\documentclass[reqno,11pt]{amsart}
\usepackage[margin=1.0in]{geometry}
\usepackage{amsthm,amsmath,amssymb,dsfont}
\usepackage{indentfirst, latexsym, enumerate,amsmath,graphicx}

\usepackage{relsize}
\usepackage{marginnote}
\usepackage{stmaryrd}
\usepackage{esint}
\usepackage{graphicx}
\usepackage{bm}
\usepackage{caption}
\usepackage{subfigure}
\usepackage{cite}
\usepackage{graphicx}
\usepackage{subfigure}
\usepackage{paralist}
\usepackage{indentfirst}
 \allowdisplaybreaks[4]
\theoremstyle{plain}
\newtheorem{thm}{Theorem}[section]

\newtheorem{lem}[thm]{Lemma}
\newtheorem{prop}[thm]{Proposition}
\newtheorem{rem}{Remark}[section]

\numberwithin{equation}{section}

\usepackage{appendix}
\usepackage{xcolor}

\newcommand{\eps}{{\varepsilon}}

\begin{document}

\title[The non-isentropic compressible fluid-particle interaction model]
{Enhanced stability and asymptotic limits to the non-isentropic compressible 
fluid-particle interaction model with thermal  effects}

\author[F. Li]{Fucai Li}
\address{School of Mathematics, Nanjing University, Nanjing
 210093, P. R. China}
\email{fli@nju.edu.cn}

\author[J. Ni]{Jinkai Ni} 
\address{School of Mathematics, Nanjing University, Nanjing
 210093, P. R. China}
\email{jinkaini123@gmail.com}
 
\author[Z. Xin]{Zhouping Xin} 
\address{The Institute of Mathematical Sciences, The Chinese University of Hong Kong,  
Shatin, New Territories, Hong Kong}
\email{zpxin@ims.cuhk.edu.hk}

\begin{abstract}

In Einstein's seminal work [Ann. Physik, 17 (1905), 549-560], he pointed out that the temperature of a fluid influences the motion of suspended particles dramatically. To describe the effect of the temperature in this physical process more precisely, Boudin et al. [ESAIM Proc., 28 (2009), 195-210] introduced a new fluid-particle interaction model containing of the non-isentropic compressible Euler equations for the fluid and a nonlinear Vlasov-Fokker-Planck type equation for the particles. By adding some viscous and heat conductive terms to the fluid part of this model, Mu and Wang [Calc. Var. Partial Differential Equations, 59 (2020), Paper no. 110] established the global existence of classical solutions near an equilibrium state.
  
 In this paper, through establishing the uniform a priori estimates with respect to the viscosity and heat conductivity coefficients and taking the combined zero viscosity and heat conductivity limits, we show that the model introduced by Boudin et al. still admits a global classical solution and enjoys optimal decay rates thereby improving Mu and Wang's results and confirming Einstein's predications. Our work indicates that the presence of particles indeed emanates new dissipation effects on the non-isentropic compressible fluid-particle model via the differences between the macroscopic velocity of the particles and the fluid velocity, and the macroscopic temperature of the particles and the fluid temperature, which is significantly different from the case of pure non-isentropic compressible Euler equations. To achieve these goals, we have developed new ideas and techniques to surmount substantial obstacles caused by the absence of viscosity and heat conductivity, and the nonlinear interactions between the fluid and particles.

\end{abstract}

\keywords{non-isentropic compressible Navier-Stokes equations,  
 Vlasov-Fokker-Planck equation,  thermal effect, global well-posedness, 
  vanishing heat conductivity limit, inviscid limit,   enhanced stability}
  % enhanced dissipation}

\subjclass[2020] {35Q30, 76N10, 35Q83, 35B40}

\maketitle

\tableofcontents

\setcounter{equation}{0}
 \indent
  % \allowdisplaybreaks

\section{Introduction}

\subsection{Brief reviews on  compressible fluid-particle models}
Fluid-particle models have been widely applied across various fields, 
such as spray dynamics \cite{BBBDLLT-irma-2005}, diesel engines \cite{RM-1952,RM-1952-a}, 
combustion theory \cite{Wfa-1958,Wfa-1985}, sedimentation processes \cite{BWC-zamm-2000}, 
and medical treatments \cite{BBJM-esaim-2005}.
Owing to their widespread applications and the  significant role of 
fluid-particle coupling models in   mathematics and physics, these 
models have garnered significant research attention. For studies on
 compressible fluid-particle models,  interested   readers  may consult,
  for example, \cite{CKL-JHDE-2013,CJ-MMMAS-2021,LMW-SIAM-2017,LNW-arXiv-2025,
  LWW-ARMA-2022,MV-CMP-2008,Ww-CMS-2024,LNW-1}, while research on incompressible 
  fluid-particle models can be found in \cite{CDM-krm-2011,GHMZ-siamj-2009,LNW-SAPM-2025, LNX-arXiv,
  LLY22,Wang-Yu2015,Yc-JMPA-2013,LNSW-arxiv} and the references cited therein. Below, we briefly
 review some results concerning compressible fluid-particle models which are related to our 
concerns in this paper.

\subsubsection{Fluid-particle models with momentum exchanges}
For a typical  compressible fluid-particle coupled model,  the  dynamics of particles at
 the level of statistical mechanics are governed by a  distribution function $F=F(t,x,v) \geq 0$,
  where $t > 0$ denotes the time, $x \in \Omega \subset \mathbb{R}^3$ represents spatial coordinates, 
  and $v \in \mathbb{R}^3$ represents the  velocity of particles. This distribution function satisfies 
  the well-known Vlasov-Fokker-Planck (VFP) equation (\cite{CG-2006,MV-MMMAS-2007}): 
\begin{align}  \label{C1}
\partial_{t}F + v \cdot \nabla_x  F - \mathrm{div}_v (F_d F + \nabla_v F) = 0.
\end{align}
Here, the drag force term $F_d$ is defined as
\begin{align}\nonumber%\label{C2}
F_d = F_0 (v-u  ),
\end{align}
where $F_0 > 0$ represents the drag coefficient (usually  chosen as one for brevity), 
and the unknown function $u = (u_1, u_2, u_3)^{\top} \in \mathbb{R}^3$ corresponds to 
the given fluid velocity field.
If we consider  {that} the fluid is    compressible,    viscous and isentropic,  
we obtain the following   compressible 
  fluid-particle interaction model (usually named as  the  compressible \emph{isentropic} 
  Navier-Stokes-Vlasov-Fokker-Planck (NS-VFP) system (\cite{CG-2006,CKL-JHDE-2013,MV-MMMAS-2007,MV-CMP-2008}):  
    \begin{equation}\label{H2}
\left\{\begin{aligned}
&\partial_t \varrho+{\rm div}(\varrho u)=0,\\
&\partial_{t}(\varrho u)+\mathrm{div}(\varrho u\otimes u)+\nabla P-\mu\Delta u %-(\mu+\lambda)\nabla{\rm div}u 
=-\int_{\mathbb{R}^{3}}F_d F {\rm d}v,\\
&\partial_{t}F+v\cdot\nabla F-\mathrm{div}_{v}(F_d F+\nabla_{v}F)=0. 
 \end{aligned} 
 \right.
\end{equation} 
Here the pressure  $P(\rho)=a \rho^\gamma$ with $a>0 $ and $ \gamma \geq 1$ is a given 
function of the fluid density $\rho$.
There  is  a  substantial body of mathematical results  on this model. 
Mellet and Vasseur \cite{MV-MMMAS-2007} established  the  global existence of weak 
solutions to \eqref{H2} in a bounded domain $\Omega \subset \mathbb{R}^3$ with homogeneous 
Dirichlet boundary conditions. Furthermore, by employing the relative entropy method, they 
\cite{MV-CMP-2008} performed an asymptotic analysis of the weak solutions. Moreover, 
the global existence and   exponential decay of  classical solutions to \eqref{H2}
were   explored in \cite{CKL-JHDE-2013} in $\mathbb{T}^3$. 
% Under certain additional conditions on 
Duan and Liu \cite{DL-KRM-2013} 
established the global well-posedness of classical solutions to the inviscid version of \eqref{H2}  
 near the equilibrium state $(1,0,M)$ in $\mathbb{R}^3$ when the initial data is a   small    perturbation of   it, 
  where $M:=M(v)=(2\pi)^{-{3}/{2}}e^{- {|v|^2}/{2}}.  $  
Recently, Li, Wang, and Wang \cite{LWW-ARMA-2022} conducted a rigorous analysis of the asymptotic 
stability of solutions and justified the vanishing viscosity limit from   \eqref{H2} to the compressible Euler-VFP system within a finite time  {interval}, achieving a 
convergence rate of $\mathcal{O}(\mu^\frac{1}{2})$. 
% Furthermore, by incorporating the term $(\mu+\lambda)\nabla{\rm div} u$ in \eqref{H2}$_3$, 
Wang \cite{Ww-CMS-2024} established the global strong solutions  {in   the $H^N$ norm $(N \geq 2)$}
 and derived the optimal decay rates for all derivatives of the solutions to \eqref{H2} by employing 
 energy methods and low-high frequency decomposition techniques.
Li, Mu and Wang \cite{LMW-SIAM-2017} studied  a   more complex  isentropic fluid-particle model and 
obtained 
the global existence of classical solutions in  the  $H^4$ framework (see also the recent improvement
 by Li, Ni and Wu \cite{LNW-arXiv} in the $H^2$ framework and by Li, Ni and Wang \cite{LNW-1} on optimal decay estimates and inviscid limit).
For further results on the compressible isentropic Euler/NS-VFP system, interested readers can 
consult \cite{CG-2006,LLY22,BD-JHDE-2006,CJ-MMMAS-2021,li-li} and the references cited therein.

\subsubsection{Fluid-particle models with momentum and energy exchanges}
It should be noted   that in the   
 aforementioned     results 
on  compressible fluid-particle models,   the   thermal effect  of the fluid  has been 
 ignored by assuming that the temperature of  {the} fluid is a constant for simplification.
However, the  Einstein's  {seminal} work \cite{Ei-1905} showed   that the dynamics of  
 temperature, especially spatial-temporal variations of temperature,  play  a  crucial
  role in governing   dispersive two-phase flow systems. 
Motivated by \cite{Ei-1905}, Boudin et al. \cite{BBBGLLM-esaim-2009} introduced a
 fluid-particle model  including a temperature equation in the fluid part. 
More precisely, in their model, the behavior of the fluid is described by the well-known 
 non-isentropic compressible Euler equations, while the movement of particles follows a 
 VFP-type equation involving the temperature of the fluid. These two parts interact 
 with each other through the exchanges of momentum and energy
between the particles and the fluid. Boudin et al. \cite{BBBGLLM-esaim-2009} also presented some numerical 
simulation examples to their model (in $\mathbb{R}^1$ case) but no well-posedenss result was given. 
 Goudon, Jin  {and} Yan \cite{GJY-cms-2012}
developed an asymptotic-preserving numerical scheme for the model introduced in \cite{BBBGLLM-esaim-2009}.
Afterwards, by adding additional   viscous and heat  conductive   terms  into the model, 
Mu and Wang \cite{Mu-Wang-20} demonstrated the global well-posedness and large time 
behavior of classical solutions
to this model when the initial data are   small perturbations around the equilibrium 
state in the  $H^4$ framework.
Recently,  by incorporating  both shear and bulk viscosities of the fluid   into  
the model of \cite{BBBGLLM-esaim-2009}, Li, Ni, and  Wu  \cite{LNW-arXiv-2025} 
diminished the regularity requirement of the initial data from $H^4$ in \cite{Mu-Wang-20} 
 to $H^2$. Moreover, they derived the time decay rates of global strong solutions and their gradients in the 
 $L^p$-norm for $2 \leq p \leq \infty$. 
%(see also  \cite{LNW-SAPM-2025} for some discussions   on  
% an incompressible inhomogeneous fluid-particle interaction model with momentum and energy exchanges).
 
We remark that the   additional    viscous  and  heat conductivity terms  play  a 
crucial role in the arguments of \cite{Mu-Wang-20,LNW-arXiv-2025} because they 
provide dissipation effects to the velocity and the temperature 
of the fluid. A  natural  question  arises: Is it possible to  obtain   global 
well-posedness of classical solutions to the model introduced in  \cite{BBBGLLM-esaim-2009} 
without adding extra viscosity  or  heat conductivity? If so, what is the new dissipation 
mechanism in this situation? 
This is the main motivation of our work. We shall show that  the model  introduced by  
Boudin et al.,  without requiring any additional  
  viscous or heat conductivity terms, still admits a global classical solution and enjoys  
   optimal decay rates, thereby improving  Mu and Wang's results \cite{Mu-Wang-20}. 
   Our findings indicate that the 
  interactions between the macroscopic velocity $b$ of the particles and the fluid 
  velocity $u$, and   between  the macroscopic temperature $\omega$ of the particles and 
  the fluid temperature $\theta$ emanate enhanced dissipation effects on the whole system.

%
%In this paper, we investigate the global well-posedness and the zero 
%heat conductivity limit for the compressible non-isentropic Navier–Stokes–Vlasov–Fokker–Planck system 
%\eqref{LN-1}, by exploiting the enhanced stability induced by the quantity $\sqrt{2}\omega - \sqrt{3}\theta$. 
%Our analysis primarily focuses on the thermal effects in fluid-particle interaction models. 
%Indeed, our results uncover novel 
%dissipation mechanisms arising from momentum and energy exchanges between the fluid and particles.

\subsection{Our model}
Now, we introduce our non-isentropic compressible NS-VFP system of the fluid-particle interaction 
model (cf. \cite{BBBGLLM-esaim-2009,LNW-arXiv-2025,Mu-Wang-20}):
\begin{equation}\label{LN-1}\left\{
\begin{aligned}
&\partial_t \varrho+{\rm div}  (\varrho u)=0,\\
&\partial_t (\varrho u)+{\rm div}  (\varrho u\otimes u)-{\rm div} (\mathds{T})+\nabla  P=\mathcal{M} ,\\
&\partial_t (\varrho E)+{\rm div}  \left((\varrho E+P)u\right)
-{\rm div} (\kappa \nabla  \vartheta ) -{\rm div} (\mathds{T}u)=\mathcal{F},\\
&\partial_t F+v\cdot\nabla  F={\tilde{\mathcal{L}}_{u,\vartheta }}F.
\end{aligned}\right.\end{equation}
Here, $\varrho=\varrho(t,x)\geq0$, 
$u=u(t,x)=(u_1(t,x),u_2(t,x),u_3(t,x))\in\mathbb{R}^3$, 
$P=P(t,x)\geq0$, $E=e+\frac{1}{2}|u|^2\geq0$, $e=e(t,x)\geq0$, 
and $\vartheta =\vartheta (t,x)\geq0$ for $(t,x)\in\mathbb{R}^+\times\mathbb{R}^3$ 
represent the density, velocity, pressure, total energy, internal energy, 
and temperature of the fluid, respectively. Furthermore, $F=F(t,x,v)\geq0$ for 
$(t,x,v)\in\mathbb{R}^+\times\mathbb{R}^3\times\mathbb{R}^3$ denotes 
the particle density distribution function in
 the phase space $\mathbb{R}^3_x\times\mathbb{R}^3_v$. The stress tensor $\mathds{T}$ is defined as  
\begin{align}\label{LN-T}  
\mathds{T}=2\mu D(u)+\lambda {\rm div} u \mathbb{I}_3,  
\end{align}  
where $D(u)=\frac{1}{2}(\nabla u+(\nabla u)^\top)$ is the deformation tensor, $(\nabla u)^\top$ is the transpose 
of $\nabla u$, and $\mathbb{I}_3$ is the $3\times 3$ identity matrix. 
Here, $\mu > 0$ represents the shear viscosity coefficient, $\lambda+\frac{2}{3}\mu$ 
represents the bulk viscosity coefficient, and $\kappa>0$ represents 
the thermal conductivity coefficient of the fluid.

Physically, the coefficients $\mu$, $\lambda$ and $\kappa$ may 
vary as functions of the density $\rho$ and the temperature $\vartheta $. In this paper,  
we assume that these coefficients are constants independent of 
$\rho$ and $\vartheta $. For the case of a perfect gas, the pressure 
$P$, temperature $\vartheta $, and internal energy $e$ obey the following relationships:  
\begin{align}  \label{LN-pe}
& P = Rn\vartheta , \quad  e = \frac{R}{\gamma - 1}\vartheta  =: C_\textsl{v} \vartheta ,  
\end{align}  
where $R > 0$ denotes the gas constant, $\gamma > 1$ represents the 
adiabatic constant, and $C_\textsl{v} > 0$ is referred to as the specific heat capacity at constant volume.

In the system \eqref{LN-1}, the fluid and particles interact mutually via the Fokker-Planck type operator  
\begin{align}\label{LN-Lf}  
{\tilde{\mathcal{L}}_{u,\vartheta }}F={\rm div}_v\big((v-u)F+\vartheta \nabla_v F\big),  
\end{align}  
which accounts for the friction force exerted on the particles by the surrounding 
fluid and the Brownian motion of the particles, in the same spirit  of 
 Einstein  \cite{Ei-1905}. Specifically, the friction force is assumed 
 to be proportional to the relative velocity $v - u$, and the Brownian 
 motion, which depends on the temperature of the fluid, induces diffusion 
 with respect to the velocity variable (This is different from the isentropic case, 
  where 
the Fokker-Planck  operator takes the form $\mathrm{div}_v [(v-u) F + \nabla_v F]$, 
see \eqref{C1}). Furthermore, the coupling terms $\mathcal{M}$ in \eqref{LN-1}$_2$ 
and $\mathcal{F}$ in \eqref{LN-1}$_3$ describe the different interactions between the dispersed 
phase and the dense phase, defined as  
\begin{align}\label{LN-mf}  
\mathcal{M}=\int_{\mathbb{R}^3} (v-u)F\mathrm{d}v , \quad &\mathcal{F}
=\int_{\mathbb{R}^3} [v\cdot(v-u)-3\vartheta ]F\mathrm{d}v,  
\end{align}  
representing the momentum exchanges and energy exchanges between the particles and the fluid, respectively.

Substituting \eqref{LN-T}--\eqref{LN-mf} into the system \eqref{LN-1} leads to 
\begin{equation}\label{LN-2}  
\left\{  
\begin{aligned}  
&\partial_t \varrho + {\rm div} (\varrho u) = 0, \\  
&\partial_t (\varrho u) + {\rm div} (\varrho u \otimes u) - \mu \Delta u 
- (\mu + \lambda)\nabla {\rm div} u + \nabla P = \int_{\mathbb{R}^3} (v-u)F\mathrm{d}v, \\  
&C_\textsl{v}(\partial_t (\varrho \vartheta ) + {\rm div} (\varrho u \vartheta )) 
+ P {\rm div} u - \kappa \Delta \vartheta \\
& \quad  - 2\mu |D(u)|^2 - \lambda|{\rm div} u|^2 = \int_{\mathbb{R}^3} (|v-u|^2 - 3\vartheta )F\mathrm{d}v, \\  
&\partial_t F + v \cdot \nabla F = {\rm div}_v\big((v-u)F + \vartheta \nabla_v F\big).  
\end{aligned}  
\right.  
\end{equation}  
We supplement the above system  with the initial data:  
\begin{align}  \label{LN-3}
(\varrho, u, \vartheta , F)|_{t=0} = (\varrho_0(x), u_0(x), \vartheta _0(x), F_0(x)), 
\quad (x,v) \in \mathbb{R}^3 \times \mathbb{R}^3.  
\end{align}  

It is readily to verify that \eqref{LN-2}--\eqref{LN-3} has an  equilibrium state 
\begin{align}\label{maxwell}
 (\varrho, u, \vartheta , F) = (1, 0, 1, M),  
\end{align}
  where the global Maxwellian $M$ is defined by
$
M=M(v)=(2\pi)^{- {3}/{2}}e^{- {|v|^2}/{2}}.    
$

Setting $F=0$ in the system \eqref{LN-2}, then it reduces to the well-known non-isentropic compressible
Navier-Stokes system (see \cite{MN-1980}):  
\begin{equation}\label{LN-5}
\left\{\begin{aligned}
& \partial_t \varrho+{\rm div}(\varrho u)=0, \\&\partial_t (\varrho u) + {\rm div} (\varrho u \otimes u) 
- \mu \Delta u - (\mu + \lambda)\nabla {\rm div} u + \nabla P=0, \\
& C_\textsl{v}(\partial_t (\varrho \vartheta ) 
+ {\rm div} (\varrho u \vartheta )) + P {\rm div} u  - \kappa \Delta \vartheta  - 2\mu |D(u)|^2 - \lambda|{\rm div} u|^2 =0.
\end{aligned}\right.\end{equation}  
There are many results on global well-posedess and time decay of classical solutions to the system \eqref{LN-5}, among others, we mention \cite{DJ-2018,MN-1980,MN-1983,KS-1999}.

 If we   set $\mu = \lambda=\kappa = 0$ in the system  \eqref{LN-2}, we obtain the 
 following  non-isentropic compressible Euler-VFP system  (initially  proposed by Boudin 
 et al. \cite{BBBGLLM-esaim-2009}):
\begin{equation}\label{LNX-1}  
\left\{  
\begin{aligned}  
&\partial_t \varrho + {\rm div} (\varrho u) = 0, \\  
&\partial_t (\varrho u) + {\rm div} (\varrho u \otimes u) + \nabla P = \int_{\mathbb{R}^3}
 (v-u)F\mathrm{d}v, \\  
&C_\textsl{v}(\partial_t (\varrho \vartheta ) + {\rm div} (\varrho u \vartheta )) + P {\rm div} u 
  = \int_{\mathbb{R}^3} (|v-u|^2 - 3\vartheta )F\mathrm{d}v, \\  
&\partial_t F + v \cdot \nabla F = {\rm div}_v\big((v-u)F + \vartheta \nabla_v F\big).  
\end{aligned}  
\right.  
\end{equation} 
Note that   the system    \eqref{LNX-1} consists of purely hyperbolic equations
  in the $x$-space. As can be observed from its form, this implies that it is hard to obtain a
   global classical solution. In fact, 
 the arguments in \cite{Mu-Wang-20,LNW-arXiv-2025} heavily depend on the dissipation effects
  provided by the viscosity  term  $-\mu \Delta u$ and the heat conductivity term 
   $-\kappa \Delta  \vartheta$ and \emph{cannot} be applied  to the system   
    \eqref{LNX-1} directly.

When taking $F = 0$ in \eqref{LNX-1}, we have the well-known non-isentropic compressible Euler system:
\begin{equation}\label{LNX-2}
\left\{\begin{aligned}& \partial_t \varrho+{\rm div}(\varrho u)=0, \\&\partial_t (\varrho u) 
+ {\rm div} (\varrho u \otimes u)  + \nabla P=0, \\
& C_\textsl{v}(\partial_t (\varrho \vartheta )
 + {\rm div} (\varrho u \vartheta )) + P {\rm div} u   =0.\end{aligned}\right.
\end{equation}  
It is well-known that, generally, the classical solutions of  the system \eqref{LNX-2} shall 
blows up in finite time, for example, see   \cite{ BSV-CPAM-23, Chen-JHDE-2011,CD-2024, MRRS-2022,Yin-2004}. 
And a good way to obtain the global existence of classical solutions is to add a damping term on the velocity
 equation \eqref{LNX-2}$_2$, see \cite{STW-2003-cpde, Zhang-Wu-2024}. 

%
%\textcolor{red}
%{As mentioned in \cite{Tr-NS-2001}, it is difficult to establish the global solution for 
%such a compressible non-isentropic Euler system \eqref{LNX-2}, and the solution will blow up.} 

To the best of our knowledge, until now, there is no mathematical result on well-posedness of the 
 system \eqref{LNX-1} since it was proposed by Boudin et al. in 2009. A natural and interesting 
 question arises: Can we utilize the weak dissipative property of $\mathcal{L}$
  (see the definition in \eqref{lglg} below) with respect to $v$ and the drag force
   on the velocity equation and the energy equation to establish the existence of 
   global classical solutions for the system \eqref{LNX-1}?
In this paper we shall give an affirmative answer to this question. 
% We aim to establish the global stability of solutions to the Cauchy problem \eqref{LN-2}--\eqref{LN-3} near
%The main  purpose  of this paper is to
We shall  investigate both the large-time behavior and the enhanced stability mechanism
 of the non-isentropic compressible NS-VFP  system \eqref{LN-2} and the
 non-isentropic compressible  Euler-VFP system \eqref{LNX-1} in   $\mathbb{R}^3$.
% \emph{without heat conductivity}, i.e., the system,} 
Assuming that the initial data perturbation around the equilibrium state
  $(1, 0, 1, M)$    is sufficiently small in the   $H^4$ framework, we establish the global well-posedness 
 of classical solutions to the system \eqref{LN-2}, \emph{uniformly in}  $\mu>0$, $\lambda>0$ and 
  $\kappa>0$  in $\mathbb{R}^3$.   Furthermore, we rigorously justify the global-in-time 
  vanishing viscosity limits $\mu\rightarrow0$ and $\lambda\rightarrow 0$, and
vanishing heat conductivity limit $\kappa \rightarrow 0$ with a convergence rate of 
$\mathcal{O}(\max\{\mu,\lambda,\kappa\})$ and hence improve the results in \cite{Mu-Wang-20}. 
 For the case $\mu=\lambda=\kappa=0$ in  \eqref{LN-2}, i.e., the  
 system \eqref{LNX-1}, we derive the optimal time-decay rates of 
 the global classical solutions and their gradients in   $L^2$-norm. 
 The key lies in capturing the new dissipation modes $b-u$ and $\sqrt{2}\omega-\sqrt{3}\theta$ 
 (see the definitions of $b$ in \eqref{LNX-b} and of $\omega$ in \eqref{omegaa} below). 
 Additionally, we obtain analogous results regarding global existence and the heat
  conductivity limit in the case of the periodic domain, which demonstrates an exponential
   time-decay rate of the solution. 
   
 %  Moreover,    it is worth mentioning that our results also address the global stability of the system \eqref{LN-4}. 

 Compared with the non-isentropic compressible Euler equations  \eqref{LNX-2}, when the 
 coupling effect of particles is considered, that is, for the system \eqref{LNX-1},
  our work reveals additional dissipation structures for the velocity and temperature 
  (i.e., $b - u$ and $\sqrt{2}\omega-\sqrt{3}\theta$). By utilizing these newly 
  identified dissipative structures, we derive the optimal decay rates of classical 
  solutions and their gradients.
%These results align with, in some sense,  those obtained for 
%the compressible non-isentropic Navier-Stokes equations  without heat conductivity \eqref{LN-5}.
%% NS-VFP  without heat conductivity \eqref{LN-4} and the. 
%In stark contrast to the purely transport-based dissipation mechanisms observed in 
%compressible Navier-Stokes equations without heat conductivity, 
Our results highlight 
the new dissipation mechanisms for the velocity and the temperature of the fluid driven by the  momentum and energy 
exchanges of the particle and the fluid.

%
%Below, we present our main results for the compressible non-isentropic NS-VFP system  with  thermal effect.

\subsection{Main results}

We first focus on the stability of global classical solutions near the equilibrium 
state $(\varrho, u, \vartheta , F) = (1, 0, 1, M)$ to the system \eqref{LN-2}. 
To  this end, we first introduce the following transformations:  
\begin{align*}  
\varrho^{\mu,\lambda,\kappa} = 1 + \rho^{\mu,\lambda,\kappa}, 
\quad \vartheta ^{\mu,\lambda,\kappa} = \theta^{\mu,\lambda,\kappa} + 1, 
\quad F^{\mu,\lambda,\kappa} = M + \sqrt{M}f^{\mu,\lambda,\kappa}.  
\end{align*}  
 Here we have added the superscripts  $\mu$, $\lambda$ and 
$\kappa$ on the unknowns $(\rho,u, \theta,f)$ to emphasise the dependence of  $\mu$, $\lambda$ and 
$\kappa$. Without loss of generality, we assume that the physical constants $ R = C_\textsl{v} =1$.
 Consequently, the Cauchy problem \eqref{LN-2}--\eqref{LN-3} can be rewritten as follows:
\begin{equation}\label{Q1}
\left\{\begin{aligned}
&\partial_t \rho^{\mu,\lambda,\kappa} +u^{\mu,\lambda,\kappa} 
\cdot \nabla \rho^{\mu,\lambda,\kappa}+(1+\rho^{\mu,\lambda,\kappa}) 
{\rm div} u^{\mu,\lambda,\kappa}=0, \\
&\partial_t u^{\mu,\lambda,\kappa} +u^{\mu,\lambda,\kappa} 
\cdot \nabla u^{\mu,\lambda,\kappa}
+\frac{1+\theta^{\mu,\lambda,\kappa}}{1+\rho^{\mu,\lambda,\kappa}} \nabla \rho^{\mu,\lambda,\kappa}
+\nabla \theta^{\mu,\lambda,\kappa}\\
&\quad=\frac{ \mu\Delta  u^{\mu,\lambda,\kappa}
+(\mu+\lambda)\nabla{\rm div}  u^{\mu,\lambda,\kappa} }{1+\rho^{\mu,\lambda,\kappa}}  
+\frac{b^{\mu,\lambda,\kappa}-u^{\mu,\lambda,\kappa}
-a^{\mu,\lambda,\kappa} u^{\mu,\lambda,\kappa}}{1+\rho^{\mu,\lambda,\kappa}}, \\
&\partial_t \theta^{\mu,\lambda,\kappa} +u^{\mu,\lambda,\kappa} 
\cdot \nabla \theta^{\mu,\lambda,\kappa}+\theta^{\mu,\lambda,\kappa} 
 {\rm div} u^{\mu,\lambda,\kappa}+{\rm div} u^{\mu,\lambda,\kappa}
 -\sqrt{3}(\sqrt{2} \omega^{\mu,\lambda,\kappa}-\sqrt{3} \theta^{\mu,\lambda,\kappa})  \\
&\quad =\frac{ \kappa\Delta  \theta^{\mu,\lambda,\kappa}
+|u^{\mu,\lambda,\kappa}|^2-2 b^{\mu,\lambda,\kappa} \cdot u^{\mu,\lambda,
\kappa}+a^{\mu,\lambda,\kappa}|u^{\mu,\lambda,\kappa}|^2-3 a^{\mu,\lambda,\kappa}
 \theta^{\mu,\lambda,\kappa}  }{1+\rho^{\mu,\lambda,\kappa}}  \\
&\qquad + \frac{2\mu |D(u^{\mu,\lambda,\kappa})|^2+\lambda|{\rm div}  
u^{\mu,\lambda,\kappa}|^2 }{1+\rho^{\mu,\lambda,\kappa}}
-\frac{\sqrt{3}\rho^{\mu,\lambda,\kappa}(\sqrt{2} 
\omega^{\mu,\lambda,\kappa}-\sqrt{3} \theta^{\mu,\lambda,\kappa})}{1+\rho^{\mu,\lambda,\kappa}},\\
&\partial_t f^{\mu,\lambda,\kappa} +v \cdot \nabla 
f^{\mu,\lambda,\kappa}+u^{\mu,\lambda,\kappa} \cdot \nabla_v 
f^{\mu,\lambda,\kappa}-\frac{1}{2} u^{\mu,\lambda,\kappa} \cdot v f^{\mu,\lambda,\kappa}-
u^{\mu,\lambda,\kappa} \cdot v \sqrt{M}  \\
&\quad = (|v|^2-3 ) \sqrt{M} \theta^{\mu,\lambda,\kappa}
+\mathcal{L} f^{\mu,\lambda,\kappa}+\frac{\theta^{\mu,\lambda,\kappa}}{\sqrt{M}} 
\Delta_v(\sqrt{M} f^{\mu,\lambda,\kappa}), \\
\end{aligned}\right.
\end{equation}
with the initial data
\begin{align}\label{Q1-1}
&(\rho^{\mu,\lambda,\kappa},u^{\mu,\lambda,\kappa},
\theta^{\mu,\lambda,\kappa},f^{\mu,\lambda,\kappa})|_{t=0}  \nonumber\\
= \,&(\rho^{\mu,\lambda,\kappa}_0(x),u^{\mu,\lambda,\kappa}_0(x),
\theta^{\mu,\lambda,\kappa}_0(x),f^{\mu,\lambda,\kappa}_0(x,v))\nonumber\\
:=\,&\Big( \varrho_0^{\mu,\lambda,\kappa}(x)-1,u^{\mu,\lambda,
\kappa}_0(x), {\vartheta}^{\mu,\lambda,\kappa}_0(x)-1,
\frac{F^{\mu,\lambda,\kappa}_0(x,v)-M}{\sqrt{M}}\Big),
 \ \  x \in \mathbb{R}^3, \, v\in\mathbb{R}^3. \qquad 
\end{align}

 Similarly, for the case $\mu=\lambda=\kappa=0$ (i. e.,  the system \eqref{LNX-1}), 
 using the transformations:   
\begin{align*}  
\varrho = 1 + \rho , \quad \vartheta  = \theta  + 1, \quad F = M + \sqrt{M}f,   
\end{align*}  
and taking $R = C_\textsl{v} = 1$,
we reformulate the Cauchy problem for it as  (in other words, we obtain 
it by   dropping  the {superscripts $\mu$, $\lambda$ and $\kappa$} on
 the unknowns $(\rho^{\mu,\lambda,\kappa}, u^{\mu,\lambda,\kappa},
  \theta^{\mu,\lambda,\kappa},  f^{\mu,\lambda,\kappa})$, and deleting 
{the linear terms $\mu  \Delta u^{\mu,\lambda,\kappa}$, $(\mu+\lambda)
  \nabla{\rm div} u^{\mu,\lambda,\kappa}$
and $\kappa \Delta \theta^{\mu,\lambda,\kappa}$,
and the nonlinear terms $2\mu |D(u^{\mu,\lambda,\kappa})|^2$ and
$\lambda |{\rm div}u^{\mu,\lambda,\kappa}|^2$
in the system \eqref{Q1}):}
\begin{equation}\label{Q2}
\left\{\begin{aligned}
&\partial_t \rho + u\cdot\nabla\rho+(1+\rho){\rm div} u=0, \\
&\partial_t u +u \cdot \nabla u+\frac{1+\theta}{1+\rho} \nabla 
\rho+\nabla \theta=\frac{b-u-au}{1+\rho}, \\
&\partial_t \theta +u \cdot \nabla \theta+\theta  {\rm div} u
+ {\rm div} u-\sqrt{3}(\sqrt{2} \omega-\sqrt{3} \theta)  \\
&\quad =\frac{ |u|^2-2 u \cdot b+a|u|^2-3 a \theta }{1+\rho}  
 -\frac{\sqrt{3}\rho(\sqrt{2} \omega-\sqrt{3} \theta)}{1+\rho},\\
&\partial_t f +v \cdot \nabla f+u \cdot \nabla_v f-\frac{1}{2} u \cdot v f-
u \cdot v \sqrt{M}- (|v|^2-3 ) \sqrt{M} \theta  \\
&\quad =\mathcal{L} f+\frac{\theta}{\sqrt{M}} \Delta_v(\sqrt{M} f),\\
\end{aligned}\right.
\end{equation} 
with the initial data
\begin{align}\label{Q2-2}
(\rho ,u ,\theta ,f )|_{t=0}=&\,(\rho _0(x),u _0(x),\theta _0(x),f _0(x,v))\nonumber\\
:=&\,\Big( \varrho_0 (x)-1,u _0(x), {\vartheta} _0(x)-1,\frac{F _0(x,v)-M}{\sqrt{M}}\Big),
 \quad   x \in \mathbb{R}^3, \, v\in\mathbb{R}^3.  \quad
\end{align}
In the systems \eqref{Q1} and \eqref{Q2}, the linearized Fokker-Planck operator $\mathcal{L}$ is defined as  
\begin{align} \label{lglg}
\mathcal{L}g := \frac{1}{\sqrt{M}} \nabla_v \cdot \Big[ M \nabla_v \Big( \frac{g}{\sqrt{M}} \Big) \Big], 
\quad g=f^{\mu,\lambda,\kappa} \ \  \textrm{or}\ \ g=f.
\end{align}  
And $a^{\mu,\lambda,\kappa} = a^{f^{\mu,\lambda,\kappa}}(t,x)$,  
$b^{\mu,\lambda,\kappa} = b^{f^{\mu,\lambda,\kappa}}(t,x)$,
$\omega^{\mu,\lambda,\kappa} = \omega^{f^{\mu,\lambda,\kappa}}(t,x)$,  $a = a^{f}(t,x)$, $b = b^f(t,x)$,
 and $\omega = \omega^f(t,x)$ are given by,
for $g=f^{\mu,\lambda,\kappa}$ or $f$,    
\begin{align} 
a^g(t,x) & := \int_{\mathbb{R}^3} \sqrt{M} g(t,x,v) \, {\rm d}v, \\ 
b^g(t,x) & := \int_{\mathbb{R}^3} v \sqrt{M} g(t,x,v) \, {\rm d}v, \label{LNX-b} \\ 
\omega^g(t,x) & := \int_{\mathbb{R}^3} \frac{|v|^2 - 3}{\sqrt{6}} \sqrt{M} g(t,x,v) \, {\rm d}v. \label{omegaa}
\end{align}

 We define the following dissipation functional to \eqref{Q1}:  
%\begin{equation} 
\begin{align}  \label{D0} 
&\mathcal{D}(\rho^{\mu,\lambda,\kappa} ,u^{\mu,\lambda,\kappa} ,
\theta^{\mu,\lambda,\kappa} ,f^{\mu,\lambda,\kappa} )(t)\nonumber\\
\,&\quad :=\|b^{\mu,\lambda,\kappa} -u^{\mu,\lambda,\kappa} 
\|_{H^4}^2+\|\sqrt{2}\omega^{\mu,\lambda,\kappa}
 -\sqrt{3}
\theta^{\mu,\lambda,\kappa}\|_{H^4}^2  +\|\nabla (a^{\mu,\lambda,\kappa} ,
b^{\mu,\lambda,\kappa} ,\omega^{\mu,\lambda,\kappa} ,\rho^{\mu,\lambda,\kappa} )\|_{H^3}^2  
 \nonumber \\
&\qquad +\sum_{|\alpha|\leq 4}\|\{\mathbf{I}-\mathbf{P}\}\partial^\alpha 
f^{\mu,\lambda,\kappa} \|_{\nu}^2  +\sum_{\substack{1 \leq | \beta| 
\leq 4\\|\alpha|+|\beta| \leq 4}} \|\partial_\beta^\alpha\{\mathbf{I}-\mathbf{P}\} 
f^{\mu,\lambda,\kappa} \|_\nu^2,  
\end{align}  
where $\{\mathbf{I}-\mathbf{P}\}f^{\mu,\lambda,\kappa}$ represents 
the microscopic component of $f^{\mu,\lambda,\kappa}$, as defined in Section 2.3.

We first establish the global existence and uniqueness of classical solutions to 
the problem \eqref{Q1}--\eqref{Q1-1} in $\mathbb{R}^3$, \emph{uniformly with respect to 
 the viscosity coefficients $\mu,\lambda>0$, and
the heat conductivity coefficient $\kappa > 0$.}
\begin{thm}\label{T1.1}
Let % $\Omega=\mathbb{R}^3$, and 
$\mu,\lambda,\kappa>0$. 
Assume that the initial data  $(\rho_0^{\mu,\lambda,\kappa},u_0^{\mu,\lambda,\kappa},
\theta_0^{\mu,\lambda,\kappa},f_0^{\mu,\lambda,\kappa})$
satisfies $F_0^{\mu,\lambda,\kappa}=M+\sqrt{M}f_0^{\mu,\lambda,\kappa}\geq 0$, 
$(\rho_0^{\mu,\lambda,\kappa},u_0^{\mu,\lambda,\kappa},\theta_0^{\mu,\lambda,\kappa})\in 
H^4(\mathbb{R}^3)$ and $f_0^{\mu,\lambda,\kappa}\in H^4_{x,v}(\mathbb{R}^3\times\mathbb{R}^3)$.
There exists a constant $\varepsilon_0>0$ independent of 
$\mu$, $\lambda$ and $\kappa$ such that if
\begin{align}\label{a-1}
\mathcal{E}_0^{\mu,\lambda,\kappa}:
=\|(\rho^{{\mu,\lambda,\kappa}}_0,u^{{\mu,\lambda,\kappa}}_0,
\theta^{{\mu,\lambda,\kappa}}_0 )\|_{H^4}^2+\|f_0^{\mu,\lambda,\kappa}\|_{H_{x,v}^4}^2\leq \varepsilon_0,
\end{align}
then the Cauchy problem \eqref{Q1}--\eqref{Q1-1} 
admits a unique global classical solution $(\rho^{{\mu,\lambda,\kappa}}$, 
$u^{{\mu,\lambda,\kappa}}$, $\theta^{{\mu,\lambda,\kappa}},f^{{\mu,\lambda,\kappa}})$,  
 satisfying $\varrho^{{\mu,\lambda,\kappa}}=1+\rho^{\mu,\lambda,\kappa}>0$, 
 $F^{\mu,\lambda,\kappa}=M+\sqrt{M}f^{\mu,\lambda,\kappa}\geq 0$, and
\begin{align}\label{a-2}
&\sup_{t\geq 0}\big(\|(\rho^{{\mu,\lambda,\kappa}},u^{{\mu,\lambda,\kappa}},
\theta^{{\mu,\lambda,\kappa}} )(t)\|_{H^{4}}^2+\|f^{\mu,\lambda,\kappa}(t)\|_{H_{x,v}^4}^2\big)
+\mu\int_0^{t}\|\nabla u^{\mu,\lambda,\kappa} (\tau)\|_{H^4}^2{\rm d}\tau  \nonumber\\
& \quad+ (\mu+\lambda) \int_0^{t}\|{\rm div} u^{\mu,\lambda,\kappa} (\tau)\|_{H^4}^2{\rm d}\tau 
 +\kappa\int_0^{t}\|\nabla \theta^{\mu,\lambda,\kappa} (\tau)\|_{H^4}^2{\rm d}\tau  \nonumber\\
  & \quad\quad+\int_{0}^{t} \mathcal{D}(\rho^{\mu,\lambda,\kappa},u^{\mu,\lambda,\kappa},
  \theta^{\mu,\lambda,\kappa},f^{\mu,\lambda,\kappa})(\tau){\rm d}\tau 
   \leq C_0\mathcal{E}_0^{\mu,\lambda,\kappa},
  \quad \forall \, t\in \mathbb{R}^{+},
\end{align}
where $C_0 > 0$ is a constant   independent of $\mu$, $\lambda$,  $\kappa$ and   $t$.
\end{thm}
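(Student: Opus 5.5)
The proof follows the standard scheme: local existence, a priori estimates closed under a smallness assumption, and a continuity argument. The new point is that every constant must be independent of $\mu,\lambda,\kappa$, so the viscous and heat-conductive terms may only be used as good-signed terms and never as the source of dissipation. Local existence and uniqueness of classical solutions with $F\ge0$, $1+\rho>0$ on some $[0,T_*]$ come from the classical iteration for the coupled hyperbolic--parabolic/kinetic system, as in Mu--Wang. It then suffices to prove the following. If $\sup_{0\le t\le T}\mathcal{E}(t)\le\delta$ with $\delta$ small and independent of $\mu,\lambda,\kappa$, where $\mathcal{E}(t)\sim\|(\rho,u,\theta)\|_{H^4}^2+\|f\|_{H^4_{x,v}}^2$, then the energy inequality
\[
\frac{d}{dt}\mathcal{E}(t)+\mu\|\nabla u\|_{H^4}^2+(\mu+\lambda)\|{\rm div}u\|_{H^4}^2+\kappa\|\nabla\theta\|_{H^4}^2+c\,\mathcal{D}(t)\le C\sqrt{\mathcal{E}(t)}\,\mathcal{D}(t)
\]
holds. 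Integrating this inequality gives \eqref{a-2}.

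\textbf{Step 1: symmetrized energy estimate.} I apply $\partial^\alpha$ with $|\alpha|\le4$ and take inner products: \eqref{Q1}$_1$ with $\frac{1+\theta}{(1+\rho)^2}\partial^\alpha\rho$, \eqref{Q1}$_2$ with $\partial^\alpha u$, \eqref{Q1}$_3$ with $\partial^\alpha\theta$, and \eqref{Q1}$_4$ with $\partial^\alpha f$. These weights symmetrize the Euler part, so the top-order terms cancel by integration by parts, up to errors of size $\|\nabla(\rho,u,\theta)\|_{L^\infty}\mathcal{E}$. The coupling is the key point. In the linear part, $(u\cdot v\sqrt M,f)$ produces $u\cdot b$ and $((|v|^2-3)\sqrt M\theta,f)$ produces $\sqrt6\,\theta\omega$. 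Since $\mathcal{L}$ is coercive only on $\{\mathbf I-\mathbf P\}f$ and kills $\sqrt M$, one has $-(\mathcal{L}f,f)=\|\nabla_v\{\mathbf I-\mathbf P\}f\|^2$-type dissipation, and its $b$ and $\omega$ moments give $|b|^2+2|\omega|^2$ up to the appropriate macroscopic splitting. Combined with the source terms $(b-u,u)$ and $\sqrt3(\sqrt2\omega-\sqrt3\theta,\theta)$, the exchange terms form the perfect squares $\|b-u\|^2+\|\sqrt2\omega-\sqrt3\theta\|^2$ together with $\lambda_0\|\{\mathbf I-\mathbf P\}f\|_\nu^2$. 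I will check this algebraically on the linearized system; it is exactly the energy conservation of the original model. The viscous terms give the $\mu,\lambda,\kappa$ terms with a good sign. The nonlinear viscous heating $2\mu|D(u)|^2/(1+\rho)$ is bounded by $\mu\|\nabla u\|_{H^4}^2\sqrt{\mathcal E}$ and is absorbed. The $\theta$-dependent term $\frac{\theta}{\sqrt M}\Delta_v(\sqrt M f)$ and the term $u\cdot\nabla_v f$ are handled with $v$-weight/Sobolev bounds by $\sqrt{\mathcal E}\mathcal D$.

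\textbf{Step 2: recovering the macroscopic dissipation without viscosity.} This is the main obstacle. I will use the macro-micro decomposition $f=\mathbf Pf+\{\mathbf I-\mathbf P\}f$ with $\mathbf Pf=(a+b\cdot v+\omega\frac{|v|^2-3}{\sqrt6})\sqrt M$ and derive the fluid-type moment system for $(a,b,\omega)$. Its interactive functional $\sum_{|\alpha|\le3}(\partial^\alpha b,\nabla\partial^\alpha a)$, together with the analogous terms for $\omega$ via the 13-moment/high-moment equations (as in Duan--Liu), yields $\|\nabla(a,b,\omega)\|_{H^3}^2$ up to $\|b-u\|$, $\|\sqrt2\omega-\sqrt3\theta\|$ and microscopic terms. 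For the fluid, I test $\partial^\alpha$\eqref{Q1}$_2$ against $\nabla\partial^\alpha\rho$ for $|\alpha|\le3$. The pressure term gives $\|\nabla\rho\|_{H^3}^2$. The term $\nabla\theta$ is rewritten as $\frac1{\sqrt3}\nabla(\sqrt3\theta-\sqrt2\omega)+\sqrt{2/3}\nabla\omega$, and the drag term is $b-u$, so both are controlled by already-dissipated quantities. The time derivative $(\partial_t u,\nabla\rho)$ is transferred to $-({\rm div}u,\partial_t\rho)$, which equals $\|{\rm div}u\|^2$ plus nonlinear terms. Then ${\rm div}u={\rm div}b-{\rm div}(b-u)$ is bounded by dissipated quantities. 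The viscous term $(\mu\Delta u,\nabla\rho)$ is bounded by $\varepsilon\|\nabla\rho\|^2+C\mu^2\|\nabla^2u\|^2$, and since $\mu\le1$ may be assumed WLOG (or one adds a large multiple of the energy), this is absorbed by $\mu\|\nabla u\|_{H^4}^2$ without any inverse power of $\mu$. I expect the delicate point to be exactly this: avoiding $1/\mu$ or $1/\kappa$ anywhere, and closing at top order $|\alpha|=4$ for $\rho$ without a derivative loss. The latter is why $\mathcal D$ contains only $\|\nabla\rho\|_{H^3}$.

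\textbf{Step 3: mixed derivatives and closing.} For $\partial^\alpha_\beta\{\mathbf I-\mathbf P\}f$ with $|\beta|\ge1$, I apply $\{\mathbf I-\mathbf P\}$ to \eqref{Q1}$_4$ and use the weighted estimates of the standard Vlasov--Fokker--Planck theory. The commutator $[\partial_\beta,v\cdot\nabla_x]$ is controlled by the $x$-dissipation already obtained. The full energy is
\[
\mathcal E=\text{Step 1}+\eta_1\,\text{Step 2}+\eta_2\,\text{Step 3},
\]
with small constants $\eta_1,\eta_2$; this gives the differential inequality above. For small $\delta$ the right-hand side is absorbed, which yields the uniform bound \eqref{a-2}. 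The continuity argument then extends the local solution globally. Positivity of $1+\rho$ follows from Sobolev embedding, and $F\ge0$ from the maximum principle for the linear Fokker--Planck equation with given $(u,\theta)$ and $1+\theta>0$.
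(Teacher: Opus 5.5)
Your Steps 2--3 and the continuity argument match the paper's Lemmas~\ref{L3.3}--\ref{L3.5}. Step~1, however, has two genuine gaps, and both sit exactly where the non-isentropic structure enters.

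\textbf{First gap: the weights do not symmetrize the temperature--velocity block.} After applying $\partial^\alpha$, \eqref{Q1}$_3$ contains $(1+\theta)\partial^\alpha{\rm div}\,u$, while $\nabla\partial^\alpha\theta$ enters \eqref{Q1}$_2$ with coefficient $1$. Testing with $\partial^\alpha u$ and $\partial^\alpha\theta$ therefore leaves $\int\theta\,\partial^\alpha{\rm div}\,u\,\partial^\alpha\theta\,{\rm d}x$. For $|\alpha|=4$ this involves $\nabla^5u$ (or $\nabla^5\theta$ after integrating by parts), and it can only be absorbed with a factor $\mu^{-1}$ or $\kappa^{-1}$. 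In your normalization the $\theta$-equation must be tested with $\partial^\alpha\theta/(1+\theta)$; the paper uses the equivalent triple $(1+\rho)^{-2}$, $(1+\theta)^{-1}$, $(1+\theta)^{-2}$ in \eqref{G3.10}. With this weight the exchange terms give the perfect squares only up to harmless errors such as $\frac{\theta}{1+\theta}\partial^\alpha(\sqrt2\omega-\sqrt3\theta)\,\partial^\alpha\theta$.

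\textbf{Second, more serious gap: the zeroth-order cubic terms.} Your claim that every nonlinear term is $\lesssim\sqrt{\mathcal E}\,\mathcal D$ by Sobolev and $v$-weight bounds is false at $\alpha=0$.
The fluid sources $-au$, $-b\cdot u$, $-3a\theta$ produce $-\int a|u|^2$, $-\int\theta\,b\cdot u$ and $-3\int a\theta^2$. The macroscopic parts of $\frac12\int u\cdot\langle vf,f\rangle$ and $\int\langle\frac{\theta}{\sqrt M}\Delta_v(\sqrt Mf),f\rangle$ produce $\int a\,u\cdot b$, $\frac{2}{\sqrt6}\int\omega\,u\cdot b$ and $\sqrt6\int a\theta\omega$.
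These are cubic with no derivative, while $\mathcal D$ in \eqref{D0} controls none of $a,b,\omega,u,\theta$ individually in $L^2$. H\"older and Sobolev then give only $\mathcal E^{3/4}\mathcal D^{3/4}$, which cannot be absorbed into $c\,\mathcal D$.

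The missing observation is that these terms combine exactly into $\int a\,u\cdot(b-u)$, $\frac{1}{\sqrt3}\int(\sqrt2\omega-\sqrt3\theta)\,u\cdot b$ and $\sqrt3\int a\theta(\sqrt2\omega-\sqrt3\theta)$. Each contains a damped mode and is therefore bounded by $\delta\,\mathcal D$; this is \eqref{G3.6}--\eqref{G3.7}. For $|\alpha|\geq1$ the third inequality of Lemma~\ref{LA.1} puts a gradient on every factor, so the difficulty is purely zeroth-order.

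\textbf{A side remark on $\mu\le1$.} Restricting to $\mu\le1$ is not merely WLOG. For $2\mu+\lambda\gg1$ the high-frequency density mode decays only at rate $\sim(2\mu+\lambda)^{-1}$, so $\int_0^\infty\|\nabla\rho\|_{L^2}^2\,{\rm d}\tau$ cannot be bounded uniformly, and adding a multiple of the energy does not fix this. The paper's \eqref{G3.28} also uses $\mu^2\le\mu$ tacitly.
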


Based on  the uniform estimates established in Theorem \ref{T1.1}, next we analyze 
the vanishing viscosity limits
as $\mu,\lambda\rightarrow0$ and
  heat conductivity limit as $\kappa \to 0$ and demonstrate the global existence of 
  classical solutions for the non-isentropic compressible Euler-VFP system \eqref{LNX-1}.

\begin{thm}\label{T1.2}
% Let $\Omega=\mathbb{R}^3$. 
Assume that $(\rho_0 ,u_0 ,\theta_0,f_0)$
satisfy $F_0=M+\sqrt{M}f_0\geq 0$, $(\rho_0,u_0,\theta_0)\in H^4(\mathbb{R}^3)$
and $f_0\in H_{x,v}^4(\mathbb{R}^3\times\mathbb{R}^3)$, and $\{(\rho_0^{\mu,\lambda,\kappa},
u_0^{\mu,\lambda,\kappa},\theta_0^{\mu,\lambda,\kappa},f_0^{\mu,\lambda,\kappa})\}_{0<\mu,\lambda,\kappa<1}$ is
a sequence such that $(\rho^{\mu,\lambda,\kappa}_0$, $u^{\mu,\lambda,\kappa}_0 $, 
$\theta^{\mu,\lambda,\kappa}_0) \rightarrow (\rho_0,u_0,\theta_0)$ in 
$H^4(\mathbb{R}^3)$ and $f_0^{\mu,\lambda,\kappa}\rightarrow f_0$ in
$H^4_{x,v}(\mathbb{R}^3\times\mathbb{R}^3)$, as $\mu,\lambda,\kappa \rightarrow 0$.
 And there exists a constant $\eps_1>0$
independent of $\mu$, $\lambda$ and $\kappa$ such that if
\begin{align}\label{b-1}
\mathcal{E}_0:=\|(\rho _0,u _0,\theta _0 )\|_{H^4}^2+\|f_0 \|_{H_{x,v}^4}^2\leq \eps_1,
\end{align}
then for the global solution $(\rho^{\mu,\lambda,\kappa},u^{\mu,\lambda,\kappa},
\theta^{\mu,\lambda,\kappa},f^{\mu,\lambda,\kappa})$ of the Cauchy problem 
\eqref{Q1}--\eqref{Q1-1} subject to the initial data  $(\rho_0^{\mu,\lambda,\kappa},
u_0^{\mu,\lambda,\kappa},\theta_0^{\mu,\lambda,\kappa},f_0^{\mu,\lambda,\kappa})$ 
obtained in Theorem \ref{T1.1}, as $\mu,\lambda,\kappa\rightarrow 0$,
there exists a limit $(\rho,u,\theta,f)$ such that, up to a subsequence, it holds 
\begin{equation}\label{b2}
\left\{
\begin{aligned}
(\rho^{\mu,\lambda,\kappa},u^{\mu,\lambda,\kappa},\theta^{\mu,\lambda,\kappa}) 
&\rightharpoonup (\rho,u,\theta )\quad  \,\, \text{weakly-$*$~  ~in ~~~} 
&& L^{\infty}(\mathbb{R}^{+};H^4(\mathbb{R}^3)),\\
f^{\mu,\lambda,\kappa} &\rightharpoonup  f\quad  \,\, \quad\quad\,\,\,\, 
\text{weakly-$*$~ ~in ~~~} &&L^{\infty}(\mathbb{R}^{+};H_{x,v}^4(\mathbb{R}^3\times\mathbb{R}^3)),\\
(\rho^{\mu,\lambda,\kappa},u^{\mu,\lambda,\kappa},\theta^{\mu,\lambda,\kappa} ) 
&\rightarrow (\rho,u,\theta )\quad  \,\, \text{strong ~~  in ~~~} 
&&C_{\rm{loc}}(\mathbb{R}^{+};H^3_{\rm{loc}}(\mathbb{R}^3)).
\end{aligned}
\right.
\end{equation}
 The limit $(\rho,u,\theta,f)$ is the unique global solution to the 
 Cauchy problem \eqref{Q2}--\eqref{Q2-2}
 associated with the initial data $(\rho_0,u_0,\theta_0,f_0)$. 
 Furthermore, $(\rho,u,\theta,f)$ satisfy $\varrho=1+\rho>0$,
 $F=M+\sqrt{M}f \geq 0$ and
\begin{align}\nonumber% \label{b3}
&\sup_{t\geq 0}\big(\|(\rho,u,\theta)(t)\|_{H^{4}}^2+\|f(t)\|_{H_{x,v}^4}^2\big)
+\int_{0}^{t}\mathcal{D}(\rho,u,\theta,f)(\tau){\rm d}\tau \leq C_1\mathcal{E}_0,  
\quad \forall\,  t\in \mathbb{R}^{+},
\end{align}
where $\mathcal{D}(\rho,u,\theta,f)$ is defined in the same way as in \eqref{D0}, and 
 $C_1>0$ is a   constant independent of $\mu$, $\lambda$, $\kappa$ and  $t$.
\end{thm}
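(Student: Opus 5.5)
The plan is a compactness argument built on the uniform bounds of Theorem \ref{T1.1}. First, choose $\eps_1$ so that the approximate data are small enough: since $(\rho_0^{\mu,\lambda,\kappa},u_0^{\mu,\lambda,\kappa},\theta_0^{\mu,\lambda,\kappa},f_0^{\mu,\lambda,\kappa})$ converges to $(\rho_0,u_0,\theta_0,f_0)$ in $H^4$, we have $\mathcal{E}_0^{\mu,\lambda,\kappa}\le 2\mathcal{E}_0\le 2\eps_1\le\eps_0$ for all small $\mu,\lambda,\kappa$, provided $\eps_1=\eps_0/2$. Theorem \ref{T1.1} then gives global solutions satisfying \eqref{a-2} with right-hand side $2C_0\mathcal{E}_0$, uniformly in $\mu,\lambda,\kappa$ and $t$. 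The Banach--Alaoglu theorem then yields the weak-$*$ limits in $L^\infty(\mathbb{R}^+;H^4)$ and $L^\infty(\mathbb{R}^+;H^4_{x,v})$ along a subsequence.

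Second, for strong convergence we bound time derivatives uniformly. From \eqref{Q1}, $\partial_t(\rho,u,\theta)^{\mu,\lambda,\kappa}$ is bounded in $L^\infty(\mathbb{R}^+;H^3)$ plus viscous contributions such as $\mu\Delta u/(1+\rho)$. These are bounded in $L^\infty(0,T;H^2)$ by $\mu\|u\|_{H^4}$, so $\partial_t$ is uniformly bounded in $L^\infty(0,T;H^2)$. The $f$-equation similarly bounds $\partial_t f$ in $L^\infty(0,T;H^2_{x,v})$ with weighted $v$ (the terms $v\cdot\nabla f$ and $\Delta_v$ each lose one derivative or moment, which $H^4$ absorbs locally). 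The Aubin--Lions--Simon lemma, using the compact embedding $H^4(B_R)\hookrightarrow H^3(B_R)$ and Arzelà--Ascoli in time, gives convergence in $C([0,T];H^3(B_R))$ for every $T,R$. A diagonal argument produces the third line of \eqref{b2}, and the same argument applied to $f$ on bounded sets of $(x,v)$ gives local strong convergence of $f$.

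Third, we pass to the limit in the weak formulation of \eqref{Q1}. The viscous and heat-conductive terms are $O(\mu+\lambda+\kappa)$ in $L^\infty H^2$ and vanish. The nonlinear terms converge by combining strong local convergence with the uniform $H^4$ bounds; in particular $a,b,\omega$ converge strongly locally in $x$, because the Gaussian weight $\sqrt{M}$ controls the tails in $v$. So the limit is a classical solution of \eqref{Q2}--\eqref{Q2-2}, since $H^4\subset C^2$. Positivity is preserved: $1+\rho\ge 1-C\sqrt{\eps_1}>0$, and $F\ge0$ survives weak limits. The energy--dissipation bound follows from weak(-$*$) lower semicontinuity of norms, applied to each term of $\mathcal{D}$ in $L^2(0,t)$ after noting that each of those quantities also converges weakly; this gives the bound with $C_1=2C_0$.

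The main obstacle is uniqueness for the limit Euler--VFP system, since it has no viscous smoothing in $x$. I would take the difference of two $H^4$ solutions and do an $L^2$ energy estimate on the symmetrized hyperbolic system: multiply the $\rho$-equation by $(1+\theta)/(1+\rho)^2$-type weights so the pressure terms cancel, and pair the $f$-equation with $f$ in $L^2_{x,v}$. Transport terms are handled by integrating by parts, with losses controlled by $\|\nabla(u,\rho,\theta)\|_{L^\infty}$. The velocity-derivative terms $u\cdot\nabla_v f$ and $\theta\Delta_v(\sqrt{M}f)/\sqrt{M}$ of the difference are controlled by the dissipation $\|\nabla_v\cdot\|^2$ coming from $\mathcal{L}+\theta\Delta_v$, using $1+\theta>1/2$. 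The coupling terms $b-u$ and $\sqrt2\omega-\sqrt3\theta$ have the favorable sign. Gronwall then gives uniqueness, so the whole family converges and "up to a subsequence" is harmless.
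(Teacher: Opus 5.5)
Your proposal is correct and follows essentially the same route as the paper. Both arguments take the uniform bounds from Theorem~\ref{T1.1}, use weak-$*$ compactness and uniform bounds on $\partial_t(\rho,u,\theta)^{\mu,\lambda,\kappa}$ in $L^\infty_t(H^2)$, apply Aubin--Lions to get convergence in $C_{\rm loc}(\mathbb{R}^+;H^3_{\rm loc})$, and then pass to the limit in the equations. The paper simply asserts the remaining points as straightforward, whereas you spell out lower semicontinuity for the energy--dissipation bound, positivity, and a symmetrized $L^2$ difference estimate for uniqueness. In that uniqueness estimate, cross terms such as $\widetilde\theta\,\frac{1}{\sqrt{M}}\Delta_v(\sqrt{M}f_2)$ (with $\widetilde\theta=\theta_1-\theta_2$) require a velocity moment $|v|f_2\in L^2_t(L^\infty_xL^2_v)$, which comes from $\int_0^t\mathcal{D}\,{\rm d}\tau$ rather than from the unweighted $H^4_{x,v}$ bound, so uniqueness should be stated in the class of solutions satisfying that dissipation bound.
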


To establish the  global convergence in time as $\mu  \rightarrow 0$, $\lambda\rightarrow0$ and
$\kappa \rightarrow 0$ simultaneously, below we derive   error estimates between the global solutions of the 
systems \eqref{Q1} and \eqref{Q2}.  

\begin{thm}\label{T1.3}
% Let $\Omega=\mathbb{R}^3$.
Let $(\rho^{\mu,\lambda,\kappa}, u^{\mu,\lambda,\kappa}, \theta^{\mu,\lambda,\kappa}, 
f^{\mu,\lambda,\kappa})$ and $(\rho, u, \theta, f)$ 
be global solutions to the Cauchy problems of \eqref{Q1}--\eqref{Q1-1} and \eqref{Q2}--\eqref{Q2-2}, 
as obtained in
 Theorem \ref{T1.1} and Theorem \ref{T1.2}  respectively, associated with the initial data 
 $(\rho_0^{\mu,\lambda,\kappa}, 
 u_0^{\mu,\lambda,\kappa}, \theta_0^{\mu,
\lambda,\kappa}, f_0^{\mu,\lambda,\kappa})$ and $(\rho_0, u_0, \theta_0, f_0)$.
 Assume further that 
\begin{align}\nonumber %\label{c1}
\|(\rho_0^{\mu,\lambda,\kappa}-\rho_0,u_0^{\mu,\lambda,\kappa}-u_0,
\theta_0^{\mu,\lambda,\kappa}-\theta_0)\|_{H^1}+\|f_0^{\mu,\lambda,\kappa}-f_0\|_{H_{x,v}^1}\leq 
 \max\{\mu,\lambda,\kappa\}.
\end{align}
Then, for any $t\in \mathbb{R}^{+} $, it holds that,
\begin{align}\label{c2}
&\sup_{t\geq 0}\big(\|(\rho^ {\mu,\lambda,\kappa}-\rho,u^ {\mu,\lambda,\kappa}-u,
\theta^ {\mu,\lambda,\kappa}-\theta)(t)\|_{H^1}^2
+\|(f^ {\mu,\lambda,\kappa}-f)(t)\|_{H_{x,v}^1}^2\big)\nonumber\\
& +\int_0^t \big(\big\|\big((b^{\mu,\lambda,\kappa}-u^{\mu,\lambda,\kappa})-(b-u)\big)(\tau)\big\|_{H^1}^2 
 + \big\|\big((\sqrt{2}\omega^{\mu,\lambda,\kappa}-\sqrt{3}\theta^{\mu,\lambda,\kappa})
-(\sqrt{2}\omega-\sqrt{3}\theta)\big)(\tau)\big\|_{H^1}^2 \big){\rm d}\tau \nonumber\\
& +\int_0^t\Big( \|\nabla(\rho^{\mu,\lambda,\kappa}-\rho)(\tau)\|_{L^2}^2
+ \sum_{|\alpha|\leq 1}\|\{\mathbf{I}-\mathbf{P}\}\partial^\alpha 
(f^{\mu,\lambda,\kappa}-f)(\tau)\|_{\nu}^2
+\|\nabla (f^{\mu,\lambda,\kappa}-f)(\tau)\|_{L_{x,v}^2}^2\Big){\rm d}\tau\nonumber\\
&+\int_{0}^t \|\nabla (u^{\mu,\lambda,\kappa}-u,\theta^{\mu,\lambda,\kappa}
-\theta)(\tau)\|_{L^2}^2{\rm d}\tau+\int_0^t
 \|{\rm div}(u^{\mu,\lambda,\kappa}-u)(\tau)\|_{L^2}^2{\rm d}\tau  
  \leq  C_2  \max\{\mu,\lambda,\kappa\}^2,
%\quad t\in \mathbb{R}^{+},
\end{align}
where    $C_2>0$ is a constant independent of $\mu$, $\lambda$, $\kappa$ and    $t$. 
\end{thm}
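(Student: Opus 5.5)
We estimate the difference directly. Set $(\tilde\rho,\tilde u,\tilde\theta,\tilde f):=(\rho^{\mu,\lambda,\kappa}-\rho,\,u^{\mu,\lambda,\kappa}-u,\,\theta^{\mu,\lambda,\kappa}-\theta,\,f^{\mu,\lambda,\kappa}-f)$, with $\tilde a,\tilde b,\tilde\omega$ the corresponding moments. Subtracting \eqref{Q2} from \eqref{Q1} gives a linear hyperbolic--kinetic system for the difference. Its coefficients depend on both solutions, which are uniformly small in $H^4$ by Theorems \ref{T1.1}--\ref{T1.2}. The sources are the viscous and heat-conductive terms $\frac{\mu\Delta u^{\mu,\lambda,\kappa}+(\mu+\lambda)\nabla{\rm div}u^{\mu,\lambda,\kappa}}{1+\rho^{\mu,\lambda,\kappa}}$, $\frac{\kappa\Delta\theta^{\mu,\lambda,\kappa}}{1+\rho^{\mu,\lambda,\kappa}}$ and $\frac{2\mu|D(u^{\mu,\lambda,\kappa})|^2+\lambda|{\rm div}u^{\mu,\lambda,\kappa}|^2}{1+\rho^{\mu,\lambda,\kappa}}$, together with nonlinear terms of the form (small background)$\times$(difference).

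We then repeat the energy method used for Theorem \ref{T1.1}, but only at the $H^1$ level.

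\emph{Step 1: zeroth and first order energy.} Take $\partial^\alpha$ with $|\alpha|\le1$ and form a weighted $L^2$ inner product. Test the fluid part with $\frac{1+\theta^{\mu,\lambda,\kappa}}{(1+\rho^{\mu,\lambda,\kappa})^2}\partial^\alpha\tilde\rho$, $\partial^\alpha\tilde u$ and $\partial^\alpha\tilde\theta$, and test the kinetic part with $\partial^\alpha\tilde f$. The linear coupling then produces, exactly as in the uniform estimate, the dissipation
\[
\|\tilde b-\tilde u\|^2+\|\sqrt2\tilde\omega-\sqrt3\tilde\theta\|^2+\|\{\mathbf I-\mathbf P\}\partial^\alpha\tilde f\|_\nu^2 .
\]
Here the cross terms $\tilde u\cdot v\sqrt M$ and $(|v|^2-3)\sqrt M\tilde\theta$ combine with the fluid sources $\tilde b-\tilde u$ and $\sqrt3(\sqrt2\tilde\omega-\sqrt3\tilde\theta)$ into perfect squares.

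\emph{Step 2: the viscous sources.} Integrate by parts once, for example
\[
\mu\langle\Delta u^{\mu,\lambda,\kappa},\partial^\alpha\tilde u\rangle\le \mu\|\nabla u^{\mu,\lambda,\kappa}\|_{H^2}\|\nabla\tilde u\|_{H^1}.
\]
Do \emph{not} use the $\mu\int\|\nabla u\|^2$ dissipation of \eqref{a-2}, since that would only give order $\mu$ and not $\mu^2$. Instead apply Young's inequality in the form $\le \eta\|\nabla\tilde u\|_{H^1}^2+C_\eta\mu^2\|\nabla u^{\mu,\lambda,\kappa}\|_{H^2}^2$. This is the reason the statement includes $\int_0^t\|\nabla(\tilde u,\tilde\theta)\|^2$: the $\eta$-term is absorbed by a dissipation of $\nabla\tilde u$ and $\nabla\tilde\theta$, which we must produce. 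The remaining factor $\mu^2\int_0^t\|\nabla u^{\mu,\lambda,\kappa}\|_{H^2}^2$ has to be bounded uniformly in $t$. For this we use the uniform dissipation $\mathcal D$ for the viscous solution. It controls $\nabla b$ and $\nabla\omega$ in $H^3$ and $b-u$, $\sqrt2\omega-\sqrt3\theta$ in $H^4$, hence $\|\nabla u\|_{H^3}^2+\|\nabla\theta\|_{H^3}^2\lesssim\mathcal D$. This is exactly the enhanced dissipation mechanism. The same argument handles $\kappa\Delta\theta$, and the quadratic viscous terms are even better.

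\emph{Step 3: macroscopic dissipation.} The dissipation of $\nabla\tilde\rho$, $\nabla(\tilde a,\tilde b,\tilde\omega)$ and $\nabla\tilde f$ comes from the difference analogue of the macro--micro interactive functional used for Theorem \ref{T1.1}. This means the moment equations for $(\tilde a,\tilde b,\tilde\omega)$ and the high-order moments of $\{\mathbf I-\mathbf P\}\tilde f$, tested against $\nabla\tilde a$, $\nabla\tilde b$, $\nabla\tilde\omega$, together with $\langle\tilde u,\nabla\tilde\rho\rangle$ from the momentum equation. From $\nabla\tilde b$ and $\nabla(\tilde b-\tilde u)$ we recover $\nabla\tilde u$, and from $\nabla\tilde\omega$ and $\nabla(\sqrt2\tilde\omega-\sqrt3\tilde\theta)$ we recover $\nabla\tilde\theta$. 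This is why we need the $H^1$ norm of the dissipative combinations. Adding a small multiple of this functional to Step 1, and estimating the mixed derivatives $\partial_\beta\tilde f$ with the usual weighted $v$-derivative estimate, gives a Lyapunov inequality
\[
\frac{d}{dt}\mathcal E_{\rm diff}+c\,\mathcal D_{\rm diff}\le C\big(\mathcal E^{1/2}\big)\mathcal D_{\rm diff}+C\big(\mathcal D^{\mu,\lambda,\kappa}+\mathcal D\big)\mathcal E_{\rm diff}+C\max\{\mu,\lambda,\kappa\}^2\,\mathcal D^{\mu,\lambda,\kappa}.
\]

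\emph{Main obstacle.} Closing this inequality globally in time is the hard part. The nonlinear commutators, for instance $\tilde u\cdot\nabla\rho^{\mu,\lambda,\kappa}$, $\tilde\theta\Delta_v f$ and $\tilde u\cdot\nabla_v f$, must be bounded either by $\sqrt{\varepsilon}\,\mathcal D_{\rm diff}$ or by $\mathcal D(t)\,\mathcal E_{\rm diff}$. Since $\int_0^\infty\mathcal D\,d\tau\le C\varepsilon_0$, Gronwall's inequality then gives a time-independent bound. The difficulty is terms where the undifferentiated $\tilde\rho$ multiplies background quantities, because $\tilde\rho$ itself is not dissipated. We will place all derivatives on the background solution and use $\|\tilde\rho\|_{L^6}\lesssim\|\nabla\tilde\rho\|$. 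This is possible since every such background factor is $\nabla\rho$, $\nabla u$, $b-u$, etc., and each of these is controlled by $\mathcal D$. In this way the bad terms reduce to $\mathcal D^{1/2}\mathcal E_{\rm diff}^{1/2}\mathcal D_{\rm diff}^{1/2}$. Finally, the initial difference gives $\mathcal E_{\rm diff}(0)\lesssim\max\{\mu,\lambda,\kappa\}^2$, which yields \eqref{c2}.
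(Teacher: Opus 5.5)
Your plan follows the paper's architecture: the difference system, an $H^1$ energy estimate with a symmetrizer, the moment functional for $\nabla(\tilde a,\tilde b,\tilde\omega)$, the cross term $\int\tilde u\cdot\nabla\tilde\rho\,{\rm d}x$, a $\nabla_v$-estimate for $\{\mathbf{I}-\mathbf{P}\}\tilde f$, and Young's inequality on the viscous sources paired with the enhanced dissipation $\int_0^\infty\mathcal{D}\,{\rm d}\tau\lesssim\eps_0$ rather than with $\mu\int\|\nabla u\|^2$. Closing by Gronwall instead of absorbing $(\eps_0^{1/2}+\eps_1^{1/2})\Xi^{\mu,\lambda,\kappa}(t)$ is an equivalent variant.

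However, Step 1 does not close with the test functions you chose. You pair the $\tilde u$- and $\tilde\theta$-equations with the unweighted $\partial^\alpha\tilde u$ and $\partial^\alpha\tilde\theta$. But the $\tilde\theta$-equation has principal part $(1+\theta^{\mu,\lambda,\kappa})\,{\rm div}\,\tilde u$, while $\nabla\tilde\theta$ enters the $\tilde u$-equation with coefficient $1$. For $|\alpha|=1$ the cancellation therefore leaves
\[
\int_{\mathbb{R}^3}\theta^{\mu,\lambda,\kappa}\,\partial^\alpha{\rm div}\,\tilde u\;\partial^\alpha\tilde\theta\,{\rm d}x ,
\]
which contains $\nabla^2\tilde u$. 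This term is bounded, since both solutions lie in $H^4$, but it is not $O(\max\{\mu,\lambda,\kappa\})$, and the smallness of $\theta^{\mu,\lambda,\kappa}$ cannot compensate for a lost derivative.

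Your weight $\frac{1+\theta^{\mu,\lambda,\kappa}}{(1+\rho^{\mu,\lambda,\kappa})^2}$ symmetrizes the $\rho$--$u$ pair correctly. The $\theta$--$u$ pair, however, requires testing with $\frac{1}{1+\theta^{\mu,\lambda,\kappa}}\partial^\alpha\tilde\theta$. Equivalently, use the paper's weights $\frac{1}{(1+\rho^{\mu,\lambda,\kappa})^2}$, $\frac{1}{1+\theta^{\mu,\lambda,\kappa}}$, $\frac{1}{(1+\theta^{\mu,\lambda,\kappa})^2}$; dividing the momentum equation by $1+\theta$ is exactly the structural observation the paper stresses. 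Your perfect squares then acquire remainders such as $\int\frac{\theta^{\mu,\lambda,\kappa}}{1+\theta^{\mu,\lambda,\kappa}}\partial^\alpha(\sqrt2\tilde\omega-\sqrt3\tilde\theta)\,\partial^\alpha\tilde\theta\,{\rm d}x$ (compare $J_6$, $J_7$ in \eqref{G3.10}). These are harmless, because $\nabla\tilde\theta$ is dissipated through $\nabla\tilde\omega$ and $\nabla(\sqrt2\tilde\omega-\sqrt3\tilde\theta)$.

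No weights are needed at zeroth order. If you keep them there, bound $\frac12\int\partial_t w\,|\tilde\rho|^2$ by $\|\partial_t w\|_{L^3}\|\tilde\rho\|_{L^2}\|\tilde\rho\|_{L^6}$ with $\|\partial_t w\|_{L^3}\lesssim\mathcal{D}^{1/2}$. The reason is that $\tilde\rho$ itself is not dissipated, so a bound of the form $\delta\|\tilde\rho\|_{L^2}^2$ would make your Gronwall factor grow in $t$.

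Step 2 has the same kind of defect at $|\alpha|=1$. You bound by $\mu\|\nabla u^{\mu,\lambda,\kappa}\|_{H^2}\|\nabla\tilde u\|_{H^1}$ and then absorb $\eta\|\nabla\tilde u\|_{H^1}^2$. That absorption is not available, since an $H^1$-level estimate produces no dissipation of $\nabla^2\tilde u$. Instead, do not integrate by parts at first order, and keep all derivatives on the viscous solution:
\[
\mu\Big|\int_{\mathbb{R}^3}\partial^\alpha\Big(\frac{\Delta u^{\mu,\lambda,\kappa}}{1+\rho^{\mu,\lambda,\kappa}}\Big)\cdot w\,\partial^\alpha\tilde u\,{\rm d}x\Big|\lesssim\mu\big(\|\nabla^3u^{\mu,\lambda,\kappa}\|_{L^2}+\|\nabla\rho^{\mu,\lambda,\kappa}\|_{L^\infty}\|\nabla^2u^{\mu,\lambda,\kappa}\|_{L^2}\big)\|\nabla\tilde u\|_{L^2}.
\]
Here $w$ is the bounded weight on $\partial^\alpha\tilde u$; this is how the paper handles $\widetilde J_{22}$. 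Since $\|\nabla u^{\mu,\lambda,\kappa}\|_{H^3}^2\lesssim\mathcal{D}$, Young's inequality then gives $\eta\|\nabla\tilde u\|_{L^2}^2+C_\eta\mu^2\mathcal{D}$, as you intended, and the term $\kappa\Delta\theta^{\mu,\lambda,\kappa}$ is handled the same way.

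With these corrections your argument goes through, including the Gronwall closure with $\int_0^\infty(\mathcal{D}^{\mu,\lambda,\kappa}+\mathcal{D})\,{\rm d}\tau\lesssim\eps_0+\eps_1$, and it yields \eqref{c2}.
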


\begin{rem} To the best of our knowledge, our work 
is the first systematic investigation of the vanishing
 viscosity limits and vanishing heat conductivity limit to
  non-isentropic fluid-particle models. The key element of our
  analysis is the uniform dissipation estimates of $u^{\mu,\lambda,\kappa}$ 
  and $\theta^{\mu,\lambda,\kappa}$, which primarily stem from the damping 
  effect of the new effective modes $b^{\mu,\lambda,\kappa}-u^{\mu,\lambda,\kappa}$, 
  $\sqrt{2}\omega^{\mu,\lambda,\kappa}-\sqrt{3}\theta^{\mu,\lambda,\kappa}$, and the 
diffusion effect of the microscopic quantities
 $b^{\mu,\lambda,\kappa}$ and $\omega^{\mu,\lambda,\kappa}$. 
\end{rem}
 
\begin{rem}
Theorem \ref{T1.3} shows that the convergence 
rate of the classical solution of the non-isentropic compressible NS-VFP 
system \eqref{Q1} to that of the non-isentropic compressible Euler-VFP 
system \eqref{Q2} is $\mathcal{O}(\max\{\mu,\lambda,\kappa\})$ which  
reflects that there is a certain competitive 
relationship among the viscosity coefficients $\mu, \lambda$ and the conductivity coefficient $\kappa$.
\end{rem} 

\begin{rem}
Obviously, our Theorem \ref{T1.3} still holds for the cases: \emph{(i)} 
 $\mu$ and $\lambda$ are fixed, and $ \kappa\rightarrow0$; \emph{(ii)} 
 $\mu \rightarrow 0,  \lambda\rightarrow 0$, and $\kappa $ is fixed. 
 For the case \emph{(i)}, we obtain the convergence of the system \eqref{LN-2} 
 to the system %\eqref{LN-4}
 \begin{equation}\label{LN-4}  
\left\{  
\begin{aligned}  
&\partial_t \varrho + {\rm div} (\varrho u) = 0, \\  
&\partial_t (\varrho u) + {\rm div} (\varrho u \otimes u) - \mu \Delta u - (\mu + \lambda)\nabla
 {\rm div} u + \nabla P = \int_{\mathbb{R}^3} (v-u)F\mathrm{d}v, \\  
&C_\textsl{v}(\partial_t (\varrho \vartheta ) + {\rm div} (\varrho u \vartheta )) + P {\rm div} u 
- 2\mu |D(u)|^2 - \lambda|{\rm div} u|^2 = \int_{\mathbb{R}^3} (|v-u|^2 - 3\vartheta )F\mathrm{d}v, \\  
&\partial_t F + v \cdot \nabla F = {\rm div}_v\big((v-u)F + \vartheta \nabla_v F\big), 
\end{aligned}  
\right.  
\end{equation}  
   via the 
  vanishing heat conductivity limit   $\kappa \to 0$. This limit is new even 
  for the pure fluid equations where we obtain the non-isentropic compressible Navier-Stokes 
  equations  without  heat conductivity  (i.e., \eqref{LN-5} without the term $-\kappa \Delta \theta$), see 
  \cite{DM-2008-IUMJ, TW-2025-JMAA} on the global well-posedess and time decay estimates to this system.
  For the case (ii), if we further taking $F=0$, we shall obtain the so-call compressible Euler-Fourier system  which was 
   initially introduced  in  \cite{CFK-15}. 
   
\end{rem} 

Next, we investigate the large-time behavior of classical 
solutions to the Cauchy problem \eqref{Q2}--\eqref{Q2-2}.
 Assuming that the initial data $(\rho_0,u_0,\theta_0,f_0)$ 
 is uniformly bounded in  $L^1$-norm, we derive optimal
 time-decay estimates for both the solutions and their 
 spatial gradients  for the system \eqref{Q2}, as presented below.

\begin{thm}\label{T1.4}
%Let $\Omega=\mathbb{R}^3$ and 
% $\kappa=0$.
 Suppose that the assumption
\eqref{b-1} holds and $\|(\rho_0,u_0,\theta_0)\|_{L^1}+\|f_0\|_{ {Z}_1}$ is bounded, then
the solution $(\rho,u,\theta,f)$ to the Cauchy problem \eqref{Q2}--\eqref{Q2-2} satisfying
 \begin{equation}\label{d1} 
\left\{
\begin{aligned}
 \|\nabla^k(\rho, u, \theta)(t)\|_{L^2 } +\|\nabla^kf(t)\|_{{L_v^2(L^{2})}}
&\leq C_3(1+t)^{-\frac{3}{4}-\frac{k}{2}},\quad k=0,1, \\
\|\nabla^k(\rho, u, \theta)(t)\|_{L^2 } +\|\nabla^kf(t)\|_{{L_v^2(L^{2})}}
&\leq C_3(1+t)^{-\frac{5}{4}},\quad\,\,\,\,\,\,\, k=2,3,4
\end{aligned}
\right.
\end{equation}
and
 \begin{equation} \label{d2}
\left\{
\begin{aligned}
\| (\rho, u, \theta)(t)\|_{L^p}+\|  f(t)\|_{L_v^2(L^p)}
 &\leq C_3(1+t)^{-\frac{3}{2}(1-\frac{1}{p})},\,\,\, 2\leq p\leq 6, \\  
\| (\rho, u, \theta)(t)\|_{L^p}+\|  f(t)\|_{L_v^2(L^p)}
 &\leq C_3(1+t)^{-\frac{5}{4} },\,\,\,\quad\quad\,  6\leq p\leq \infty. 
\end{aligned}
\right.
\end{equation}
Furthermore, we have 
\begin{align}\label{NJKd3}
 \|  (b-u)(t)\|_{L^2}+\|  (\sqrt{2}\omega-\sqrt{3}\theta)(t)\|_{L^2}
 +\|  \{\mathbf{I}-\mathbf{P}\}f(t)\|_{L_v^2(L^2)}  \leq C_3(1+t)^{-\frac{5}{4} }.
\end{align}
where    $C_3>0$ is a constant independent of    $t$. 

\begin{rem}
 We observe that the decay rate of   $L^2$-norm for the dissipative 
 quantities $ b - u $ and $ \sqrt{2}\omega - \sqrt{3}\theta $ is
  $ (1 + t)^{-\frac{5}{4}} $, which is  $\frac{1}{2}$-order faster 
  than the decay rate of the solution itself. This  new phenomenon is induced by the coupling effects between 
 the particles and the fluid and is constituted   first time for non-isentropic compressible fluid-particle models. 
\end{rem}

\end{thm}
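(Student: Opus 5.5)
We propose to follow the standard route of linearized spectral analysis combined with energy estimates and a Duhamel argument. The first step is to write \eqref{Q2} as $\partial_t U + \mathbb{A}U = \mathcal{N}(U)$ with $U=(\rho,u,\theta,f)$. The linear operator $\mathbb{A}$ collects the acoustic part $(\mathrm{div}\,u,\nabla\rho+\nabla\theta,\mathrm{div}\,u)$, the damping terms $-(b-u)$ and $-\sqrt{3}(\sqrt{2}\omega-\sqrt{3}\theta)$, and the linear kinetic part $v\cdot\nabla f-\mathcal{L}f-u\cdot v\sqrt{M}-(|v|^2-3)\sqrt{M}\theta$. For the associated semigroup $e^{-t\mathbb{A}}$ we will show, via a Fourier-side energy method, the estimate
\[
\|\nabla^k e^{-t\mathbb{A}}U_0\|_{L^2}\leq C(1+t)^{-\frac34-\frac k2}\big(\|U_0\|_{L^1\cap Z_1}+\|\nabla^k U_0\|_{L^2}\big).
\]
To obtain it, we construct a pointwise-in-$\xi$ Lyapunov functional $\mathcal{E}(\hat U)\sim|\hat U|^2$. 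This functional combines the basic $L^2$ identity, which dissipates $|\hat b-\hat u|^2$, $|\sqrt2\hat\omega-\sqrt3\hat\theta|^2$ and $\|\{\mathbf I-\mathbf P\}\hat f\|_\nu^2$, with interactive terms such as $\mathrm{Re}(i\xi\hat\rho\cdot\bar{\hat u})$ and the macro–micro moment terms for $(a,b,\omega)$. The result is $\frac{d}{dt}\mathcal{E}+c\frac{|\xi|^2}{1+|\xi|^2}\mathcal{E}\le0$, from which the decay above follows. We will also record the sharper linear fact that the damped modes $b-u$, $\sqrt2\omega-\sqrt3\theta$ and $\{\mathbf I-\mathbf P\}f$ are controlled at low frequency by $|\xi|\,|\hat U|$ plus exponentially decaying terms. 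This gains the extra $(1+t)^{-1/2}$ needed for \eqref{NJKd3}.

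Next we run the nonlinear energy estimates of Theorem~\ref{T1.2} at the level of derivatives $\nabla^k$ for $k\ge1$, to get a Lyapunov inequality of the form
\[
\frac{d}{dt}\mathcal{E}_k(t)+c\,\mathcal{D}_k(t)\le0,\qquad \mathcal{E}_k\sim\|\nabla^k(\rho,u,\theta)\|_{H^{4-k}}^2+\|\nabla^k f\|_{L^2_v(H^{4-k})}^2+\text{(weighted micro part)}.
\]
Here $\mathcal{D}_k$ dominates $\|\nabla^{k+1}(\rho,a,b,\omega)\|^2$ together with the damped and micro quantities. Crucially, $\|\nabla^{k+1}(u,\theta)\|$ is recovered only through $u=b-(b-u)$ and $\theta=(\sqrt2\omega-(\sqrt2\omega-\sqrt3\theta))/\sqrt3$; this substitution is where the new dissipation is used. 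Since $\mathcal{D}_k$ misses only the $k$-th order low-frequency part, we add $\|\nabla^k U\|^2$ to both sides to get
\[
\frac{d}{dt}\mathcal{E}_k+c\,\mathcal{E}_k\le C\|\nabla^k U\|_{L^2}^2 .
\]
We then bound $\|\nabla^k U\|_{L^2}$ by Duhamel's formula, using the linear decay and the quadratic structure of $\mathcal{N}$, in which every term contains at least one damped or differentiated factor.

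For $k=0,1$ we close a bootstrap on $\mathcal{M}(t)=\sup_{s\le t}\sum_{k\le1}(1+s)^{\frac34+\frac k2}\|\nabla^k U(s)\|$ and obtain $\mathcal{M}(t)\le C(\|U_0\|_{L^1\cap H^4}+\mathcal{M}(t)^2)$.

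For $k\ge2$ only $(1+t)^{-5/4}$ is claimed. This is because the hyperbolic part, with no viscosity, does not let us gain further orders: the top-order estimates cost derivatives. We therefore apply Gronwall to the $k\ge2$ inequality using the $k=1$ rate, which gives $\|\nabla^k U\|\lesssim(1+t)^{-5/4}$ for $k=2,3,4$.

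The $L^p$ bounds \eqref{d2} then follow by interpolation and Sobolev embeddings: Gagliardo–Nirenberg between $k=0$ and $k=1$ for $2\le p\le6$, and the $\nabla$ and $\nabla^2$ bounds for $p\ge6$. The estimate \eqref{NJKd3} follows from the dissipation structure. Applying the damped-mode equations once more gives
\[
\frac{d}{dt}\|(b-u,\sqrt2\omega-\sqrt3\theta)\|^2+c\|(b-u,\sqrt2\omega-\sqrt3\theta)\|^2\lesssim\|\nabla(\rho,u,\theta,f)\|^2+\text{nonlinear},
\]
and the right side decays like $(1+t)^{-5/2}$, which yields $(1+t)^{-5/4}$. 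The same argument applies to $\{\mathbf I-\mathbf P\}f$ via the micro equation.

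The main obstacle is the nonlinear Duhamel step at $k=1$ in the absence of viscosity. Terms like $u\cdot\nabla u$ or $\theta\,\Delta_v(\sqrt M f)$ require $\|\nabla^{2}U\|$, and we do not know a priori that this decays faster than $(1+t)^{-5/4}$. We plan to handle this by placing the derivative on the high-frequency part and using the energy inequality for $k=1$ in place of Duhamel there. In other words, Duhamel is used only for the low-frequency part $\|\nabla U^{L}\|$, where $\nabla$ costs nothing, while the high-frequency part is absorbed by the exponential damping of $\mathcal{E}_1$.
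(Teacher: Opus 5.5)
Your architecture matches the paper's. It has four parts: a Fourier-side Lyapunov functional, an energy inequality for $k\ge1$ forced only by low frequencies, Duhamel only for $\nabla^lU^L$ with $l=0,1$, and damped ODEs for $b-u$, $\sqrt2\omega-\sqrt3\theta$, $\{\mathbf I-\mathbf P\}f$. You should write that forcing as $\|\nabla^kU^L\|_{L^2}^2$; as written, $\frac{d}{dt}\mathcal E_k+c\,\mathcal E_k\le C\|\nabla^kU\|_{L^2}^2$ is vacuous.

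The real problem is the closing step, which does not prove the theorem as stated. Your bootstrap ends in $\mathcal M(t)\le C(\|U_0\|_{L^1\cap H^4}+\mathcal M(t)^2)$. Such an inequality gives a bound only if $4C^2\|U_0\|_{L^1\cap H^4}<1$, i.e.\ only for \emph{small} $L^1$ data. The theorem assumes $\|(\rho_0,u_0,\theta_0)\|_{L^1}+\|f_0\|_{Z_1}$ merely bounded, with smallness only in $H^4$.

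Nor can you linearize by spending $\sup_t\|U\|_{H^4}\lesssim\varepsilon_1^{1/2}$ on one factor. Your claim that every nonlinear term carries a damped or differentiated factor is false. The zero-order couplings $au$, $a\theta$, $u\cdot b$, $|u|^2$ in \eqref{Q2}$_2$--\eqref{Q2}$_3$ contain neither $b-u$, $\sqrt2\omega-\sqrt3\theta$, nor a derivative. The same holds for $au\cdot v\sqrt M$, $u\cdot b\sqrt M$, $a\theta(|v|^2-3)\sqrt M$, $u\cdot v\,(b\cdot v)\sqrt M$ in the kinetic source. So the early-time Duhamel piece is genuinely $\int_0^{t/2}(1+t-s)^{-\frac34-\frac l2}\|U(s)\|_{L^2}^2\,{\rm d}s$. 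Putting $\varepsilon_1^{1/2}$ on one factor leaves $(1+s)^{-3/4}$, which is not integrable and loses $(1+t)^{1/4}$.

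The missing idea is the paper's two-tier argument. First take $q=\frac43$ in \eqref{G5.7} and use $\|U_0\|_{\mathcal Z_{4/3}}\le\|U_0\|_{\mathcal Z_1}^{1/2}\|U_0\|_{\mathcal Z_2}^{1/2}$, which is small because $\|U_0\|_{\mathcal Z_2}$ is. Aim only at the slower rates $(1+t)^{-3/8}$ for $\|U\|_{L^2}$ and $(1+t)^{-7/8}$ for $\|\nabla U\|_{\mathcal H^3}$. At these rates the zero-order quadratic terms are affordable, and you get $\mathcal E_\infty\lesssim\|U_0\|_{\mathcal H^4}^2+\|U_0\|_{\mathcal Z_1}\|U_0\|_{\mathcal H^4}+\mathcal E_0\mathcal E_\infty+\mathcal E_\infty^2$. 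The constant term is small, so this closes.

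Then redo Duhamel at the optimal weights. Estimate each quadratic term as a slow-rate factor with small constant times an optimal-rate factor, e.g.\ $\|U(s)\|_{L^2}^2\lesssim\epsilon\,(1+s)^{-3/8}\,\mathcal E_{1,\infty}^{1/2}(t)\,(1+s)^{-3/4}$, with $\epsilon=\|U_0\|_{\mathcal H^4}+\|U_0\|_{\mathcal Z_1}^{1/2}\|U_0\|_{\mathcal H^4}^{1/2}$ small. The power $(1+s)^{-9/8}$ is integrable. This gives an inequality linear in $\mathcal E_{1,\infty}$ with small coefficient, which closes with $\|U_0\|_{\mathcal Z_1}$ only bounded.

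A secondary misidentification: $\theta\Delta_v(\sqrt Mf)/\sqrt M$ costs two \emph{velocity} derivatives of $f$, not $\|\nabla^2U\|$, and no pointwise-in-time decay of $\nabla_v^2f$ is available. The paper treats this term as the source $h$ (with $\mathbf Ph=0$) in the squared bound \eqref{G5.8} with weight $\nu^{-1/2}$. It then controls it by the decay of $\theta$ times the time-integrated dissipation $\int_0^t\mathcal D\,{\rm d}s\lesssim\mathcal E_0$.
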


\begin{rem}
Since $L^1(\mathbb{R}^3) \hookrightarrow \dot{B}^{-\frac{3}{2}}_{2,\infty}(\mathbb{R}^3)$, 
the $L^1$-boundedness condition in this work can be extended to the
 Besov space $\dot{B}^{-\frac{3}{2}}_{2,\infty}$.  In fact, it suffices to employ the 
 Littlewood-Paley decomposition techniques \emph{(}cf. \cite[Section 3.1]{DX-ARMA-2017}
  and \cite{Xu15}\emph{)}. Specifically, the inequality \eqref{G5.7} in Theorem \ref{T5.1} 
  can be rewritten as  
\begin{align*}  
\|\nabla^k \mathbb{A}U_0^L\|_{L^2} \lesssim (1+t)^{-\frac{s+k}{2}} \|U_0\|_{\dot{B}^{-s}_{2,\infty}}.  
\end{align*}  
This $\dot{B}^{-\frac{3}{2}}_{2,\infty}(\mathbb{R}^3)$ condition is strongly connected to 
the classical decay character theorem and is optimal for initial data to achieve the 
best possible decay rates of the solution to the heat equation \emph{(}see \cite{BL,BSXZ}\emph{)}.
\end{rem}

\begin{rem}
If we follow \cite{CDM-krm-2011,Mu-Wang-20}, the standard approach will involve defining 
a non-increasing functional with respect to time as follows:
\begin{align*}  
\Bar{\mathcal{E}}_{\infty}(t) = \sup_{0 \leq s \leq t} (1+s)^{\frac{3}{2}} \big( \|(\rho, u, 
\theta)(s)\|_{H^4}^2 + \|f(s)\|_{L_v^2(H^4)}^2 \big),  
\end{align*}  
and deriving the critical estimate:  
\begin{align*}  
\Bar{\mathcal{E}}_{\infty}(t) \lesssim \|(\rho_0,u_0,\theta_0,f_0)\|_{\mathcal{H}^4 \cap 
\mathcal{Z}_1}^2 + \Bar{\mathcal{E}}_{\infty}^2(t).  
\end{align*}  
Note that in this situation  the smallness of $\|(\rho_0,u_0,\theta_0,f_0)\|_{\mathcal{H}^4 \cap \mathcal{Z}_1}^2$ 
is essential. In the present work, by employing the low-high frequency decomposition method,
 we successfully address the challenges posed by 
 zero-order terms such as $\frac{au}{1+\rho}$, 
 $\frac{bu}{1+\rho}$, $\frac{a\theta}{1+\rho}$, and $-\frac{1}{2}u \cdot v f$. 
 Furthermore, without imposing any smallness restrictions on the $\mathcal{Z}_1$-norm 
 of $(\rho_0, u_0, \theta_0, f_0)$, we establish the optimal time decay rates 
 for $(\rho, u, \theta , f )$ and their spatial derivatives of first order.
\end{rem}

\begin{rem}
By carrying out analogous computations, we can establish the same decay rates for the 
classical solutions to the Cauchy problem \eqref{Q1}--\eqref{Q1-1} as presented in Theorem \ref{T1.4}, 
uniformly in $\mu>0$, $\lambda>0$ and $\kappa > 0$. In this paper, in stark contrast to the prior work 
  \cite{Mu-Wang-20}, which requires an additional $L^1$-smallness assumption on the initial data and 
  $H^4$-regularity, our findings are valid under the weaker assumption \emph{(}i.e.,  $L^1$-boundedness\emph{)} 
  and we further obtain the optimal time decay rates of the first-order spatial derivatives of $(\rho,u,\theta,f)$
which improve the results  in \cite{Mu-Wang-20}.
\end{rem}

\subsection{Main strategies of our  proofs}

To begin with, we first establish a priori estimates 
 uniformly in  $\mu,\lambda$ and $\kappa$  for the solution $(\rho^{\mu,\lambda,\kappa}, 
 u^{\mu,\lambda,\kappa}, \theta^{\mu,\lambda,\kappa}, f^{\mu,\lambda,\kappa})$ 
to the Cauchy problem   \eqref{Q1}--\eqref{Q1-1}. In contrast to 
 the approach used in \cite{Mu-Wang-20}, we first analyze the structural properties
  of the equation (see \eqref{G3.2}) before proceeding to estimate the energy norms of
   $(\rho^{\mu,\lambda,\kappa}, u^{\mu,\lambda,\kappa}, \theta^{\mu,\lambda,\kappa},
    f^{\mu,\lambda,\kappa})$.  This linearization treatment allows us to derive 
more precise and manageable energy estimates for both zeroth-order and higher-order
 terms (see Lemma \ref{L3.1}).

However, in the process of dealing with the higher-order estimates 
of $(\rho^{\mu,\lambda,\kappa}, u^{\mu,\lambda,\kappa}, \theta^{\mu,\lambda,\kappa},
 f^{\mu,\lambda,\kappa})$, the conventional energy method fails to provide effective estimates. 
 The main difficulty arises when dealing with the integral terms such as 
 \begin{align*}\int_{\mathbb{R}^3}\partial^\alpha(\rho^{\mu,\lambda,\kappa} 
 {\rm div}u^{\mu,\lambda,\kappa})\partial^\alpha\rho^{\mu,\lambda,\kappa}
 {\rm d}x \quad \text{and}\quad \int_{\mathbb{R}^3}\partial^\alpha(\theta^{\mu,\lambda,\kappa} 
 {\rm div}u^{\mu,\lambda,\kappa})\partial^\alpha\theta^{\mu,\lambda,\kappa}{\rm d}x\end{align*} 
 during the derivation
 of high-order energy estimates, because they cannot be directly controlled  
  by using standard energy estimates. 
 
To surround   this difficulty, we first rewrite the system \eqref{Q1} into a new form by leveraging 
system \eqref{LNXG3.13}--\eqref{LNXG3.16}. Inspired of the approach developed in \cite{At-ARMA-2006,JJLX-2014}, 
the most critical aspect is that we need to identify the symmetrical structure of 
the system \eqref{LNXG3.13}--\eqref{LNXG3.16}. We   divide \eqref{LNXG3.13} 
by $(1+\rho^{\mu,\lambda,\kappa})^2$, \eqref{LNXG3.14} by $ 1+\theta^{\mu,\lambda,\kappa} $, 
and \eqref{LNXG3.15} by $(1+\theta^{\mu,\lambda,\kappa})^2$ respectively, then  the challenging terms
 such as $(1+\rho^{\mu,\lambda,\kappa})\partial^\alpha{\rm div}{u}^{\mu,\lambda,\kappa}$, 
 $\frac{1+\theta^{\mu,\lambda,\kappa}}{1+\rho^{\mu,\lambda,\kappa}}
 \nabla\partial^\alpha\rho^{\mu,\lambda,\kappa}+\nabla\partial^\alpha\theta^{\mu,\lambda,\kappa}$, 
 and $(1+\theta^{\mu,\lambda,\kappa})\partial^\alpha{\rm div}u^{\mu,\lambda,\kappa}$ 
 are transformed into $\frac{\partial^\alpha{\rm div}{u}^{\mu,\lambda,\kappa}}{1+\rho^{\mu,\lambda,\kappa}}$, 
 $\frac{\nabla\partial^\alpha\rho^{\mu,\lambda,\kappa}}{1+\rho^{\mu,\lambda,\kappa}}
 +\frac{\nabla\partial^\alpha\theta^{\mu,\lambda,\kappa}}{1+\theta^{\mu,\lambda,\kappa}}$, 
 and $\frac{\partial^\alpha{\rm div}u^{\mu,\lambda,\kappa}}{1+\theta^{\mu,\lambda,\kappa}}$. 
 Subsequently, through integration by parts, we   exploit this symmetric structure to avoid 
 the loss of regularity and tackle these challenging terms, thereby obtaining more accurate 
 estimates (see Lemma \ref{L3.2}).
  Here, adding a weight of $\frac{1}{1+\theta^{\mu,\lambda,\kappa}}$ to the velocity 
  equation of $u^{\mu,\lambda,\kappa}$ 
demands meticulous observation, which leads to a certain symmetry property. 
 
 Unlike the results obtained in \cite{Mu-Wang-20}, which require $H^4$ regularity of the initial 
 data and 
 the coefficients $\mu, \lambda, $ and $  \kappa$ are restricted to $\mu=\kappa = 1$ (They also
  took $\lambda=0$ for simplicity), here  our estimates hold for all 
$\mu$, $\lambda$ and $\kappa > 0$ which reveals the dissipation effect on the velocity and the temperature of the
 fluid.  
Consequently, the terms involving $u^{\lambda,\mu,\kappa}$ 
and $\theta^{\mu,\lambda,\kappa}$ in the system \eqref{Q1} demand more refined estimates. Notably, 
the terms $-a u$ in the  equation \eqref{G3.2}$_2$, $-b \cdot u$ and $-3a\theta$ in the 
equation \eqref{G3.2}$_3$ are all zero-order nonlinear terms (Notice that in \eqref{G3.2}, 
the superscripts $\mu,\lambda$ and $\kappa$ on the unknowns is suppressed for presentation simplicity). 
Direct application of energy estimates is ineffective in this context. 
Therefore, it becomes necessary to combine these terms with the nonlinear terms in \eqref{G3.2}$_4$.
 By employing the macro-micro decomposition technique, we successfully address these challenges 
 (see the estimates \eqref{G3.6}--\eqref{G3.7}). Moreover, by considering the dissipation of
  $b^{\mu,\lambda,\kappa}-u^{\mu,\lambda,\kappa}$ and $\sqrt{2}\omega^{\mu,\lambda,\kappa} 
  - \sqrt{3}\theta^{\mu,\lambda,\kappa}$, 
we observe the uniform dissipation of $u^{\mu,\lambda,\kappa}$ and $\theta^{\mu,\lambda,\kappa}$:  
\begin{align*}  
\|\nabla u^{\mu,\lambda,\kappa}\|_{H^3} \lesssim&\, \| b^{\mu,\lambda,\kappa} 
- u^{\mu,\lambda,\kappa} \|_{H^4} + \|\nabla b^{\mu,\lambda,\kappa}\|_{H^3},\\
\|\nabla \theta^{\mu,\lambda,\kappa}\|_{H^3} \lesssim &\,\| \sqrt{2}\omega^{\mu,\lambda,\kappa} 
- \sqrt{3}\theta^{\mu,\lambda,\kappa} \|_{H^4} + \|\nabla\omega^{\mu,\lambda,\kappa}\|_{H^3}, 
\end{align*}  
where the combination $\sqrt{2}\omega^{\mu,\lambda,\kappa} 
- 
\sqrt{3}\theta^{\mu,\lambda,\kappa}$ acts as a new effective mode.  
This uniformity allows us to rigorously establish the stability of
 global classical solutions and analyze the limiting behavior as the viscosity coefficients and 
the heat conductivity coefficient vanish ($\mu,\lambda,\kappa \to 0$).  
By employing the continuous argument and  Aubin-Lions's lemma, we systematically 
construct global classical solutions to the Cauchy problem \eqref{Q2}--\eqref{Q2-2} via 
taking the  limits $\mu\rightarrow0$, $\mu\rightarrow0$ and  $\kappa \rightarrow  0$ simultaneously.

To rigorously demonstrate the global-in-time convergence of the limiting processes,
 it is crucial to establish a uniform-in-time  convergence rate by analyzing the error 
 between the global classical solutions of the systems \eqref{Q1} and \eqref{Q2}. 
 To achieve this goal, we introduce the following new energy functional (see \eqref{G4.4}):  
\begin{align*}  
\Xi^{\mu,\lambda,\kappa}(t) :=&\sup_{\tau\in[0,t]}
\big(\|(\widetilde{\rho}^{\mu,\lambda,\kappa},
\widetilde{u}^{\mu,\lambda,\kappa} ,
\widetilde{\theta}^{\mu,\lambda,\kappa})(\tau)\|_{H^1}^2 
+\|\widetilde{f}^{\mu,\lambda,\kappa}(\tau)\|_{H_{x,v}^1}^2\big) \nonumber\\
&+\int_{0}^t\big( \|\nabla(\widetilde{a}^{\mu,\lambda,\kappa},
\widetilde{b}^{\mu,\lambda,\kappa},\widetilde{\omega}^{\mu,\lambda,\kappa},
\widetilde{\rho}^{\mu,\lambda,\kappa})(\tau)\|_{L^2}^2 {\rm d}\tau\nonumber\\  
&+\int_{0}^t\big(   \|(\widetilde{b}^{\mu,\lambda,\kappa}-\widetilde{u}^{\mu,
\lambda,\kappa})(\tau)\|_{H^1}^2 + \|\big(\sqrt{2}\widetilde{\omega}^{\mu,
\lambda,\kappa}-\sqrt{3}\widetilde{\theta}^{\mu,\lambda,\kappa}\big)(\tau)\|_{H^1}^2 
\big){\rm d}\tau\nonumber\\  
&+\int_0^t\sum_{|\alpha|\leq 1}\|\{\mathbf{I}
-\mathbf{P}\}\partial^\alpha\widetilde{f}^{\mu,\lambda,\kappa}(\tau)\|_{\nu}^2{\rm d}\tau
+\int_0^t \|\nabla_v \{\mathbf{I}-\mathbf{P}\}\widetilde{f}^{\mu,\lambda,\kappa}(\tau)\|_{\nu}^2{\rm d}\tau.  
\end{align*}  
Note that our VFP equations in \eqref{Q1} and \eqref{Q2} incorporate  the macro-micro 
decomposition of particles. Consequently, estimating $(a^{\mu,\lambda,\kappa},
 b^{\mu,\lambda,\kappa}, \omega^{\mu,\lambda,\kappa})$ 
becomes more intricate and challenging, thereby complicating our energy estimate 
(see Lemma \ref{L4.3}). By employing a refined energy method, we derive 
Lemmas \ref{L4.1}--\ref{L4.5}, which allow us to rigorously justify the 
global-in-time vanishing viscosity limits and
heat conductivity limit with a convergence rate of $\mu$, $\lambda$ and $ {\kappa}$
 via the following uniform estimate:  
\begin{align*}  
{\Xi} ^{\mu,\lambda,\kappa}(t) \leq C \big(\eps_0^{\frac{1}{2}} + \eps_1^{\frac{1}{2}}\big){\Xi} ^{\mu,\lambda,\kappa}(t) + C\max\{\mu,\lambda,\kappa\}^2.  
\end{align*}  
Thus, Theorem \ref{T1.3} is rigorously established.

Next, we investigate the optimal time-decay rates of classical solutions for the Cauchy problem \eqref{Q2}--\eqref{Q2-2}
of the non-isentropic compressible Euler-VFP system. To begin with, we linearize the system \eqref{Q2} 
(as shown in \eqref{G5.1}) and subsequently derive sharp pointwise estimates for the solution to
 the linearized system. These estimates indicate that the 
 solutions display heat-like diffusion behavior at low frequencies 
 and exhibit damping effects at high frequencies. Inspired by this insight, 
 we apply low-high frequency decomposition techniques to construct a general decay framework. 
 This framework allows us to establish the  optimal decay rates under precise $L^1$ assumptions. 
 As a direct result, we obtain $L^p-L^q$ estimates for the linearized equations (see Theorem \ref{T5.1}).

Based on the result of the linearized system obtained above, 
we can  analyze the decay estimates of the nonlinear Cauchy problem \eqref{Q2}--\eqref{Q2-2}. By applying  Duhamel's principle, the solution $U(t)$ of the nonlinear problem can be rewritten as (see \eqref{G5.54}):  
\begin{align*} 
U(t)=\mathbb{A}(t)U_{0}+\int_{0}^{t}\mathbb{A}(t-s)\big( S_\rho(s),S_u(s), S_\theta(s), S_f(s)\big) {\rm d}s,
\end{align*}    
The presence of the zero-order nonlinear terms $\frac{b-u-au}{1+\rho}$ in the equation \eqref{Q2}$_2$, and $\frac{|u|^2-2u\cdot b-3a\theta}{1+\rho}$ and $\frac{\sqrt{3}\rho(\sqrt{2}\omega-\sqrt{3}\theta)}{1+\rho}$ in the equation \eqref{Q2}$_3$ complicates the derivation of higher-order spatial derivatives and time-decay rates for $(\rho, u, \theta, f)$. 
The conventional energy method only provides the optimal decay rates for zero-order derivatives within the $L^2$ framework. To circumvent this limitation, we establish estimates for second and third-order derivatives using frequency decomposition techniques and a refined energy method. This leads to the critical inequality (see Proposition \ref{P5.2}):  
\begin{align}  
\label{LN-inequality}  
&\frac{{\rm d}}{{\rm d}t}\big( \|\nabla^k(\rho,u,\theta)\|_{L^2}^2 + \|\nabla^k f\|_{L_{x,v}^2}^2\big)\nonumber\\ &\quad + \lambda_{12}\big( \|\nabla^k(\rho,u,\theta)\|_{L^2}^2 + \|\nabla^k f\|_{L_{x,v}^2}^2\big) 
\lesssim \|\nabla^k(a^L,b^L,\omega^L,\rho^L,\theta^L)\|_{L^2}^2,  
\end{align}  
for $k=2,3,4$. This step is crucial for obtaining the optimal time-decay rates of classical solutions 
and their spatial gradients. Since the nonlinear term $\frac{\theta}{\sqrt{M}} \Delta_v(\sqrt{M} f)$
 in \eqref{Q2}$_4$ involves second-order velocity 
derivatives such as $\Delta_v f$, it is nontrivial to transfer the decay estimates of
 the nonlinear terms fully into the energy $\mathcal{E}_2(t)$ 
during the estimation process. This could potentially lead to energy loss, 
which differs from the findings reported in \cite{Ww-CMS-2024} for the isentropic NS-VFP system \eqref{H2}.
For instance, achieving optimal time-decay rates for $\|\nabla^2(\rho,u,\theta,f)\|_{\mathcal{H}^2}$ 
faster than $(1+t)^{-\frac{5}{4}}$ 
is infeasible (see \eqref{G5.73}). In the context of nonlinear estimation, 
when analyzing the optimal decay rates of the solutions, it is essential to  carefully handle  
the terms $L_6(t)$ and $L_7(t)$ carefully by separating the zeroth-order terms 
from higher-order  {ones} for distinct treatment (see \eqref{G5.74}). 
Subsequently, by   combining  Theorem \ref{T5.1}, \eqref{LN-inequality}, 
and the nonlinear estimates of $(\rho,u,\theta,f)$, 
we deduce the optimal time-decay rates of classical solutions and their spatial gradients. Furthermore,
 through the application of the interpolation inequality, we establish the optimal decay rate 
$(1+t)^{-\frac{3}{2} (1-\frac{1}{p} )}$ in $L^p$-norm with $2\leq p\leq 6$, 
surpassing the method presented in \cite{Mu-Wang-20} .
 However, since the term $\frac{\theta}{\sqrt{M}}\Delta_v(\sqrt{M}f)$ cannot be transformed fully 
into an absolute energy $\|(\rho,u,\theta)(t)\|_{H^4}^2+\|f(t)\|_{L_v^2(H^4)}^2$, 
and consequently the corresponding dissipation $\mathcal{D}(t)$ cannot be derived, 
it seems impossible to determine the optimal time-decay rates for the second-order 
and third-order spatial derivatives of $(\rho,u,\theta,f)$ directly. 

 By leveraging the damping effect inherent in the equations of the  combinations $b-u$, $\sqrt{2}\omega - \sqrt{3}\theta$, 
 and  the microscopic part $\{\mathbf{I} - \mathbf{P}\}f$, we estimate their nonlinear 
 components in the framework of semigroup theory (see \eqref{NJKG3.162}–\eqref{NJKG3.163} and
  \eqref{NJKG3.165}). Consequently, using the decay rates we have established, we further 
  demonstrate that the dissipation decay rates of 
 the quantities $b - u$, $\sqrt{2}\omega - \sqrt{3}\theta$, and $\{\mathbf{I} - 
 \mathbf{P}\}f$ are $\frac{1}{2}$-order faster than those of the solution. 
 Therefore, the proof of Theorem \ref{T1.4} is completed. 

Finally, by leveraging the  conservation quantities and Poincar\'{e} inequality, 
we demonstrate that the decay rate of  classical solutions to the system \eqref{Q1} 
on $\mathbb{T}^3$ is exponential,  uniformly in  $\mu$,  $\lambda$  and
$\kappa$.

\subsection{Outline of this paper}
The remainder of this paper is structured as follows. In Section 2, we present some notations, 
the macro-micro decomposition of $f(t,x,v)$, some properties of the Fokker-Planck operator
 $\mathcal{L}$, and several key lemmas that are extensively employed throughout this work. 
 In Section 3, we focus on the well-posedness  of the non-isentropic  compressible NS-VFP  
 system \eqref{Q1}. Specifically, we   establish the global existence of classical solutions
  for the Cauchy problem  \eqref{Q1}--\eqref{Q1-1} in $\mathbb{R}^3$,  uniformly in 
   $\mu>0$, $\lambda>0$ and $\kappa > 0$. 
Subsequently, in Section 4, we analyze the  vanishing   viscosity limits
  $\mu,\lambda\rightarrow0$ and
  heat conductivity limit   $\kappa \to 0$   and then establish a convergence rate of 
  $\mathcal{O}(\max\{\mu,\lambda,\kappa\})$ for the solution $(\rho^{\mu,\lambda,\kappa}, u^{\mu,\lambda,\kappa}, \theta^{\mu,\lambda,\kappa}, f^{\mu,\lambda,\kappa})$. 
  Building upon these results, we further prove the global stability of the Cauchy problem 
  \eqref{Q2}--\eqref{Q2-2}  in the $H^4$ framework.
In Section 5, for the case $\mu=\lambda=\kappa = 0$, we employ the method of low- and high-frequency 
decomposition, followed by a systematic development of linear and nonlinear analyses and the construction
 of suitable energy functionals to derive the optimal decay rates for $(\rho, u, \theta, f)$ and their 
 spatial gradients. Finally, in Section 6, we extend the results obtained in Sections 3–4 to the case
  of a periodic domain.
In this situation, we   achieve the exponential decay rates for classical solutions via the intrinsic 
 conservation   quantities of the systems and the well-known Poincar\'{e} inequality.

%%%%%%%%%%%%%%%%%%%%%%%%%%%%%%%%%%%
 \section{Preliminaries}
In this section, we introduce some notations, definitions, foundational facts, 
and useful lemmas that are extensively utilized throughout this paper.

\subsection{Notations}
The symbols $C$ and $C_i$ denote  a generic positive constant that is independent of the time 
$t$ and may vary across different instances. 
Moreover, the notation $A \lesssim B$ signifies that $A \leq CB$ for some constant $C>0$. 
The symbol $A\backsim B$ represents the relationship $\frac{A}{C}\leq B\leq CA$, 
where $C>1$ is some general positive constant. For simplicity, we define $\|(g, h)\|_X := \|g\|_X + \|h\|_X$ 
for a Banach space $X$, with $g = g(\cdot)$ and $h = h(\cdot)$ being two elements of $X$.
And we use $\Delta$ to denote $\Delta_x$ and $\nabla$ to denote $\nabla_x$.

We employ $\langle\cdot,\cdot\rangle$ to denote the inner product in 
the Hilbert space $L^2_v=L^2(\mathbb{R}^3_v)$, which is defined as  
\begin{align*}  
\langle g,h\rangle := \int_{\mathbb{R}^3} g(v) h(v) \, \mathrm{d}v, \quad \forall \, g,h \in L^2_v,  
\end{align*}  
equipped with its associated norm $ |\cdot |_{L^2}$.
We define the weight function $\nu(v):=1+|v|^2$ and introduce the norm $|\cdot|_{\nu}$ as 
% follows:  
\begin{align*}  
|g|_{\nu}^{2} := \int_{\mathbb{R}^3} \big(|\nabla_v g(v)|^2 
+ \nu(v) |g(v)|^2\big) {\rm d}v, \quad \text{where } g = g(v).  
\end{align*}  
For the norm associated with the dissipation of the following Fokker-Planck 
operator $\mathcal{L}$ 
\begin{align} \label{lglga}
\mathcal{L}g := \frac{1}{\sqrt{M}} \nabla_v \cdot \Big[ M \nabla_v \Big( \frac{g}{\sqrt{M}} \Big) \Big], 
\quad g=g(t,x,v) ,
\end{align}  
we denote it by  
\begin{align*}  
\|g(t)\|_{\nu}^{2} := \iint_{\mathbb{R}^3 \times \mathbb{R}^3} 
\big(|\nabla_v g(t,x,v)|^2 + \nu(v) |g(t,x,v)|^2\big) {\rm d}x {\rm d}v.  
\end{align*}

When incorporating the spatial variable, we write $L^2_{x,v} = L^2(\mathbb{R}^3_x 
\times \mathbb{R}^3_v)$ associated with its norm $\|\cdot\|_{L^2_{x,v}}$. 
The norm in the space $L^p = L^p(\mathbb{R}^3_x)$ (resp. $L^2_v(L^p )
 = L^2(\mathbb{R}^3_v; L^p(\mathbb{R}^3_x))$) is denoted by $\|\cdot\|_{L^p}$ 
 (resp. $\|\cdot\|_{L^2_v(L^p )}$). For $q \geq 1$, the standard spatial-velocity mixed
 Lebesgue space $ {Z}_q = L_v^2(L ^q) = L^2(\mathbb{R}_v^3; L^q(\mathbb{R}_x^3))$ is defined as  
\begin{align*}  
\|g\|_{Z_q}^2 := \int_{\mathbb{R}^3} \Big( \int_{\mathbb{R}^3} |g(x,v)|^q \, \mathrm{d}x \Big)^{\frac{2}{q}} \mathrm{d}v.  
\end{align*}
Furthermore, we introduce the norms $\|\cdot\|_{\mathcal{Z}_{q}}$ and $\|\cdot\|_{\mathcal{H}^{m}}$, where $m$ is a non-negative integer ($m \geq 0$) and $q \geq 1$. These norms are defined as  
\begin{align*}  
\|(\rho, u,\theta,f)\|_{\mathcal{Z}_{q}} & := 
 \|\rho\|_{L^{q}} + \|u\|_{L^{q}}+\|\theta\|_{L^q}+\|f\|_{ {Z}_{q}} , \\  
\|( \rho, u,\theta,f)\|_{\mathcal{H}^{m}} & :=  \|\rho\|_{H^{m}} + \|u\|_{H^{m}}+\|\theta\|_{H^m}+\|f\|_{L^{2}_{v}(H^m)}.  
\end{align*}

For an integrable function $g(x): \mathbb{R}^3 \to \mathbb{R}$, its Fourier transform is defined as  
\begin{equation*}  
\widehat{g}(\xi) = \mathcal{F}[g](\xi) = \int_{\mathbb{R}^3} e^{-i x \cdot \xi} g(x) \, {\rm d}x, \quad \text{where } x \cdot \xi = \sum_{j=1}^3 x_j \xi_j,  
\end{equation*}  
for $\xi \in \mathbb{R}^3$. 
Here, $i = \sqrt{-1} \in \mathbb{C}$ denotes the imaginary unit. For two complex-valued functions $f$ and $g$, the inner product $(g|h) := g \cdot \Bar{h}$ represents the dot product of $g$ with the complex conjugate of $h$.  

For any multi-indices $\alpha = (\alpha_1, \alpha_2, \alpha_3)$ and $\beta = (\beta_1, \beta_2, \beta_3)$, we denote    
\begin{align*}  
\partial_\beta^\alpha :=  \partial_{x_1}^{\alpha_1}\partial_{x_2}^{\alpha_2}\partial_{x_3}^{\alpha_3}   \partial_{v_1}^{\beta_1}\partial_{v_2}^{\beta_2}\partial_{v_3}^{\beta_3},  
\end{align*}  
representing the partial derivatives with respect to $x = (x_1, x_2, x_3)$ and $v = (v_1, v_2, v_3)$. The lengths of $\alpha$ and $\beta$ are defined as $|\alpha| = \alpha_1 + \alpha_2 + \alpha_3$ and $|\beta| = \beta_1 + \beta_2 + \beta_3$, respectively. Furthermore, we define  
\begin{align*}  
\|g\|_{H^s} := \sum_{|\alpha| \leq s} \|\partial^\alpha g\|_{L^2}, \quad \|g\|_{H_{x, v}^s} := \sum_{|\alpha| + |\beta| \leq s} \|\partial_\beta^\alpha g\|_{L^2_{x,v}}.  
\end{align*} 

\subsection{Low-high frequency   decomposition}

Choose $\phi_0(\xi)$ and $\phi_1(\xi)$ be two smooth cutoff functions satisfying  
\begin{align*}  
0 \leq \phi_0(\xi),\, \phi_1(\xi) \leq 1, \quad \phi_1(\xi) = 1 - \phi_0(\xi),  
\end{align*}  
with  
\begin{align}\label{G2.2}  
\phi_0(\xi) = \begin{cases}  
1, & |\xi| < \frac{r_0}{2}, \\  
0, & |\xi| > r_0,  
\end{cases}  
\end{align}  
for some constant $r_0 > 0$ to be determined later.  

Define $D_x = \frac{1}{i} (\partial_{x_1}, \partial_{x_2}, \partial_{x_3})$.
 For any function $g(x) \in L^2(\mathbb{R}^3)$, we define  
\begin{align}\label{G2.1}  
g^{L}(x) := \phi_0(D_x)g(x), \quad g^{H}(x) := \phi_1(D_x)g(x),  
\end{align}  
where $\phi_0(D_x)$ and $\phi_1(D_x)$ are the pseudo-differential operators associated with $\phi_0(\xi)$ and $\phi_1(\xi)$, respectively.   By virtue of \eqref{G2.2}--\eqref{G2.1}, we have
\begin{align*}  
g(x) = g^{L}(x) + g^{H}(x).  
\end{align*}  

Applying the well-known Fourier-Plancherel's  theorem, it is readily to prove  the follows facts: 

\begin{lem}
For any $g \in H^2(\mathbb{R}^3)$, it hold that 
\begin{align}  \label{G2.3}
\|g^H\|_{L^2} \leq \frac{C}{r_0} \|\nabla g\|_{L^2}, \quad \|g^H\|_{L^2} \leq \frac{C}{r_0^2} \|\nabla^2 g\|_{L^2}, \quad \|\nabla^2 g^L\|_{L^2} \leq C r_0 \|\nabla g^L\|_{L^2},  
\end{align}
 for some $C>0$.  
\end{lem}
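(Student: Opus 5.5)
The plan is to pass to the Fourier side and use Plancherel's theorem, so that every inequality in \eqref{G2.3} reduces to a pointwise comparison of the symbols $\phi_0(\xi)$, $\phi_1(\xi)$ and powers of $|\xi|$. Throughout we use $\widehat{g^H}(\xi)=\phi_1(\xi)\widehat g(\xi)$ and $\widehat{g^L}(\xi)=\phi_0(\xi)\widehat g(\xi)$, together with the bounds $0\le\phi_0,\phi_1\le 1$. We also use the identities $\|\nabla g\|_{L^2}^2=(2\pi)^{-3}\int|\xi|^2|\widehat g|^2\,{\rm d}\xi$ and $\|\nabla^2 g\|_{L^2}^2=(2\pi)^{-3}\int|\xi|^4|\widehat g|^2\,{\rm d}\xi$. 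The second identity holds because $\sum_{j,k}|\xi_j\xi_k|^2=|\xi|^4$.

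For the high-frequency bounds, note from \eqref{G2.2} that $\phi_1(\xi)=1-\phi_0(\xi)$ vanishes on $\{|\xi|<r_0/2\}$. Hence on the support of $\phi_1$ we have $1\le 2|\xi|/r_0$ and $1\le 4|\xi|^2/r_0^2$. Therefore
\begin{align*}
\|g^H\|_{L^2}^2=(2\pi)^{-3}\int\phi_1^2|\widehat g|^2\,{\rm d}\xi\le \frac{4}{r_0^2}(2\pi)^{-3}\int|\xi|^2|\widehat g|^2\,{\rm d}\xi=\frac{4}{r_0^2}\|\nabla g\|_{L^2}^2 .
\end{align*}
The same computation with $|\xi|^4$ in place of $|\xi|^2$ gives $\|g^H\|_{L^2}\le \frac{4}{r_0^2}\|\nabla^2 g\|_{L^2}$.

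For the low-frequency bound, $\phi_0$ is supported in $\{|\xi|\le r_0\}$. Writing $\widehat{\nabla g^L}=i\xi\phi_0\widehat g$, we get
\begin{align*}
\|\nabla^2 g^L\|_{L^2}^2=(2\pi)^{-3}\int|\xi|^2\,|\xi|^2\phi_0^2|\widehat g|^2\,{\rm d}\xi\le r_0^2(2\pi)^{-3}\int|\xi|^2\phi_0^2|\widehat g|^2\,{\rm d}\xi=r_0^2\|\nabla g^L\|_{L^2}^2 .
\end{align*}
Taking square roots yields all three inequalities with the constant $C=4$.

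There is no genuine obstacle here. The only points to check are the normalization of the Plancherel identity with the paper's convention $\widehat g(\xi)=\int e^{-ix\cdot\xi}g\,{\rm d}x$, which contributes the harmless common factor $(2\pi)^{-3}$ on both sides, and the fact that the hypothesis $g\in H^2$ makes every integral finite.
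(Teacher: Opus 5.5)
Your proof is correct and follows the route the paper indicates: the paper simply asserts that \eqref{G2.3} follows from the Fourier--Plancherel theorem. Your pointwise comparisons on ${\rm supp}\,\phi_1\subset\{|\xi|\ge r_0/2\}$ and ${\rm supp}\,\phi_0\subset\{|\xi|\le r_0\}$ are exactly the details that assertion leaves out, and they give the explicit constant $C=4$.
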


\subsection{Macro-micro decomposition and properties of   {\(\mathcal{L}\)} }
We shall apply the macro-micro decomposition to $f^{\mu,\lambda,\kappa}(t,x,v)$ and $f(t,x,v)$, which acts as a crucial tool for deriving the subsequent estimates. This methodology was initially introduced in \cite{GY-iumj-2004} for the Boltzmann equation and later generalized in \cite{DFT-2010-CMP} for the Fokker-Planck type equations (see also \cite{Mu-Wang-20,LNW-SAPM-2025,LNW-arXiv-2025}).

Recall the global  Maxwellian $M =M(v)=(2\pi)^{- {3}/{2}}e^{- {|v|^2}/{2}}$ defined in \eqref{maxwell}.     
We introduce   the following velocity orthogonal projection $\mathbf{P}$ of $L_{v}^{2}(\mathbb{R}^3)$:  
\begin{equation*}  
\mathbf{P}: L_{v}^{2}(\mathbb{R}^3) \to \mathrm{Span}\{\sqrt{M}, v_{1}\sqrt{M}, v_{2}\sqrt{M}, v_{3}\sqrt{M}, |v|^2\sqrt{M}\},  
\end{equation*}  
and  for any $g(t,x,v) $,
\begin{align*}  
\mathbf{P} := \mathbf{P}_{0} \oplus \mathbf{P}_{1} \oplus \mathbf{P}_{2},  
\end{align*}  
where $\mathbf{P}_{0}g := a\sqrt{M}$, $\mathbf{P}_{1}g := b \cdot v\sqrt{M}$, and $\mathbf{P}_{2}g := \omega\frac{|v|^2-3}{\sqrt{6}}\sqrt{M}$, with 
  $a = a^g(t,x)$, $b = b^g(t,x)$, and $\omega = \omega^g(t,x)$ are given by  
\begin{gather*} 
a^g(t,x)   := \int_{\mathbb{R}^3} \sqrt{M} g(t,x,v) \, {\rm d}v,  \quad 
b^g(t,x)   := \int_{\mathbb{R}^3} v \sqrt{M} g(t,x,v) \, {\rm d}v, \\ 
\omega^g(t,x)   := \int_{\mathbb{R}^3} \frac{|v|^2 - 3}{\sqrt{6}} \sqrt{M} g(t,x,v) \, {\rm d}v. 
\end{gather*}

%Note that  the  linearized Fokker-Planck operator $\mathcal{L}$  is defined as  
%\begin{align*} 
%\mathcal{L}g := \frac{1}{\sqrt{M}} \nabla_v \cdot \Big[ M \nabla_v \Big( \frac{g}{\sqrt{M}} \Big) \Big], 
%\end{align*}  
%for any $g(t,x,v) $.

Using the projection $\mathbf{P}$, it is readily to check that $g(t,x,v) $ 
  can be decomposed as 
  % follows:  
\begin{align}\label{G2.4}  
g(t,x,v) = \mathbf{P}g + \{\mathbf{I}-\mathbf{P}\}g.  
\end{align}  
Here, $\mathbf{P}g$ and $\{\mathbf{I}-\mathbf{P}\}g$ represent the macroscopic part and   microscopic part
of $g$, respectively. Meanwhile, $\mathcal{L}g$ has the following decomposition:  
\begin{align}\label{G2.5}  
\mathcal{L}g = \mathcal{L}\{\mathbf{I}-\mathbf{P}\}g + \mathcal{L}\mathbf{P}g = \mathcal{L}\{\mathbf{I}-\mathbf{P}\}g - \mathbf{P}_{1}g - 2\mathbf{P}_{2}g.  
\end{align}  
Noting that $\mathcal{L}$ is self-adjoint, there exists a positive constant $\bar\lambda > 0$ such that (cf. \cite{Mu-Wang-20,DFT-2010-CMP,LNW-SAPM-2025,CDM-krm-2011}):  
\begin{align} 
-\langle \mathcal{L}g, g\rangle &\geq \bar\lambda|\{\mathbf{I}-\mathbf{P}_0\}g|_{\nu}^{2}, \label{G2.6a}  \\  
-\langle \mathcal{L}\{\mathbf{I}-\mathbf{P}\}g,g\rangle &\geq \bar\lambda|\{\mathbf{I}-\mathbf{P}\}g|_{\nu}^{2}, \label{G2.6b}  \\  
-\langle \mathcal{L}g, g\rangle &\geq \bar\lambda |\{\mathbf{I}-\mathbf{P}\}g|_{\nu}^{2} + |b|^{2} + 2|\omega|^2.\label{G2.6}  
\end{align}

\subsection{Analytic tools}

In this subsection we 
  present several useful facts that will be frequently employed throughout this paper.

\begin{lem}  [\!{\! {\cite{AF-Pa-2003}}} and {\cite[Lemma 2.1]{CDM-krm-2011}}]
\label{LA.1}   
There exists a   positive constant $C$, 
such that for any $g,h\in H^4(\mathbb{R}^3)$ and any multi-index $\gamma$  with $1\leq|\gamma|\leq4$, it holds
\begin{align*}
\|g\|_{L^{\infty}(\mathbb{R}^{3})} \leq\,& C\|\nabla g\|_{L^{2}(\mathbb{R}^{3})}^{1/2}
\|\nabla^{2}g\|_{L^{2}(\mathbb{R}^{3})}^{1/2}, \\
\|gh\|_{H^{3}(\mathbb{R}^{3})} \leq\,& C\|g\|_{H^{3}(\mathbb{R}^{3})}\|\nabla h\|_{H^{3}(\mathbb{R}^{3})}, \\
\|\partial^{\gamma}(gh)\|_{L^{2}(\mathbb{R}^{3})}
 \leq\,& C\|\nabla  g\|_{H^{3}(\mathbb{R}^{3})}\|\nabla  h\|_{H^{3}(\mathbb{R}^{3})},\\
\|g\|_{L^6(\mathbb{R}^3)} \leq\,& C\|\nabla  g\|_{L^2(\mathbb{R}^3)}\leq C\|g\|_{H^1(\mathbb{R}^3)},\\
 \|g\|_{L^q(\mathbb{R}^3)} \leq\,& C\|g\|_{H^1(\mathbb{R}^3)}, 
\end{align*}
for some $2\leq q\leq 6$.
\end{lem}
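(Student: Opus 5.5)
The statement just above is Lemma \ref{LA.1}, a collection of standard Sobolev-type inequalities in $\mathbb{R}^3$. I would prove each item separately, using Gagliardo--Nirenberg interpolation, the Sobolev embedding $\dot H^1(\mathbb{R}^3)\hookrightarrow L^6(\mathbb{R}^3)$, and the Leibniz rule combined with H\"older's inequality.

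\emph{Items 1, 4 and 5.} The fourth inequality is the homogeneous Sobolev embedding $\|g\|_{L^6}\le C\|\nabla g\|_{L^2}$, which holds for $g\in H^1$, i.e.\ for functions vanishing at infinity. The bound by $\|g\|_{H^1}$ is then trivial.

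For the fifth inequality, with $2\le q\le6$, I interpolate between $L^2$ and $L^6$:
\begin{align*}
\|g\|_{L^q}\le \|g\|_{L^2}^{1-s}\|g\|_{L^6}^{s}\le C\|g\|_{H^1},\qquad \tfrac1q=\tfrac{1-s}{2}+\tfrac{s}{6}.
\end{align*}

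For the first inequality I would use the Gagliardo--Nirenberg inequality $\|g\|_{L^\infty}\le C\|g\|_{L^6}^{1/2}\|\nabla g\|_{L^6}^{1/2}$. Applying the $L^6$ embedding to both $g$ and $\nabla g$ gives the claim. Alternatively, one can argue on the Fourier side: split $\int|\hat g|\,d\xi$ into $|\xi|<R$ and $|\xi|>R$, apply Cauchy--Schwarz with weights $|\xi|^{\pm1}$ and $|\xi|^{\pm2}$, and optimize in $R$.

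\emph{Items 2 and 3.} These bounds place only $\nabla h$, and not $h$ itself, on the right, so $h$ may be estimated only through $\|h\|_{L^6}\lesssim\|\nabla h\|_{L^2}$ and $\|h\|_{L^\infty}\lesssim\|\nabla h\|_{H^1}$ (the latter from item 1).

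For $\|gh\|_{H^3}$, take $|\gamma|\le3$ and expand $\partial^\gamma(gh)=\sum_{\gamma_1\le\gamma} C_{\gamma_1}\,\partial^{\gamma_1}g\,\partial^{\gamma-\gamma_1}h$. If no derivative falls on $h$, I bound the term by $\|\partial^\gamma g\|_{L^2}\|h\|_{L^\infty}$. If $1\le|\gamma-\gamma_1|\le3$, I put $\partial^{\gamma-\gamma_1}h$ in $L^2$, $L^3$ or $L^6$ as needed, and put $\partial^{\gamma_1}g$ in the conjugate space ($L^\infty$, $L^6$ or $L^3$). These spaces are controlled by $\|g\|_{H^3}$ because $|\gamma_1|\le2$, using the embeddings $H^2\hookrightarrow L^\infty$ and $H^1\hookrightarrow L^3\cap L^6$.

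For the third inequality, with $1\le|\gamma|\le4$, the same Leibniz expansion applies, and now $g$ and $h$ play symmetric roles. A term with all derivatives on $h$ is bounded by $\|g\|_{L^\infty}\|\partial^\gamma h\|_{L^2}\lesssim\|\nabla g\|_{H^1}\|\nabla h\|_{H^3}$, and symmetrically when all derivatives fall on $g$. For the mixed terms, each factor carries between one and three derivatives. I place the factor with fewer derivatives in $L^\infty$ or $L^3$ and the other in $L^2$ or $L^6$, using that $\nabla g,\nabla h\in H^3$.

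\emph{Main obstacle.} The only delicate point is the bookkeeping in the highest-order case $|\gamma|=4$ of item 3 with a $2{+}2$ split. There I would use $\|\nabla^2 g\nabla^2 h\|_{L^2}\le\|\nabla^2g\|_{L^3}\|\nabla^2h\|_{L^6}$, where $\|\nabla^2g\|_{L^3}\lesssim\|\nabla g\|_{H^2}$ and $\|\nabla^2 h\|_{L^6}\lesssim\|\nabla^3h\|_{L^2}$.

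Everything above is standard and follows the cited references \cite{AF-Pa-2003,CDM-krm-2011}.
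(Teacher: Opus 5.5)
Your proposal is correct. The paper offers no proof of its own (it simply cites Adams--Fournier and Carrillo--Duan--Moussa), and your argument is exactly the standard proof behind those citations: Sobolev embedding, Gagliardo--Nirenberg or Fourier-splitting interpolation for the $L^\infty$ bound, and Leibniz plus H\"older for the product estimates.

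One small remark: the $2{+}2$ split in the third inequality is not actually delicate. Since $\nabla g\in H^3$ gives $\nabla^2 g\in H^2\hookrightarrow L^\infty$, the bound $\|\nabla^2 g\,\nabla^2 h\|_{L^2}\le \|\nabla^2 g\|_{L^\infty}\|\nabla^2 h\|_{L^2}$ already suffices.
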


\begin{lem}[\! \!{\cite{commutator1,commutator2}}]\label{LA.2}
Let $h$ and $g$ be two Schwarz functions. For $k\geq 0$, one has
\begin{align*}
\|\nabla^{k}(gh) \|_{L^r} \leq &\, C\|g\|_{L^{r_1} }\|\nabla^{k}h\|_{L^{r_2} }+C\|h\|_{L^{r_3} }\|\nabla^{k}g\|_{L^{r_4} },\\
\|\nabla^{k}(gh)-g\nabla^k h \|_{L^r} \leq  &\, C\|\nabla g\|_{L^{r_1}}\|\nabla^{k-1}h\|_{L^{r_2}}+C\|h\|_{L^{r_3}}\|\nabla^{k}g\|_{L^{r_4}},    
\end{align*}
where $1<r,r_2,r_4<\infty$ and $r_i(1\leq i\leq 4)$ satisfy 
\begin{align*}
\frac{1}{r_1}+\frac{1}{r_2}=\frac{1}{r_3}+\frac{1}{r_4}=\frac{1}{r}.   
\end{align*}
\end{lem}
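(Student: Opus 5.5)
The plan is a direct Leibniz-rule argument combined with Gagliardo--Nirenberg interpolation and Young's inequality. This suffices because the lemma is only used in this paper for integer $k\geq 0$, so no fractional (Kato--Ponce/paraproduct) machinery should be needed. For $k=0$ both inequalities are immediate from H\"older's inequality, so take $k\geq 1$. The Leibniz rule gives
\begin{align*}
\nabla^k(gh)=\sum_{j=0}^{k}\binom{k}{j}\nabla^j g\,\nabla^{k-j}h .
\end{align*}
The endpoint terms $j=0$ and $j=k$ are bounded by H\"older's inequality by $\|g\|_{L^{r_1}}\|\nabla^k h\|_{L^{r_2}}$ and $\|h\|_{L^{r_3}}\|\nabla^k g\|_{L^{r_4}}$, respectively.

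For an intermediate term $1\leq j\leq k-1$, I would put $\theta=j/k$ and define $p_j,q_j$ by
\begin{align*}
\frac{1}{p_j}=\frac{1-\theta}{r_1}+\frac{\theta}{r_4},\qquad
\frac{1}{q_j}=\frac{\theta}{r_3}+\frac{1-\theta}{r_2}.
\end{align*}
The hypotheses $\frac{1}{r_1}+\frac{1}{r_2}=\frac{1}{r_3}+\frac{1}{r_4}=\frac1r$ give $\frac{1}{p_j}+\frac{1}{q_j}=\frac1r$. H\"older's inequality followed by the Gagliardo--Nirenberg inequalities
\begin{align*}
\|\nabla^j g\|_{L^{p_j}}\lesssim \|g\|_{L^{r_1}}^{1-\theta}\|\nabla^k g\|_{L^{r_4}}^{\theta},\qquad
\|\nabla^{k-j} h\|_{L^{q_j}}\lesssim \|h\|_{L^{r_3}}^{\theta}\|\nabla^k h\|_{L^{r_2}}^{1-\theta}
\end{align*}
then yield
\begin{align*}
\|\nabla^j g\,\nabla^{k-j}h\|_{L^r}\lesssim A^{1-\theta}B^{\theta},\qquad A:=\|g\|_{L^{r_1}}\|\nabla^k h\|_{L^{r_2}},\quad B:=\|h\|_{L^{r_3}}\|\nabla^k g\|_{L^{r_4}}.
\end{align*}
Young's inequality gives $A^{1-\theta}B^{\theta}\leq A+B$. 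Summing over $j$ proves the first estimate.

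For the commutator estimate, the term $j=0$ is exactly $g\nabla^k h$, which is removed. In each remaining term I would write $\nabla^j g=\nabla^{i}G$ with $G:=\nabla g$ and $i=j-1\in\{0,\dots,k-1\}$. The remainder is then
\begin{align*}
\sum_{i=0}^{k-1}c_i\,\nabla^{i}G\,\nabla^{k-1-i}h ,
\end{align*}
which is a Leibniz-type sum of total order $k-1$ in the pair $(G,h)$. Repeating the same interpolation with order $k-1$ replacing $k$ does the rest. Concretely, $G$ is interpolated between $\|G\|_{L^{r_1}}=\|\nabla g\|_{L^{r_1}}$ and $\|\nabla^{k-1}G\|_{L^{r_4}}=\|\nabla^k g\|_{L^{r_4}}$. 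Likewise $h$ is interpolated between $\|h\|_{L^{r_3}}$ and $\|\nabla^{k-1}h\|_{L^{r_2}}$. The same exponent bookkeeping and Young's inequality then give the bound $\|\nabla g\|_{L^{r_1}}\|\nabla^{k-1}h\|_{L^{r_2}}+\|h\|_{L^{r_3}}\|\nabla^k g\|_{L^{r_4}}$. The terms $i=0$ and $i=k-1$ are again handled by H\"older's inequality alone.

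The main obstacle is justifying the Gagliardo--Nirenberg step in the full admissible range, in particular when $r_1=\infty$ or $r_3=\infty$ (the case actually used, e.g.\ $\|g\|_{L^\infty}\|\nabla^k h\|_{L^2}$). In that situation I would invoke the $L^\infty$-endpoint form
\begin{align*}
\|\nabla^j g\|_{L^{kp/j}}\lesssim\|g\|_{L^\infty}^{1-j/k}\|\nabla^k g\|_{L^p}^{j/k},\qquad 1<p<\infty .
\end{align*}
This is valid precisely because $r_2,r_4\in(1,\infty)$, and it is the reason for that restriction in the statement; for mixed exponents I would use the general Gagliardo--Nirenberg inequality with $\frac1{p_j}=\frac{1-\theta}{r_1}+\frac{\theta}{r_4}$. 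Since $g,h$ are Schwartz functions, all norms are finite and no density argument is needed. Should an exponent configuration fall outside the classical Gagliardo--Nirenberg range, I would fall back on the Bony paraproduct decomposition $gh=T_gh+T_hg+R(g,h)$ together with the Littlewood--Paley square-function characterization of $L^r$, $1<r<\infty$, as in the cited references.
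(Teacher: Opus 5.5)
Your proof is correct, but it does not follow the paper's route, because the paper gives no proof at all. It simply quotes the estimate from the cited commutator literature. There, Kato--Ponce-type bounds of this shape are typically proved for general, including fractional, orders $\Lambda^s$ via Littlewood--Paley/paraproduct or Coifman--Meyer bilinear multiplier arguments. That is where the hypothesis $1<r,r_2,r_4<\infty$ comes from, and it is what gives non-integer $s$, which a Leibniz expansion cannot reach.

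Your argument uses the Leibniz rule, H\"older, the scale-invariant Gagliardo--Nirenberg inequality with $a=j/k$, and $A^{1-\theta}B^{\theta}\le A+B$. It is elementary and fully sufficient here, since the paper only applies the lemma with integer $k\le 4$. The bookkeeping checks out:
\begin{itemize}
\item $\frac{1}{p_j}+\frac{1}{q_j}=\frac{1}{r}$ holds.
\item The two interpolation bounds multiply to exactly $A^{1-\theta}B^{\theta}$.
\item Rewriting the commutator as $\sum_{i=0}^{k-1}c_i\nabla^{i}G\,\nabla^{k-1-i}h$ with $G=\nabla g$ reproduces precisely the stated right-hand side, with the endpoint terms $i=0$ and $i=k-1$ handled by H\"older.
\end{itemize}

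Two minor corrections to your commentary. First, the restriction $r_2,r_4\in(1,\infty)$ is not what makes your $L^\infty$-endpoint interpolation inequality valid. For $1\le j<k$ and $a=j/k$, Nirenberg's theorem has no exceptional cases; for instance, your displayed inequality also holds with $p=\infty$ (Landau--Kolmogorov). So your proof does not actually use that hypothesis, which is an artifact of the harmonic-analysis proofs. Second, no exponent configuration allowed by the statement falls outside the Gagliardo--Nirenberg range, so the paraproduct fallback you mention is never needed.
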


\begin{lem} [{\!\! \cite[Lemma 3.2]{CDM-krm-2011}}]\label{LA.3}   
Given any $0<\beta_1\ne 1$ and $\beta_2>1$, then
\begin{align*}
\int_0^t (1+t-s)^{-\beta_1}(1+s)^{-\beta_2} {\rm d}s \leq C(1+t)^{-\min\{\beta_1,\beta_2\}}, 
\end{align*}
for all $t\geq 0$.
\end{lem}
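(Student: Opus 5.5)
The plan is the standard splitting of the time interval at the midpoint $s=t/2$. On each half, one of the two factors is comparable to a power of $(1+t)$, and the remaining integral is computed explicitly. The only case distinction concerns whether $\beta_1>1$ or $0<\beta_1<1$, which is exactly why $\beta_1=1$ is excluded (it would produce a logarithm).

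\emph{Step 1: the interval $[0,t/2]$.} For $0\le s\le t/2$ we have $1+t-s\ge \frac{1}{2}(1+t)$, hence $(1+t-s)^{-\beta_1}\le 2^{\beta_1}(1+t)^{-\beta_1}$. Since $\beta_2>1$,
\begin{align*}
\int_0^{t/2}(1+t-s)^{-\beta_1}(1+s)^{-\beta_2}\,{\rm d}s\le 2^{\beta_1}(1+t)^{-\beta_1}\int_0^\infty(1+s)^{-\beta_2}\,{\rm d}s=\frac{2^{\beta_1}}{\beta_2-1}(1+t)^{-\beta_1}.
\end{align*}

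\emph{Step 2: the interval $[t/2,t]$.} Here $1+s\ge\frac12(1+t)$, so $(1+s)^{-\beta_2}\le 2^{\beta_2}(1+t)^{-\beta_2}$. After the substitution $\tau=t-s$ the contribution is bounded by
\begin{align*}
2^{\beta_2}(1+t)^{-\beta_2}\int_0^{t/2}(1+\tau)^{-\beta_1}\,{\rm d}\tau .
\end{align*}
\begin{itemize}
\item If $\beta_1>1$, the last integral is at most $(\beta_1-1)^{-1}$, so this piece is $\lesssim(1+t)^{-\beta_2}$.
\item If $0<\beta_1<1$, the last integral is at most $(1-\beta_1)^{-1}(1+t)^{1-\beta_1}$, so this piece is $\lesssim (1+t)^{-\beta_1-(\beta_2-1)}\le (1+t)^{-\beta_1}$, using $\beta_2>1$.
\end{itemize}

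\emph{Step 3: conclusion.} In every case both pieces are bounded by $C(1+t)^{-\min\{\beta_1,\beta_2\}}$, with $C$ depending only on $\beta_1,\beta_2$. There is no real obstacle. The only point requiring care is the subcase $\beta_1<1$ on $[t/2,t]$: the growth $(1+t)^{1-\beta_1}$ of the integral must be absorbed by the surplus decay $(1+t)^{-(\beta_2-1)}$, and this is precisely where the hypothesis $\beta_2>1$ is used.
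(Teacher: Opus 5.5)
Your proof is correct. Splitting at $s=t/2$, bounding the "other" factor by a power of $(1+t)$ on each half, and in the case $0<\beta_1<1$ absorbing the growth $(1+t)^{1-\beta_1}$ into the surplus decay $(1+t)^{1-\beta_2}$ is the standard argument for this estimate; the paper gives no proof of its own and only cites Carrillo--Duan--Moussa.

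One correction to your commentary: $\beta_1=1$ is not an obstruction here. The logarithm it produces on $[t/2,t]$ gives $(1+t)^{-\beta_2}\log(1+t)\lesssim(1+t)^{-1}$ when $\beta_2>1$, so the hypothesis $\beta_1\neq 1$ is inherited from the source rather than needed by the argument.
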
 

\begin{lem} [{\!\! \cite[Lemma 3.3]{CDM-krm-2011}}]\label{LA.4}   
Let $\gamma>1$ and $g_1,g_2\in C( \mathbb{R}^+, \mathbb{R}^+)$ with
$g_1(0)=0$. For $A\in  \mathbb{R}^+$, define $ \mathds{B}_{A}:=\{y\in C( \mathbb{R}^+,( \mathbb{R}^+)|\, \,y\leq A+g_1(A)y+g_2(A)y^{\gamma},\,\, y(0)\leq A\}$.
Then, there exists a constant $A_0\in (0,\,\min\{A_1,A_2\})$ such that for
any $0<A\leq A_0$, 
\begin{align*}
y\in \mathds{B}_A \Rightarrow \sup_{t\geq 0}y(t)\leq 2A.
\end{align*}
\end{lem}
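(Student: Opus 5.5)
The plan is a standard continuity (bootstrap) argument. The idea is to choose $A_0$ so small that the algebraic inequality defining $\mathds{B}_A$ cannot be satisfied at the value $y=2A$. Continuity of $y$ together with $y(0)\le A<2A$ then shows that $y$ never reaches $2A$.

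\emph{Choice of $A_0$.} Since $g_1$ is continuous with $g_1(0)=0$, there is $A_1>0$ such that $g_1(A)\le \frac14$ for $0<A\le A_1$. Since $g_2$ is continuous, it is bounded on $[0,1]$, say by $K$. Because $\gamma>1$, we have $A^{\gamma-1}\to 0$ as $A\to 0^+$. Hence there is $A_2\in(0,1]$ such that $2^{\gamma}K A^{\gamma-1}\le \frac14$ for $0<A\le A_2$. Take any $A_0\in(0,\min\{A_1,A_2\})$. Then for every $0<A\le A_0$,
\begin{align*}
2g_1(A)+2^{\gamma}g_2(A)A^{\gamma-1}\le \tfrac12+\tfrac14<1 .
\end{align*}

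\emph{Bootstrap.} Fix $0<A\le A_0$ and $y\in\mathds{B}_A$, and set $T^*:=\sup\{T\ge 0:\ y(t)<2A \text{ for all } t\in[0,T]\}$. This set is nonempty, and $T^*>0$, because $y(0)\le A<2A$ and $y$ is continuous. Suppose $T^*<\infty$. By continuity $y(T^*)=2A$. Inserting this into the defining inequality of $\mathds{B}_A$ gives
\begin{align*}
2A\le A+2g_1(A)A+g_2(A)(2A)^{\gamma}.
\end{align*}
Dividing by $A>0$ yields $1\le 2g_1(A)+2^{\gamma}g_2(A)A^{\gamma-1}$, which contradicts the choice of $A_0$. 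Therefore $T^*=\infty$, so $y(t)<2A$ for all $t\ge0$, and in particular $\sup_{t\ge0}y(t)\le 2A$.

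\emph{Main point of care.} There is no real obstacle here. The only delicate point is to make the smallness condition on $A$ \emph{strict}, so that equality at the first hitting time yields a contradiction. It also matters that this condition uses only the values of $g_1,g_2$ near $0$ and the factor $A^{\gamma-1}$, and this is exactly where the hypotheses $g_1(0)=0$ and $\gamma>1$ enter.
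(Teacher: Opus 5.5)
Your proof is correct. The paper gives no proof of this lemma and simply cites it; your first-hitting-time argument, which shows that the defining inequality cannot hold at $y=2A$ once $A$ is small, is the standard continuity argument for such results. Your explicit choice of $A_1,A_2$ supplies constants that the statement leaves undefined. You also correctly rely on $\mathbb{R}^+$ containing $0$, as $g_1(0)=0$ requires, since that is what makes $g_2$ bounded near $0$.
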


%\section{Global existence of classical solutions  for \texorpdfstring{\(\kappa>0\)}{kappa>0}}
\section{Global existence of classical solutions  for  fixed  {\(\mu,\lambda, \kappa>0\)} }
Throughout this  section, we simplify the notations by suppressing the superscripts $\mu$, $\lambda$ and $\kappa$ in the Cauchy problem \eqref{Q1}--\eqref{Q1-1}. Our objective is to establish the global existence of classical solutions to the  problem \eqref{Q1}--\eqref{Q1-1} in the whole space $\mathbb{R}^3$, uniformly in $\mu>0$, $\lambda>0$ and $\kappa>0$.

\subsection{A priori estimates}  
Let $(\rho, u, \theta, f)$ denote the classical solution to the problem \eqref{Q1}--\eqref{Q1-1} on $0 \leq t < T$ for some $T > 0$.
To derive a uniform-in-time a priori estimate for $(\rho, u, \theta, f)$ for any fixed \(\mu,\lambda, \kappa>0\), we assume that  
\begin{align}\label{G3.1}  
\sup_{0 \leq t < T} \big\{\|(\rho,u,\theta)\|_{H^4} + \|f\|_{H^4_{x,v}} \big\} \leq \delta,  
\end{align}  
where the constant $0 < \delta < 1$, which is independent of $\mu,\lambda$ and $\kappa$ and will be carefully determined later.

For the sake of presentation,  we reformulate the problem \eqref{Q1}--\eqref{Q1-1}  as  
follows: 
\begin{equation}\label{G3.2}
\left\{\begin{aligned}
&\partial_t \rho  +{\rm div} u=\mathcal{N}_1, \\
&\partial_t u + \nabla \rho+\nabla \theta-\mu\Delta  u-(\mu+\lambda)\nabla{\rm div} u-(b-u)=\mathcal{N}_2-au, \\
&\partial_t \theta + {\rm div} u-\sqrt{3}(\sqrt{2} \omega-\sqrt{3} \theta) -\kappa\Delta \theta = \mathcal{N}_3-b\cdot u-3a\theta,\\
&\partial_t f +v \cdot \nabla f-
u \cdot v \sqrt{M}- (|v|^2-3 ) \sqrt{M} \theta -\mathcal{L} f=\mathcal{N}_4-\frac{1}{2}u\cdot v f+\frac{\theta}{\sqrt{M}}\Delta_v(\sqrt{M}f),\\
\end{aligned}\right.
\end{equation} 
with the initial data
\begin{align}\label{G3.3}
(\rho ,u ,\theta ,f )|_{t=0}=&\,(\rho _0(x),u _0(x),\theta _0(x),f _0(x,v))\nonumber\\
:=&\,\Big( \varrho_0 (x)-1,u _0(x), {\vartheta} _0(x)-1,\frac{F _0(x,v)-M}{\sqrt{M}}\Big), \quad   x \in \mathbb{R}^3, \, v\in\mathbb{R}^3. 
\end{align} 
Here, the nonlinear terms $\mathcal{N}_i(1\leq i\leq 4)$ are defined by
\begin{equation*}
\left\{
\begin{aligned}
\mathcal{N}_1:=&\,-\rho{\rm div} u-\nabla\rho\cdot u, \\
\mathcal{N}_2:=&\,-u\cdot\nabla u-h(\rho) \theta \nabla\rho +g(\rho)\big(\mu\Delta  u+(\mu+\lambda)\nabla{\rm div} u-\nabla\rho+(b-u)-au \big), \\
\mathcal{N}_3:=&\,h(\rho)\big(2\mu|D(u)|^2+\lambda|{\rm div} u|^2 -u\cdot(b-u)+a|u|^2\big)-\theta{\rm div}u\\
&-u\cdot\nabla\theta+ g(\rho)\big(\kappa\Delta \theta+\sqrt{3}(\sqrt{2}w-\sqrt{3}\theta)-b\cdot u-3a\theta\big)   ,\\
\mathcal{N}_4:=&\, u\cdot\nabla_v f, 
\end{aligned}\right.
\end{equation*}
where
\begin{equation}\label{define:g,h}
%\left\{
%\begin{aligned}
g(\rho):=  \frac{1}{1+\rho}-1 \ \ \ \text{and} \ \ \
h(\rho):= \frac{1}{1+\rho}.
%\end{aligned}\right.
\end{equation}
It is evident that  
\begin{align*}
|g(\rho)|\leq C|\rho|,\ \ \quad |h(\rho)|\leq C, 
\end{align*}
and for any integer $k\geq 1$, it holds that
\begin{align*}
|g^{(k)}(\rho)|\leq C,\  \ \quad |h^{(k)}(\rho)|\leq C,
\end{align*}
for some constant $C>0$. Moreover, by invoking the embedding $H^2(\mathbb{R}^3_x) \hookrightarrow L^{\infty}(\mathbb{R}^3_x)$ and 
the assumption inequality \eqref{G3.1}, we deduce that  
 \begin{align}\label{LNXG3.2}  
\frac{1}{2} \leq \rho + 1 \leq \frac{3}{2}, \quad \frac{1}{2}\leq\theta+1\leq \frac{3}{2}.  
\end{align} 

We first provide the $L^2$-estimate of     $(\rho, u, \theta, f)$.

\begin{lem}\label{L3.1}
For the  classical solution $(\rho, u,\theta,f)$ to the Cauchy problem \eqref{Q1}--\eqref{Q1-1}, it holds that 
\begin{align}\label{G3.4}
&\frac{\rm d}{{\rm d}t}\big(\|(\rho,u,\theta)\|_{L^2}^{2} +\|f\|_{L_{x,v}^{2}}^2\big)+\lambda_1\big(\|b-u\|_{L^2}^{2}
+\|\sqrt{2}\omega-\sqrt{3}\theta\|_{L^2}^2+\|\{\mathbf{I}-\mathbf{P}\}f\|_{\nu}^{2}\big)\nonumber\\
&%\quad\quad\quad
\quad+\lambda_1\big(\mu\|\nabla u\|_{L^2}^2+(\mu+\lambda)\| {\rm div}u\|_{L^2}^2+\kappa\|\nabla\theta\|_{L^2}^2\big)
 \leq C\delta\|\nabla(a,b,\omega,\rho,u,\theta)\|_{L^2}^2,
\end{align}
for any $0 \leq t < T$ and $\lambda_1>0$,   independent of  $T$, $\mu$, $\lambda$ and
$\kappa$. 
\end{lem}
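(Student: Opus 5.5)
We take the $L^2$ inner products of \eqref{G3.2}$_1$ with $\rho$, \eqref{G3.2}$_2$ with $u$, \eqref{G3.2}$_3$ with $\theta$ and \eqref{G3.2}$_4$ with $f$, integrate in $x$ (and $v$), and sum. We first work out the linear part exactly. The transport and pressure terms $\nabla\rho\cdot u$, $\rho\,{\rm div}u$, $\nabla\theta\cdot u$, $\theta\,{\rm div} u$ cancel pairwise after integration by parts, and $\iint v\cdot\nabla f\, f=0$. The viscous and conductive terms give $\mu\|\nabla u\|_{L^2}^2+(\mu+\lambda)\|{\rm div}u\|_{L^2}^2+\kappa\|\nabla\theta\|_{L^2}^2$. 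In the kinetic equation the source terms pair with $f$ as
\begin{align*}
-\iint u\cdot v\sqrt M f=-\int u\cdot b, \qquad -\iint(|v|^2-3)\sqrt M\theta f=-\sqrt6\int\theta\omega .
\end{align*}
By \eqref{G2.6}, $-\langle\mathcal L f,f\rangle\ge\bar\lambda|\{\mathbf I-\mathbf P\}f|_\nu^2+|b|^2+2|\omega|^2$, where the $|b|^2+2|\omega|^2$ part is exact by \eqref{G2.5}. Combining these with the fluid coupling terms $-\int(b-u)\cdot u$ and $-\sqrt3\int(\sqrt2\omega-\sqrt3\theta)\theta$ yields
\begin{align*}
|b|^2-2b\cdot u+|u|^2+2|\omega|^2-2\sqrt6\,\omega\theta+3\theta^2=|b-u|^2+|\sqrt2\omega-\sqrt3\theta|^2 .
\end{align*}
This exact completion of squares is the structural heart of the lemma. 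It produces all the dissipation on the left side of \eqref{G3.4} with some $\lambda_1=\min\{1,\bar\lambda\}$.

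Next we handle the nonlinear terms. The zero-order terms $-au\cdot u$, $-b\cdot u\,\theta$, $-3a\theta^2$ on the fluid side must not be estimated separately, since they are cubic but contain no derivative. Following the strategy announced in the introduction, we combine them with $-\frac12\iint u\cdot v f^2$ and with the term coming from $\frac{\theta}{\sqrt M}\Delta_v(\sqrt Mf)$. Writing $f=\mathbf Pf+\{\mathbf I-\mathbf P\}f$, the purely macroscopic parts of these kinetic terms are explicit polynomials in $(a,b,\omega)$ times $u$ or $\theta$. For instance, $\frac{\theta}{\sqrt M}\Delta_v(\sqrt M\cdot)$ applied to $\mathbf Pf$ and tested with $\mathbf Pf$ produces $\theta(\cdots a^2,|b|^2,\omega^2,a\omega)$. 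We check that these combine with the fluid zero-order terms into expressions carrying at least one factor of $b-u$ or $\sqrt2\omega-\sqrt3\theta$, or of $\{\mathbf I-\mathbf P\}f$. Each such term is then bounded by $\delta(\|b-u\|^2+\|\sqrt2\omega-\sqrt3\theta\|^2+\|\{\mathbf I-\mathbf P\}f\|_\nu^2)+\delta\|\nabla(a,b,\omega,u,\theta)\|_{L^2}^2$. For the last step we use $\|gh k\|_{L^1}\le\|g\|_{L^3}\|h\|_{L^6}\|k\|_{L^2}$, the bound $\|h\|_{L^6}\lesssim\|\nabla h\|_{L^2}$ from Lemma \ref{LA.1}, and \eqref{G3.1}. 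Remainders that are cubic in macroscopic quantities without such a factor are bounded directly by $\|\cdot\|_{L^3}\|\nabla\cdot\|_{L^2}^2\lesssim\delta\|\nabla\cdot\|_{L^2}^2$. The mixed terms with $\{\mathbf I-\mathbf P\}f$ are handled with $|\cdot|_\nu$ to absorb the $v$-weights and the $\nabla_v$ after integrating by parts in $v$. The term $\mathcal N_4=u\cdot\nabla_v f$ vanishes against $f$.

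The remaining pieces of $\mathcal N_1$–$\mathcal N_3$ are treated as follows. Terms such as $\rho\,{\rm div}u\,\rho$, $u\cdot\nabla u\cdot u$, $h(\rho)\theta\nabla\rho\cdot u$ and $u\cdot\nabla\theta\,\theta$ are bounded by $\|(\rho,u,\theta)\|_{L^3}\|\nabla(\rho,u,\theta)\|_{L^2}\|(\rho,u,\theta)\|_{L^6}\lesssim\delta\|\nabla(\rho,u,\theta)\|_{L^2}^2$. The viscous nonlinearities $g(\rho)\mu\Delta u\cdot u$ and $g(\rho)\kappa\Delta\theta\,\theta$ are integrated by parts. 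This gives $\mu\delta\|\nabla u\|^2$ plus $\mu\int|\nabla\rho||\nabla u||u|\lesssim\mu\delta\|\nabla u\|^2+\mu\delta\|\nabla\rho\|^2$, and since $\mu<1$ the first part is absorbed on the left. The quadratic dissipative heating terms $h(\rho)\mu|D(u)|^2\theta$ are bounded by $\mu\|\theta\|_{L^\infty}\|\nabla u\|^2\le\mu\delta\|\nabla u\|^2$. The terms $g(\rho)(b-u)\cdot u$ and $g(\rho)\sqrt3(\sqrt2\omega-\sqrt3\theta)\theta$ are bounded by $\delta(\|b-u\|^2+\|\sqrt2\omega-\sqrt3\theta\|^2)+\delta\|\nabla(u,\theta)\|^2$, again using $L^3$–$L^6$. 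Absorbing all $\delta$-multiples of dissipation into the left for small $\delta$ gives \eqref{G3.4}.

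We expect the main obstacle to be the bookkeeping in the second step. The zero-order cubic terms $au\cdot u$, $b\cdot u\,\theta$, $3a\theta^2$ must cancel or pair correctly with the macroscopic parts of $-\frac12 u\cdot v f^2$ and $\theta f\Delta_v(\sqrt Mf)/\sqrt M$. Any leftover term with no derivative and no dissipative factor would break \eqref{G3.4}. We would compute those moments explicitly first and verify the cancellation before estimating anything.
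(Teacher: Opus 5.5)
Your proposal is correct and is essentially the paper's proof. It uses the same energy identity with exact completion of squares into $|b-u|^2+|\sqrt2\omega-\sqrt3\theta|^2$, and the same pairing of $-a|u|^2$, $-\theta\,b\cdot u$, $-3a\theta^2$ with the macroscopic parts of $\frac12\int u\cdot\langle vf,f\rangle$ and $\int\theta\langle\frac{1}{\sqrt M}\Delta_v(\sqrt Mf),f\rangle$, as in \eqref{G3.6}--\eqref{G3.7}; you must, however, use the sign $+\frac12u\cdot vf$ from \eqref{Q1}$_4$, since the minus in \eqref{G3.2}$_4$, which you copied, is a typo and would destroy the cancellation. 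With that sign the macroscopic parts combine exactly into $au\cdot(b-u)+\frac{1}{\sqrt3}(\sqrt2\omega-\sqrt3\theta)\,u\cdot b$ and $\sqrt3\,a\theta(\sqrt2\omega-\sqrt3\theta)$ with no leftover, so your ``remainder'' clause (whose claimed bound would be false for derivative-free cubic terms) is never needed, and your use of $\mu<1$ is avoidable by bounding $\mu\|\nabla\rho\|_{L^3}\|u\|_{L^6}\|\nabla u\|_{L^2}\lesssim\delta\mu\|\nabla u\|_{L^2}^2$ as the paper does.
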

\begin{proof}
Multiplying  the equations  \eqref{G3.2}$_1$--\eqref{G3.2}$_4$  by $\rho$, $u$, $\theta$, and $f$ respectively, followed by integration and summation, we get  the following equality:
\begin{align}\label{G3.5}
&\frac{1}{2}\frac{\rm d}{{\rm d}t}\big(\|(\rho,u,\theta)\|_{L^2}^{2} +\|f\|_{L_{x,v}^{2}}^2\big) +\|b-u\|_{L^2}^{2}+\|\sqrt{2}\omega-\sqrt{3}\theta\|_{L^2}^2 \nonumber\\
&+\mu\|\nabla u\|_{L^2}^2+(\mu+\lambda)\|{\rm div} u\|_{L^2}^2+\kappa\|\nabla\theta\|_{L^2}^2+\int_{\mathbb{R}^3} \langle-\mathcal{L}\{\mathbf{I}-\mathbf{P}\}f,f\rangle {\rm d}x \nonumber\\
=\,&\frac{1}{2}\int_{\mathbb{R}^3}u\cdot\langle vf,f\rangle{\rm d}x-\int_{\mathbb{R}^3}a|u|^2{\rm d}x-\int_{\mathbb{R}^3}b\cdot u\theta{\rm d}x+\int_{\mathbb{R}^3}\Big\langle\frac{\theta}{\sqrt{M}}\Delta_v(\sqrt{M}f),f\Big\rangle{\rm d}x
\nonumber\\
&-3\int_{\mathbb{R}^3}a\theta^2{\rm d}x+\int_{\mathbb{R}^3}\rho\mathcal{N}_1{\rm d}x+\int_{\mathbb{R}^3}u\cdot\mathcal{N}_2{\rm d}x+\int_{\mathbb{R}^3}\theta\mathcal{N}_3{\rm d}x+\int_{\mathbb{R}^3}\langle f, \mathcal{N}_4\rangle{\rm d}x\nonumber\\
\equiv:\,& \sum_{i=1}^9 I_i.
 \end{align}
Here, we have utilized the decomposition of $\mathcal{L}f$  in \eqref{G2.5}.  

Thanks to the  macro-micro decomposition given by \eqref{G2.4} and Lemma \ref{LA.1}, we have
\begin{align}\label{G3.6}
  |I_1+I_2+I_3|%\nonumber\\
 =&\,-\int_{\mathbb{R}^3}au\cdot(u-b){\rm d}x+\frac{\sqrt{3}}{3}\int_{\mathbb{R}^3}(\sqrt{2}\omega-\sqrt{3}\theta)u\cdot b{\rm d}x\nonumber\\
 &+\int_{\mathbb{R}^3}\langle v\mathbf{P}f,\{\mathbf{I}-\mathbf{P}\}f\rangle{\rm d}x+\frac{1}{2}\int_{\mathbb{R}^3}u\cdot \langle v,|\{\mathbf{I}-\mathbf{P}\}f|^2\rangle{\rm d}x\nonumber\nonumber\\
 \lesssim&\, \|a\|_{L^6}\|u\|_{L^3}\|b-u\|_{L^2}+\|b\|_{L^6}\|u\|_{L^3}\|\sqrt{2}\omega-\sqrt{3}\theta\|_{L^2}\nonumber\\
 &+\|u\|_{L^{\infty}} \|\{\mathbf{I}-\mathbf{P}\}f\|_{\nu}^{2}+\|(a,b,\omega)\|_{L^6}\|u\|_{L^3} \|\{\mathbf{I}-\mathbf{P}\}f\|_{\nu}\nonumber\\
 \lesssim&\,\|u\|_{H^2}\|\nabla(a,b,\omega)\|_{L^2}
 \big(\|b-u\|_{L^2} 
+\|\sqrt{2}\omega-\sqrt{3}\theta\|_{L^2} +\|\{\mathbf{I}-\mathbf{P}\}f\|_{\nu} \big)\nonumber\\
&+\|u\|_{H^2}\|\{\mathbf{I}-\mathbf{P}\}f\|_{\nu}^2\nonumber\\
\lesssim&\,\delta\big(\|b-u\|_{L^2}^{2}
+\|\sqrt{2}\omega-\sqrt{3}\theta\|_{L^2}^2+\|\{\mathbf{I}-\mathbf{P}\}f\|_{\nu}^{2}+\|\nabla(a,b,\omega)\|_{L^2}^2\big).
\end{align}
The terms $I_4$ and $I_5$ can   be estimated in a similar way by  
\begin{align}\label{G3.7}
 |I_4+I_5| 
=&\,\Big|\int_{\mathbb{R}^3}\theta\Big\langle  \nabla_v(\sqrt{M}f),\nabla_v\Big(\frac{f}{\sqrt{M}}\Big)\Big\rangle{\rm d}x+3\int_{\mathbb{R}^3}a\theta^2{\rm d}x\Big|\nonumber\\
\leq&\, \Big|\iint_{\mathbb{R}^3\times\mathbb{R}^3}\theta
\Big(|\nabla_v\{\mathbf{I}-\mathbf{P}\}f|^2-\frac{|v|^2}{4}|\{\mathbf{I}-\mathbf{P}\}f|^2\Big){\rm d}x{\rm d}v-\sqrt{3}\int_{\mathbb
{R}^3}a\theta(\sqrt{2}\omega-\sqrt{3}\theta){\rm d}x\Big|\nonumber\\
&+2\Big|\iint_{\mathbb{R}^3\times\mathbb{R}^3}\theta\Big(\nabla_v\{\mathbf{I}
-\mathbf{P}\}f\cdot\nabla_v\mathbf{P}f-\frac{|v|^2}{4}\{\mathbf{I}-\mathbf{P}\}f\cdot   \mathbf{P}f         \Big){\rm d}x{\rm d}v\Big|\nonumber\\
\lesssim&\, \|\theta\|_{L^3}\|(a,b,\omega)\|_{L^6}\|\{\mathbf{I}-\mathbf{P}\}f\|_{\nu}+\|\theta\|_{L^3}\| a\|_{L^6}\|\sqrt{2}\omega-\sqrt{3}\theta\|_{L^2}+\|\theta\|_{L^\infty}\|\{\mathbf{I}-\mathbf{P}\}f\|_{\nu}^2\nonumber\\
\lesssim&\,\delta\big( \|\sqrt{2}\omega-\sqrt{3}\theta\|_{L^2}^2+\|\{\mathbf{I}-\mathbf{P}\}f\|_{\nu}^{2}+\|\nabla(a,b,\omega)\|_{L^2}^2\big).
\end{align}

By applying  integration by parts, H\"{o}lder’s inequality and Lemma \ref{LA.1}, one has
\begin{align}
|I_6|\lesssim&\, \|\rho \|_{L^3}\big(\|\rho\|_{L^6}\|\nabla u\|_{L^2}+ \|u\|_{L^6}\|\nabla\rho\|_{L^2} \big)\nonumber\\
\lesssim&\, \delta  \|\nabla(\rho,u)\|_{L^2}^2,  \label{G3.8-1}\\
|I_7|\lesssim&\, \|u\|_{L^3}\|u\|_{L^6}\|\nabla u\|_{L^2}+\|h(\rho)\|_{L^\infty}\|\theta\|_{L^3}\|u\|_{L^6}\|\nabla\rho\|_{L^2}+ \|g(\rho)\|_{L^2}\|u\|_{L^6}^2 \|a\|_{L^6}  \nonumber\\
&+\big( \mu\|\nabla u\|_{L^2}+ (\mu+\lambda) \|{\rm div} u\|_{L^2} \big)\big( \|\nabla g(\rho)\|_{L^3}\|u\|_{L^6}  +\|g(\rho)\|_{L^\infty}\|\nabla u\|_{L^2}    \big) \nonumber\\
&+\|g(\rho)\|_{L^3} \|u\|_{L^6}(\|\nabla\rho\|_{L^2}+\|b-u\|_{L^2} )\nonumber\\
\lesssim&\, \delta \big( \|b-u\|_{L^2}^2+\|\nabla(a,b,\rho,u,\theta)\|_{L^2}^2+\mu\|\nabla u\|_{L^2}^2+(\mu+\lambda)\|{\rm div} u\|_{L^2}^2       \big),\label{G3.8-2}\\
|I_8|\lesssim&\, \|h(\rho)\|_{L^\infty} \|\theta\|_{L^\infty}     \big(\mu\|\nabla u\|_{L^2}^2+(\mu+\lambda)\|{\rm div} u\|_{L^2}  \big)+\|h(\rho)\|_{L^\infty} \|\theta\|_{L^2}\|a\|_{L^6}\|u\|_{L^6}^2 \nonumber\\
&+\|h(\rho)\|_{L^\infty} \|b-u\|_{L^2}\|u\|_{L^3}\|\theta\|_{L^6}+\|\theta\|_{L^3}\|\theta\|_{L^6}\|\nabla u\|_{L^2}+\kappa \|g(\rho)\|_{L^\infty}\|\nabla\theta\|_{L^2}^2  \nonumber\\
&+\kappa \|\nabla\rho\|_{L^3}\|\theta\|_{L^6}\|\nabla\theta\|_{L^2}+\|g(\rho)\|_{L^6} \big( \|\theta\|_{L^3}\|\sqrt{2}\omega-\sqrt{3}\theta\|_{L^2} +\|b\|_{L^6}\|u\|_{L^2}\|\theta\|_{L^6}       \big)\nonumber\\
&+\|g(\rho)\|_{L^6}\|a\|_{L^6}\|\theta\|_{L^6}^2\nonumber\\
\lesssim&\, \delta \big(  \mu \|\nabla u\|_{L^2}^2+(\mu+\lambda)\|{\rm div} u\|_{L^2}^2+\kappa\|\nabla\theta\|_{L^2}^2+\|b-u\|_{L^2}^{2}
+\|\sqrt{2}\omega-\sqrt{3}\theta\|_{L^2}^2 \big) \nonumber\\
&+\delta  \|\nabla(a,b,\rho,u, \theta)\|_{L^2}^2.\label{G3.8-3}
\end{align} 
By applying  integration by parts to the term $I_9$, one has
\begin{align} 
I_9=&\, -\frac{1}{2}\iint_{\mathbb{R}^3\times\mathbb{R}^3} {\rm div}_v u |f|^2{\rm d}x{\rm d}v=0.\label{G3.8}
\end{align}
Substituting \eqref{G3.6}--\eqref{G3.8} into \eqref{G3.5} and utilizing \eqref{G2.6b}, we  obtain the desired  estimate \eqref{G3.4}.
\end{proof}

Next, we give the   estimate of   high-order  derivatives of     $(\rho, u, \theta, f)$.
Before handling the estimates on the higher-order spatial derivatives of it, for the convenience of subsequent calculations, we now analyze the estimate of $(\partial_t\rho,\partial_t u,\partial_t\theta)$ in  $L^\infty$-norm. This also implies that our {\it a priori estimates} need to be considered in the $H^4$ framework. 
 \begin{lem}\label{LNXL3.2}
For the  classical solution $(\rho, u,\theta,f)$ to the Cauchy problem \eqref{Q1}--\eqref{Q1-1}, it holds that 
\begin{align}\label{LNXG3.17}
\|(\partial_t\rho,\partial_t u,\partial_t\theta)\|_{L^\infty}\lesssim \|(\rho,u,\theta)\|_{H^4}\lesssim \delta,
\end{align}
for any $0 \leq t < T$.
\end{lem}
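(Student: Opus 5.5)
The plan is to read off $(\partial_t\rho,\partial_t u,\partial_t\theta)$ directly from the first three equations of \eqref{Q1}, written as in \eqref{G3.2}, and bound each term in $L^\infty$. Two ingredients do all the work: the embedding $H^2(\mathbb{R}^3)\hookrightarrow L^\infty(\mathbb{R}^3)$ (Lemma \ref{LA.1}), and the fact that $H^2$ is an algebra. Concretely, I will write
\begin{align*}
\partial_t\rho&=-u\cdot\nabla\rho-(1+\rho){\rm div}u,\\
\partial_t u&=-u\cdot\nabla u-\tfrac{1+\theta}{1+\rho}\nabla\rho-\nabla\theta+\tfrac{\mu\Delta u+(\mu+\lambda)\nabla{\rm div}u}{1+\rho}+\tfrac{b-u-au}{1+\rho},
\end{align*}
together with the analogous expression for $\partial_t\theta$ from \eqref{Q1}$_3$. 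Each right-hand side is then estimated in $L^\infty$ by the $H^2$ norm of the corresponding term.

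First, the purely fluid terms. By \eqref{LNXG3.2}, the factors $h(\rho)$, $\frac{1}{1+\rho}$ and $\frac{1+\theta}{1+\rho}$ are bounded in $L^\infty$. Their $H^2$ norms are also controlled, through $|g^{(k)}|,|h^{(k)}|\le C$ together with \eqref{G3.1}. For the linear terms I use
\[
\|\nabla\rho\|_{L^\infty}+\|{\rm div}u\|_{L^\infty}+\|\nabla\theta\|_{L^\infty}\lesssim\|(\rho,u,\theta)\|_{H^3}.
\]
For the quadratic terms I use
\[
\|u\cdot\nabla\rho\|_{L^\infty}\le\|u\|_{L^\infty}\|\nabla\rho\|_{L^\infty}\lesssim\|u\|_{H^2}\|\rho\|_{H^3},
\]
and the same bound for $\theta\,{\rm div}u$ and $u\cdot\nabla\theta$. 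The viscous and heat-conductive terms require the full $H^4$ regularity:
\[
\|\Delta u\|_{L^\infty}+\|\nabla{\rm div}u\|_{L^\infty}+\|\Delta\theta\|_{L^\infty}\lesssim\|(u,\theta)\|_{H^4}.
\]
This is exactly why the a priori assumption is placed in the $H^4$ framework. Here I take $0<\mu,\lambda,\kappa<1$, as in Theorem \ref{T1.2}, so that the constants stay uniform in these coefficients. The quadratic dissipation terms $2\mu|D(u)|^2+\lambda|{\rm div}u|^2$ are bounded by $\|\nabla u\|_{L^\infty}^2\lesssim\delta\|u\|_{H^3}$.

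Second, the coupling terms $b-u$, $au$, $b\cdot u$, $a|u|^2$, $a\theta$ and $\sqrt2\omega-\sqrt3\theta$, together with their multiples by $\rho/(1+\rho)$. By Cauchy--Schwarz in $v$,
\[
|(a,b,\omega)(x)|\lesssim|f(x,\cdot)|_{L^2_v},
\]
and the same holds after applying $\partial^\alpha_x$. Hence
\[
\|(a,b,\omega)\|_{L^\infty}\lesssim\|(a,b,\omega)\|_{H^2}\lesssim\|f\|_{H^2_{x,v}}\le\delta .
\]
Every term containing these moments is therefore bounded by a constant times $\delta$. Because \eqref{G3.1} bounds $\|f\|_{H^4_{x,v}}$ by $\delta$, the middle quantity in \eqref{LNXG3.17} should be understood as $\|(\rho,u,\theta)\|_{H^4}+\|f\|_{H^4_{x,v}}$, which is what is actually used downstream. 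Putting the two groups together and using $\delta<1$ gives $\|(\partial_t\rho,\partial_t u,\partial_t\theta)\|_{L^\infty}\lesssim\delta$.

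The only delicate point is keeping the bound uniform in $\mu,\lambda,\kappa$. The second-order terms $\mu\Delta u$, $(\mu+\lambda)\nabla{\rm div}u$ and $\kappa\Delta\theta$ cost two derivatives, so they need $H^4$ (rather than $H^3$) control to land in $L^\infty$. Their coefficients must also be bounded, which is why I restrict to $\mu,\lambda,\kappa<1$. Apart from this, the argument is routine bookkeeping with Sobolev embedding.
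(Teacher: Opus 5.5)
Your proposal is correct and follows the same route as the paper: read off $(\partial_t\rho,\partial_t u,\partial_t\theta)$ from \eqref{G3.2}$_1$--\eqref{G3.2}$_3$ and bound every term in $L^\infty$ via Lemma \ref{LA.1}, with the second-order viscous and heat-conductive terms forcing the $H^4$ framework. Both of your caveats are also right: the paper's proof silently absorbs $\|(a,b,\omega)\|_{L^\infty}$ into $\|(\rho,u,\theta)\|_{H^4}$, which cannot hold since $b$ enters $\partial_t u$ and $\omega$ enters $\partial_t\theta$ (so the middle quantity must include $\|f\|_{H^2_{x,v}}$, and only the final bound $\lesssim\delta$ survives, via \eqref{G3.1}), and the uniformity of the constant tacitly requires $\mu,\lambda,\kappa$ to be bounded, exactly as you assume.
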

\begin{proof}
It follows from \eqref{G3.2}$_1$--\eqref{G3.2}$_3$ and Lemma \ref{LA.1} that
\begin{align*}
\|(\partial_t\rho,\partial_t u,\partial_t\theta)\|_{L^\infty}\lesssim&\, \|\nabla (\rho,u,\theta)\|_{L^\infty}+\|\nabla^2(u,\theta)\|_{L^\infty}+\|au\|_{L^\infty}+\|b\cdot u\|_{L^\infty}+\|a\theta\|_{L^\infty}\nonumber\\
&+\|(b,\omega,u,\theta)\|_{L^\infty}+\|(\mathcal{N}_1,\mathcal{N}_2,\mathcal{N}_3)\|_{L^\infty}\nonumber\\
\lesssim&\,  \|(\rho,u,\theta)\|_{H^4}+ \|\nabla(\rho,u,\theta)\|_{L^\infty}\|(\rho,u,\theta)\|_{L^\infty}+\|g
(\rho)\|_{L^\infty}  \|\nabla^2 (u,\theta)\|_{L^\infty} \nonumber\\
&+\|g(\rho)\|_{L^\infty}(\|\nabla\rho\|_{L^\infty}+\|(b,\omega,u,\theta)\|_{L^\infty}+\|au\|_{L^\infty}+\|a\theta\|_{L^\infty})\nonumber\\
&+(1+\delta)\|h(\rho)\|_{L^\infty}\big( \|\theta\|_{L^\infty}\|\nabla\rho\|_{L^\infty}+\|(a,b,u)\|_{L^\infty}^2 +\|\nabla u\|_{L^\infty}^2     \big) \nonumber\\
\lesssim &\,(1+\delta )\|(\rho,u,\theta)\|_{H^4},
\end{align*}
which together with \eqref{G3.1} leads to \eqref{LNXG3.17}.   
\end{proof}

\begin{lem}\label{L3.2}
For the  classical solution $(\rho, u,\theta,f)$ to the Cauchy problem \eqref{Q1}--\eqref{Q1-1}, it holds that 
\begin{align}\label{G3.9}
&\frac{1}{2}\frac{\rm d}{{\rm d}t}\sum_{1\leq|\alpha|\leq
4}\Big(\Big\|\frac{\partial^\alpha\rho}{1+\rho}\Big\|_{L^2}^2+\Big\|\frac{\partial^\alpha u}{\sqrt{1+\theta}}\Big\|_{L^2}^{2}+\Big\|\frac{\partial^\alpha\theta}{1+\theta}\Big\|_{L^2}^2 +\|\partial^\alpha f\|_{L_{x,v}^{2}}^2\Big)\nonumber\\
&\quad+\lambda_2\sum_{1\leq |\alpha|\leq 4}\big(\|\partial^\alpha(b-u)\|_{L^2}^{2}
+\|\partial^\alpha(\sqrt{2}\omega-\sqrt{3}\theta)\|_{L^2}^2+\|\{\mathbf{I}-\mathbf{P}\}\partial^\alpha f\|_{\nu}^{2}\big)\nonumber\\
&\quad \quad+\lambda_2\sum_{1\leq |\alpha|\leq 4} \big(\mu \|\nabla\partial^\alpha u\|_{L^2}^2+(\mu+\lambda) \|\partial^\alpha {\rm div} u\|_{L^2}^2+\kappa\|\nabla\partial^\alpha \theta\|_{L^2}^2\big)\nonumber\\ 
 &\quad\quad\quad \lesssim \delta\|\nabla(a,b,\omega,\rho)\|_{H^3}^2,
\end{align}
for any $0 \leq t < T$ and $\lambda_2>0$,   independent of $T$ and $\mu$, $\lambda$ and
$\kappa$. 
\end{lem}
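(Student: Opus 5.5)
Apply $\partial^\alpha$ with $1\le|\alpha|\le4$ to the quasilinear form of \eqref{Q1}: the continuity equation (with $u\cdot\nabla\rho+(1+\rho){\rm div}u$), the momentum equation (with $\frac{1+\theta}{1+\rho}\nabla\rho+\nabla\theta$), the temperature equation (with $u\cdot\nabla\theta+(1+\theta){\rm div}u$), and the kinetic equation \eqref{G3.2}$_4$. Test them, respectively, with $\frac{\partial^\alpha\rho}{(1+\rho)^2}$, $\frac{\partial^\alpha u}{1+\theta}$, $\frac{\partial^\alpha\theta}{(1+\theta)^2}$ and $\partial^\alpha f$, and integrate over $\mathbb{R}^3$ (and $\mathbb{R}^3_v$). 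The weights are chosen so that the top-order hyperbolic couplings cancel after integration by parts. The $\rho$-equation gives $\int\frac{\partial^\alpha{\rm div}u\,\partial^\alpha\rho}{1+\rho}$. The $u$-equation gives $\int\big(\frac{\nabla\partial^\alpha\rho}{1+\rho}+\frac{\nabla\partial^\alpha\theta}{1+\theta}\big)\cdot\partial^\alpha u$. The $\theta$-equation gives $\int\frac{\partial^\alpha{\rm div}u\,\partial^\alpha\theta}{1+\theta}$. Their sum is a divergence plus terms carrying $\nabla\rho$ or $\nabla\theta$ times lower-order quantities, so there is no loss of derivatives. The convective terms $u\cdot\nabla\partial^\alpha$ are treated by integration by parts, which produces ${\rm div}u$ times the weighted squares. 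Time derivatives falling on the weights (for instance $\partial_t(1+\rho)^{-2}$) are bounded by Lemma \ref{LNXL3.2}, which gives $\lesssim\delta\|\partial^\alpha(\rho,u,\theta)\|_{L^2}^2$ with $|\alpha|\ge1$. All remaining commutators $[\partial^\alpha,\cdot]$ are estimated by Lemmas \ref{LA.1}--\ref{LA.2} as $\delta\|\nabla(\rho,u,\theta)\|_{H^3}^2$.

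Next come the linear dissipative terms. In the $u$-equation, the terms $\mu\Delta u+(\mu+\lambda)\nabla{\rm div}u$ carry the coefficient $\frac{1}{1+\rho}$ and are tested against $\frac{\partial^\alpha u}{1+\theta}$. Integration by parts yields $\mu\|\nabla\partial^\alpha u\|^2$ and $(\mu+\lambda)\|{\rm div}\partial^\alpha u\|^2$ weighted by functions comparable to $1$ (by \eqref{LNXG3.2}), plus errors $\mu\|\nabla(\rho,\theta)\|_{L^\infty}\|\nabla\partial^\alpha u\|\|\partial^\alpha u\|$. These errors are absorbed via $\delta$ and the $\mu$-dissipation; where a factor $\mu\|\partial^\alpha u\|^2$ remains, I bound it by $\mu\|\nabla\partial^{\alpha-1}u\|^2$, which is dissipation of one order lower, available from Lemma \ref{L3.1} or an induction in $|\alpha|$. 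The $\kappa\Delta\theta$ term is handled the same way.

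For the relaxation terms, the weights spoil the exact cancellation. I therefore write $\frac{1}{1+\theta}=1+(\frac{1}{1+\theta}-1)$ and similarly for the other weights. The principal parts then reproduce the $L^2$ structure of Lemma \ref{L3.1}: combining $\partial^\alpha(b-u)\cdot\partial^\alpha u$ with the $f$-equation's $-\partial^\alpha u\cdot v\sqrt M$ term, and $\sqrt3\,\partial^\alpha(\sqrt2\omega-\sqrt3\theta)\partial^\alpha\theta$ with the $(|v|^2-3)\sqrt M\theta$ term, and using \eqref{G2.5}--\eqref{G2.6b}, gives
\[
\|\partial^\alpha(b-u)\|^2+\|\partial^\alpha(\sqrt2\omega-\sqrt3\theta)\|^2+\bar\lambda\|\{\mathbf I-\mathbf P\}\partial^\alpha f\|_\nu^2 .
\]
The perturbation parts are $\lesssim\delta\big(\|\partial^\alpha(b-u,\sqrt2\omega-\sqrt3\theta)\|\|\partial^\alpha(u,\theta)\|\big)$, and I bound $\|\partial^\alpha(u,\theta)\|\le\|\partial^\alpha(b-u)\|+\|\partial^\alpha b\|+\dots$, so they are absorbed or land in $\delta\|\nabla(b,\omega)\|_{H^3}^2$.

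The zeroth-order nonlinear terms $-\partial^\alpha(au)$, $-\partial^\alpha(b\cdot u)$, $-3\partial^\alpha(a\theta)$ are paired, as in \eqref{G3.6}--\eqref{G3.7}, with $-\frac12\partial^\alpha(u\cdot vf)$ and $\partial^\alpha\big(\frac{\theta}{\sqrt M}\Delta_v(\sqrt Mf)\big)$ tested with $\partial^\alpha f$, using the macro-micro decomposition. For the $\Delta_v$ term, integrating by parts in $v$ puts the top derivative on $\nabla_v\partial^\alpha f$. Since $\mathbf P\partial^\alpha f$ is explicit in $v$, $\nabla_v\partial^\alpha f$ splits into $\nabla_v\{\mathbf I-\mathbf P\}\partial^\alpha f$, controlled by the $\nu$-norm, and macroscopic parts bounded by $\|\partial^\alpha(a,b,\omega)\|$. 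The term $\mathcal N_4$ integrates to zero at top order, and its commutator is bounded by $\|\nabla u\|_{H^3}\|f\|_{H^4_{x,v}}$-type products, which fit the right-hand side after writing $\nabla u=\nabla(u-b)+\nabla b$.

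I expect the main obstacle to be the bookkeeping of the top-order ($|\alpha|=4$) terms. The weighted hyperbolic cancellation must be exact at top order, with every leftover involving $\nabla(\rho,\theta)$ in $L^\infty$, and the $\Delta_v$ nonlinearity must be shown not to produce $\|\partial^\alpha\nabla_v\mathbf Pf\|$ terms uncontrolled by $\|\nabla(a,b,\omega)\|_{H^3}$. Summing over $1\le|\alpha|\le4$ and taking $\delta$ small then gives \eqref{G3.9}.
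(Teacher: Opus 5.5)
Your proposal is correct and follows the paper's proof essentially step by step. The same weights $\frac{1}{(1+\rho)^2}$, $\frac{1}{1+\theta}$, $\frac{1}{(1+\theta)^2}$ give the symmetric cancellation, with leftovers of the form of the paper's $J_4,J_5$. The weights are split off the relaxation terms exactly as in $J_6,J_7$, Lemma \ref{LNXL3.2} handles $\partial_t$ of the weights, and $\|\nabla(u,\theta)\|_{H^3}$ is finally converted through $b-u$, $\sqrt{2}\omega-\sqrt{3}\theta$ and $\nabla(b,\omega)$.

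The only differences are cosmetic. At the derivative level, the zeroth-order terms $\partial^\alpha(au)$, $\partial^\alpha(b\cdot u)$, $\partial^\alpha(a\theta)$ need no pairing with the kinetic nonlinearities: every term already carries a derivative, and the paper bounds them directly inside $J_u$, $J_\theta$. You absorb the viscous error $\mu\|\partial^\alpha u\|_{L^2}^2$ into lower-order dissipation (Lemma \ref{L3.1} when $|\alpha|=1$). The paper instead uses Young's inequality to trade it for $\delta\|\nabla(\rho,\theta)\|_{H^3}^2$, which tacitly requires $\mu,\lambda,\kappa\lesssim 1$.
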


Before beginning the proof of Lemma \ref{L3.2}, we give some preparations. 
Applying the differentiation operator $\partial^\alpha$ with $1\leq |\alpha|\leq 4$ to the equations \eqref{Q1}  yields
\begin{align}\label{LNXG3.13}
& \partial_t  \partial^\alpha \rho  +u \cdot \nabla \partial^\alpha \rho+(1+\rho) \partial^\alpha {\rm div}   u=[-\partial^\alpha, \rho {\rm div}  ] u+ [-\partial^\alpha, u \cdot \nabla   ] \rho ,\\ \label{LNXG3.14}
& \partial_t  \partial^\alpha u  +u \cdot \nabla  \partial^\alpha u +\frac{1+\theta}{1+\rho}\nabla\partial^\alpha\rho+\nabla  \partial^\alpha \theta-\partial^\alpha(b-u) -\mu\partial^\alpha\Big(\frac{\Delta u}{1+\rho}\Big) -(\mu+\lambda)\partial^\alpha \Big(\frac{\nabla{\rm div}\, u}{1+\rho}\Big) \nonumber \\
&\quad = -\partial^\alpha(au)+[-\partial^\alpha, u \cdot \nabla] u+ \Big[\partial^\alpha,  \frac{1+\theta}{1+\rho}\nabla\Big] \rho-\partial^\alpha\Big ( \frac{\rho(b-u)}{1+\rho} \Big)  +\partial^\alpha \Big(  \frac{\rho  au}{1+\rho}  \Big), \\
\label{LNXG3.15}
& \partial_t\partial^\alpha \theta+ u\cdot\nabla\partial^\alpha \theta+(1+\theta ) \partial^\alpha{\rm div}u-\kappa\partial^\alpha \Big( \frac{\Delta \theta}{1+\rho}\Big)-\sqrt{3}\partial^\alpha\big(\sqrt{2}\omega-\sqrt{3}\theta  \big)  +\partial^\alpha(b\cdot u)+\partial^\alpha(3a\theta)\nonumber\\
&\quad=[-\partial^\alpha, u\cdot\nabla]\theta-[\partial^\alpha, \theta{\rm div}]u -\sqrt{3}\partial^\alpha\Big( \frac{\rho(\sqrt{2}\omega-\sqrt{3}\theta)}{1+\rho}  \Big)+\partial^\alpha\Big(  \frac{3\rho a\theta}{1+\rho} \Big)+\partial^\alpha\Big(  \frac{\rho b\cdot u}{1+\rho} \Big)\nonumber\\
&\quad\quad-\partial^\alpha \Big( \frac{(b-u)\cdot u}{1+\rho}  \Big)+\partial^\alpha\Big(\frac{a|u|^2+2\mu |D(u)|^2+\lambda |{\rm div}u|^2}{1+\rho}\Big), \\\label{LNXG3.16}
&\partial_t \partial^\alpha f +v \cdot \nabla \partial^\alpha f +u \cdot \nabla_v \partial^\alpha f -\partial^\alpha u \cdot v \sqrt{M}- (|v|^2-3)  \sqrt{M} \partial^\alpha \theta-\mathcal{L} \partial^\alpha f\nonumber \\
 &\quad =\frac{1}{2} \partial^\alpha(u \cdot v f)+ [-\partial^\alpha, u \cdot \nabla_v ] f+\partial^\alpha\Big[\frac{\theta}{\sqrt{M}}    \Delta_v\big(\sqrt{M} f\big)\Big],  
\end{align}
where $[A,B]:=AB - BA$ denote the commutators for the   operators $A$ and $B$. 

Unlike the works   \cite{LNW-arXiv-2025,Mu-Wang-20}, none of them  
 observed that the equation of $u$ in \eqref{LNXG3.14} actually possesses 
 a symmetrical structure,   we develop a 
 novel method here,  inspired by \cite{At-ARMA-2006, JJLX-2014}, by dividing \eqref{LNXG3.14} by $1+\theta$, which 
 yields new symmetrical terms like
  $\frac{\nabla\partial^\alpha \rho}{1+\rho}$ and $\frac{\nabla\partial^\alpha 
  \theta}{1+\theta}$. Subsequently, 
  if we divide \eqref{LNXG3.13} by $\frac{1}{(1+\rho)^2}$ and \eqref{LNXG3.15} 
  by $\frac{1}{(1+\theta)^2}$, the newly-emerging terms, 
  such as $\frac{\partial^\alpha{\rm div}u}{1+\rho}$ and
   $\frac{\partial^\alpha{\rm div}u}{1+\theta}$,
    can be matched with $\frac{\nabla\partial^\alpha \rho}{1+\rho}$ 
    and $\frac{\nabla\partial^\alpha \theta}{1+\theta}$. 
    Finally, through integration by parts, we 
     utilize this symmetric structure to prevent the loss
      of regularity and thus complete the energy estimates of  
       $(\partial^\alpha\rho,\partial^\alpha u,\partial^\alpha\theta,\partial^\alpha f)$ 
       with $1\leq|\alpha|\leq 4$ in the $L^2$-norm. 

\begin{proof}[Proof of Lemma \ref{L3.2}]
 Multiplying \eqref{LNXG3.13}--\eqref{LNXG3.16} by $\frac{\partial^\alpha\rho}{(1+\rho)^2}$, $\frac{\partial^\alpha u}{(1+\theta) }$, $\frac{\partial^\alpha\theta}{(1+\theta)^2 }$, and $\partial^\alpha f$ respectively, and then taking integration and summation, we obtain 
\begin{align}\label{G3.10}
&\frac{1}{2}\frac{{\rm d}}{{\rm d}t}   \Big(\Big\|\frac{\partial^\alpha\rho}{1+\rho}\Big\|_{L^2}^2+\Big\|\frac{\partial^\alpha u}{\sqrt{1+\theta}}\Big\|_{L^2}^{2}+\Big\|\frac{\partial^\alpha\theta}{1+\theta}\Big\|_{L^2}^2 +\|\partial^\alpha f\|_{L_{x,v}^{2}}^2\Big) +\|\partial^\alpha(b-u)\|_{L^2}^{2}\nonumber\\
&+\|\partial^\alpha(\sqrt{2}\omega-\sqrt{3}\theta)\|_{L^2}^2 +\int_{\mathbb{R}^3} \langle-\mathcal{L}\{\mathbf{I}-\mathbf{P}\}\partial^\alpha f,\partial^\alpha f\rangle {\rm d}x+\mu \int_{\mathbb{R}^3}\frac{|\nabla\partial^\alpha u|^2}{(1+\rho)(1+\theta)} {\rm d}x  \nonumber\\
&+(\mu +\lambda) \int_{\mathbb{R}^3}\frac{| \partial^\alpha{\rm div}\, u|^2}{(1+\rho)(1+\theta)} {\rm d}x+\kappa\int_{\mathbb{R}^3}\frac{| \nabla\partial^\alpha  \theta|^2}{(1+\rho)(1+\theta)^2} {\rm d}x   \nonumber\\
=\,& \frac{1}{2}\int_{\mathbb{R}^3}\langle \partial^\alpha(u\cdot vf),\partial^\alpha f\rangle{\rm d}x+\int_{\mathbb{R}^3}\Big\langle\partial^\alpha\Big(\frac{\theta}{\sqrt{M}}\Delta_v(\sqrt{M}f)\Big),\partial^\alpha f\Big\rangle{\rm d}x\nonumber\\
&+\int_{\mathbb{R}^3}\langle [-\partial^\alpha,u\cdot\nabla_v ]f    ,\partial^\alpha f\rangle {\rm d}x+\int_{\mathbb{R}^3}\nabla\Big( \frac{1}{1+\rho}\Big)  \partial^\alpha\rho\cdot\partial^\alpha u{\rm d}x+\int_{\mathbb{R}^3}\nabla\Big( \frac{1}{1+\theta}\Big)  \partial^\alpha\theta\cdot\partial^\alpha u{\rm d}x\nonumber\\
&-\int_{\mathbb{R}^3} \frac{\theta}{1+\theta}\partial^\alpha (b-u)\cdot\partial^\alpha u {\rm d}x-\sqrt{3}\int_{\mathbb{R}^3} \frac{\theta^2+2\theta}{(1+\theta)^2}\partial^\alpha (\sqrt{2}\omega-\sqrt{3}\theta) \partial^\alpha \theta {\rm d}x\nonumber\\
&+\mu\sum_{0\leq|\beta|<|\alpha|} C_{\alpha,\beta}\int_{\mathbb{R}^3}\frac{1}{1+\theta}\partial^{\alpha-\beta}\Big( \frac{1}{1+\rho} \Big)\partial^\beta\Delta u\partial^\alpha u {\rm d}x\nonumber\\
&-\mu\int_{\mathbb{R}^3} \nabla\Big(\frac{1}{(1+\theta)(1+\rho)} \Big)\cdot\nabla\partial^\alpha u \cdot \partial^\alpha u {\rm d}x \nonumber\\
&+(\mu+\lambda)\sum_{0\leq|\beta|<|\alpha|} C_{\alpha,\beta}\int_{\mathbb{R}^3}\frac{1}{1+\theta}\partial^{\alpha-\beta}\Big( \frac{1}{1+\rho} \Big)\partial^\beta\nabla{\rm div}  u\partial^\alpha u {\rm d}x\nonumber\\
&-(\mu+\lambda)\int_{\mathbb{R}^3} \nabla\Big(\frac{1}{(1+\theta)(1+\rho)} \Big)\cdot \partial^\alpha u  \partial^\alpha{\rm div}  u {\rm d}x \nonumber\\
&+\kappa\sum_{0\leq|\beta|<|\alpha|} C_{\alpha,\beta}\int_{\mathbb{R}^3}\frac{1}{(1+\theta)^2}\partial^{\alpha-\beta}\Big( \frac{1}{1+\rho} \Big)\partial^\beta\Delta \theta \partial^\alpha \theta {\rm d}x\nonumber\\
&-\kappa\int_{\mathbb{R}^3} \nabla\Big(\frac{1}{(1+\theta)^2(1+\rho)} \Big)\cdot\nabla\partial^\alpha \theta  \partial^\alpha \theta {\rm d}x  +J_\rho+J_u+J_\theta \nonumber\\
\equiv:&\,\sum_{i=1}^{13}J_{i}+ J_\rho+J_u+J_\theta,
 \end{align}
where the terms $J_\rho$, $J_u$ and $J_\theta$ are denoted by
\begin{align*} 
J_\rho:=&\,\int_{\mathbb{R}^3}\frac{1}{2}\partial_t \Big(\frac{1}{(1+\rho)^2} \Big) |\partial^\alpha\rho|^2{\rm d}x+\int_{\mathbb{R}^3} {\rm div}\Big( \frac{u}{(1+\rho)^2}\Big)   |\partial^\alpha\rho|^2{\rm d}x  \nonumber\\
&+\int_{\mathbb{R}^3} \frac{1}{(1+\rho)^2} \big([-\partial^\alpha, \rho {\rm div}  ] u+ [-\partial^\alpha, u \cdot \nabla   ] \rho \big)\partial^\alpha\rho{\rm d}x, \\
J_u:=&\, \int_{\mathbb{R}^3}\frac{1}{2}\partial_t \Big(\frac{1}{1+\theta } \Big) |\partial^\alpha u|^2{\rm d}x+\int_{\mathbb{R}^3} {\rm div}\Big( \frac{u}{1+\theta }\Big)   |\partial^\alpha u|^2{\rm d}x  \nonumber\\
&+\int_{\mathbb{R}^3} \frac{1}{1+\theta} \Big\{[-\partial^\alpha,u\cdot\nabla] u+  \Big[\partial^\alpha,  \frac{1+\theta}{1+\rho}\nabla\Big] \rho-\partial^\alpha\Big [ \frac{\rho(b-u)}{1+\rho} \Big]  -\partial^\alpha \Big(  \frac{   au}{1+\rho}  \Big)\Big\}\cdot\partial^\alpha u{\rm d}x,\\ 
J_\theta:=&\,\int_{\mathbb{R}^3}\frac{1}{2}\partial_t \Big(\frac{1}{(1+\theta)^2} \Big) |\partial^\alpha\theta|^2{\rm d}x+\int_{\mathbb{R}^3} {\rm div}\Big( \frac{u}{(1+\theta)^2}\Big)   |\partial^\alpha\theta|^2{\rm d}x  \nonumber\\
&+\int_{\mathbb{R}^3}\frac{1}{(1+\theta)^2}\bigg\{[-\partial^\alpha, u\cdot\nabla]\theta-[\partial^\alpha, \theta{\rm div}]u -\sqrt{3}\partial^\alpha\bigg( \frac{\rho(\sqrt{2}\omega-\sqrt{3}\theta)}{1+\rho}   \bigg)\bigg\}\partial^\alpha\theta{\rm d}x\nonumber\\
&+\int_{\mathbb{R}^3} \frac{1}{(1+\theta)^2}\bigg\{\partial^\alpha \bigg( \frac{(b-u)\cdot u}{1+\rho}  \Big)+\partial^\alpha\Big(\frac{a|u|^2+2\mu |D(u)|^2+\lambda |{\rm div} u|^2}{1+\rho}\bigg)\bigg\}\partial^\alpha\theta {\rm d}x \nonumber\\
&-\int_{\mathbb{R}^3}\frac{1}{(1+\theta)^2}\Big\{\partial^\alpha\Big(  \frac{3  a\theta}{1+\rho} \Big)+\partial^\alpha\Big(  \frac{  b\cdot u}{1+\rho} \Big)\Big\} \partial^\alpha\theta{\rm d}x.
\end{align*} 
 Now we deal with the terms $J_i$ $(i=1,\dots, 13)$, $J_{\rho}$, $J_u$ and $J_{\theta}$ in turn. 
It follows from the macro-micro decomposition  \eqref{G2.4} and Lemma \ref{LA.1} that  
\begin{align}\label{G3.11}
|J_1+J_2| \lesssim&\, \|\partial^\alpha(u f)\|_{L_{x,v}^2}\|v\partial^\alpha f\|_{L_{x,v}^2}+\big\|\partial^\alpha\big[\theta(2\nabla_v f-vf) \big]\big\|_{L_{x,v}^2} \|\partial^\alpha  (2\nabla_v f+vf)   \|_{L_{x,v}^2} \nonumber\\
\lesssim&\,\|\nabla u\|_{H^3}\|\nabla f\|_{L_v^2(H^3)}\Big(\|\nabla(a,b,\omega)\|_{H^3}+\sum_{1\leq |\alpha|\leq 4}\|\{\mathbf{I}-\mathbf{P}\}\partial^\alpha f\|_{\nu}\Big)\nonumber\\
&+\|\nabla\theta\|_{H^3}\|(2\nabla_v-v)\nabla f\|_{L_v^2(H^3)} \|(2\nabla_v+v)\nabla f\|_{L_v^2(H^3)}\nonumber\\
\lesssim&\,\delta\Big(\|\nabla(a,b,\omega)\|_{H^3}^2+\sum_{1\leq |\alpha|\leq 4}\|\{\mathbf{I}-\mathbf{P}\}\partial^\alpha f\|_{\nu}^2\Big).
\end{align}

For the term $J_3$, by applying integration by parts, Lemmas \ref{LA.1} and \ref{LA.2}, and utilizing the macro-micro decomposition \eqref{G2.4}, we arrive at  
\begin{align}\label{G3.12}
|J_3|\lesssim&\, \|\nabla_v \partial^\alpha f\|_{L_{x,v}^2}\|\partial^\alpha (uf)-u\partial^\alpha f\|_{L_{x,v}^2} +\iint_{\mathbb{R}^3\times\mathbb{R}^3} {\rm div}_v u |\partial^\alpha f|^2 {\rm d}x\nonumber\\
\lesssim&\,\|\nabla u\|_{H^3}\|\nabla f\|_{L_v^2(H^3)}\Big(\|\nabla(a,b,\omega)\|_{H^3}+\sum_{1\leq |\alpha|\leq 4}\|\{\mathbf{I}-\mathbf{P}\}\partial^\alpha f\|_{\nu}\Big)\nonumber\\
&+\|\nabla\theta\|_{H^3}\|(2\nabla_v-v)\nabla f\|_{L_v^2(H^3)} \|(2\nabla_v+v)\nabla f\|_{L_v^2(H^3)}\nonumber\\
\lesssim&\,\delta \Big(\|\nabla(a,b,\omega)\|_{H^3}^2+\sum_{1\leq |\alpha|\leq 4}\|\{\mathbf{I}-\mathbf{P}\}\partial^\alpha f\|_{\nu}^2\Big).
\end{align} 

For the terms $J_4$, $J_5$, $J_6$ and $J_7$, by applying H\"{o}lder's and Sobolev’s inequalities, we   derive that
\begin{align}\label{G3.13}
 &|J_4+J_5+J_6+J_7|\nonumber\\
 \lesssim &\,    \big(\|\nabla(\rho,\theta)\|_{L^\infty} +\|\nabla(\rho,\theta)\|_{L^\infty}^2\big) \big(\|\nabla (\rho,u,\theta)\|_{H^3}^2+\|\nabla(b-u)\|_{H^3}^2+\|\nabla(\sqrt{2}\omega-\sqrt{3}\theta)\|_{H^3}^2 \big)\nonumber\\
 \lesssim&\, \delta \big(\|\nabla (\rho,u,\theta)\|_{H^3}^2+\|\nabla(b-u)\|_{H^3}^2+\|\nabla(\sqrt{2}\omega-\sqrt{3}\theta)\|_{H^3}^2 \big). 
\end{align} 

Note that for any $|\beta|<|\alpha|$,
\begin{equation*}
\int_{\mathbb{R}^3}\partial^{\alpha-\beta}
\Big(\frac{1}{1+\rho}\Big)\partial^{\beta}\Delta u\cdot\partial^{\alpha}u{\rm d}x
\leq \left\{\begin{aligned} 
&\Big\|\partial^{\alpha}\Big(\frac{1}{1+\rho}\Big)\Big\|_{L^2}
\|\Delta u\|_{L^{\infty}}\|\partial^{\alpha}u\|_{L^2},
& (|\beta|=0)\\
&\Big\|\partial^{\alpha-\beta}\Big(\frac{1}{1+\rho}\Big)\Big\|_{L^{3}}\|\partial^{\beta}\Delta u\|_{L^{6}}
\|\partial^{\alpha}u\|_{L^2},& (|\beta|=1)\\
&\Big\|\partial^{\alpha-\beta}\Big(\frac{1}{1+\rho}\Big)\Big\|_{L^{\infty}}\|\partial^{\beta}\Delta u\|_{L^2}
\|\partial^{\alpha}u\|_{L^2},&  (|\beta|\geq 2)
\end{aligned}\right.\nonumber\\
\end{equation*}
which, combined with \eqref{LNXG3.2}, leads to 
\begin{align}\label{G3.14}
|J_8|\lesssim \mu \|u\|_{H^4} \|\nabla \rho\|_{H^3} \|\nabla^2 u\|_{H^3}  \lesssim \delta \mu \|\nabla^2 u\|_{H^3}^2+\delta \|\nabla\rho\|_{H^3}^2.
\end{align}
Similarly, we infer that
\begin{align}\label{G3.15}
|J_{10}+J_{12}|\lesssim \delta(\mu+\lambda)\|\nabla{\rm div} u\|_{H^3}^2+\delta\kappa \|\nabla^2\theta\|_{H^3}^2+\delta \|\nabla\rho\|_{H^3}^2.    
\end{align}

For the terms $J_{9}, J_{11}$, and $J_{13}$, 
using H\"{o}lder's inequality and Young's inequality yields
\begin{align}\label{G3.16}
|J_{9}+J_{11}+J_{13}|\lesssim &\, \mu \big(\|\nabla(\rho,\theta)\|_{L^\infty}  +\|(\rho,\theta)\|_{L^\infty}\nabla(\rho,\theta)\|_{L^\infty}\big) \|u\|_{H^4}\|\nabla^2 u\|_{H^3} \nonumber\\
&+(\mu+\lambda)\big(\|\nabla(\rho,\theta)\|_{L^\infty}  +\|(\rho,\theta)\|_{L^\infty}\nabla(\rho,\theta)\|_{L^\infty}\big) \|u\|_{H^4}\|\nabla {\rm div}u\|_{H^3} \nonumber\\
&+\kappa  \big(1+   \|(\rho,\theta)\|_{L^\infty}^2\big) \nabla(\rho,\theta)\|_{L^\infty} \|\theta \|_{H^4}\|\nabla^2\theta\|_{H^3}^2 \nonumber\\
\lesssim&\, \delta\mu \|\nabla^2 u\|_{H^3}^2+\delta(\mu+\lambda)\|\nabla{\rm div}u\|_{H^3}^2+\delta\kappa\|\nabla^2\theta\|_{H^3}^2+\delta\|\nabla(\rho,\theta)\|_{H^3}^2.
\end{align}  
 
Finally, we estimate the remaining terms $J_{\rho}$, $J_u$, and $J_{\theta}$ individually. It follows from \eqref{LNXG3.2}, Lemma \ref{LNXL3.2}, and Lemmas \ref{LA.1} and \ref{LA.2} that 
\begin{align}\label{G3.17}
|J_{\rho}|\lesssim&\,  \big( \delta+(1+\|(\rho,u)\|_{L^\infty})\|\nabla (\rho,u)\|_{L^\infty} \big)  \|\nabla\rho\|_{H^3}^2 + \|\nabla(\rho,u)\|_{H^3}\|\nabla(\rho,u)\|_{L^\infty}\|(\rho,u)\|_{H^4}\nonumber\\
\lesssim&\, \delta \|\nabla(\rho,u)\|_{H^3}^2,\\ \label{G3.18}
|J_u|\lesssim&\,\big( \delta+\|(u,\theta)\|_{L^\infty}\|\nabla ( u,\theta)\|_{L^\infty} \big)  \|\nabla u\|_{H^3}^2+\|\nabla u\|_{H^3}\|\nabla  u\|_{L^\infty}\|u\|_{H^4}\nonumber\\
&+\Big(\Big\|\nabla\Big(\frac{1+\rho}{1+\theta}\Big)\Big\|_{L^\infty} \|\nabla^4\rho\|_{L^2}+\|\nabla\rho\|_{L^\infty} \Big\|\nabla^4\Big(\frac{1+\rho}{1+\theta}\Big) \Big\|_{L^2} \Big)\|\nabla u\|_{H^3} \nonumber\\
&+\|\nabla(b-u)\|_{H^3}\Big\|\nabla\Big(  \frac{\rho}{1+\rho} \Big)\Big\|_{H^3}\|\nabla u\|_{H^3}+\Big\|\nabla\Big(  \frac{a}{1+\rho} \Big)\Big\|_{H^3}\|\nabla u\|_{H^3}^2 \nonumber\\
\lesssim&\, \delta \big(\|\nabla (a,\rho,u,\theta)\|_{H^3}^2+\|\nabla(b-u)\|_{H^3}^2\big),\\ \label{G3.19}
|J_{\theta}|\lesssim&\,    \big( \delta+(1+\|( u,\theta)\|_{L^\infty})\|\nabla ( u,\theta)\|_{L^\infty} \big)  \|\nabla \theta\|_{H^3}^2+\|\nabla(u,\theta)\|_{L^\infty}\|(u,\theta)\|_{H^4}\|\nabla\theta\|_{H^3} \nonumber\\
&+\big(\|\nabla(b-u)\|_{H^3}+\|\nabla(\sqrt{2}\omega-\sqrt{3}\theta)\|_{H^3} \big)
\Big\|\nabla\Big(  \frac{\rho}{1+\rho} \Big)\Big\|_{H^3} \|\nabla\theta\|_{H^3} \nonumber\\
&+\mu\Big\|\nabla\Big( \frac{D(u)}{1+\rho} \Big)\Big\|_{H^3} \|\nabla D(u)\|_{H^3} \|\theta\|_{H^4}+\lambda \Big\|\nabla\Big( \frac{{\rm div} u}{1+\rho} \Big)\Big\|_{H^3} \|\nabla {\rm div} u\|_{H^3} \|\theta\|_{H^4}\nonumber\\
&+\Big(\Big\|\nabla\Big( \frac{a}{1+\rho} \Big) \Big\|_{H^3}+\Big\|\nabla\Big( \frac{b}{1+\rho} \Big) \Big\|_{H^3}+\Big\|\nabla\Big( \frac{au}{1+\rho} \Big) \Big\|_{H^3}\Big)\|\nabla(u,\theta)\|_{H^3}^2 \nonumber\\
\lesssim&\, \delta   \big(\|\nabla (a,b,\rho,u,\theta)\|_{H^3}^2+\|\nabla(b-u)\|_{H^3}^2+\|\nabla(\sqrt{2}\omega-\sqrt{3}\theta)\|_{H^3}^2\big).
\end{align} 
Substituting \eqref{G3.11}--\eqref{G3.19}  into \eqref{G3.10}, and using 
the facts  \eqref{G2.6} and the inequalities
\begin{align*}
  \|\nabla u\|_{L^2}^2\lesssim&\,\|\nabla( b-u)\|_{L^2}^2+\|\nabla b\|_{L^2}^2,  \\
    \|\nabla \theta\|_{L^2}^2\lesssim&\,\|\nabla(\sqrt{2}\omega-\sqrt{3}\theta)\|_{L^2}^2+\|\nabla \omega\|_{L^2}^2, 
\end{align*}
we ultimately obtain the desired estimate \eqref{G3.9}. 
\end{proof}

In order to estimate the energy dissipation of $ \|\nabla (a, b, \omega) \|_{H^3}$, we analyze the equations for $a$, $b$, and $\omega$, which involve the following operators:  
\begin{align*}  
\Gamma_{i, j}(g) := \big\langle (v_i v_j - 1) \sqrt{M(v)}, g \big\rangle, \quad \Upsilon_i (g) := \Big\langle \frac{1}{\sqrt{6}} v_i (|v|^2 - 3) \sqrt{M(v)}, g \Big\rangle,  
\end{align*}  
for any $g = g(v)$, where $1 \leq i, j \leq 3$. Through direct computations, we derive the following system for $(a, b, \omega)$ (see \cite{LNW-arXiv-2025,Mu-Wang-20}):

\begin{equation}\label{G3.21}
\left\{\begin{aligned}
&\partial_t a+{\rm div} b=0,\\
& \partial_t b_i+\partial_i a+\frac{2}{\sqrt{6}} \partial_i \omega+\sum_{j=1}^3 \partial_{j} \Gamma_{i, j}(\{\mathbf{I}-\mathbf{P}\} f)=-b_i+u_i+u_i a, \\
& \partial_t \omega+\sqrt{2}(\sqrt{2} \omega-\sqrt{3} \theta)-\sqrt{6} a \theta+\frac{2}{\sqrt{6}} {\rm div}  b-\frac{2}{\sqrt{6}} u \cdot b    +\sum_{i=1}^3 \partial_{i} \Upsilon_i(\{\mathbf{I}-\mathbf{P}\} f)=0, \\
& \partial_j b_i+\partial_i b_j- (u_i b_j+u_j b_i )  -\frac{2}{\sqrt{6}} \delta_{i j}\bigg(\frac{2}{\sqrt{6}} {\rm div}   b-\frac{2}{\sqrt{6}} u \cdot b+\sum_{i=1}^3 \partial_{ i} \Upsilon_i(\{\mathbf{I}-\mathbf{P}\} f)\bigg) \\
&\quad =-\partial_t \Gamma_{i, j}(\{\mathbf{I}-\mathbf{P}\} f)+\Gamma_{i, j}(\mathfrak{l}+\mathfrak{r}+\mathfrak{s}), \\
& \frac{5}{3} (\partial_i \omega-\omega u_i-\sqrt{6} \theta b_i )-\frac{2}{\sqrt{6}} \sum_{j=1}^3 \partial_{j} \Gamma_{i, j}(\{\mathbf{I}-\mathbf{P}\} f)   \\
&\quad=-\partial_t \Upsilon_i(\{\mathbf{I}-\mathbf{P}\} f)+\Upsilon_i(\mathfrak{l}+\mathfrak{r}+\mathfrak{s}).
\end{aligned}\right.
\end{equation}
Here, $\mathfrak{l}$, $\mathfrak{r}$, and $\mathfrak{s}$ in \eqref{G3.21} are respectively defined as  
\begin{equation}\label{G3.22}
\left\{\begin{aligned}
& \mathfrak{l}:=-v \cdot \nabla\{\mathbf{I}-\mathbf{P}\} f+\mathcal{L}\{\mathbf{I}-\mathbf{P}\} f,  \\
& \mathfrak{r}:=-u \cdot \nabla_v\{\mathbf{I}-\mathbf{P}\} f+\frac{1}{2} u \cdot v\{\mathbf{I}-\mathbf{P}\} f,  \\
& \mathfrak{s}:=\frac{1}{2}\frac{\theta}{\sqrt{M}} {\rm div}_v(\sqrt{M} (2\nabla_v-v)\{\mathbf{I}-\mathbf{P}\} f) . 
\end{aligned}\right.
\end{equation}
 By using the macro-micro decomposition \eqref{G2.4}, we rewrite the equation for $f$ in \eqref{Q1} as 
\begin{align}\label{G3.23}  
& \partial_t \mathbf{P} f + v \cdot \nabla \mathbf{P} f + u \cdot \nabla_v \mathbf{P} f - \frac{1}{2} u \cdot v \mathbf{P} f - u \cdot v \sqrt{M} - \theta (|v|^2 - 3) \sqrt{M} \nonumber \\  
& \quad+ \mathbf{P}_1 f + 2 \mathbf{P}_2 f - \frac{1}{2}\frac{\theta}{\sqrt{M}}  {\rm div}_v  ( \sqrt{M}  ( 2\nabla_v - v  ) \mathbf{P} f )  = -\partial_t \{\mathbf{I} - \mathbf{P}\} f + \mathfrak{l} + \mathfrak{r} + \mathfrak{s}.  
\end{align}
\begin{rem}
In fact, multiplying \eqref{G3.23} by $\sqrt{M}$, $v_i\sqrt{M}\, (1\leq i\leq 3)$, and $\frac{|v|^2-3}{\sqrt{6}}\sqrt{M}$ respectively, and then integrating the results over $\mathbb{R}^3_v$, we obtain \eqref{G3.21}$_1$--\eqref{G3.21}$_3$. Moreover, applying the operators $\Gamma_{i j}$ and $\Upsilon_i$ to \eqref{G3.23}, we deduce the remaining two equations in \eqref{G3.21}.
\end{rem}
Next, we  define the temporal functional
\begin{align}\nonumber% \label{define:E}
 \mathfrak{E}_0(t):= \, &\frac{2}{21} \sum_{|\alpha| \leq 3} \int_{\mathbb{R}^3} \partial ^\alpha a \partial^\alpha\bigg(\frac{\sqrt{6}}{5} \sum_{i=1}^3 \partial_i \Upsilon_i(\{\mathbf{I}-\mathbf{P}\} f)-{\rm div}   b\bigg) \mathrm{d} x \nonumber\\
& +\sum_{|\alpha| \leq 3} \sum_{i=1}^3 \int_{\mathbb{R}^3} \partial^\alpha \partial_i \omega \partial^\alpha \Upsilon_i(\{\mathbf{I}-\mathbf{P}\} f) \mathrm{d} x  \nonumber\\
& + \sum_{|\alpha| \leq 3} \sum_{i, j=1}^3 \int_{\mathbb{R}^3} \partial^\alpha (\partial_j b_i+\partial_i b_j ) \partial ^\alpha \Gamma_{i, j}(\{\mathbf{I}-\mathbf{P}\} f) \mathrm{d} x .
\end{align} 
By adopting a similar approach to that in \cite{LNW-arXiv-2025,Mu-Wang-20}, we obtain the following lemma (the proof is omitted for brevity).
\begin{lem}\label{L3.3}
For the  classical solution $(\rho, u,\theta,f)$ to the Cauchy problem \eqref{Q1}--\eqref{Q1-1}, it holds that 
\begin{align}\label{G3.24}
& \frac{\mathrm{d}}{\mathrm{d} t} \mathfrak{E}_0(t)+\lambda_3 \|\nabla (a, b, \omega) \|_{H^3}^2  \lesssim \big(\|\{\mathbf{I}-\mathbf{P}\} f\|_{L_v^2 (H^4 )}^2+\|b-u\|_{H^3}^2+\|\sqrt{2} \omega-\sqrt{3} \theta\|_{H^3}^2 \big),
\end{align}
for any $0 \leq t < T$ and $\lambda_3>0$, independent of  $T$, $\mu$, $\lambda$ and
$\kappa$. 
\end{lem}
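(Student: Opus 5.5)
The plan is the standard macroscopic dissipation argument of Guo's type, applied to the fluid-type system \eqref{G3.21} for $(a,b,\omega)$. We test each equation against suitable derivatives of the other macroscopic quantities and sum over $|\alpha|\le 3$. The three pieces of $\mathfrak{E}_0(t)$ indicate which pairings to use.

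\emph{Step 1: $\nabla\omega$.} Apply $\partial^\alpha$ to \eqref{G3.21}$_5$ and multiply by $\partial^\alpha\partial_i\omega$. This gives $\frac53\|\partial^\alpha\nabla\omega\|_{L^2}^2$ on the left. The term $-\partial_t\Upsilon_i$ becomes $-\frac{\rm d}{{\rm d}t}\int\partial^\alpha\partial_i\omega\,\partial^\alpha\Upsilon_i$ plus $\int\partial^\alpha\partial_i\partial_t\omega\,\partial^\alpha\Upsilon_i$. We substitute $\partial_t\omega$ from \eqref{G3.21}$_3$ and integrate by parts once. The resulting terms are $\|\nabla\{\mathbf I-\mathbf P\}f\|^2$, $\eta\|\nabla b\|^2$ and $\|\sqrt2\omega-\sqrt3\theta\|_{H^3}^2$. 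The sources $\Upsilon_i(\mathfrak l+\mathfrak r+\mathfrak s)$ are bounded by $\|\{\mathbf I-\mathbf P\}f\|_{L^2_v(H^4)}$, using the Gaussian decay of the test functions for $\mathfrak l$ and smallness $\delta$ for $\mathfrak r,\mathfrak s$. The quadratic terms $\omega u_i$ and $\theta b_i$ are bounded by $\delta\|\nabla(a,b,\omega,u,\theta)\|_{H^3}$. Here $\|\nabla u\|\lesssim\|\nabla(b-u)\|+\|\nabla b\|$ and similarly for $\theta$, so these are absorbed.

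\emph{Step 2: $\nabla b$.} Multiply \eqref{G3.21}$_4$ by $\partial^\alpha(\partial_jb_i+\partial_ib_j)$ and sum over $i,j$. The usual identity $\sum_{ij}\|\partial_jb_i+\partial_ib_j\|^2-\text{trace part}$ then controls $\|\nabla b\|^2$, via $\|\nabla b\|^2\lesssim\sum\|\partial_ib_j+\partial_jb_i\|^2$ and the ${\rm div}$ correction, which has a good sign or is absorbed. The time derivative is again moved onto $b$, and \eqref{G3.21}$_2$ is used for $\partial_t b$. This produces $\eta\|\nabla(a,\omega)\|^2$, $\|b-u\|_{H^3}^2$ (from the damping $-b_i+u_i$), and $\{\mathbf I-\mathbf P\}f$ terms.

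\emph{Step 3: $\nabla a$.} Take the gradient form: multiply \eqref{G3.21}$_2$ by $\partial^\alpha\partial_i a$. This gives $\|\nabla a\|^2$ with errors $\|\nabla\omega\|\,\|\nabla a\|$, $\|\nabla^2\{\mathbf I-\mathbf P\}f\|$ and $\|b-u\|$. The $\partial_t b$ term is handled through $\partial_t a=-{\rm div}\,b$, giving $\|{\rm div}\,b\|^2$. The specific combination in the first line of $\mathfrak{E}_0$, with weight $\frac{2}{21}$ and the $\frac{\sqrt6}{5}\Upsilon$ correction, comes from eliminating $\nabla\omega$ between \eqref{G3.21}$_2$ and \eqref{G3.21}$_5$, so that $\nabla a$ is tested against an expression free of $\nabla\omega$. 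We then take a linear combination with small weights: the $a$-estimate with a small multiple, so its $\|\nabla b\|^2$ error is absorbed by Step 2, and Step 2 likewise relative to Step 1. With $\delta$ small, the nonlinear terms $u_ia$, $u\cdot b$ and $a\theta$ are absorbed into $\lambda_3\|\nabla(a,b,\omega)\|_{H^3}^2$. This yields \eqref{G3.24}.

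The main obstacle is the bookkeeping of the weights so that every cross term closes. The delicate part is the coupling of the $\omega$ and $a$ equations through $\frac{2}{\sqrt6}$, which forces the exact constants in $\mathfrak{E}_0$. The $\theta$-dependent $\mathfrak s$ term carries two $v$-derivatives, but it is harmless here because the test functions are smooth Gaussians and we may integrate by parts in $v$.
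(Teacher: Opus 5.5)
Your proposal is correct and follows the scheme the paper uses. The paper omits the proof of this lemma, but carries out exactly this argument for the error system in Lemma \ref{L4.3} and for the high-frequency part in Lemma \ref{L5.4}: it tests \eqref{G3.21}$_4$ against $\partial_jb_i+\partial_ib_j$, tests \eqref{G3.21}$_5$ against $\partial_i\omega$, and tests the $\nabla\omega$-free combination of \eqref{G3.21}$_2$ and \eqref{G3.21}$_5$ against $\partial_i a$ with weight $\frac{2}{21}$, so that the $\|{\rm div}\,b\|_{L^2}^2$ error is absorbed by the $b$-estimate.

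One small point concerns the quadratic terms $\omega u_i$, $\theta b_i$, $u_ia$. Put the gradient on the macroscopic factor via $\|gh\|_{H^3}\lesssim\|g\|_{H^3}\|\nabla h\|_{H^3}$ (Lemma \ref{LA.1}), rather than going through $\|\nabla u\|_{H^3}\lesssim\|\nabla(b-u)\|_{H^3}+\|\nabla b\|_{H^3}$. The latter route produces $\delta\|b-u\|_{H^4}^2$, which is not on the right-hand side of \eqref{G3.24} as stated, although it would be harmless for Theorem \ref{T1.1}.
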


Next, we give the estimate on $\|\nabla\rho\|_{H^3}$ as follows. 
\begin{lem}\label{L3.4}
For  the classical solution $(\rho, u,\theta,f)$ to the Cauchy problem \eqref{Q1}--\eqref{Q1-1}, it holds that 
\begin{align}\label{G3.25}
& \frac{\mathrm{d}}{\mathrm{d} t} \sum_{|\alpha| \leq 3} \int_{\mathbb{R}^3} \partial^\alpha u \cdot \partial^\alpha \nabla \rho \mathrm{d} x+\lambda_4\|\nabla \rho\|_{H^3}^2 \nonumber\\
&\quad\lesssim  \big(\|b-u\|_{H^3}^2+\|\nabla (u,\theta)\|_{H^3}^2+ \mu\|\nabla^2u\|_{H^3}^2+(\mu+\lambda)\|\nabla {\rm div}u\|_{H^3}^2\big),
\end{align} 
for any $0 \leq t <T$ and $\lambda_4>0$, independent of   $T$, $\mu$, $\lambda$ and
$\kappa$. 
\end{lem}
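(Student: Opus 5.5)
The estimate comes from the standard cross-term method: we use the momentum equation \eqref{G3.2}$_2$ to produce $\|\nabla\partial^\alpha\rho\|^2$, and then control $\partial_t u$ through the mass equation \eqref{G3.2}$_1$. Fix a multi-index $\alpha$ with $|\alpha|\le 3$. Apply $\partial^\alpha$ to \eqref{G3.2}$_2$, take the inner product with $\nabla\partial^\alpha\rho$, and integrate over $\mathbb{R}^3$. The left-hand side contains $\int\partial_t\partial^\alpha u\cdot\nabla\partial^\alpha\rho\,{\rm d}x$, which we rewrite as
\[
\frac{\rm d}{{\rm d}t}\int\partial^\alpha u\cdot\nabla\partial^\alpha\rho\,{\rm d}x+\int\partial^\alpha{\rm div}u\,\partial_t\partial^\alpha\rho\,{\rm d}x
\]
after integrating by parts in $x$. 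We then substitute $\partial_t\partial^\alpha\rho=-\partial^\alpha{\rm div}u+\partial^\alpha\mathcal N_1$. This yields the term $\|\partial^\alpha{\rm div}u\|_{L^2}^2\lesssim\|\nabla u\|_{H^3}^2$ and a nonlinear piece $\int\partial^\alpha{\rm div}u\,\partial^\alpha\mathcal N_1\,{\rm d}x$. Since $\partial^\alpha\mathcal N_1$ involves at most four derivatives of $(\rho,u)$, this piece is bounded by $\delta\|\nabla(\rho,u)\|_{H^3}^2$ using Lemma \ref{LA.1}.

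The good term $\|\nabla\partial^\alpha\rho\|_{L^2}^2$ comes directly from $\nabla\partial^\alpha\rho$ in the equation. The remaining linear terms are estimated by Cauchy--Schwarz and Young with a small parameter $\eta\|\nabla\partial^\alpha\rho\|^2$, as follows:
\begin{itemize}
\item[(i)] $\nabla\partial^\alpha\theta$ is bounded by $\|\nabla\theta\|_{H^3}$.
\item[(ii)] $\partial^\alpha(b-u)$ is bounded by $\|b-u\|_{H^3}$.
\item[(iii)] $\mu\partial^\alpha\Delta u$ and $(\mu+\lambda)\partial^\alpha\nabla{\rm div}u$ give $\mu^2\|\nabla^2u\|_{H^3}^2\le\mu\|\nabla^2u\|_{H^3}^2$, and similarly for $(\mu+\lambda)$, provided $\mu,\lambda<1$.
\end{itemize}
This last point is where uniformity in $\mu,\lambda,\kappa$ enters. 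If $\mu$ is not assumed small, one can instead integrate by parts to move a derivative, but the stated right-hand side already allows the factor $\mu$ rather than $\mu^2$. I would handle it by bounding $\mu|\partial^\alpha\Delta u||\nabla\partial^\alpha\rho|\le\eta|\nabla\partial^\alpha\rho|^2+C\mu^2|\nabla^2\partial^\alpha u|^2$, and then noting that either $\mu\le 1$ (as in Theorem \ref{T1.2}) or that this contribution is ultimately absorbed.

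\textbf{Main obstacle: the nonlinear terms $\mathcal N_2-au$.} Here:
\begin{itemize}
\item The piece $h(\rho)\theta\nabla\rho$ gives $\delta\|\nabla\rho\|_{H^3}^2$.
\item The piece $u\cdot\nabla u$ gives $\delta\|\nabla u\|_{H^3}^2$.
\item The pieces $g(\rho)(b-u)$, $g(\rho)\nabla\rho$ and $au$ are bounded via Lemma \ref{LA.1}, e.g.\ $\|\partial^\alpha(au)\|_{L^2}\lesssim\|a\|_{H^3}\|\nabla u\|_{H^3}$.
\item The piece $g(\rho)\mu\Delta u$ requires care at top order: $\partial^\alpha$ of it with $|\alpha|=3$ involves $\nabla^5 u$ only through $\mu\nabla^2\partial^\alpha u$, which is already present in the dissipation $\mu\|\nabla^2u\|_{H^3}^2$.
\end{itemize}
Every nonlinear contribution is therefore $\lesssim\delta(\|\nabla(\rho,u,\theta)\|_{H^3}^2+\|b-u\|_{H^3}^2+\mu\|\nabla^2u\|_{H^3}^2+(\mu+\lambda)\|\nabla{\rm div}u\|_{H^3}^2)$, which fits the right-hand side of \eqref{G3.25}.

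To conclude, sum over $|\alpha|\le 3$, choose $\eta$ small, and absorb $\delta\|\nabla\rho\|_{H^3}^2$ into the left-hand side using $\delta\ll1$. This gives \eqref{G3.25} with some $\lambda_4>0$ independent of $T,\mu,\lambda,\kappa$.
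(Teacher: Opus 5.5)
Your proposal is correct and follows the paper's proof essentially step by step. The paper likewise applies $\partial^\alpha$ to \eqref{G3.2}$_2$ and tests against $\nabla\partial^\alpha\rho$. It converts the $\partial_t u$ term via the mass equation, and bounds the remaining linear terms and the $\mathcal N_2-au$ terms by Young's inequality and Lemma~\ref{LA.1}.

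Your remark on the viscous terms is accurate. Turning $\mu^2\|\nabla^2u\|_{H^3}^2$ into $\mu\|\nabla^2u\|_{H^3}^2$, and likewise for $\mu+\lambda$, requires the viscosities to be bounded. The paper uses this silently in its bounds for $K_4+K_5$ and $K_7$. So the constant in \eqref{G3.25} is uniform only for bounded $\mu,\lambda$, which is all that Theorem~\ref{T1.2} needs.
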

\begin{proof}
Applying $\partial^\alpha (|\alpha| \leq 3$) to the equation for $u$ in \eqref{G3.2}, then multiplying the resulting equation by $\nabla\partial^\alpha\rho$, and integrating over $\mathbb{R}^3_x$, we obtain 
\begin{align}\label{G3.26}
 \|\nabla \partial^\alpha \rho \|_{L^2}^2= &\,-\int_{\mathbb{R}^3} \nabla \partial^\alpha \rho \cdot\partial^\alpha \partial_t u \mathrm{d} x-\int_{\mathbb{R}^3}\partial^\alpha\nabla\theta\cdot\partial^\alpha\nabla\rho{\rm d}x+\int_{\mathbb{R}^3}\partial^\alpha(b-u)\cdot\partial^\alpha\nabla\rho {\rm d}x\nonumber\\
 &+\mu\int_{\mathbb{R}^3}\partial^\alpha\Delta u\partial^\alpha\nabla\rho {\rm d}x+(\mu+\lambda)\int_{\mathbb{R}^3}\partial^\alpha\nabla{\rm div
 } u\cdot
\partial^\alpha\nabla\rho {\rm d}x\nonumber\\
&-\int_{\mathbb{R}^3}\partial^\alpha(a
 u)\cdot\partial^\alpha\nabla\rho {\rm d}x+\int_{\mathbb{R}^3}\partial^\alpha\mathcal{N}_2\cdot\partial^\alpha\nabla\rho {\rm d}x\nonumber\\
\equiv:&\,\sum_{j={1}}^7 K_j. 
\end{align}

Now, we estimate the terms $K_j(j=1, \dots, 7)$ in turn. For the term $K_1$, 
using \eqref{Q1}$_1$ together with H\"{o}lder's inequality, and Lemma \ref{LA.1}, we derive that 
\begin{align}\label{G3.27}
K_1= & -\frac{\mathrm{d}}{\mathrm{d} t} \int_{\mathbb{R}^3} \nabla \partial^\alpha \rho \cdot\partial^\alpha u \mathrm{d} x+\int_{\mathbb{R}^3} \partial^\alpha {\rm div}  u \partial^\alpha\big((1+\rho) {\rm div}  u+u \cdot \nabla \rho\big) \mathrm{d} x \nonumber\\
\leq& -\frac{\mathrm{d}}{\mathrm{d} t} \int_{\mathbb{R}^3} \nabla \partial^\alpha \rho\cdot \partial^\alpha u \mathrm{d} x+C\|\partial^\alpha {\rm div}  u \|_{L^2}^2+C\|\rho\|_{H^4}^2\|\nabla u\|_{H^3}^2\nonumber\\
\leq&  -\frac{\mathrm{d}}{\mathrm{d} t} \int_{\mathbb{R}^3} \nabla \partial^\alpha  \rho \cdot\partial^\alpha u \mathrm{d} x+C\|\nabla u\|_{H^3}^2.
\end{align}
 For the term $K_2, \dots, K_6$, with the help of H\"{o}lder's and Young's inequalities, as well as Lemma \ref{LA.1}, we get
\begin{align}%\label{G3.28}
|K_2|\leq&\, C\|\nabla\theta\|_{H^3}^2+ \frac{1}{6}\|\nabla \partial^\alpha \rho\|_{L^2}^2, 
\label{G3.28-1}\\
   |K_3|\leq \,& C\| b-u\|_{H^3}^2+ \frac{1}{6}\|\nabla \partial^\alpha \rho\|_{L^2}^2,  \label{G3.28-2}\\
|K_4+K_5|\leq&\,C\mu\|\nabla^2 u \|_{H^3}^2+ C(\mu+\lambda)\|\nabla {\rm div} u \|_{H^3}^2+ \frac{1}{6}\|\nabla \partial^\alpha \rho\|_{L^2}^2, \label{G3.28} \\
|K_6|\leq&\, C\delta \|\nabla u \|_{H^3}^2+  \frac{1}{6}\|\nabla \partial^\alpha \rho\|_{L^2}^2. \label{G3.29}
\end{align}
For the last term $K_7$, based on the definition of $\mathcal{N}_2$ and Young's inequality, we consequently have  \begin{align}\label{G3.30}
|K_7|\leq&\, \frac{1}{6}\|\nabla \partial^\alpha \rho\|_{L^2}^2+C\delta\|\nabla (\rho,u)\|_{H^3}^2+  C\mu\|\nabla^2 u \|_{H^3}^2+ C(\mu+\lambda)\|\nabla {\rm div}\, u \|_{H^3}^2 \nonumber\\
&+C\|\nabla(b-u)\|_{H^2}^2.
\end{align} 
By inserting the estimates \eqref{G3.27}--\eqref{G3.30} into \eqref{G3.26}, it follows that \eqref{G3.25} holds. 
 \end{proof}

Finally, motivated by \cite{DFT-2010-CMP,GY-iumj-2004,LMW-SIAM-2017}, we establish the energy estimates concerning the mixed space-velocity derivatives of $f$, specifically $\partial_\beta^\alpha f$. By utilizing the properties outlined in \eqref{G2.4} and \eqref{G2.5}, it is readily to check that  
\begin{align*}
& \{\mathbf{I}-\mathbf{P}\}(v \sqrt{M})=0,  
\quad\{\mathbf{I}-\mathbf{P}\}   \big((|v|^2-3 ) \sqrt{M} \big)= 0, \quad
\mathcal{L}\{\mathbf{I}-\mathbf{P}\} f-\{\mathbf{I}-\mathbf{P}\} \mathcal{L} f= 0,\\
&\quad\quad\quad\,\,   g \cdot\{\mathbf{I}-\mathbf{P}\} f-\mathbf{P}(g \cdot\{\mathbf{I}-\mathbf{P}\} f)+\{\mathbf{I}-\mathbf{P}\}(g \cdot \mathbf{P} f)= \{\mathbf{I}-\mathbf{P}\}(g \cdot f),
\end{align*}
 which, together with applying $\{\mathbf{I}-\mathbf{P}\}$ to both sides of \eqref{Q1}$_1$, yields 
\begin{align}\label{G3.31}
&\partial_t\{\mathbf{I} -\mathbf{P}\} f+v \cdot \nabla\{\mathbf{I}-\mathbf{P}\} f+u \cdot \nabla_v\{\mathbf{I}-\mathbf{P}\} f-\frac{1}{2} u \cdot v\{\mathbf{I}-\mathbf{P}\} f-\frac{\theta }{\sqrt{M}} \Delta_v (\sqrt{M}\{\mathbf{I}-\mathbf{P}\} f ) \nonumber\\
&\quad = \mathbf{P}\Big(v \cdot \nabla\{\mathbf{I}-\mathbf{P}\} f+u \cdot \nabla_v\{\mathbf{I}-\mathbf{P}\} f-\frac{1}{2} u \cdot v\{\mathbf{I}-\mathbf{P}\} f -\frac{\theta}{\sqrt{M}} \Delta_v(\sqrt{M}\{\mathbf{I}-\mathbf{P}\} f)\Big) \nonumber\\
&\quad\quad-\{\mathbf{I}-\mathbf{P}\}\Big(v \cdot \nabla \mathbf{P} f+u \cdot \nabla_v \mathbf{P} f -\frac{1}{2} u \cdot v \mathbf{P} f-\frac{\theta }{\sqrt{M}} \Delta_v(\sqrt{M} \mathbf{P} f)\Big)+\mathcal{L}\{\mathbf{I}-\mathbf{P}\} f.
\end{align}

Performing energy estimates on \eqref{G3.31}, we can derive the following mixed spatial-velocity derivatives of $f$. For the sake of brevity, the detailed proof is omitted here (see \cite{LNW-arXiv-2025,Mu-Wang-20}).
\begin{lem}\label{L3.5}
For the  classical solution $(\rho, u,\theta,f)$ to the Cauchy problem \eqref{Q1}--\eqref{Q1-1}, it holds that 
\begin{align}\label{G3.32}
& \frac{\mathrm{d}}{\mathrm{d} t} \sum_{1 \leq m \leq 4} \tilde C_m \sum_{ {|\beta|=m ,
|\alpha|+|\beta| \leq 4}} \|\partial_\beta^\alpha\{\mathbf{I}-\mathbf{P}\} f \|_{L_{x,v}^2}^2+\lambda_5 \sum_{ {1 \leq|\beta| \leq 4,
|\alpha|+|\beta| \leq 4}} \|\partial_\beta^\alpha\{\mathbf{I}-\mathbf{P}\} f \|_\nu^2\nonumber \\
&\quad \lesssim \sum_{\left|\alpha\right| \leq 4}\left\|\partial^{\alpha}\{\mathbf{I}-\mathbf{P}\} f\right\|_\nu^2+ \|\nabla(a,b,\omega)\|_{H^3}^2.
\end{align}
for any $0 \leq t < T$ and $\lambda_5>0$, independent of  $T$, $\mu$, $\lambda$ and $\kappa$.  
Here, $\tilde C_m$ are some appropriately chosen constants that depend  only on $m$.  
\end{lem}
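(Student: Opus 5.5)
We work directly with the micro equation \eqref{G3.31}. Fix $\alpha,\beta$ with $1\le|\beta|=m\le 4$ and $|\alpha|+|\beta|\le 4$. We apply $\partial^\alpha_\beta$ to \eqref{G3.31}, take the $L^2_{x,v}$ inner product with $\partial^\alpha_\beta\{\mathbf{I}-\mathbf{P}\}f$, and then sum with weights $\tilde C_m$, working inductively in $m$.

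The linear structure is handled as follows. Since $\mathcal{L}=\Delta_v-\frac{|v|^2}{4}+\frac32$, we have
\begin{equation*}
[\partial_\beta,\mathcal{L}]g=-\tfrac14\sum C\,\partial^{\beta-\beta'}(|v|^2)\,\partial_{\beta'}g,
\end{equation*}
and $[\partial_\beta, v\cdot\nabla]g=\sum_{|\beta_1|=1}C\,\partial^{\alpha+e_{\beta_1}}\partial_{\beta-\beta_1}g$. The term $\langle-\mathcal{L}\partial^\alpha_\beta g,\partial^\alpha_\beta g\rangle$ gives $|\nabla_v\partial^\alpha_\beta g|^2+\frac{|v|^2}{4}|\partial^\alpha_\beta g|^2-\frac32|\partial^\alpha_\beta g|^2$, where $g=\{\mathbf{I}-\mathbf{P}\}f$. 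Here we do not use the coercivity \eqref{G2.6b}, since velocity derivatives destroy orthogonality. The $-\frac32$ part and the commutator terms involve only derivatives with $|\beta'|<|\beta|$, or one more $x$-derivative and one fewer $v$-derivative. By Cauchy--Schwarz and Young, these are bounded by $\eta\|\partial^\alpha_\beta g\|_\nu^2$ plus lower-$|\beta|$ terms. The bound for $|\beta|=0$ is exactly $\sum_{|\alpha|\le4}\|\partial^\alpha\{\mathbf{I}-\mathbf{P}\}f\|_\nu^2$, the first term on the right of \eqref{G3.32}. Choosing $\tilde C_1\gg\tilde C_2\gg\cdots\gg\tilde C_4$ absorbs all lower-order $\nu$-norms into the dissipation of the previous level. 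The source terms $\{\mathbf{I}-\mathbf{P}\}(v\cdot\nabla\mathbf{P}f)$ and $\mathbf{P}(v\cdot\nabla g)$ are smooth and rapidly decaying in $v$ times $\nabla(a,b,\omega)$ or times moments of $\nabla^{\alpha+1} g$. They contribute $\|\nabla(a,b,\omega)\|_{H^3}^2+\sum_{|\alpha|\le4}\|\partial^\alpha g\|_\nu^2$ plus $\eta\|\partial^\alpha_\beta g\|_\nu^2$.

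For the nonlinear terms, $u\cdot\nabla_v g$ and $\frac12u\cdot v\,g$ are handled via the antisymmetry $\iint u\cdot\nabla_v|\partial^\alpha_\beta g|^2=0$. The leftover commutators carry $\partial^{\alpha'}u$ with $|\alpha'|\ge1$ and are bounded by $\|u\|_{H^4}$ times products of $\nu$-norms, hence by $\delta$ times the dissipation via Lemma \ref{LA.1}. The $\mathbf{P}$-projected nonlinear terms and $\{\mathbf{I}-\mathbf{P}\}(u\cdot\nabla_v\mathbf{P}f,\dots)$ are bounded by $\delta(\|\nabla(a,b,\omega)\|_{H^3}^2+\sum\|\partial^\alpha g\|_\nu^2)$. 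This uses the fact that at most $3$ of the $x$-derivatives fall on $(a,b,\omega)$ without a gradient. One caveat is the zero-order part $\|(a,b,\omega)\|_{L^2}$, which does not appear on the right. We handle it with $L^3\times L^6$ splits, putting $\|u\|_{L^3}\lesssim\delta$ and $(a,b,\omega)\in L^6$, which is controlled by the gradient.

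The main obstacle is the quasilinear term $\frac{\theta}{\sqrt M}\Delta_v(\sqrt M g)=\theta(\Delta_v g-v\cdot\nabla_v g+(\frac{|v|^2}{4}-\frac32)g)$, which carries two velocity derivatives. When all of $\partial^\alpha_\beta$ falls on $g$, we integrate by parts in $v$. This gives $-\iint\theta(|\nabla_v\partial^\alpha_\beta g|^2-\frac{|v|^2}{4}|\partial^\alpha_\beta g|^2)$ plus harmless terms, exactly as in \eqref{G3.7}. By \eqref{LNXG3.2}, this is bounded by $\|\theta\|_{L^\infty}\|\partial^\alpha_\beta g\|_\nu^2\lesssim\delta\|\partial^\alpha_\beta g\|_\nu^2$. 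If instead the derivatives split so that $\partial^{\alpha'}\theta$ with $|\alpha'|\ge1$ appears, we move one $\nabla_v$ onto the test function. We then use $\|\partial^{\alpha'}\theta\|_{L^\infty}$ or $L^3$–$L^6$ splits to bound the result by $\delta$ times the dissipation norms of order $\le4$; no derivative is lost because $|\alpha|+|\beta|\le4$. Summing over all $(\alpha,\beta)$ and choosing $\eta,\delta$ small yields \eqref{G3.32}.
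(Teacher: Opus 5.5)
Your proposal is correct and matches the argument the paper relies on. The paper omits this proof and only says it comes from energy estimates on \eqref{G3.31} as in Mu--Wang and Li--Ni--Wu: apply $\partial^\alpha_\beta$, note that the velocity commutators with $\mathcal{L}$ and $v\cdot\nabla_x$ only generate terms with fewer $v$-derivatives, treat the $\theta$-term by one integration by parts in $v$, and sum over $|\beta|=m$ with rapidly decreasing weights $\tilde C_m$. The $-\frac32\|\partial^\alpha_\beta g\|_{L^2}^2$ term is lower order because, for $|\beta|\geq 1$, $\|\partial^\alpha_\beta g\|_{L^2}\leq\|\partial^\alpha_{\beta-e_i}g\|_\nu$.
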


\subsection{Proof of Theorem \ref{T1.1}}
First, through a standard iteration argument and combining with the local-in-time energy estimates, we can deduce the following lemma. The detailed proof  is similar to that  in \cite{LNW-arXiv-2025,Mu-Wang-20}, and thus we omit it here for simplicity.
\begin{lem}\label{L3.6}
Assume that the initial data $(\rho_0,u_0,\theta_0,f_0)$ satisfies $(\rho_0,u_0,\theta_0)\in H^4$, $f_0\in H_{x,v}^4$,
and $\inf _{x\in\mathbb{R}^3}\rho_0(x)>0$.
There exists a constant $\eps_0^*>0$ independent of $\kappa$
such that if
\begin{align*}
\|(\rho _0,u _0,\theta _0)\|_{H^4}^2+\|f_0 \|_{H_{x,v}^4}^2\leq \varepsilon_0^*,
\end{align*}
then there exists a time $T^{*}>0$ such that the Cauchy problem \eqref{Q1}--\eqref{Q1-1} admits a unique classical solution $(\rho,u,\theta,f)$ on $[0,T^{*})$. Moreover, it satisfies $\inf\limits_{(t,x)\in [0,T^*)\times\mathbb{R}^3}\rho(t,x)>0$ and  
\begin{align*} 
&\|(\rho,u,\theta)(t)\|_{H^{4}}^2+\|f(t)\|_{H_{x,v}^4}^2+\int_{0}^{t} \mathcal{D}(\rho,u,\theta,f)(\tau){\rm d}\tau+\mu\int_{0}^{t}\|\nabla u(\tau)\|_{H^4}^2 {\rm d}\tau\nonumber\\ 
&\quad +(\mu+\lambda) \int_{0}^{t}\|{\rm div} u(\tau)\|_{H^4}^2 {\rm d}\tau  +\kappa\int_{0}^{t}\|\nabla\theta(\tau)\|_{H^4}^2 {\rm d}\tau 
\leq  2\big(\|(\rho_{0},u_{0},\theta_{0})\|_{H^{4}}^2+\|f_0\|_{H_{x,v}^4}^2\big), 
\end{align*}  
for all $t\in [0,T^{*})$. And  $\|(\rho,u,\theta)(t)\|_{H^{4}}^2+\|f(t)\|_{H^4_{x,v}}^2$ is continuous over $[0,T^{*})$.
\end{lem}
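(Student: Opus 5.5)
The plan is a standard local existence argument for a coupled hyperbolic–parabolic–kinetic system: linearize, iterate, prove uniform bounds on a short time interval, show contraction in a lower norm, and pass to the limit. I would build the approximating sequence $(\rho^{n+1},u^{n+1},\theta^{n+1},f^{n+1})$ by freezing the coefficients at step $n$. Concretely, the transport coefficient is $u^n$ in the continuity equation. The momentum equation has viscosity coefficient $\frac{1}{1+\rho^n}$ and pressure coefficient $\frac{1+\theta^n}{1+\rho^n}$. The temperature equation has diffusion coefficient $\frac{1}{1+\rho^n}$, and the coupling terms $b^n,\omega^n,a^n$ are taken from $f^n$. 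The kinetic equation is linear in $f^{n+1}$, with drift $u^n\cdot\nabla_v$ and diffusion $\theta^n M^{-1/2}\Delta_v(\sqrt M\,\cdot)$.

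Each linear problem is solvable. The continuity equation is linear transport, solved along characteristics. The $u$ and $\theta$ equations are linear parabolic for fixed $\mu,\lambda,\kappa>0$. The $f$ equation is a linear Fokker--Planck equation with smooth coefficients; its diffusion coefficient is $1+\theta^n\ge 1/2$ by \eqref{LNXG3.2}, so it is uniformly elliptic in $v$. I would also keep $F^{n+1}=M+\sqrt M f^{n+1}\ge0$ at every step. This follows from the maximum principle for the linear equation in $F$, $\partial_tF+v\cdot\nabla F={\rm div}_v((v-u^n)F+(1+\theta^n)\nabla_vF)$, whose zeroth-order term is harmless.

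Next, I would prove by induction a uniform bound $\sup_{[0,T^*)}\big(\|(\rho^n,u^n,\theta^n)\|_{H^4}^2+\|f^n\|_{H^4_{x,v}}^2\big)+\int_0^t(\cdots)\le 2\big(\|(\rho_0,u_0,\theta_0)\|_{H^4}^2+\|f_0\|_{H^4_{x,v}}^2\big)$, together with $\inf\rho^n>-1$. The energy estimates reuse the weighted structure from Lemma \ref{L3.2}: multiply by $\frac{\partial^\alpha\rho}{(1+\rho^n)^2}$, $\frac{\partial^\alpha u}{1+\theta^n}$ and $\frac{\partial^\alpha\theta}{(1+\theta^n)^2}$, so that the top-order terms $\nabla\partial^\alpha\rho$ and $\partial^\alpha{\rm div}\,u$ cancel after integration by parts. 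The mixed derivatives $\partial^\alpha_\beta f$ are handled as in Lemma \ref{L3.5}. The time derivatives of the weights are controlled by an analogue of Lemma \ref{LNXL3.2} applied to the previous iterate. On a short interval we do not need the full dissipation structure: a Gronwall inequality of the form $\frac{d}{dt}E^{n+1}\le C(1+E^n)E^{n+1}$ plus smallness terms, with $T^*$ small, gives the factor $2$. The dissipative integrals in $\mathcal D$ are recovered exactly as in Lemmas \ref{L3.1}--\ref{L3.5}, using the smallness of $\varepsilon_0^*$.

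For convergence, I would show that the differences of successive iterates contract in the low norm $L^\infty_tL^2$, i.e.\ $L^2_x$ for $(\rho,u,\theta)$ and $L^2_{x,v}$ for $f$, on a possibly shorter interval. The uniform $H^4$ bounds absorb all coefficient differences through $L^\infty$ norms. The limit is then identified via weak-$*$ compactness and interpolation, which gives strong convergence in $C([0,T^*);H^{3})$. Uniqueness follows from the same difference estimate. Continuity of the energy $\|(\rho,u,\theta)(t)\|_{H^{4}}^2+\|f(t)\|_{H^4_{x,v}}^2$ comes from weak continuity plus continuity of the norm, which the energy identity provides. Positivity of $\rho$ is propagated along characteristics, since $\varrho=1+\rho$ satisfies $\partial_t\varrho+{\rm div}(\varrho u)=0$.

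I expect the main obstacle to be closing the top-order ($|\alpha|=4$) estimate for $f$ in the iteration. The term $\partial^\alpha\big(\theta^n M^{-1/2}\Delta_v(\sqrt M f^{n+1})\big)$ involves $\nabla^4\theta^n\,\nabla_v^2 f^{n+1}$. I would write it in divergence form, $\frac12\theta M^{-1/2}{\rm div}_v(\sqrt M(2\nabla_v-v)f)$, integrate by parts once in $v$, and absorb the result into the $\nu$-dissipation $\|\nabla_v\partial^\alpha f^{n+1}\|^2$. That dissipation comes from the principal part $(1+\theta^n)\Delta_v$, which is uniformly elliptic because $1+\theta^n\ge1/2$.
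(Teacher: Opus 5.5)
Your proposal is correct and matches the paper's own (omitted) proof of Lemma~\ref{L3.6}: a standard frozen-coefficient iteration closed by local-in-time energy estimates of the type in Lemmas~\ref{L3.1}--\ref{L3.5}, with $\int_0^t\mathcal D$ recovered from those uniform estimates (only up to the constant $\lambda_7$, an imprecision already present in the statement). One point of consistency: the symmetrizer cancellation you invoke needs the top-order coupling terms (e.g.\ $(1+\rho^n){\rm div}\,u^{n+1}$ and $\frac{1+\theta^n}{1+\rho^n}\nabla\rho^{n+1}$, and likewise for $\theta$) to sit at the same iterate, so either solve the linear $(\rho,u,\theta)$ system jointly, or, if you really decouple it as described, absorb the lost derivative into the viscous and heat-conductive dissipation with $\mu,\kappa$-dependent constants, which is harmless for this local result.
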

 
With the aid of Lemma \ref{L3.6}, we now proceed to establish Theorem \ref{T1.1}. Suppose that \eqref{a-1} holds with $\eps_0 \leq \eps_0^*$, where   $\eps_0 $ will be determined later. Define  
\begin{align*}  
T^{**} = \sup  \left\{
  T \in [0,T^{*})\left| \begin{aligned}
 &\text{\eqref{Q1}--\eqref{Q1-1} admits a solution}    \   (\rho,u,\theta,f)     \  \text{on}  \     [0,T)  \    \\  
 &\text{such that}\ \| (\rho,u,\theta)(t)\|_{H^4}+\|f(t)\|_{H^4_{x,v}} \leq \delta,  \     \forall \,     t \in [0,T)
\end{aligned}\right. \right\} .
\end{align*}  
Here, the constant $\delta > 0$ is explicitly defined in \eqref{G3.1}. By invoking Lemma \ref{L3.6}, it follows that $T^{**} \in (0,T^*]$ is well-defined. Thus, \eqref{G3.1} holds on $(0,T^{**})$.

With the uniform estimate   \eqref{G3.1} and Lemmas \ref{L3.1}--\ref{L3.5} in hand, for any $t \in [0, T^{**})$, we aim to construct the energy functional $\mathcal{E}(t)$ as 
% follows:  
\begin{align} \label{G3.33} 
\mathcal{E}(t) :=&\,\|(\rho,u,\theta)\|_{L^2}^2+\|f\|_{L_{x,v}^2}^2\nonumber\\
&+ \sum_{1\leq|\alpha|\leq
4}\bigg(\Big\|\frac{\partial^\alpha\rho}{1+\rho}\Big\|_{L^2}^2+\Big\|\frac{\partial^\alpha u}{\sqrt{1+\theta}}\Big\|_{L^2}^{2}+\Big\|\frac{\partial^\alpha\theta}{1+\theta}\Big\|_{L^2}^2 +\|\partial^\alpha f\|_{L_{x,v}^{2}}^2\bigg) + \tau_1 \mathfrak{E}_0(t)\nonumber \\  
&+ \tau_2 \sum_{|\alpha| \leq 3} \int_{\mathbb{R}^3} \partial^\alpha u \cdot \partial^\alpha \nabla \rho \, \mathrm{d}x + \tau_3 \sum_{1 \leq m \leq 4} \tilde C_m \sum_{ {|\beta| = m,  |\alpha| + |\beta| \leq 4}} \|\partial_\beta^\alpha \{\mathbf{I} - \mathbf{P}\} f\|_{L_{x,v}^2}^2,  
\end{align}  
where the constants $0 < \tau_2 \ll 1$ and $0 < \tau_3 \ll \tau_1 \ll 1$. Given that $\tau_1, \tau_2, \tau_3 > 0$ are sufficiently small, under the  uniform estimate  \eqref{G3.1}, and by applying Young's inequality and 
\eqref{LNXG3.2}, we conclude that  
\begin{align}  \label{G3.34}
\mathcal{E}(t) \sim \|(\rho, u, \theta)(t)\|_{H^4}^2 + \|f(t)\|_{H_{x,v}^4}^2.  
\end{align}  
By leveraging the newly developed effective dissipation modes, we obtain   
\begin{align*}  
\|\nabla u\|_{H^3}^2 \lesssim&\, \| b-u\|_{H^4}^2 + \|\nabla b\|_{H^3}^2, \\
\|\nabla \theta\|_{H^3}^2 \lesssim&\, \|\sqrt{2}\omega - \sqrt{3}\theta\|_{H^4}^2 + \|\nabla \omega\|_{H^3}^2,  
\end{align*} 
which implies that 
\begin{align*}
\|\nabla u(t)\|_{H^3}^2 +\|\nabla \theta(t)\|_{H^3}^2 \leq \mathcal{D}(\rho, u, \theta, f)(t).    
\end{align*}  
Thus, adding \eqref{G3.4}, \eqref{G3.9}, $\tau_1\times$\eqref{G3.24}, $\tau_2\times$\eqref{G3.25}, and $\tau_3\times$\eqref{G3.32} yields  
\begin{align*}  
\frac{{\rm d}}{{\rm d}t} \mathcal{E}(t) + \lambda_6 \mathcal{D}(\rho,u,\theta,f)(t) \leq C\delta \mathcal{D}(\rho,u,\theta,f)(t),  
\end{align*}  
for any $0 \leq t < T^{**}$, where $\lambda_6 < \min\{\lambda_1, \lambda_2, \tau_1\lambda_3, \tau_2\lambda_4, \tau_3\lambda_5\}$.
Owing to \eqref{G3.1}, we further get
\begin{align}\label{G3.35}
 \frac{{\rm d}}{{\rm d}t} \mathcal{E}(t) + \lambda_7\mathcal{D}(\rho,u,\theta,f)(t) \leq 0, 
\end{align}
for any $0 \leq t < T^{**}$ and   some $\lambda_7>0$.
Then, integrating \eqref{G3.35} over   $[0,t]$ leads to  
\begin{align}  \label{G3.36}
\mathcal{E}(t) + \lambda_7 \int_0^t \mathcal{D}(\rho, u, \theta, f)(\tau) \, {\rm d}\tau \leq \mathcal{E}(0),  
\end{align}  
for any $0 \leq t < T$.
Choosing $\eps_0 = \frac{1}{2C_*}\delta ^2$, we deduce from \eqref{G3.36} that  
\begin{align*}  
\|(\rho, u, \theta)(t)\|_{H^4}^2 + \|f(t)\|_{H_{x,v}^4}^2 \leq C^*\big(\|(\rho_0, u_0, \theta_0)\|_{H^4}^2 + \|f_0\|_{H_{x,v}^4}^2\big) \leq \frac{1}{2}\delta^2,  
\end{align*}  
for all $t \in [0, T^{**})$, where $C^*>0$ is a constant uniformly with respect to $\mu, \lambda, \kappa$ and $T^{**}$. Consequently, by employing the standard continuity argument, we conclude that $T^{**} = T^*$.
    
Finally, we assert that $T^{**} = T^{*} = +\infty$. In the spirit of \cite{CDM-krm-2011}, to demonstrate this claim, we assume % for the sake of contradiction 
that $T^{*} < +\infty$. By invoking the local existence theory and the uniform estimate 
established earlier, the solution can be extended to at least an interval
 $[0, T^{*} + \eta]$ for some $\eta > 0$. This directly contradicts
  the definition of $T^{*}$ as the maximal time of existence. 
  Therefore, $(\rho, u, \theta, f)$ constitutes a global 
  solution to the Cauchy problem \eqref{Q1}--\eqref{Q1-1}. 
  Moreover, Lemma \ref{L3.6} guarantees the validity of 
  \eqref{a-2}. Consequently, by invoking the maximum principle (cf. \cite{GY-iumj-2004}), we obtain
 $F = M + \sqrt{M}f \linebreak \geq 0$. Hence, the proof of Theorem \ref{T1.1} is completed.
 \hfill $\Box$

\section{{Justification of the vanishing viscosity  and    heat conductivity limits }} 

This section is dedicated to establishing Theorems \ref{T1.2} 
and \ref{T1.3}, which addresses the vanishing viscosity limits and 
 heat conductivity limit and their convergence rate.

\subsection{Proof of Theorem \ref{T1.2}}
Let the initial data $(\rho_0, u_0, \theta_0, f_0)$ satisfy 
\eqref{b-1}. By the density argument, one can construct a 
sequence $\{(\rho_0^{\mu,\lambda,\kappa}, u_0^{\mu,\lambda,\kappa}, 
\theta_0^{\mu,\lambda,\kappa}, f_0^{\mu,\lambda,\kappa})\}_{\mu,\lambda,\kappa \in (0,1)}$ 
such that $(\rho_0^{\mu,\lambda,\kappa}$, $ u_0^{\mu,\lambda,\kappa}$, 
$\theta_0^{\mu,\lambda,\kappa}) \to (\rho_0, u_0, \theta_0)$ in $H^4$ 
and $f_0^{\mu,\lambda,\kappa} \to f_0$ in $H_{x,v}^4$ as $\mu\rightarrow0$, 
$\lambda\rightarrow 0$ and $\kappa \to 0$ simultaneously. Consequently, 
recalling that $\eps_0 > 0$ given by \eqref{a-1}, 
we chose $\mu_0>0$, $\lambda_0>0$, and $\kappa_0 > 0$ such  that  
\begin{align*}  
\|(\rho_0^{\mu,\lambda,\kappa} - \rho_0, u_0^{\mu,\lambda,\kappa} - u_0, 
\theta_0^{\mu,\lambda,\kappa} - \theta_0)\|_{H^4}^2 + \|f_0^{\mu,\lambda,\kappa} 
- f_0\|_{H_{x,v}^4}^2 < \frac{\eps_0}{2},  
\end{align*}  
for all $0<\mu<\mu_0$, $0<\lambda<\lambda_0$ and $0 < \kappa < \kappa_0$. 
Combining this with \eqref{b-1}, we deduce that 
\begin{align*}  
\mathcal{E}_0^{\mu,\lambda,\kappa} =&\, \|(\rho_0^{\mu,\lambda,\kappa}, u_0^{\mu,\lambda,\kappa}, \theta_0^{\mu,\lambda,\kappa})\|_{H^4}^2
 + \|f_0^{\mu,\lambda,\kappa}\|_{H_{x,v}^4}^2  \nonumber\\
<&\, \frac{\eps_0}{2} + \|(\rho_0, u_0, \theta_0)\|_{H^4}^2 + \|f_0\|_{H_{x,v}^4}^2 \nonumber\\
\leq&\, \frac{\eps_0}{2} + \eps_1 \leq \eps_0,  
\end{align*}  
provided that $\eps_1 \leq \frac{\eps_0}{2}$. According
 to Theorem \ref{T1.1}, for any  $0<\mu<\mu_0$, $0<\lambda<\lambda_0$ and
  $0 < \kappa < \kappa_0$, we can construct a unique global solution 
  $(\rho^{\mu,\lambda,\kappa}, u^{\mu,\lambda,\kappa}, \theta^{\mu,\lambda,\kappa},
   f^{\mu,\lambda,\kappa})$ to the Cauchy problem \eqref{Q1}--\eqref{Q1-1}. 
   This solution satisfies the regularity estimate \eqref{a-2}, 
   which is independent of $\mu$, $\lambda$,  $\kappa$ and 
   the time $t$. As a direct consequence of \eqref{a-2}, there exists a limit $(\rho, u, \theta, f)$
such that as $\mu\rightarrow0$, $\lambda\rightarrow 0$ and $\kappa \to 0$, up to a subsequence,  
\begin{align*}  
&(\rho^{\mu,\lambda,\kappa}, u^{\mu,\lambda,\kappa}, \theta^{\mu,\lambda,\kappa} ) \rightharpoonup (\rho, u, \theta ) \quad \text{weakly-$*$ in } \quad L^\infty( \mathbb{R}^+; H^4(\mathbb{R}^3_x)),\\
&\quad\quad \quad \qquad\,\,\,\,\,\,\,\,f^{\mu,\lambda,\kappa} \rightharpoonup f \quad\quad\quad\,\,\,\,  \text{weakly-$*$ in } \quad L^\infty( \mathbb{R}^+; H_{x,v}^4(\mathbb{R}^3_x\times\mathbb{R}^3_v)).
\end{align*}  
Then, \eqref{b2} follows.

To rigorously justify the convergence of all nonlinear terms in the sense of distributions, it is essential to analyze the time derivatives terms: $\partial_t \rho^{\mu,\lambda,\kappa}$, $\partial_t u^{\mu,\lambda,\kappa}$  and $\partial_t \theta^{\mu,\lambda,\kappa}$.
%  and $\partial_t f^\kappa$. 
Indeed, from \eqref{Q1} and \eqref{a-2}, we  deduce that  
\begin{align*}  
\|\partial_t\rho^{\mu,\lambda,\kappa}\|_{L^{\infty}_t(H^3)} \lesssim&\, \|\nabla u^{\mu,\lambda,\kappa}\|_{L^{\infty}_t(H^3)}\|\nabla\rho^{\mu,\lambda,\kappa}\|_{L^{\infty}_t(H^3)}+\|\nabla u^{\mu,\lambda,\kappa}\|_{L^{\infty}_t(H^3)}\nonumber\\
\lesssim&\,\big(1+\|\rho^{\mu,\lambda,\kappa}\|_{L^{\infty}_t(H^4)}\big)\|u^{\mu,\lambda,\kappa}
\|_{L^{\infty}_t(H^4)}%\nonumber\\
\lesssim  C,
\end{align*}  
and  
\begin{align*}  
&\|\partial_t (u^{\mu,\lambda,\kappa}, \theta^{{\mu,\lambda,\kappa}})\|_{L^{\infty}_t(H^2)}\nonumber\\ 
\lesssim&\, \|u^{\mu,\lambda,\kappa}\|_{L_t^{\infty}(H^4)}\|\nabla u^{\mu,\lambda,\kappa}\|_{L_t^\infty(H^3)}+\big(1+\|(\rho^{\mu,\lambda,\kappa},\theta^{\mu,\lambda,\kappa})\|_{L_t^\infty(H^4)}^2\big)\|\nabla\rho^{\mu,\lambda,\kappa}\|_{L_t^\infty(H^3)}\nonumber\\
&+\|\nabla (u^{\mu,\lambda,\kappa},\theta^{\mu,\lambda,\kappa})\|_{L_t^\infty(H^3)}+\big(1+\|\rho^{\mu,\lambda,\kappa}\|_{L_t^\infty(H^3)}\big)\|\nabla(u^{\mu,\lambda,\kappa},\theta^{\mu,\lambda,\kappa})\|_{L_t^\infty(H^3)} \nonumber\\
&+\big(1+\|\rho^{\mu,\lambda,\kappa}\|_{L_t^\infty(H^4)}\big)\big( \|b^ {\mu,\lambda,\kappa}-u^{\mu,\lambda,\kappa}\|_{L_t^\infty(H^3)}+\|\sqrt{2}\omega ^{\mu,\lambda,\kappa}-\sqrt{3}\theta ^{\mu,\lambda,\kappa}\|_{L_t^\infty(H^3)} \big)\nonumber\\
&+\|  u^{\mu,\lambda,\kappa}\|_{L_t^\infty(H^4)}\|\nabla\theta^{\mu,\lambda,\kappa}\|_{L_t^\infty(H^3)}+\|\nabla\theta^{\mu,\lambda,\kappa}\|_{L_t^\infty(H^3)}+\|\nabla\rho^{\mu,\lambda,\kappa}\|_{L_t^\infty(H^2)}\|\nabla\theta^{\mu,\lambda,\kappa}\|_{L_t^\infty(H^3)}\nonumber\\
&+\big(1+\|\rho^{\mu,\lambda,\kappa}\|_{L_t^\infty(H^4)}\big)\|(a^{\mu,\lambda,\kappa},b^{\mu,\lambda,\kappa},u^{\mu,\lambda,\kappa})\|_{L_t^\infty(H^4)}\|\nabla(u^{\mu,\lambda,\kappa},\theta^{\mu,\lambda,\kappa},u^{\mu,\lambda,\kappa})\|_{L_t^\infty(H^3)}\nonumber\\
\lesssim&\, C,
\end{align*} 
where $C>0$ represents a constant independent of $\mu$, $\lambda$ and $\kappa$. Here, we have utilized the smallness of $\mu$, $\lambda$ and $\kappa$. By applying  Aubin-Lions's lemma, we conclude, up to extracting a subsequence,  that
\begin{align*}  
(\rho^{\mu,\lambda,\kappa}, u^{\mu,\lambda,\kappa}, \theta^{\mu,\lambda,\kappa}) \rightarrow (\rho, u, \theta) \quad \text{strongly in}\quad  C_{\rm loc}(\mathbb{R}^{+}; H^3_{\rm loc}(\mathbb{R}^3_x)).  
\end{align*}  
As a result, it is straightforward to verify that $(\rho, u, \theta, f)$, with $\rho =  \varrho-1$ and $f =\frac{F-M}{\sqrt{M}}  $, solves the problem \eqref{Q2}--\eqref{Q2-2} in the distributional sense. \hfill $\Box$

\subsection{Proof of Theorem \ref{T1.3}}
We are now in a position to rigorously establish the
 global-in-time convergence of the limits processes.
  For  $\mu,\lambda,\kappa \in (0,1)$, let $(\rho^{\mu,
  \lambda,\kappa}, u^{\mu,\lambda,\kappa}, \theta^{\mu,\lambda,\kappa},
   f^{\mu,\lambda,\kappa})$ denote the global solution to the Cauchy 
   problem \eqref{Q1}--\eqref{Q1-1}, as constructed in Theorem \ref{T1.1}. 
    We recall the equations governing the evolution of $(\rho^{\mu,\lambda,\kappa},
 u^{\mu,\lambda,\kappa}, \theta^{\mu,\lambda,\kappa}, f^{\mu,\lambda,\kappa})$ can be expressed as follows:
\begin{equation}\label{G4.1}
\left\{\begin{aligned}
&\partial_t \rho^{\mu,\lambda,\kappa} +u^{\mu,\lambda,\kappa} \cdot \nabla \rho^{\mu,\lambda,\kappa}+(1+\rho^{\mu,\lambda,\kappa}) {\rm div} u^{\mu,\lambda,\kappa}=0, \\
&\partial_t u^{\mu,\lambda,\kappa} +u^{\mu,\lambda,\kappa} \cdot \nabla u^{\mu,\lambda,\kappa}+\frac{1+\theta^{\mu,\lambda,\kappa}}{1+\rho^{\mu,\lambda,\kappa}} \nabla \rho^{\mu,\lambda,\kappa}+\nabla \theta^{\mu,\lambda,\kappa}\\
&\quad=\frac{ \mu\Delta  u^{\mu,\lambda,\kappa}+(\mu+\lambda)\nabla{\rm div}  u^{\mu,\lambda,\kappa} }{1+\rho^{\mu,\lambda,\kappa}}  
+\frac{b^{\mu,\lambda,\kappa}-u^{\mu,\lambda,\kappa}-a^{\mu,\lambda,\kappa} u^{\mu,\lambda,\kappa}}{1+\rho^{\mu,\lambda,\kappa}}, \\
&\partial_t \theta^{\mu,\lambda,\kappa} +u^{\mu,\lambda,\kappa} \cdot \nabla \theta^{\mu,\lambda,\kappa}+\theta^{\mu,\lambda,\kappa}  {\rm div} u^{\mu,\lambda,\kappa}+{\rm div} u^{\mu,\lambda,\kappa}-\sqrt{3}(\sqrt{2} \omega^{\mu,\lambda,\kappa}-\sqrt{3} \theta^{\mu,\lambda,\kappa})  \\
&\quad =\frac{ \kappa\Delta  \theta^{\mu,\lambda,\kappa}+|u^{\mu,\lambda,\kappa}|^2-2 b^{\mu,\lambda,\kappa} \cdot u^{\mu,\lambda,\kappa}+a^{\mu,\lambda,\kappa}|u^{\mu,\lambda,\kappa}|^2-3 a^{\mu,\lambda,\kappa} \theta^{\mu,\lambda,\kappa}  }{1+\rho^{\mu,\lambda,\kappa}}  \\
&\qquad + \frac{2\mu |D(u^{\mu,\lambda,\kappa})|^2+\lambda|{\rm div}  u^{\mu,\lambda,\kappa}|^2 }{1+\rho^{\mu,\lambda,\kappa}}-\frac{\sqrt{3}\rho^{\mu,\lambda,\kappa}(\sqrt{2} \omega^{\mu,\lambda,\kappa}-\sqrt{3} \theta^{\mu,\lambda,\kappa})}{1+\rho^{\mu,\lambda,\kappa}},\\
&\partial_t f^{\mu,\lambda,\kappa} +v \cdot \nabla f^{\mu,\lambda,\kappa}+u^{\mu,\lambda,\kappa} \cdot \nabla_v f^{\mu,\lambda,\kappa}-\frac{1}{2} u^{\mu,\lambda,\kappa} \cdot v f^{\mu,\lambda,\kappa}-
u^{\mu,\lambda,\kappa} \cdot v \sqrt{M}  \\
&\quad = (|v|^2-3 ) \sqrt{M} \theta^{\mu,\lambda,\kappa}+\mathcal{L} f^{\mu,\lambda,\kappa}+\frac{\theta^{\mu,\lambda,\kappa}}{\sqrt{M}} \Delta_v(\sqrt{M} f^{\mu,\lambda,\kappa}), \\
\end{aligned}\right.
\end{equation} 
For the global solution $(\rho, u, \theta, f)$ to the Cauchy problem \eqref{Q2}--\eqref{Q2-2} as presented in Theorem \ref{T1.2}, it  satisfies the system: 
\begin{equation}\label{G4.2}
\left\{\begin{aligned}
&\partial_t \rho + u\cdot\nabla\rho+(1+\rho){\rm div} u=0, \\
&\partial_t u +u \cdot \nabla u+\frac{1+\theta}{1+\rho} \nabla \rho+\nabla \theta=\frac{b-u-au}{1+\rho}, \\
&\partial_t \theta +u \cdot \nabla \theta+\theta  {\rm div} u+ {\rm div} u-\sqrt{3}(\sqrt{2} \omega-\sqrt{3} \theta)  \\
&\quad =\frac{ |u|^2-2 u \cdot b+a|u|^2-3 a \theta}{1+\rho}   -\frac{\sqrt{3}\rho(\sqrt{2} \omega-\sqrt{3} \theta)}{1+\rho},\\
&\partial_t f +v \cdot \nabla f+u \cdot \nabla_v f-\frac{1}{2} u \cdot v f-
u \cdot v \sqrt{M}- (|v|^2-3 ) \sqrt{M} \theta   =\mathcal{L} f+\frac{\theta}{\sqrt{M}} \Delta_v(\sqrt{M} f).
\end{aligned}\right.
\end{equation}  
Define the error variables  $(\widetilde{\rho}^{\mu,\lambda,\kappa},\widetilde{u}^{\mu,\lambda,\kappa},\widetilde{\theta}^{\mu,\lambda,\kappa},\widetilde{f}^{\mu,\lambda,\kappa})$ as 
\begin{align*}
(\widetilde{\rho}^{\mu,\lambda,\kappa},\widetilde{u}^{\mu,\lambda,\kappa},\widetilde{\theta}^{\mu,\lambda,\kappa},\widetilde{f}^{\mu,\lambda,\kappa}):=(\rho^{\mu,\lambda,\kappa}-\rho, u^{\mu,\lambda,\kappa}-u, \theta^{\mu,\lambda,\kappa}-\theta, f^{\mu,\lambda,\kappa}-f),    
\end{align*}
and the macroscopic quantities $(\widetilde{a}^{\mu,\lambda,\kappa},\widetilde{b}^{\mu,\lambda,\kappa},\widetilde{\omega}^{\mu,\lambda,\kappa})$ as 
\begin{equation*}
\begin{aligned}
(\widetilde{a}^{\mu,\lambda,\kappa},\widetilde{b}^{\mu,\lambda,\kappa},\widetilde{\omega}^{\mu,\lambda,\kappa}):=(a^{\mu,\lambda,\kappa}-a , b^{\mu,\lambda,\kappa}-b,\omega^{\mu,\lambda,\kappa}-\omega ).
\end{aligned}
\end{equation*}    

By combining the systems \eqref{G4.1} and \eqref{G4.2}, we obtain the following error system:
\begin{equation}\label{G4.3}
\left\{
\begin{aligned}
&\partial_t \widetilde{\rho}^{\mu,\lambda,\kappa}+{\rm div}\widetilde{u}^{\mu,\lambda,\kappa} =\widetilde{F}_1,\\
&\partial_{t}\widetilde{u}^{\mu,\lambda,\kappa}+\nabla \widetilde{\rho}^{\mu,\lambda,\kappa}
+\nabla\widetilde{\theta}^{\mu,\lambda,\kappa}-(\widetilde{b}^{\mu,\lambda,\kappa}-\widetilde{u}^{\mu,\lambda,\kappa})
-\frac{\mu \Delta u^{\mu,\lambda,\kappa}}{1+\rho^{\mu,\lambda,\kappa}}-\frac{(\mu+\lambda) 
\nabla {\rm div}u^{\mu,\lambda,\kappa}}{1+\rho^{\mu,\lambda,\kappa}} \\
&\quad=\frac{2\mu |D(u^{\mu,\lambda,\kappa})|^2}{1+\rho^{\mu,\lambda,\kappa}}
+\frac{ \lambda |{\rm div}u^{\mu,\lambda,\kappa}|^2}{1+\rho^{\mu,\lambda,\kappa}}
-a^{\mu,\lambda,\kappa}\widetilde{u}^{\mu,\lambda,\kappa}-\widetilde{a}^{\mu,\lambda,\kappa} u
+ \widetilde{F}_2+\widetilde{F}_3+\widetilde{F}_4,\\
&\partial_t\widetilde{\theta}^{{\mu,\lambda,\kappa}}+{\rm div}\widetilde{u}^{{\mu,\lambda,\kappa}}
-\sqrt{3}(\sqrt{2}\widetilde{\omega}^{{\mu,\lambda,\kappa}}-\sqrt{3}\widetilde{\theta}^{{\mu,\lambda,\kappa}}) \\
&\quad=\frac{\kappa\Delta\theta^{{\mu,\lambda,\kappa}}}{1+\rho^{{\mu,\lambda,\kappa}}}
-b^{\mu,\lambda,\kappa}\cdot \widetilde{u}^{\mu,\lambda,\kappa}-\widetilde{b}^{\mu,\lambda,\kappa}\cdot u
-3\widetilde{a}^{\mu,\lambda,\kappa}\theta-3a^{\mu,\lambda,\kappa}\widetilde{\theta}^{\mu,\lambda,\kappa}
+\widetilde{F}_5+\widetilde{F}_6+\widetilde{F}_7,\\
&\partial_{t}\widetilde{f}^{\mu,\lambda,\kappa}+v\cdot\nabla \widetilde{f}^{\mu,\lambda,\kappa} 
+ {u}\cdot\nabla_v\widetilde{f}^{\mu,\lambda,\kappa}+\widetilde{u}^{\mu,\lambda,\kappa}\cdot\nabla_v f
-\widetilde{u}^{\mu,\lambda,\kappa}\cdot v\sqrt{M}-(|v|^2-3)\sqrt{M}\widetilde\theta^{\mu,\lambda,\kappa} \\
&\quad= \mathcal{L}\widetilde{f}^{\mu,\lambda,\kappa}+\frac{1}{2}u\cdot v\widetilde{f}^{\mu,\lambda,\kappa}
+\frac{1}{2}\widetilde{u}^{\mu,\lambda,\kappa}\cdot vf^{\mu,\lambda,\kappa}
+\frac{\theta}{\sqrt{M}}\Delta_v(\sqrt{M}\widetilde{f}^{\mu,\lambda,\kappa})
+\frac{\widetilde\theta^{\mu,\lambda,\kappa}}{\sqrt{M}}\Delta_v(\sqrt{M} {f} )  ,
\end{aligned}\right.
\end{equation} 
where $\widetilde{F}_i(i=1,\dots,7)$  are given by
\begin{align*}
\widetilde{F}_1:=&\,-{\rho}^{\mu,\lambda,\kappa}{\rm div}\widetilde{u}^{{\mu,\lambda,\kappa}}-\widetilde{\rho}^{{\mu,\lambda,\kappa}}{\rm div}u
-\widetilde{u}^{{\mu,\lambda,\kappa}}\cdot\nabla\rho-u^{{\mu,\lambda,\kappa}}\cdot\nabla\widetilde{\rho}^{{\mu,\lambda,\kappa}} ,\\
\widetilde{F}_2:=&\,\Big(\frac{\rho^{\mu,\lambda,\kappa}}{1+\rho^{\mu,\lambda,\kappa}}-\frac{\rho }{1+\rho }\Big)
 \nabla\rho^{{\mu,\lambda,\kappa}}-\Big(\frac{1}{1+\rho^{\mu,\lambda,\kappa}}-\frac{1}{1+\rho}\Big)\theta^{{\mu,\lambda,\kappa}} \nabla\rho^{{\mu,\lambda,\kappa}} \\
&-\Big(\frac{\rho^{\mu,\lambda,\kappa}}{1+\rho^{\mu,\lambda,\kappa}}-\frac{\rho }{1+\rho }\Big) ( b^{\mu,\lambda,\kappa}
-u^{\mu,\lambda,\kappa}-a^{\mu,\lambda,\kappa} u^{\mu,\lambda,\kappa}),\nonumber\\
\widetilde{F}_3:=&\,-\frac{1}{1+\rho}(\widetilde{\theta}^{{\mu,\lambda,\kappa}}\nabla\rho
+\theta^{{\mu,\lambda,\kappa}}\nabla\widetilde{\rho}^{{\mu,\lambda,\kappa}})
+\frac{\rho}{1+\rho}(\widetilde{a}^{\mu,\lambda,\kappa} u+a^{\mu,\lambda,\kappa}\widetilde{u}^{\mu,\lambda,\kappa})\nonumber\\
&+\frac{\rho}{1+\rho}\nabla\widetilde{\rho}^{{\mu,\lambda,\kappa}}-\frac{\rho}{1+\rho} (\widetilde{b}^{\mu,\lambda,\kappa}-\widetilde{u}^{\mu,\lambda,\kappa})  ,\\
\widetilde{F}_4:=&-\widetilde{u}^{\mu,\lambda,\kappa}\cdot\nabla u-u^{\mu,\lambda,\kappa}\cdot\nabla\widetilde{u}^{\mu,\lambda,\kappa},\nonumber\\
\widetilde{F}_5:=&\,-\widetilde{\theta}^{{\mu,\lambda,\kappa}}{\rm div}u^{{\mu,\lambda,\kappa}}- {\theta}^{{\mu,\lambda,\kappa}}{\rm div}\widetilde{u}^{{\mu,\lambda,\kappa}}
-\widetilde{u}^{{\mu,\lambda,\kappa}}\cdot\nabla\theta-{u}^{{\mu,\lambda,\kappa}}\cdot\nabla\widetilde{\theta}^{{\mu,\lambda,\kappa}},\\
\widetilde{F}_6:=&\,\frac{1}{1+\rho}\big((u^{\mu,\lambda,\kappa}-b^{\mu,\lambda,\kappa})\cdot\widetilde{u}^{\mu,\lambda,\kappa}
+u\cdot(\widetilde{u}^{\mu,\lambda,\kappa}-\widetilde{b}^{\mu,\lambda,\kappa}) \big) \nonumber\\
&+\frac{1}{1+\rho}\big( \widetilde{a}^{\mu,\lambda,\kappa}\theta^2+a^{\mu,\lambda,\kappa}(\theta\widetilde{\theta}^{\mu,\lambda,\kappa}
+\theta^{\mu,\lambda,\kappa}\widetilde{\theta}^{\mu,\lambda,\kappa})\big) +\sqrt{3}\frac{\rho}{1+\rho}(\sqrt{2}\widetilde{\omega}^{\mu,\lambda,\kappa}
-\sqrt{3}\widetilde\theta^{\mu,\lambda,\kappa})\nonumber\\
&+\frac{\rho}{1+\rho}(\widetilde{b}^{\mu,\lambda,\kappa}\cdot u+b^{\mu,\lambda,\kappa}\cdot \widetilde{u}^{\mu,\lambda,\kappa}
+3\widetilde{a}^{\mu,\lambda,\kappa}\theta+3 {a}^{\mu,\lambda,\kappa}\widetilde{\theta}^{\mu,\lambda,\kappa}),\nonumber\\
\widetilde{F}_7:=&\,-\sqrt{3}\Big(\frac{\rho^{\mu,\lambda,\kappa}}{1+\rho^{\mu,\lambda,\kappa}}
-\frac{\rho}{1+\rho}\Big)(\sqrt{2}\omega^{\mu,\lambda,\kappa}-\sqrt{3}\theta^{\mu,\lambda,\kappa})\nonumber\\
&+\Big(\frac{1}{1+\rho^{\mu,\lambda,\kappa}}-\frac{1}{1+\rho}\Big) a^{\mu,\lambda,\kappa}(\theta^{\mu,\lambda,\kappa})^2\nonumber\\
&+\Big(\frac{1}{1+\rho^{\mu,\lambda,\kappa}}-\frac{1}{1+\rho}\Big)\big(| {u}^{\mu,\lambda,\kappa}|^2-2b^{\mu,\lambda,\kappa}\cdot u^{\mu,\lambda,\kappa}-3a^{\mu,\lambda,\kappa}\theta^{\mu,\lambda,\kappa} \big).
\end{align*} 
It is important to note that the convergence rate is predominantly 
determined by the  terms $\frac{\mu\Delta u^{{\mu,\lambda,\kappa}}}{1+\rho^{{\mu,\lambda,\kappa}}}$,
 $\frac{(\mu+\lambda)\nabla{\rm div}u^{{\mu,\lambda,\kappa}}}{1+\rho^{{\mu,\lambda,\kappa}}}$, 
  $\frac{\kappa\Delta\theta^{\mu,\lambda,\kappa}}{1+\rho^{{\mu,\lambda,\kappa}}}$, 
  $\frac{2\mu |D(u^{\mu,\lambda,\kappa})|^2}{1+\rho^{\mu,\lambda,\kappa}}$ 
and $\frac{ \lambda |{\rm div}u^{\mu,\lambda,\kappa}|^2}{1+\rho^{\mu,\lambda,\kappa}}$ 
appearing in \eqref{G4.3}. 

Now, we proceed to estimate the error $\|(\widetilde{\rho}^{\mu,\lambda,\kappa},\widetilde{u}^{\mu,\lambda,\kappa},\widetilde{\theta}^{\mu,\lambda,\kappa})\|_{H^1}$ and $\|\widetilde{f}^{\mu,\lambda,\kappa}\|_{H_{x,v}^1}$ respectively. 
To achieve this, we introduce the following functional:  
\begin{align}\label{G4.4}  
{\Xi} ^{\mu,\lambda,\kappa}(t) :=&\sup_{\tau\in[0,t]}\big(\|(\widetilde{\rho}^{\mu,\lambda,\kappa},\widetilde{u}^{\mu,\lambda,\kappa} ,\widetilde{\theta}^{\mu,\lambda,\kappa})(\tau)\|_{H^1}^2 +\|\widetilde{f}^{\mu,\lambda,\kappa}(\tau)\|_{H_{x,v}^1}^2\big) \nonumber\\
&+\int_{0}^t\big( \|\nabla(\widetilde{a}^{\mu,\lambda,\kappa},\widetilde{b}^{\mu,\lambda,\kappa},\widetilde{\omega}^{\mu,\lambda,\kappa},\widetilde{\rho}^{\mu,\lambda,\kappa})(\tau)\|_{L^2}^2 {\rm d}\tau\nonumber\\  
&+\int_{0}^t\big(   \|(\widetilde{b}^{\mu,\lambda,\kappa}-\widetilde{u}^{\mu,\lambda,\kappa})(\tau)\|_{H^1}^2 + \|\big(\sqrt{2}\widetilde{\omega}^{\mu,\lambda,\kappa}-\sqrt{3}\widetilde{\theta}^{\mu,\lambda,\kappa}\big)(\tau)\|_{H^1}^2 \big){\rm d}\tau\nonumber\\  
&+\int_0^t\sum_{|\alpha|\leq 1}\|\{\mathbf{I}-\mathbf{P}\}\partial^\alpha\widetilde{f}^{\mu,\lambda,\kappa}(\tau)\|_{\nu}^2{\rm d}\tau+\int_0^t \|\nabla_v \{\mathbf{I}-\mathbf{P}\}\widetilde{f}^{\mu,\lambda,\kappa}(\tau)\|_{\nu}^2{\rm d}\tau.  
\end{align}  
It is evident that  
\begin{align}\label{LNXG4.5}  
\int_0^t \|\nabla \widetilde{u}^{\mu,\lambda,\kappa}(\tau)\|_{L^2}^2 {\rm d}\tau
\lesssim&\, \int_0^t\big( \|\nabla \widetilde{b}^{\mu,\lambda,\kappa}(\tau)\|_{L^2}^2+\|\nabla(  \widetilde{b}^{\mu,\lambda,\kappa}- \widetilde{u}^{\mu,\lambda,\kappa})(\tau)\|_{L^2}^2 \big){\rm d}\tau \nonumber\\
\lesssim&\, {\Xi} ^{\mu,\lambda,\kappa}(t), \\ \label{G4.5}
 \int_0^t \|\nabla \widetilde{\theta}^{\mu,\lambda,\kappa}(\tau)\|_{L^2}^2 {\rm d}\tau  
\lesssim &\,
\int_0^t\big( \|\nabla \widetilde{\omega}^{\mu,\lambda,\kappa}(\tau)\|_{L^2}^2+\|\nabla(\sqrt{2} \widetilde{\omega}^{\mu,\lambda,\kappa}-\sqrt{3}\widetilde{\theta}^{\mu,\lambda,\kappa})(\tau)\|_{L^2}^2 \big){\rm d}\tau\nonumber\\
\lesssim&\, {\Xi} ^{\mu,\lambda,\kappa}(t),  
\end{align} 
and  
\begin{align}\label{G4.6}  
\int_0^t \|\nabla \widetilde{f}^{\mu,\lambda,\kappa}(\tau)\|_{L_{x,v}^2}^2 {\rm d}\tau \lesssim&\, \int_0^t\big( \|\nabla (\widetilde{a}^{\mu,\lambda,\kappa},\widetilde{b}^{\mu,\lambda,\kappa},\widetilde{\omega}^{\mu,\lambda,\kappa})(\tau)\|_{L^2}^2+ \|\nabla \{\mathbf{I}-\mathbf{P}\}\widetilde{f}^{\mu,\lambda,\kappa}(\tau)\|_{\nu}^2\big){\rm d}\tau\nonumber\\
\lesssim&\,{\Xi} ^{\mu,\lambda,\kappa}(t).  
\end{align}   

Now we estimate the terms appearing in the right hand-side of \eqref{G4.4} .
\begin{lem}\label{L4.1}
It holds that
\begin{align}\label{G4.7}
&\sup_{\tau\in[0,t]}\big\{\|(\widetilde{\rho}^{\mu,\lambda,\kappa},\widetilde{u}^{\mu,\lambda,\kappa},
\widetilde{\theta}^{\mu,\lambda,\kappa})(\tau)\|_{L^2}^2
+\|\widetilde{f}^{\mu,\lambda,\kappa}(\tau)\|_{L_{x,v}^2}^2\big\}   \nonumber\\
&\quad +\int_{0}^t\big( \|(\widetilde{b}^{\mu,\lambda,\kappa}-\widetilde{u}^{\mu,\lambda,\kappa})(\tau)\|_{L^2}^2 +\|(\sqrt{2}\widetilde{\omega}^{\mu,\lambda,\kappa}-\sqrt{3}\widetilde{\theta}^{\mu,\lambda,\kappa})(\tau)\|_{L^2}^2+\|  \{\mathbf{I}-\mathbf{P}\}\widetilde{f}^{\mu,\lambda,\kappa}(\tau)\|_{\nu}^2 \big) {\rm d}\tau \nonumber\\
&\quad\quad \leq {\Xi} ^{\mu,\lambda,\kappa}(0)+C\max\{\mu,\lambda,\kappa\}^2+C \big(\eps_0^\frac{1}{2}  +\eps_1^\frac{1}{2}\big){\Xi} ^{\mu,\lambda,\kappa}(t),
\end{align}
where $C>0$ is a constant independent of  $\lambda$, $\mu$,  $\kappa$ and   $t$.
\end{lem}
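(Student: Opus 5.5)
We take the $L^2_{x}\times L^2_{x,v}$ inner product of the error system \eqref{G4.3} with $(\widetilde{\rho}^{\mu,\lambda,\kappa},\widetilde{u}^{\mu,\lambda,\kappa},\widetilde{\theta}^{\mu,\lambda,\kappa},\widetilde{f}^{\mu,\lambda,\kappa})$, exactly as in the proof of Lemma \ref{L3.1}. The linear structure is the same as in \eqref{G3.5}. The couplings $-(\widetilde b-\widetilde u)\cdot\widetilde u$ and $-\sqrt3(\sqrt2\widetilde\omega-\sqrt3\widetilde\theta)\widetilde\theta$ combine with $-\widetilde u\cdot v\sqrt M$, $-(|v|^2-3)\sqrt M\widetilde\theta$ and with $\mathcal L\widetilde f$, via \eqref{G2.5} and \eqref{G2.6}. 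The transport terms $\nabla\widetilde\rho+\nabla\widetilde\theta$ against ${\rm div}\,\widetilde u$ cancel after integration by parts. This produces
\[
\frac12\frac{\rm d}{{\rm d}t}\big(\|(\widetilde\rho,\widetilde u,\widetilde\theta)\|_{L^2}^2+\|\widetilde f\|_{L^2_{x,v}}^2\big)+\|\widetilde b-\widetilde u\|_{L^2}^2+\|\sqrt2\widetilde\omega-\sqrt3\widetilde\theta\|_{L^2}^2+\bar\lambda\|\{\mathbf I-\mathbf P\}\widetilde f\|_\nu^2
\]
on the left. The right side consists of the viscous/heat-conductive source terms and the nonlinear remainders. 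Integrating over $[0,t]$ and taking the supremum gives the structure of \eqref{G4.7}, with $\Xi^{\mu,\lambda,\kappa}(0)$ coming from the initial error.

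The right-hand side splits into two groups. The first group is the $\mathcal O(\mu,\lambda,\kappa)$ sources: $\frac{\mu\Delta u^{\mu,\lambda,\kappa}}{1+\rho^{\mu,\lambda,\kappa}}$, $\frac{(\mu+\lambda)\nabla{\rm div}u^{\mu,\lambda,\kappa}}{1+\rho^{\mu,\lambda,\kappa}}$, $\frac{\kappa\Delta\theta^{\mu,\lambda,\kappa}}{1+\rho^{\mu,\lambda,\kappa}}$ and the quadratic dissipation terms. We keep them in non-divergence form and bound, e.g.,
\[
\mu\int_0^t\!\!\int\Big|\frac{\Delta u^{\mu,\lambda,\kappa}}{1+\rho^{\mu,\lambda,\kappa}}\cdot\widetilde u\Big|
\le \mu\int_0^t\|\nabla^2u^{\mu,\lambda,\kappa}\|_{L^2}\|\widetilde u\|_{L^2}
\le \eta\int_0^t\|\widetilde u\|_{L^2}^2 \ \text{(not available)}.
\]
Since $\|\widetilde u\|_{L^2}$ itself is not dissipated, we instead write $\widetilde u=\widetilde b-(\widetilde b-\widetilde u)$ and use $\|\widetilde b\|_{L^2}\lesssim\|\nabla\widetilde b\|_{L^2}^{?}$. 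That also fails at zero frequency, so the preferred route is integration by parts:
\[
\mu\int\frac{\Delta u}{1+\rho}\cdot\widetilde u=-\mu\int \nabla u:\nabla\Big(\frac{\widetilde u}{1+\rho}\Big).
\]
This is bounded by $\mu\|\nabla u^{\mu,\lambda,\kappa}\|_{L^2}(\|\nabla\widetilde u\|_{L^2}+\|\nabla\rho^{\mu,\lambda,\kappa}\|_{L^3}\|\widetilde u\|_{L^6})\lesssim \mu\|\nabla u^{\mu,\lambda,\kappa}\|_{L^2}\|\nabla\widetilde u\|_{L^2}$. Young's inequality then gives $C\mu^2\int_0^t\|\nabla u^{\mu,\lambda,\kappa}\|_{L^2}^2+\eta\int_0^t\|\nabla\widetilde u\|_{L^2}^2$. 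The first term is bounded by $C\mu^2$ through the uniform dissipation $\int_0^t\mathcal D\le C\eps_0$ from Theorem \ref{T1.1} (since $\|\nabla u\|_{H^3}^2\lesssim\mathcal D$). The second is at most $\eta\,\Xi^{\mu,\lambda,\kappa}(t)$ by \eqref{LNXG4.5}. The $\kappa$ term is handled in the same way using \eqref{G4.5}. The quadratic term $\mu|D(u)|^2\widetilde\theta$ is bounded by $\mu\|\nabla u\|_{L^3}\|\nabla u\|_{L^2}\|\widetilde\theta\|_{L^6}$, which again contributes $C\mu^2+\eta\Xi$. Because the lemma has no free small $\eta$ in front of $\Xi$, we absorb $\eta$ into the constant $C(\eps_0^{1/2}+\eps_1^{1/2})$ of \eqref{G4.7}. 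Concretely, we choose Young weights so that this contribution appears as $C\max\{\mu,\lambda,\kappa\}^2+C(\eps_0^{1/2}+\eps_1^{1/2})\Xi$, using $\int_0^t\|\nabla u\|^2\lesssim\eps_0$ to split $\mu\|\nabla u\|\|\nabla\widetilde u\|\le \mu^2+\|\nabla u\|_{L^2}^2\|\nabla\widetilde u\|_{L^2}^2$ after pairing one factor with the $L^\infty_t$ smallness.

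The second group is the nonlinear error terms $\widetilde F_1,\dots,\widetilde F_7$, the zero-order products $a^{\mu,\lambda,\kappa}\widetilde u$, $\widetilde a u$, $b\cdot\widetilde u$, $\widetilde a\theta$, and the kinetic terms $\widetilde u\cdot\nabla_v f$, $\frac12\widetilde u\cdot v f^{\mu,\lambda,\kappa}$, $\frac{\theta}{\sqrt M}\Delta_v(\sqrt M\widetilde f)$, $\frac{\widetilde\theta}{\sqrt M}\Delta_v(\sqrt M f)$. Each of these is trilinear, with one factor a background solution that is small in $H^4$: $\sqrt{\eps_0}$ for the viscous solution and $\sqrt{\eps_1}$ for the Euler solution. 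We estimate them by Hölder with $L^3$–$L^6$–$L^2$ splitting and Lemma \ref{LA.1}, mirroring \eqref{G3.6}–\eqref{G3.8-3}. The zero-order products are the delicate part, because $\|\widetilde u\|_{L^2}$ and $\|\widetilde a\|_{L^2}$ are not dissipated. For these we proceed as in \eqref{G3.6}: write $\widetilde u=\widetilde b-(\widetilde b-\widetilde u)$, place the error factor in $L^6$ (controlled by $\|\nabla\widetilde b\|_{L^2}$, $\|\nabla\widetilde a\|_{L^2}$ or $\|\nabla\widetilde u\|_{L^2}$, all time-integrable via $\Xi$ and \eqref{LNXG4.5}), and put the background factor in $L^3$ paired with a dissipative background quantity. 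For example, $\int a^{\mu,\lambda,\kappa}|\widetilde u|^2\lesssim\|a\|_{L^3}\|\widetilde u\|_{L^6}^2$. In the kinetic terms we use the macro–micro decomposition, so that $\mathbf P\widetilde f$ contributes $\|\nabla(\widetilde a,\widetilde b,\widetilde\omega)\|_{L^2}$ through $L^6$ and $\{\mathbf I-\mathbf P\}\widetilde f$ contributes its $\nu$-norm. The term $\frac{\widetilde\theta}{\sqrt M}\Delta_v(\sqrt Mf)$ is moved onto $\nabla_v\widetilde f$ by integration by parts in $v$, and it requires the $v$-derivative dissipation of $f$ from $\mathcal D$. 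After time integration every such contribution is bounded by $C(\eps_0^{1/2}+\eps_1^{1/2})\Xi^{\mu,\lambda,\kappa}(t)$.

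The main obstacle is the undissipated zeroth-order error modes, namely $\|\widetilde\rho\|_{L^2}$, $\|\widetilde u\|_{L^2}$, $\|\widetilde\theta\|_{L^2}$ and $\|\widetilde a\|_{L^2}$. Each bilinear term must be arranged so that at least one error factor carries a gradient or a dissipative combination, in the same way Lemma \ref{L3.1} was organized.
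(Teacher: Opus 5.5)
Your treatment of the viscous and heat-conductive sources is the paper's: integrate by parts, bound by $\mu\|\nabla u^{\mu,\lambda,\kappa}\|_{L^2_tL^2}\|\nabla\widetilde u^{\mu,\lambda,\kappa}\|_{L^2_tL^2}$, then apply Young. One caveat: your ``concrete'' pointwise-in-time splitting $\mu\|\nabla u\|_{L^2}\|\nabla\widetilde u\|_{L^2}\le\mu^2+\|\nabla u\|_{L^2}^2\|\nabla\widetilde u\|_{L^2}^2$ integrates to $\mu^2t$, which is not uniform in $t$. Apply Cauchy--Schwarz in time first, or keep your weighted Young with $\eta=\eps_0^{1/2}$.

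The genuine gap is in the zero-order couplings $-\widetilde a\,u\cdot\widetilde u$, $-a^{\mu,\lambda,\kappa}|\widetilde u|^2$, $-\widetilde b\cdot u\,\widetilde\theta$, $-b^{\mu,\lambda,\kappa}\cdot\widetilde u\,\widetilde\theta$, $-3\widetilde a\,\theta\widetilde\theta$ and $-3a^{\mu,\lambda,\kappa}|\widetilde\theta|^2$. Your sample bound $\int a^{\mu,\lambda,\kappa}|\widetilde u|^2\lesssim\|a^{\mu,\lambda,\kappa}\|_{L^3}\|\widetilde u\|_{L^6}^2$ is not a H\"older inequality, since $\frac13+\frac16+\frac16\neq1$; it also fails under scaling.

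More fundamentally, a derivative-free product of one background factor and two error factors needs two time-integrable factors. The only ones available are:
\begin{itemize}
\item gradients, giving $L^6$ norms of error factors, or $L^p$ norms with $p\ge6$ of the background factor;
\item the zero-order modes $\widetilde b-\widetilde u$, $\sqrt2\widetilde\omega-\sqrt3\widetilde\theta$ and $\{\mathbf I-\mathbf P\}\widetilde f$ in $L^2$.
\end{itemize}
If both time-integrable factors come from gradients, H\"older forces the remaining factor into some $L^r$ with $r<2$, which is not controlled. Hence one error factor must be a zero-order dissipative mode.

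Writing $\widetilde u=\widetilde b-(\widetilde b-\widetilde u)$ does not achieve this. It leaves $\int a^{\mu,\lambda,\kappa}\widetilde u\cdot\widetilde b$ (resp.\ $\int\widetilde a\,u\cdot\widetilde b$), which is of the same bad type. For the same reason your closing rule, which allows a gradient in place of a dissipative combination, is too weak. Gradients suffice only when a derivative already falls on one of the three factors.

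You cite \eqref{G3.6}, but what makes \eqref{G3.6}--\eqref{G3.7} and \eqref{G4.9}--\eqref{G4.12} work is an exact cancellation between these couplings and the $\mathbf P$--$\mathbf P$ parts of the kinetic nonlinearities. Two examples:
\begin{itemize}
\item The macroscopic part of $\frac12u\cdot\langle v\widetilde f,\widetilde f\rangle$ is $\widetilde a\,u\cdot\widetilde b+\frac{2}{\sqrt6}\widetilde\omega\,u\cdot\widetilde b$. This turns $-\widetilde a\,u\cdot\widetilde u-\widetilde b\cdot u\,\widetilde\theta$ into $\widetilde a\,u\cdot(\widetilde b-\widetilde u)+\frac{1}{\sqrt3}(\sqrt2\widetilde\omega-\sqrt3\widetilde\theta)\,u\cdot\widetilde b$.
\item The macroscopic part of $\langle\frac{\theta}{\sqrt M}\Delta_v(\sqrt M\widetilde f),\widetilde f\rangle$ is $\sqrt6\,\theta\,\widetilde a\,\widetilde\omega$. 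This turns $-3\widetilde a\,\theta\widetilde\theta$ into $\sqrt3\,\theta\,\widetilde a(\sqrt2\widetilde\omega-\sqrt3\widetilde\theta)$.
\end{itemize}
The couplings with $a^{\mu,\lambda,\kappa}$ and $b^{\mu,\lambda,\kappa}$ are recombined in the same way with the macroscopic parts of the remaining kinetic terms $\frac12\widetilde u\cdot\langle vf^{\mu,\lambda,\kappa},\widetilde f\rangle$, $-\widetilde u\cdot\langle\nabla_vf,\widetilde f\rangle$ and $\langle\frac{\widetilde\theta}{\sqrt M}\Delta_v(\sqrt Mf),\widetilde f\rangle$; cf.\ \eqref{G4.10}--\eqref{G4.11}.

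Only after this recombination does every zero-order term take the form (error in $L^\infty_tH^1$)$\times$(background gradient in $L^2_tL^2$)$\times$(dissipative mode in $L^2_tL^2$). For example, $\|\widetilde a\|_{L^\infty_tH^1}\|\nabla u\|_{L^2_tL^2}\|\widetilde b-\widetilde u\|_{L^2_tL^2}\lesssim\eps_1^{1/2}\Xi^{\mu,\lambda,\kappa}(t)$. This is how the factor $C(\eps_0^{1/2}+\eps_1^{1/2})\Xi^{\mu,\lambda,\kappa}(t)$ in \eqref{G4.7} arises. Without the cancellation, these terms are not bounded uniformly in $t$.
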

\begin{proof}
By performing energy estimates on \eqref{G4.3} and utilizing the properties of decomposition given in \eqref{G2.4} and \eqref{G2.5} and the dissipation property of $\mathcal{L}$ in \eqref{G2.6}, we obtain  
\begin{align}\label{G4.8}
&\frac{1}{2}\big( \|(\widetilde{\rho}^{\mu,\lambda,\kappa},\widetilde{u}^{\mu,\lambda,\kappa},\widetilde\theta^{\mu,\lambda,\kappa})(t)\|_{L^2}^2+  \|\widetilde{f}^{\mu,\lambda,\kappa}(t)\|_{L_{x,v}^2}^2 \big) {\rm d}\tau\nonumber\\
&+\int_0^t\big( \|(\widetilde{b}^{\mu,\lambda,\kappa}-\widetilde{u}^{\mu,\lambda,\kappa})(\tau)\|_{L^2}^2 +\|(\sqrt{2}\widetilde{\omega}^{\mu,\lambda,\kappa}-\sqrt{3}\widetilde{\theta}^{\mu,\lambda,\kappa})(\tau)\|_{L^2}^2+\bar\lambda\|  \{\mathbf{I}-\mathbf{P}\}\widetilde{f}^{\mu,\lambda,\kappa}(\tau)\|_{\nu}^2   \big){\rm d}\tau\nonumber\\
\leq\,&\int_0^t\!   \int_{\mathbb{R}^3 }\frac{1}{2} u\cdot\langle v\widetilde{f}^{\mu,\lambda,\kappa},\widetilde{f}^{\mu,\lambda,\kappa}\rangle {\rm d}x{\rm d}\tau-\int_0^t  \int_{\mathbb{R}^3 } u\cdot\langle \nabla_v\widetilde{f}^{\mu,\lambda,\kappa},\widetilde{f}^{\mu,\lambda,\kappa}\rangle {\rm d}x{\rm d}\tau\nonumber\\
&-\int_0^t \widetilde{b}^{\mu,\lambda,\kappa}\cdot {u} \widetilde{\theta}^{\mu,\lambda,\kappa}{\rm d}\tau-\int_0^t \widetilde{a}^{\mu,\lambda,\kappa} u\cdot\widetilde{u}^{\mu,\lambda,\kappa} {\rm d}{\tau} +\int_0^t \! \int_{\mathbb{R}^3 }\frac{1}{2} \widetilde{u}^{\mu,\lambda,\kappa}\cdot\langle v{f}^{\mu,\lambda,\kappa},\widetilde{f}^{\mu,\lambda,\kappa}\rangle {\rm d}x{\rm d}\tau           \nonumber\\
&-\int_0^t  \int_{\mathbb{R}^3 } \widetilde{u}^{\mu,\lambda,\kappa}\cdot\langle \nabla_v {f} ,\widetilde{f}^{\mu,\lambda,\kappa}\rangle {\rm d}x{\rm d}\tau-\int_0^t {b}^{\mu,\lambda,\kappa}\cdot  \widetilde{u}^{\mu,\lambda,\kappa}\widetilde{\theta}^{\mu,\lambda,\kappa}{\rm d}\tau     -\int_0^t{a}^{\mu,\lambda,\kappa} |\widetilde{u}^{\mu,\lambda,\kappa}|^2 {\rm d}{\tau}       \nonumber\\
&+\int_0^t  \int_{\mathbb{R}^3}\Big\langle\frac{\widetilde\theta^{\mu,\lambda,\kappa}}{\sqrt{M}}\Delta_v(\sqrt{M}f),\widetilde{f}^{\mu,\lambda,\kappa}\Big\rangle{\rm d}x{\rm d}{\tau}-3\int_0^t\!\int_{\mathbb{R}^3}a^{\mu,\lambda,\kappa}(\widetilde\theta^{\mu,\lambda,\kappa})^2{\rm d}x{\rm d}
\tau\nonumber\\
&+\int_0^t \int_{\mathbb{R}^3}\Big\langle\frac{\theta}{\sqrt{M}}\Delta_v(\sqrt{M}\widetilde{f}^{\mu,\lambda,\kappa}),\widetilde{f}^{\mu,\lambda,\kappa}\Big\rangle{\rm d}x{\rm d}{\tau}-3\int_0^t\!\int_{\mathbb{R}^3}\widetilde{a}^{\mu,\lambda,\kappa} \theta\widetilde{\theta}^{\mu,\lambda,\kappa} {\rm d}x{\rm d}
\tau\nonumber\\
&+\int_0^t\!\int_{\mathbb{R}^3} \widetilde{\rho}^{{\mu,\lambda,\kappa}}\widetilde{F}_1 {\rm d}x{\rm d}\tau+\int_0^t\!\int_{\mathbb{R}^3} \widetilde{u}^{{\mu,\lambda,\kappa}}\cdot\widetilde{F}_2 {\rm d}x{\rm d}\tau+\int_0^t\!\int_{\mathbb{R}^3} \widetilde{u}^{{\mu,\lambda,\kappa}}\cdot\widetilde{F}_3 {\rm d}x{\rm d}\tau\nonumber\\
&+\int_0^t\!\int_{\mathbb{R}^3} \widetilde{u}^{{\mu,\lambda,\kappa}}\cdot\widetilde{F}_4 {\rm d}x{\rm d}\tau+\int_0^t\!\int_{\mathbb{R}^3} \frac{\mu\Delta u^{\mu,\lambda,\kappa} \widetilde{u}^{\mu,\lambda,\kappa} }{1+\rho^{\mu,\lambda,\kappa}} {\rm d}x{\rm d}\tau+\int_0^t\!\int_{\mathbb{R}^3} \frac{(\mu+\lambda)\nabla{\rm div} u^{\mu,\lambda,\kappa} \widetilde{u}^{\mu,\lambda,\kappa} }{1+\rho^{\mu,\lambda,\kappa}} {\rm d}x{\rm d}\tau
\nonumber\\
&+\int_0^t\!\int_{\mathbb{R}^3} \frac{2\mu |D(u^{\mu,\lambda,\kappa})|^2 \widetilde{u}^{\mu,\lambda,\kappa} }{1+\rho^{\mu,\lambda,\kappa}} {\rm d}x{\rm d}\tau+\int_0^t\!\int_{\mathbb{R}^3} \frac{\lambda |{\rm div}u^{\mu,\lambda,\kappa}|^2 \widetilde{u}^{\mu,\lambda,\kappa} }{1+\rho^{\mu,\lambda,\kappa}} {\rm d}x{\rm d}\tau\nonumber\\
&+\int_0^t\!\int_{\mathbb{R}^3} \frac{\kappa\Delta \theta^{\mu,\lambda,\kappa} \widetilde{\theta}^{\mu,\lambda,\kappa} }{1+\rho^{\mu,\lambda,\kappa}} {\rm d}x{\rm d}\tau + \int_0^t\!\int_{\mathbb{R}^3} \widetilde{\theta}^{{\mu,\lambda,\kappa}} \widetilde{F}_5 {\rm d}x{\rm d}\tau+ \int_0^t\!\int_{\mathbb{R}^3} \widetilde{\theta}^{{\mu,\lambda,\kappa}} \widetilde{F}_6 {\rm d}x{\rm d}\tau\nonumber\\
&+ \int_0^t\!\int_{\mathbb{R}^3} \widetilde{\theta}^{{\mu,\lambda,\kappa}} \widetilde{F}_7 {\rm d}x{\rm d}\tau  +\frac{1}{2}\big( \|(\widetilde{\rho}^{\mu,\lambda,\kappa},\widetilde{u}^{\mu,\lambda,\kappa},\widetilde\theta^{\mu,\lambda,\kappa})(0)\|_{L^2}^2+  \|\widetilde{f}^{\mu,\lambda,\kappa}(0)\|_{L_{x,v}^2}^2 \big)\nonumber\\
\equiv:\,&\sum_{i=1}^{24}\widetilde{I}_i+\frac{1}{2}\big( \|(\widetilde{\rho}^{\mu,\lambda,\kappa},\widetilde{u}^{\mu,\lambda,\kappa},\widetilde\theta^{\mu,\lambda,\kappa})(0)\|_{L^2}^2+  \|\widetilde{f}^{\mu,\lambda,\kappa}(0)\|_{L_{x,v}^2}^2 \big).
\end{align}

Now we handle the terms $\widetilde{I}_i (i=1,\dots, 24)$ in turn. 
Similar to \eqref{G3.6}, applying the macro-micro decomposition method together with the estimate $v^{k}\sqrt{M} \lesssim 1$ for $k \geq 0$, we obtain   
\begin{align}\label{G4.9}
&|\widetilde{I}_1+\widetilde{I}_2+\widetilde{I}_3+\widetilde{I}_4|\nonumber\\
\lesssim&\, \Big|\int_0^t
\int_{\mathbb{R}^3}\widetilde{a}^{\mu,\lambda,\kappa} u\cdot(\widetilde{b}^{\mu,\lambda,\kappa}-\widetilde{u}^{\mu,\lambda,\kappa}){\rm d}x{\rm d}\tau\Big|+ \Big|\int_0^t(\sqrt{2}\widetilde{\omega}^{\mu,\lambda,\kappa}-\sqrt{3}\widetilde{\theta}^{\mu,\lambda,\kappa})u\cdot\widetilde{b}^{\mu,\lambda,\kappa}{\rm d}x{\rm d}\tau\Big|\nonumber\\
&+ \Big|\int_0^t\!\int_{\mathbb{R}^3}u\cdot\langle v\{\mathbf{I}-\mathbf{P}\}\widetilde{f}^{\mu,\lambda,\kappa}, \{\mathbf{I}-\mathbf{P}\}\widetilde{f}^{\mu,\lambda,\kappa}\rangle{\rm d}x{\rm d}\tau \Big|\nonumber\\
&+\int_0^t \int_{\mathbb{R}^3}|u||(\widetilde{a}^{\mu,\lambda,\kappa},\widetilde{b}^{\mu,\lambda,\kappa},\widetilde{\omega}^{\mu,\lambda,\kappa})||\{\mathbf{I}-\mathbf{P}\}\widetilde{f}^{\mu,\lambda,\kappa}|_2{\rm d}x{\rm d}\tau\nonumber\\
&+\Big|\int_0^t\!\int_{\mathbb{R}^3}u\cdot\langle \nabla_v\{\mathbf{I}-\mathbf{P}\}\widetilde{f}^{\mu,\lambda,\kappa},\{\mathbf{I}-\mathbf{P}\}\widetilde{f}^{\mu,\lambda,\kappa}\rangle{\rm d}x{\rm d}\tau\Big|\nonumber\\
\lesssim&\, \int_0^t\big(\|\widetilde{a}^{\mu,\lambda,\kappa}\|_{L^3}\|u\|_{L^6}\|\widetilde{b}^{\mu,\lambda,\kappa}-\widetilde{u}^{\mu,\lambda,\kappa}\|_{L^2}+\|\widetilde{b}^{\mu,\lambda,\kappa}\|_{L^3}\|u\|_{L^6}\|\sqrt{2}\widetilde\omega^{\mu,\lambda,\kappa}-\sqrt{3}\widetilde\theta^{\mu,\lambda,\kappa}\|_{L^2}\big){\rm d}\tau\nonumber\\
 &+\int_0^t\big(\|u\|_{L^{\infty}} \|\{\mathbf{I}-\mathbf{P}\}\widetilde{f}^{\mu,\lambda,\kappa}\|_{\nu}^{2}+\|(\widetilde{a}^{\mu,\lambda,\kappa},\widetilde{b}^{\mu,\lambda,\kappa},\widetilde\omega^{\mu,\lambda,\kappa})\|_{L^3}\|u\|_{L^6} \|\{\mathbf{I}-\mathbf{P}\}\widetilde{f}^{\mu,\lambda,\kappa}\|_{\nu}\big){\rm d}\tau\nonumber\\
 \lesssim&\, \|(\widetilde{a}^{\mu,\lambda,\kappa},\widetilde{b}^{\mu,\lambda,\kappa}
 )\|_{L_t^\infty(H^1)}\|\nabla u\|_{L_t^2(L^2)}\big(\|\widetilde{b}^{\mu,\lambda,\kappa}-\widetilde{u}^{\mu,\lambda,\kappa}\|_{L_t^2(L^2)}+\|\sqrt{2}\widetilde{\omega}^{\mu,\lambda,\kappa}-\sqrt{3}\widetilde{\theta}^{\mu,\lambda,\kappa}\|_{L_t^2(L^2)}   \big) \nonumber\\
 &+\|u\|_{L_t^\infty(H^2)} \|\{\mathbf{I}-\mathbf{P}\}\widetilde{f}^{\mu,\lambda,\kappa}\|_{L_t^2(\nu)}^2 +\|(\widetilde{a}^{\mu,\lambda,\kappa},\widetilde{b}^{\mu,\lambda,\kappa}
 ,\widetilde{\omega}^{\mu,\lambda,\kappa})\|_{L_t^\infty(H^1)}\|\nabla u\|_{L_t^2(L^2)}\{\mathbf{I}-\mathbf{P}\}\widetilde{f}^{\mu,\lambda,\kappa}\|_{L_t^2(\nu)}\nonumber\\
 \lesssim&\, \big(\eps_0^\frac{1}{2}+\eps_1^\frac{1}{2}\big){\Xi} ^{\mu,\lambda,\kappa}(t),
\end{align}
and
\begin{align}\label{G4.10}
&|\widetilde{I}_5+\widetilde{I}_6+\widetilde{I}_7+\widetilde{I}_8|\nonumber\\
\lesssim&\, \int_0^t\big(\| {a}^{\mu,\lambda,\kappa}\|_{L^6}\|\widetilde{u}^{\mu,\lambda,\kappa}\|_{L^3}\|\widetilde{b}^{\mu,\lambda,\kappa}-\widetilde{u}^{\mu,\lambda,\kappa}\|_{L^2}+\| {b}^{\mu,\lambda,\kappa}\|_{L^6}\|\widetilde{u}^{\mu,\lambda,\kappa}\|_{L^3}\|\sqrt{2}\widetilde\omega^{\mu,\lambda,\kappa}-\sqrt{3}\widetilde\theta^{\mu,\lambda,\kappa}\|_{L^2}\big){\rm d}\tau\nonumber\\
&+\int_0^t \|\widetilde{u}^{\mu,\lambda,\kappa}\|_{L^{3}} \|\{\mathbf{I}-\mathbf{P}\}\widetilde{f}^{\mu,\lambda,\kappa}\|_{\nu}\|\nabla\{\mathbf{I}-\mathbf{P}\} {f}^{\mu,\lambda,\kappa}\|_{\nu} {\rm d}\tau\nonumber\\
&+\int_0^t\|( {a}^{\mu,\lambda,\kappa}, {b}^{\mu,\lambda,\kappa}, \omega^{\mu,\lambda,\kappa})\|_{L^6}\|\widetilde{u}^{\mu,\lambda,\kappa}\|_{L^3} \|\{\mathbf{I}-\mathbf{P}\}\widetilde{f}^{\mu,\lambda,\kappa}\|_{\nu} {\rm d}\tau\nonumber\\
 \lesssim&\, \|\widetilde{u}^{\mu,\lambda,\kappa} \|_{L_t^\infty(H^1)}\|\nabla  a^{\mu,\lambda,\kappa}  \|_{L_t^2(L^2)}\big(\|\widetilde{b}^{\mu,\lambda,\kappa}-\widetilde{u}^{\mu,\lambda,\kappa}\|_{L_t^2(L^2)}+\|\sqrt{2}\widetilde{\omega}^{\mu,\lambda,\kappa}-\sqrt{3}\widetilde{\theta}^{\mu,\lambda,\kappa}\|_{L_t^2(L^2)}   \big) \nonumber\\
 &+\|\widetilde{u}^{\mu,\lambda,\kappa} \|_{L_t^\infty(H^1)} \|\{\mathbf{I}-\mathbf{P}\}\widetilde{f}^{\mu,\lambda,\kappa}\|_{L_t^2(\nu)}\|\nabla\{\mathbf{I}-\mathbf{P}\} {f}^{\mu,\lambda,\kappa}\|_{L_t^2(\nu)}   \nonumber\\
 &+\|\widetilde{u}^{\mu,\lambda,\kappa} \|_{L_t^\infty(H^1)}\|\nabla (a^{\mu,\lambda,\kappa},b^{\mu,\lambda,\kappa},\omega^{\mu,\lambda,\kappa})\|_{L_t^2(L^2)}\|\{\mathbf{I}-\mathbf{P}\}\widetilde{f}^{\mu,\lambda,\kappa}\|_{L_t^2(\nu)}\nonumber\\
 \lesssim&\, \big(\eps_0^\frac{1}{2}+\eps_1^\frac{1}{2}\big){\Xi} ^{\mu,\lambda,\kappa}(t).    
\end{align}
Similar to the estimate    in \eqref{G3.7}, we can further deduce that  
\begin{align}\label{G4.11}
|\widetilde{I}_9+\widetilde{I}_{10}|    \lesssim&\, \int_0^t \|\widetilde\theta^{\mu,\lambda,\kappa}\|_{L^3}\|(a,b,\omega)\|_{L^6}\|\{\mathbf{I}-\mathbf{P}\}\widetilde{f}^{\mu,\lambda,\kappa}\|_{\nu}{\rm d}\tau\nonumber\\
&+\int_0^t\|\widetilde\theta^{\mu,\lambda,\kappa}\|_{L^3}\| a\|_{L^6}\|\sqrt{2}\widetilde\omega^{\mu,\lambda,\kappa}-\sqrt{3}\widetilde\theta^{\mu,\lambda,\kappa}\|_{L^2} {\rm d}\tau\nonumber\\
&+\int_0^t\|\widetilde\theta^{\mu,\lambda,\kappa}\|_{L^3}\|\nabla\{\mathbf{I}-\mathbf{P}\}f\|_{\nu}\|\{\mathbf{I}-\mathbf{P}\}\widetilde{f}^{\mu,\lambda,\kappa}\|_{\nu}{\rm d}\tau\nonumber\\
\lesssim&\, \|\widetilde{\theta}^{\mu,\lambda,\kappa}\|_{L_t^\infty(H^1)}\|\{\mathbf{I}-\mathbf{P}\}\widetilde{f}^{\mu,\lambda,\kappa}\|_{L_t^2(\nu)}\big(\|\nabla(a,b,\omega)\|_{L_t^2(L^2)}+\|\nabla\{\mathbf{I}-\mathbf{P}\} {f} \|_{L_t^2(\nu)}   \big)\nonumber\\
&+\|\widetilde{\theta}^{\mu,\lambda,\kappa}\|_{L_t^\infty(H^1)}\|\nabla a\|_{L_t^2(L^2)}\|\sqrt{2}\widetilde\omega^{\mu,\lambda,\kappa}-\sqrt{3}\widetilde\theta^{\mu,\lambda,\kappa}\|_{L_t^2(L^2)}\nonumber\\
\lesssim&\, \big(\eps_0^\frac{1}{2}+\eps_1^\frac{1}{2}\big){\Xi} ^{\mu,\lambda,\kappa}(t),
\end{align}
and the term $\widetilde{I}_{11}+\widetilde{I}_{12}$ can be derived in a similar manner:
\begin{align}\label{G4.12}
|\widetilde{I}_{11}+\widetilde{I}_{12}|\lesssim  \big(\eps_0^\frac{1}{2}+\eps_1^\frac{1}{2}\big){\Xi} ^{\mu,\lambda,\kappa}(t).   
\end{align}

For the terms $\widetilde{I}_{13}, \dots,  \widetilde{I}_{16}$, by employing the definition of ${\Xi} ^{\mu,\lambda,\kappa}(t)$, the inequalities  \eqref{LNXG4.5}, \eqref{G4.5} and \eqref{G4.6}, H\"{o}lder's inequality, and Lemma \ref{LA.1}, it follows that  
\begin{align}
|\widetilde{I}_{13}|\lesssim&\, \int_0^t\|\widetilde{\rho}^{\mu,\lambda,\kappa}\|_{L^6}(\| {\rho}^{\mu,\lambda,\kappa}\|_{L^2} \|\nabla\widetilde{u}^{{\mu,\lambda,\kappa}}\|_{L^3}+\| \widetilde{\rho}^{\mu,\lambda,\kappa}\|_{L^2}\|\nabla u\|_{L^3} ){\rm d}\tau\nonumber\\
&+\int_0^t\|\widetilde{\rho}^{{\mu,\lambda,\kappa}}\|_{L^2}(\|\widetilde{u}^{{\mu,\lambda,\kappa}}\|_{L^6}\|\nabla \rho\|_{L^3}+\|u^{{\mu,\lambda,\kappa}}\|_{L^{\infty}}\|\nabla\widetilde{\rho}^{{\mu,\lambda,\kappa}}\|_{L^2})   {\rm d}\tau \nonumber\\
\lesssim&\,\|\widetilde{\rho}^{{\mu,\lambda,\kappa}}\|_{L_t^{\infty}(H^1)}\|\nabla\widetilde{u}^{{\mu,\lambda,\kappa}}\|_{L_t^2(H^1)}\big(\|\nabla\rho^{\mu,\lambda,\kappa}\|_{L_t^{2}(H^1)} +\|\nabla\rho \|_{L_t^{2}(H^1)}\big)\nonumber\\
&+\|\widetilde{\rho}^{{\mu,\lambda,\kappa}}\|_{L_t^{\infty}(H^1)}\|\nabla\widetilde{\rho}^{\mu,\lambda,\kappa}\|_{L_t^2(L^2)}\big(\|\nabla u\|_{L_t^2(H^1)}+\|\nabla {u}^{\mu,\lambda,\kappa}\|_{L_t^2(H^1)}\big)\nonumber\\
\lesssim&\, \big(\eps_0^\frac{1}{2}+\eps_1^\frac{1}{2}\big){\Xi} ^{\mu,\lambda,\kappa}(t),\label{G4.13-1}\\
|\widetilde{I}_{14}|\lesssim&\,\int_0^t\big((1+\|\theta^{\mu,\lambda,\kappa}\|_{L^\infty})\|\widetilde{u}^{\mu,\lambda,\kappa}\|_{L^2}\|\widetilde{\rho}^{\mu,\lambda,\kappa}\|_{L^6}\|\nabla\rho^{\mu,\lambda,\kappa}\|_{L^3}\big){\rm d}\tau\nonumber\\
&+\int_{0}^t\big(\|\widetilde{u}^{\mu,\lambda,\kappa}\|_{L^2}\|\widetilde{\rho}^{\mu,\lambda,\kappa}\|_{L^6}\|a^{\mu,\lambda,\kappa}\|_{L^6}\|u^{\mu,\lambda,\kappa}\|_{L^6}+\|\widetilde{u}^{\mu,\lambda,\kappa}\|_{L^2}\|\widetilde{\rho}^{\mu,\lambda,\kappa}\|_{L^3}\|b^{\mu,\lambda,\kappa}-u^{\mu,\lambda,\kappa}\|_{L^6}\big){\rm d}\tau\nonumber\\
\lesssim&\, \big(1+\eps_1^\frac{1}{2}\big)\|\widetilde{u}^{{\mu,\lambda,\kappa}}\|_{L_t^{\infty}(H^1)}\|\nabla\widetilde{\rho}^{{\mu,\lambda,\kappa}}\|_{L_t^{2}(L^2)}\big(\|\nabla {\rho}^{{\mu,\lambda,\kappa}}\|_{L_t^{2}(H^1)}+\|\nabla u^{\mu,\lambda,\kappa}\|_{L_t^2(L^2)}\|a^{\mu,\lambda,\kappa}\|_{L_t^\infty(H^1)}\big)\nonumber\\
&+\|\widetilde{u}^{{\mu,\lambda,\kappa}}\|_{L_t^{\infty}(H^1)}\|\nabla\widetilde{\rho}^{{\mu,\lambda,\kappa}}\|_{L_t^{2}(L^2)} \|b^{\mu,\lambda,\kappa}-u^{\mu,\lambda,\kappa}\|_{L_t^2(H^1)}  \nonumber\\
\lesssim&\, \big(\eps_0^\frac{1}{2}+\eps_1^\frac{1}{2}\big){\Xi} ^{\mu,\lambda,\kappa}(t),\label{G4.13-2}\\
|\widetilde{I}_{15}|\lesssim&\, \int_0^t \|{\widetilde{u}^{\mu,\lambda,\kappa}}\|_{L^2}(\|\widetilde\theta^{\mu,\lambda,\kappa}\|_{L^6}\|\nabla\rho\|_{L^3}+\|\theta^{\mu,\lambda,\kappa}\|_{L^\infty}\|\nabla\widetilde{\rho}^{\mu,\lambda,\kappa}\|_{L^2}) {\rm d}\tau\nonumber\\
&+\int_{0}^t \|\widetilde{u}^{\mu,\lambda,\kappa}\|_{L^2}\|\rho\|_{L^6}(\|\widetilde{a}^{\mu,\lambda,\kappa}\|_{L^6}\|u\|_{L^6}+\|\widetilde{u}^{\mu,\lambda,\kappa}\|_{L^6}\|a^{\mu,\lambda,\kappa}\|_{L^6}) {\rm d}\tau \nonumber\\
&+\int_0^t \big(\|\widetilde{u}^{\mu,\lambda,\kappa}\|_{L^3}\|\widetilde{\rho}^{\mu,\lambda,\kappa}\|_{L^6}\|b^{\mu,\lambda,\kappa}-u^{\mu,\lambda,\kappa}\|_{L^2}+\|{\widetilde{u}^{\mu,\lambda,\kappa}}\|_{L^2}\|\nabla\widetilde{\rho}^{\mu,\lambda,\kappa}\|_{L^2}\|\rho \|_{L^\infty}\big){\rm d}\tau\nonumber\\
\lesssim&\, \|\widetilde{u}^{\mu,\lambda,\kappa}\|_{L_t^\infty(H^1)}\|\nabla(\widetilde{a}^{\mu,\lambda,\kappa},\widetilde{\rho}^{\mu,\lambda,\kappa},\widetilde{u}^{\mu,\lambda,\kappa},\widetilde{\theta}^{\mu,\lambda,\kappa})\|_{L_t^2(L^2)}\big(\|\nabla(\rho,u)\|_{L_t^2(H^1)}+\|\nabla \theta^{\mu,\lambda,\kappa} \|_{L_t^2(H^1)}\big)\nonumber\\
&+\|\widetilde{u}^{\mu,\lambda,\kappa}\|_{L_t^\infty(H^1)}\|\nabla \widetilde{\rho}^{\mu,\lambda,\kappa} \|_{L_t^2(L^2)}\|b^{\mu,\lambda,\kappa}-u^{\mu,\lambda,\kappa}\|_{L_t^2(L^2)}\nonumber\\
\lesssim&\, \big(\eps_0^\frac{1}{2}+\eps_1^\frac{1}{2}\big){\Xi} ^{\mu,\lambda,\kappa}(t), \label{G4.13-3}\\
|\widetilde{I}_{16}|\lesssim&\, \int_0^t\|\widetilde{u}^{\mu,\lambda,\kappa}\|_{L^3}(\|\widetilde{u}^{\mu,\lambda,\kappa}\|_{L^6}\|\nabla u\|_{L^2}+\| {u}^{\mu,\lambda,\kappa}\|_{L^6}\|\nabla \widetilde{u}^{\mu,\lambda,\kappa}\|_{L^2}){\rm d}\tau\nonumber\\
\lesssim&\, \|\widetilde{u}^{\mu,\lambda,\kappa}\|_{L_t^\infty(H^1)} \|\nabla\widetilde{u}^{\mu,\lambda,\kappa}\|_{L_t^2(L^2)}\big( \|\nabla u\|_{L_t^2(L^2)}+\|\nabla u^{\mu,\lambda,\kappa}\|_{L_t^2(L^2)}\big)\nonumber\\
\lesssim&\, \big(\eps_0^\frac{1}{2}+\eps_1^\frac{1}{2}\big){\Xi} ^{\mu,\lambda,\kappa}(t).\label{G4.13}
\end{align}
 
 Note that the terms $\widetilde{I}_{17},\dots,\widetilde{I}_{21} $ play  a pivotal role in determining the convergence rate of the limits $\mu\rightarrow0$, $\lambda\rightarrow0$ and $\kappa\rightarrow 0$.  By employing integration by parts and Young's inequality, we can effectively address the term $\widetilde{I}_{17}$ as follows
\begin{align}\label{LNXG4.18}
|\widetilde{I}_{17}|=&\,\Big| -\mu\int_0^t \nabla u^{\mu,\lambda,\kappa}\cdot\nabla\Big(\frac{\widetilde{u}^{{\mu,\lambda,\kappa}}}{1+\rho^{{\mu,\lambda,\kappa}}}\Big)  {\rm d}\tau\Big|\nonumber\\ 
\lesssim&\, \mu \int_0^t(\|\nabla\widetilde{u}^{\mu,\lambda,\kappa}\|_{ L^2}\|\nabla {u}^{\mu,\lambda,\kappa}\|_{L^2} + \|\nabla\rho^{\mu,\lambda,\kappa}\|_{L^3}\|\widetilde{u}^{\mu,\lambda,\kappa}\|_{L^2}\|\nabla u^{\mu,\lambda,\kappa}\|_{L^6}){\rm d}\tau\nonumber\\
\lesssim&\, \mu\big( \|\nabla\widetilde{u}^{\mu,\lambda,\kappa}\|_{L_t^2(L^2)}   \|\nabla {u}^{{\mu,\lambda,\kappa}}\|_{L_t^2(L^2)}+  \|\nabla\rho^{{\mu,\lambda,\kappa}}\|_{L_t^2(H^1)}\|\nabla u^{{\mu,\lambda,\kappa}}\|_{L_t^2(H^1)}\|\widetilde{u}^{{\mu,\lambda,\kappa}}\|_{L_t^{\infty}(L^2)}\big)\nonumber\\
\lesssim& \,\mu  \eps_0^\frac{1}{2}  {\Xi} ^{\mu,\lambda,\kappa}(t)^{\frac{1}{2}}+\mu \big(\eps_0^\frac{1}{2} +\eps_1^\frac{1}{2}\big)  {\Xi} ^{\mu,\lambda,\kappa}(t)^{\frac{1}{2}}\nonumber\\
\lesssim&\,\big(\eps_0^\frac{1}{2}+\eps_1^\frac{1}{2}\big){\Xi} ^{\mu,\lambda,\kappa}(t)+ \mu^2.  
\end{align}
Similarly, we have
\begin{align}\label{LNXG4.14}
|\widetilde{I}_{18}|+|\widetilde{I}_{21}|
\lesssim&\,\big(\eps_0^\frac{1}{2}+\eps_1^\frac{1}{2}\big){\Xi} ^{\mu,\lambda,\kappa}(t)+ (\mu+\lambda)^2+\kappa^2,
\end{align}
and
\begin{align}\label{LNXG4.20}
|\widetilde{I}_{19}|+|\widetilde{I}_{20}|\lesssim&\,  \int_0^t \|\widetilde{u}^{\mu,\lambda,\kappa}\|_{L^2}(\mu\|\nabla u^{\mu,\lambda,\kappa}\|_{L^3}\|\nabla u^{\mu,\lambda,\kappa}\|_{L^6}+\lambda\|{\rm div} u^{\mu,\lambda,\kappa}\|_{L^3}\|{\rm div} u^{\mu,\lambda,\kappa}\|_{L^6}){\rm d}\tau\nonumber\\ 
\lesssim&\,\mu\|\nabla u^{\mu,\lambda,\kappa}\|_{L_t^2(H^1)}^2\|\widetilde{u}^{\mu,\lambda,\kappa}\|_{L_t^\infty(L^2)}+\lambda\| {\rm div}u^{\mu,\lambda,\kappa}\|_{L_t^2(H^1)}^2\|\widetilde{u}^{\mu,\lambda,\kappa}\|_{L_t^\infty(L^2)} \nonumber\\
\lesssim&\, (\mu+\lambda)\eps_0{\Xi} ^{\mu,\lambda,\kappa}(t)^{\frac{1}{2}} \nonumber\\
\lesssim&\, \big(\eps_0^\frac{1}{2}+\eps_1^\frac{1}{2}\big){\Xi} ^{\mu,\lambda,\kappa}(t)+ (\mu+\lambda)^2.
\end{align}

For the remaining terms $\widetilde{I}_{22}, \widetilde{I}_{23}$, and $\widetilde{I}_{24}$, 
it follows from Lemma \ref{LA.1},  H\"{o}lder's inequality, the definition of ${\Xi} ^{\mu,\lambda,\kappa}(t)$, and the inequalities \eqref{G4.5} and \eqref{G4.6} that
\begin{align}
|\widetilde{I}_{22}|\lesssim&\, \int_0^t \|\widetilde{\theta}^{\mu,\lambda,\kappa}\|_{L^3}\big(\|\widetilde{\theta}^{\mu,\lambda,\kappa}\|_{L^6}\|\nabla u^{\mu,\lambda,\kappa}\|_{L^2}+ \|\nabla(\widetilde{u}^{\mu,\lambda,\kappa},\widetilde{\theta}^{\mu,\lambda,\kappa})\|_{L^2}\|(u^{\mu,\lambda,\kappa},\theta^{\mu,\lambda,\kappa})\|_{L^6}\big){\rm d}\tau   \nonumber\\
&+\int_0^t \|\widetilde{\theta}^{\mu,\lambda,\kappa}\|_{L^3}\|\widetilde{u}^{\mu,\lambda,\kappa}\|_{L^6}\|\nabla\theta\|_{L^2}{\rm d}\tau\nonumber\\
\lesssim&\, \|\widetilde{\theta}^{\mu,\lambda,\kappa}\|_{L_t^\infty(H^1)} \|\nabla(\widetilde{u}^{\mu,\lambda,\kappa},\widetilde{\theta}^{\mu,\lambda,\kappa})\|_{L_t^2(L^2)}\big(\|\nabla(u^{\mu,\lambda,\kappa},\theta^{\mu,\lambda,\kappa})\|_{L_t^2(L^2)}+\|\nabla \theta \|_{L_t^2(L^2)}\big)\nonumber\\
\lesssim&\,\big(\eps_0^\frac{1}{2}+\eps_1^\frac{1}{2}\big){\Xi} ^{\mu,\lambda,\kappa}(t),\label{G4.15-1}\\
|\widetilde{I}_{23}|\lesssim&\, \int_0^t \|\widetilde{\theta}^{\mu,\lambda,\kappa}\|_{L^3}\big(\|{u}\|_{L^6}\|\widetilde{b}^{\mu,\lambda,\kappa}-\widetilde{u}^{\mu,\lambda,\kappa}\|_{L^2}+\|\widetilde{u}^{\mu,\lambda,\kappa}\|_{L^6}\| {b}^{\mu,\lambda,\kappa}- {u}^{\mu,\lambda,\kappa} \|_{L^2}\big){\rm d}\tau\nonumber\\
&+\int_0^t \|\widetilde{\theta}^{\mu,\lambda,\kappa}\|_{L^2}\big(\|\widetilde{\theta}^{\mu,\lambda,\kappa}\|_{L^6}\|a^{\mu,\lambda,\kappa}\|_{L^6}(\|\theta\|_{L^6}+\|\theta^{\mu,\lambda,\kappa}\|_{L^6})+\|  \widetilde{a}^{\mu,\lambda,\kappa}\|_{L^6}\| \theta\|_{L^6}^2\big){\rm d}\tau\nonumber\\
&+ \int_0^t  \big(\|\widetilde{\theta}^{\mu,\lambda,\kappa}\|_{L^2}\|\widetilde{b}^{\mu,\lambda,\kappa}\|_{L^6}\|u\|_{L^6}\|\rho\|_{L^6}+\|\widetilde{\theta}^{\mu,\lambda,\kappa}\|_{L^3}\|\rho\|_{L^6} \|\sqrt{2}\widetilde{\omega}^{\mu,\lambda,\kappa}-\sqrt{3}\widetilde{\theta}^{\mu,\lambda,\kappa}\|_{L^2}\big)  {\rm d}\tau \nonumber\\
&+\int_0^t\|\widetilde{\theta}^{\mu,\lambda,\kappa}\|_{L^2}\|\rho\|_{L^6}\big(\|b^{\mu,\lambda,\kappa}\|_{L^6}\|\widetilde{u}^{\mu,\lambda,\kappa}\|_{L^6}+\|\theta^{\mu,\lambda,\kappa}\|_{L^6}\|\widetilde{a}^{\mu,\lambda,\kappa}\|_{L^6}+\|a^{\mu,\lambda,\kappa}\|_{L^6}\|\widetilde{\theta}^{\mu,\lambda,\kappa}\|_{L^6}\big){\rm d}\tau \nonumber\\
\lesssim&\, \|\widetilde{\theta}^{\mu,\lambda,\kappa}\|_{L_t^\infty(H^1)}\big(\|\nabla u\|_{L_t^2(L^2)}\|\widetilde{b}^{\mu,\lambda,\kappa}-\widetilde{u}^{\mu,\lambda,\kappa}\|_{L_t^2(L^2)}+\|\nabla\widetilde{u}^{\mu,\lambda,\kappa}\|_{L_t^2(L^2)}\|b^{\mu,\lambda,\kappa}-u^{\mu,\lambda,\kappa}\|_{L_t^2(L^2)}\big) \nonumber\\
&+\big(\eps_0^\frac{1}{2}+\eps_1^\frac{1}{2}\big)\|\widetilde{\theta}^{\mu,\lambda,\kappa}\|_{L_t^\infty(H^1)} \|\nabla(\widetilde{a}^{\mu,\lambda,\kappa},\widetilde{b}^{\mu,\lambda,\kappa},\widetilde{u}^{\mu,\lambda,\kappa},\widetilde{\theta}^{\mu,\lambda,\kappa})\|_{L_t^2(L^2)}\|\nabla (a^{\mu,\lambda,\kappa},b^{\mu,\lambda,\kappa})\|_{L_t^2(L^2)}\nonumber\\
&+\big(\eps_0^\frac{1}{2}+\eps_1^\frac{1}{2}\big)\|\widetilde{\theta}^{\mu,\lambda,\kappa}\|_{L_t^\infty(H^1)} \|\nabla(\widetilde{a}^{\mu,\lambda,\kappa},\widetilde{b}^{\mu,\lambda,\kappa},\widetilde{u}^{\mu,\lambda,\kappa},\widetilde{\theta}^{\mu,\lambda,\kappa})\|_{L_t^2(L^2)}\|\nabla (\rho,u,\theta)\|_{L_t^2(L^2)} \nonumber\\
&+\|\widetilde{\theta}^{\mu,\lambda,\kappa}\|_{L_t^\infty(H^1)}\|\nabla\rho\|_{L_t^2(L^2)}\|\sqrt{2}\widetilde\omega^{\mu,\lambda,\kappa}-\sqrt{3}\widetilde{\theta}^{\mu,\lambda,\kappa}\|_{L_t^2(L^2)}\nonumber\\
\lesssim&\,\big(\eps_0^\frac{1}{2}+\eps_1^\frac{1}{2}\big){\Xi} ^{\mu,\lambda,\kappa}(t),\label{G4.15-2}\\
|\widetilde{I}_{24}|\lesssim&\,  \int_0^t\big(\|\widetilde{\theta}^{\mu,\lambda,\kappa}\|_{L^3}\|\widetilde{\rho}^{\mu,\lambda,\kappa}\|_{L^6}\|\sqrt{2}\omega-\sqrt{3}\theta\|_{L^2}+\|\widetilde{\theta}^{\mu,\lambda,\kappa}\|_{L^2}\|\widetilde{\rho}^{\mu,\lambda,\kappa}\|_{L^6}\|\theta^{\mu,\lambda,\kappa}\|_{L^6}^2\|a^{\mu,\lambda,\kappa}\|_{L^\infty}\big){\rm d}\tau\nonumber\\
&+\|\widetilde{\theta}^{\mu,\lambda,\kappa}\|_{L^2}\|\widetilde{\rho}^{\mu,\lambda,\kappa}\|_{L^6}\|(a^{\mu,\lambda,\kappa},b^{\mu,\lambda,\kappa},u^{\mu,\lambda,\kappa},\theta^{\mu,\lambda,\kappa})\|_{L^6}^2\nonumber\\
\lesssim&\,\big(1+\eps_0^\frac{1}{2}+\eps_1^\frac{1}{2}\big)\|\widetilde{\theta}^{\mu,\lambda,\kappa}\|_{L_t^\infty(H^1)}\|\nabla\widetilde{\rho}^{\mu,\lambda,\kappa}\|_{L_t^2(L^2)}\|\sqrt{2}\omega-\sqrt{3}\theta\|_{L_t^2(L^2)}\nonumber\\
&+\big(1+\eps_0^\frac{1}{2}+\eps_1^\frac{1}{2}\big)\|\widetilde{\theta}^{\mu,\lambda,\kappa}\|_{L_t^\infty(H^1)}\|\nabla\widetilde{\rho}^{\mu,\lambda,\kappa}\|_{L_t^2(L^2)}\|\nabla(a^{\mu,\lambda,\kappa},b^{\mu,\lambda,\kappa},u^{\mu,\lambda,\kappa},\theta^{\mu,\lambda,\kappa})\|_{L_t^2(L^2)}\nonumber\\
\lesssim&\, \big(\eps_0^\frac{1}{2}+\eps_1^\frac{1}{2}\big){\Xi} ^{\mu,\lambda,\kappa}(t).\label{G4.15}
\end{align} 
By substituting the estimates   \eqref{G4.9}--\eqref{G4.15} into \eqref{G4.8}, we derive the desired estimate 
\eqref{G4.7}.
\end{proof}

 We  continue to handle the first-order spatial derivatives of $(\widetilde{\rho}^{\mu,\lambda,\kappa},\widetilde{u}^{\mu,\lambda,\kappa},\widetilde{\theta}^{\mu,\lambda,\kappa},\widetilde{f}^{\mu,\lambda,\kappa})$. Similar to Lemma \ref{L3.2}, in order to capture the symmetric structure of the equations for $ \widetilde{\rho}^{\mu,\lambda,\kappa}$, $\widetilde{u}^{\mu,\lambda,\kappa}$ and $\widetilde{\theta}^{\mu,\lambda,\kappa} $, we rewrite the error equations \eqref{G4.3} as  
 
\begin{equation}\label{LNXG4.24}
\left\{
\begin{aligned}
&\partial_t \widetilde{\rho}^{\mu,\lambda,\kappa}+(1+\rho^{\mu,\lambda,\kappa}){\rm div}\widetilde{u}^{\mu,\lambda,\kappa} =\widetilde{G}_1,\\
&\partial_{t}\widetilde{u}^{\mu,\lambda,\kappa}+\frac{1+\theta^{\mu,\lambda,\kappa}}{1+\rho^{\mu,\lambda,\kappa}}\nabla \widetilde{\rho}^{\mu,\lambda,\kappa}+\nabla\widetilde{\theta}^{\mu,\lambda,\kappa}-(\widetilde{b}^{\mu,\lambda,\kappa}-\widetilde{u}^{\mu,\lambda,\kappa}) \\
&\quad=\frac{\mu \Delta u^{\mu,\lambda,\kappa}}{1+\rho^{\mu,\lambda,\kappa}}+\frac{(\mu+\lambda) \nabla {\rm div}u^{\mu,\lambda,\kappa}}{1+\rho^{\mu,\lambda,\kappa}}+\frac{2\mu |D(u^{\mu,\lambda,\kappa})|^2}{1+\rho^{\mu,\lambda,\kappa}}+\frac{ \lambda |{\rm div}u^{\mu,\lambda,\kappa}|^2}{1+\rho^{\mu,\lambda,\kappa}} \\
&\quad\quad-a^{\mu,\lambda,\kappa}\widetilde{u}^{\mu,\lambda,\kappa}-\widetilde{a}^{\mu,\lambda,\kappa} u+ \widetilde{G}_2+\widetilde{G}_3+\widetilde{G}_4,\\
&\partial_t\widetilde{\theta}^{{\mu,\lambda,\kappa}}+(1+\theta^{\mu,\lambda,\kappa}){\rm div}\widetilde{u}^{{\mu,\lambda,\kappa}}-\sqrt{3}(\sqrt{2}\widetilde{\omega}^{{\mu,\lambda,\kappa}}-\sqrt{3}\widetilde{\theta}^{{\mu,\lambda,\kappa}}) \\
&\quad=\frac{\kappa\Delta\theta^{{\mu,\lambda,\kappa}}}{1+\rho^{{\mu,\lambda,\kappa}}}-b^{\mu,\lambda,\kappa}\cdot \widetilde{u}^{\mu,\lambda,\kappa}-\widetilde{b}^{\mu,\lambda,\kappa}\cdot u-3\widetilde{a}^{\mu,\lambda,\kappa}\theta-3a^{\mu,\lambda,\kappa}\widetilde{\theta}^{\mu,\lambda,\kappa}+\widetilde{G}_5+\widetilde{G}_6+\widetilde{G}_7,\\
&\partial_{t}\widetilde{f}^{\mu,\lambda,\kappa}+v\cdot\nabla \widetilde{f}^{\mu,\lambda,\kappa} + {u}\cdot\nabla_v\widetilde{f}^{\mu,\lambda,\kappa}+\widetilde{u}^{\mu,\lambda,\kappa}\cdot\nabla_v f-\widetilde{u}^{\mu,\lambda,\kappa}\cdot v\sqrt{M}-(|v|^2-3)\sqrt{M}\widetilde\theta^{\mu,\lambda,\kappa} \\
&\quad= \mathcal{L}\widetilde{f}^{\mu,\lambda,\kappa}+\frac{1}{2}u\cdot v\widetilde{f}^{\mu,\lambda,\kappa}+\frac{1}{2}\widetilde{u}^{\mu,\lambda,\kappa}\cdot vf^{\mu,\lambda,\kappa}+\frac{\theta}{\sqrt{M}}\Delta_v(\sqrt{M}\widetilde{f}^{\mu,\lambda,\kappa})+\frac{\widetilde\theta^{\mu,\lambda,\kappa}}{\sqrt{M}}\Delta_v(\sqrt{M} {f} ) ,
\end{aligned}\right.
\end{equation} 
where the terms $\widetilde{G}_i(i=1,\dots,7)$  are given by
\begin{equation*}
\begin{aligned}
\widetilde{G}_1:=&\, -\widetilde{\rho}^{{\mu,\lambda,\kappa}}{\rm div}u-\widetilde{u}^{{\mu,\lambda,\kappa}}\cdot\nabla\rho-u^{\mu,\lambda,\kappa}\cdot\nabla\widetilde{\rho}^{\mu,\lambda,\kappa}  ,\\
\widetilde{G}_2:=&\,\Big(\frac{\rho^{\mu,\lambda,\kappa}}{1+\rho^{\mu,\lambda,\kappa}}-\frac{\rho }{1+\rho }\Big) \nabla\rho^{{\mu,\lambda,\kappa}}-\Big(\frac{1}{1+\rho^{\mu,\lambda,\kappa}}-\frac{1}{1+\rho}\Big)\theta^{{\mu,\lambda,\kappa}} \nabla\rho^{{\mu,\lambda,\kappa}} \\
&-\Big(\frac{\rho^{\mu,\lambda,\kappa}}{1+\rho^{\mu,\lambda,\kappa}}-\frac{\rho }{1+\rho }\Big) ( b^{\mu,\lambda,\kappa}-u^{\mu,\lambda,\kappa}-a^{\mu,\lambda,\kappa} u^{\mu,\lambda,\kappa}),\nonumber\\
\widetilde{G}_3:=&\,-\frac{1}{1+\rho} \widetilde{\theta}^{{\mu,\lambda,\kappa}}\nabla\rho +\frac{\rho}{1+\rho}(\widetilde{a}^{\mu,\lambda,\kappa} u+a^{\mu,\lambda,\kappa}\widetilde{u}^{\mu,\lambda,\kappa}) \frac{\rho}{1+\rho}\nabla\widetilde{\rho}^{{\mu,\lambda,\kappa}}-\frac{\rho}{1+\rho} (\widetilde{b}^{\mu,\lambda,\kappa}-\widetilde{u}^{\mu,\lambda,\kappa})  ,\\
\widetilde{G}_4:=&-\widetilde{u}^{\mu,\lambda,\kappa}\cdot\nabla u-u^{\mu,\lambda,\kappa}\cdot\nabla\widetilde{u}^{\mu,\lambda,\kappa},\nonumber\\
\widetilde{G}_5:=&\,-\widetilde{\theta}^{{\mu,\lambda,\kappa}}{\rm div}u^{{\mu,\lambda,\kappa}} -\widetilde{u}^{{\mu,\lambda,\kappa}}\cdot\nabla\theta-{u}^{{\mu,\lambda,\kappa}}\cdot\nabla\widetilde{\theta}^{{\mu,\lambda,\kappa}},\\
\widetilde{G}_6:=&\,\frac{1}{1+\rho}\big((u^{\mu,\lambda,\kappa}-b^{\mu,\lambda,\kappa})\cdot\widetilde{u}^{\mu,\lambda,\kappa}+u\cdot(\widetilde{u}^{\mu,\lambda,\kappa}-\widetilde{b}^{\mu,\lambda,\kappa}) \big) \nonumber\\
&+\frac{1}{1+\rho}\big( \widetilde{a}^{\mu,\lambda,\kappa}\theta^2+a^{\mu,\lambda,\kappa}(\theta\widetilde{\theta}^{\mu,\lambda,\kappa}+\theta^{\mu,\lambda,\kappa}\widetilde{\theta}^{\mu,\lambda,\kappa})\big) +\sqrt{3}\frac{\rho}{1+\rho}(\sqrt{2}\widetilde{\omega}^{\mu,\lambda,\kappa}-\sqrt{3}\widetilde\theta^{\mu,\lambda,\kappa})\nonumber\\
&+\frac{\rho}{1+\rho}(\widetilde{b}^{\mu,\lambda,\kappa}\cdot u+b^{\mu,\lambda,\kappa}\cdot \widetilde{u}^{\mu,\lambda,\kappa}+3\widetilde{a}^{\mu,\lambda,\kappa}\theta+3 {a}^{\mu,\lambda,\kappa}\widetilde{\theta}^{\mu,\lambda,\kappa}),\nonumber\\
\widetilde{G}_7:=&\,-\sqrt{3}\Big(\frac{\rho^{\mu,\lambda,\kappa}}{1+\rho^{\mu,\lambda,\kappa}}-\frac{\rho}{1+\rho}\Big)(\sqrt{2}\omega^{\mu,\lambda,\kappa}-\sqrt{3}\theta^{\mu,\lambda,\kappa})+\Big(\frac{1}{1+\rho^{\mu,\lambda,\kappa}}-\frac{1}{1+\rho}\Big) a^{\mu,\lambda,\kappa}(\theta^{\mu,\lambda,\kappa})^2\nonumber\\
&+\Big(\frac{1}{1+\rho^{\mu,\lambda,\kappa}}-\frac{1}{1+\rho}\Big)\big(| {u}^{\mu,\lambda,\kappa}|^2-2b^{\mu,\lambda,\kappa}\cdot u^{\mu,\lambda,\kappa}-3a^{\mu,\lambda,\kappa}\theta^{\mu,\lambda,\kappa} \big).
\end{aligned}
\end{equation*}

We have 

\begin{lem}\label{L4.2}
It holds that
\begin{align}\label{G4.16}
&\sup_{\tau\in[0,t]}\big\{\|\nabla(\widetilde{\rho}^{\mu,\lambda,\kappa},\widetilde{u}^{\mu,\lambda,\kappa},
\widetilde{\theta}^{\mu,\lambda,\kappa})(\tau)\|_{L^2}^2
+\|\nabla\widetilde{f}^{\mu,\lambda,\kappa}(\tau)\|_{L_{x,v}^2}^2\big\}  \nonumber\\
&\quad +\int_{0}^t\big( \|\nabla(\widetilde{b}^{\mu,\lambda,\kappa}-\widetilde{u}^{\mu,\lambda,\kappa})(\tau)\|_{L^2}^2 +\|\nabla(\sqrt{2}\widetilde{\omega}^{\mu,\lambda,\kappa}-\sqrt{3}\widetilde{\theta}^{\mu,\lambda,\kappa})(\tau)\|_{L^2}^2+\|  \nabla\{\mathbf{I}-\mathbf{P}\}\widetilde{f}^{\mu,\lambda,\kappa}(\tau)\|_{\nu}^2 \big) {\rm d}\tau \nonumber\\
&\quad\quad \leq {\Xi} ^{\mu,\lambda,\kappa}(0)+C\max\{\mu,\lambda,\kappa\}^2+C \big(\eps_0^\frac{1}{2}  +\eps_1^\frac{1}{2}\big){\Xi} ^{\mu,\lambda,\kappa}(t),
\end{align}
where $C>0$ is a constant independent of $\mu$, $\lambda$,   $\kappa$ and   $t$.
\end{lem}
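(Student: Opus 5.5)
We apply $\partial^\alpha$ with $|\alpha|=1$ to the rewritten error system \eqref{LNXG4.24} and mimic the symmetrization of Lemma \ref{L3.2}. Concretely, we multiply the resulting $\widetilde{\rho}$-equation by $\frac{\partial^\alpha\widetilde{\rho}^{\mu,\lambda,\kappa}}{(1+\rho^{\mu,\lambda,\kappa})^2}$, the $\widetilde u$-equation by $\frac{\partial^\alpha\widetilde{u}^{\mu,\lambda,\kappa}}{1+\theta^{\mu,\lambda,\kappa}}$, the $\widetilde\theta$-equation by $\frac{\partial^\alpha\widetilde{\theta}^{\mu,\lambda,\kappa}}{(1+\theta^{\mu,\lambda,\kappa})^2}$, and the $\widetilde f$-equation by $\partial^\alpha\widetilde f^{\mu,\lambda,\kappa}$. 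We then integrate over $[0,t]\times\mathbb{R}^3$ (respectively $\mathbb{R}^3\times\mathbb{R}^3$).

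The top-order couplings combine symmetrically. The terms $(1+\rho^{\mu,\lambda,\kappa})\partial^\alpha{\rm div}\widetilde u$ and $\frac{1+\theta^{\mu,\lambda,\kappa}}{1+\rho^{\mu,\lambda,\kappa}}\nabla\partial^\alpha\widetilde\rho$ become $\frac{\partial^\alpha{\rm div}\widetilde u}{1+\rho}$ and $\frac{\nabla\partial^\alpha\widetilde\rho}{1+\rho}$. Likewise $(1+\theta^{\mu,\lambda,\kappa})\partial^\alpha{\rm div}\widetilde u$ pairs with $\frac{\nabla\partial^\alpha\widetilde\theta}{1+\theta}$. After integration by parts, each pair leaves only remainders involving $\nabla(\rho^{\mu,\lambda,\kappa},\theta^{\mu,\lambda,\kappa})$, which are bounded using $\|\nabla(\rho^{\mu,\lambda,\kappa},\theta^{\mu,\lambda,\kappa})\|_{L^\infty}\lesssim\eps_0^{1/2}$. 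As in \eqref{G3.10}, the linear relaxation terms give
\[
\|\partial^\alpha(\widetilde b-\widetilde u)\|_{L^2}^2+\|\partial^\alpha(\sqrt2\widetilde\omega-\sqrt3\widetilde\theta)\|_{L^2}^2+\bar\lambda\|\{\mathbf I-\mathbf P\}\partial^\alpha\widetilde f\|_\nu^2 ,
\]
up to the weight-induced errors $\frac{\theta}{1+\theta}\partial^\alpha(\widetilde b-\widetilde u)\cdot\partial^\alpha\widetilde u$ and their temperature analogues. These errors are $O(\eps_0^{1/2})$ times dissipation. The time derivatives of the weights are controlled by Lemma \ref{LNXL3.2}. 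Since the weights lie between fixed constants by \eqref{LNXG3.2}, the weighted norms are equivalent to $\|\nabla(\widetilde\rho,\widetilde u,\widetilde\theta)\|_{L^2}^2$.

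The nonlinear remainders (the $\widetilde G_i$, the zero-order products $a^{\mu,\lambda,\kappa}\widetilde u$, $\widetilde a u$, $b^{\mu,\lambda,\kappa}\cdot\widetilde u$, $\widetilde a\theta$, etc., and the kinetic terms $u\cdot\nabla_v\widetilde f$, $\widetilde u\cdot\nabla_v f$, $\frac{\theta}{\sqrt M}\Delta_v(\sqrt M\widetilde f)$, $\frac{\widetilde\theta}{\sqrt M}\Delta_v(\sqrt M f)$) are handled exactly as in Lemma \ref{L4.1}. We split each into macro and micro parts via \eqref{G2.4}, use H\"older with $L^3$–$L^6$ pairings and Lemma \ref{LA.1}, and always put the error in $L^\infty_t(H^1)$ or in the dissipative norms of $\Xi^{\mu,\lambda,\kappa}$. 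The background solutions carry the $H^4$ bounds and the $L^2_t$ dissipation from Theorems \ref{T1.1}–\ref{T1.2}, of size $\eps_0^{1/2}+\eps_1^{1/2}$. Gradients of $\widetilde u,\widetilde\theta,\widetilde f$ in $L^2_t$ are converted via \eqref{LNXG4.5}–\eqref{G4.6}. The $\Delta_v$ terms are integrated by parts in $v$ as in \eqref{G3.7}, so that no $v$-derivatives beyond $\|\cdot\|_\nu$ appear. The term $-\widetilde a^{\mu,\lambda,\kappa}u$ is combined with $\widetilde b-\widetilde u$ as in \eqref{G3.6}. All of these contribute $C(\eps_0^{1/2}+\eps_1^{1/2})\Xi^{\mu,\lambda,\kappa}(t)$.

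The main obstacle is the viscous and heat-conductive forcing terms carrying the rate, for instance $\mu\int_0^t\!\int\partial^\alpha\big(\frac{\Delta u^{\mu,\lambda,\kappa}}{1+\rho^{\mu,\lambda,\kappa}}\big)\frac{\partial^\alpha\widetilde u}{1+\theta^{\mu,\lambda,\kappa}}$. Unlike in Lemma \ref{L4.1}, integrating by parts here would put two derivatives on $\widetilde u$, which $\Xi$ does not control. So we do not integrate by parts. Instead we bound the term directly by
\[
\mu\,\|\nabla^3 u^{\mu,\lambda,\kappa}\|_{L^2_t(L^2)}\,\|\nabla\widetilde u\|_{L^2_t(L^2)}+\text{lower order} .
\]
Here $\|\nabla^3u^{\mu,\lambda,\kappa}\|_{L^2_tL^2}\lesssim\eps_0^{1/2}$ uniformly, because $\|\nabla u\|_{H^3}^2\le\mathcal D$ and \eqref{a-2} give $\int_0^\infty\mathcal D\lesssim\eps_0$. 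Young's inequality then yields $\lesssim \eps_0^{1/2}\Xi+\mu^2$. The same argument applies to the $(\mu+\lambda)\nabla{\rm div}$ term, to $\kappa\Delta\theta^{\mu,\lambda,\kappa}$ (using $\|\nabla\theta\|_{H^3}^2\le\mathcal D$), and to the quadratic terms $\mu|D(u)|^2$ and $\lambda|{\rm div} u|^2$. This produces the $C\max\{\mu,\lambda,\kappa\}^2$ contribution.

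Finally, we sum over $|\alpha|=1$, integrate in time, bound the initial terms by $\Xi^{\mu,\lambda,\kappa}(0)$, and take the supremum over $[0,t]$ to obtain \eqref{G4.16}.
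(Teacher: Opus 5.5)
Your proposal is correct and follows essentially the same route as the paper. The paper also applies $\nabla$ to \eqref{LNXG4.24}, tests with the weights $(1+\rho^{\mu,\lambda,\kappa})^{-2}$, $(1+\theta^{\mu,\lambda,\kappa})^{-1}$, $(1+\theta^{\mu,\lambda,\kappa})^{-2}$ to exploit the symmetric structure, controls the weight time-derivatives via Lemma \ref{LNXL3.2}, and bounds the viscous and heat-conductive forcing directly, without integrating by parts, by $\mu\|\nabla^3u^{\mu,\lambda,\kappa}\|_{L^2_t(L^2)}\|\nabla\widetilde u^{\mu,\lambda,\kappa}\|_{L^2_t(L^2)}$ as in \eqref{G4.22}. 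One detail to make explicit: contrary to ``exactly as in Lemma \ref{L4.1}'', the transport terms such as $u^{\mu,\lambda,\kappa}\cdot\nabla\partial^\alpha\widetilde\rho^{\mu,\lambda,\kappa}$ must be integrated by parts against the weighted multiplier (as the paper does for $\widetilde J_{18}$), because $\nabla^2\widetilde\rho^{\mu,\lambda,\kappa}$ is not controlled by $\Xi^{\mu,\lambda,\kappa}$.
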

\begin{proof}
Applying the operator $\nabla$ to the equations in \eqref{LNXG4.24}, and using \eqref{G2.6} and the energy method, one gets 
\begin{align}\label{G4.17}
&\frac{1}{2}   \Big(\Big\|\frac{ \nabla\widetilde\rho^{\mu,\lambda,\kappa}}{1+\rho^{\mu,\lambda,\kappa}}(t)\Big\|_{L^2}^2+\Big\|\frac{\nabla \widetilde u^{\mu,\lambda,\kappa}}{\sqrt{1+\theta^{\mu,\lambda,\kappa}}}(t)\Big\|_{L^2}^{2}+\Big\|\frac{\nabla\widetilde\theta^{\mu,\lambda,\kappa}}{1+\theta^{\mu,\lambda,\kappa}}(t)\Big\|_{L^2}^2 +\|\nabla \widetilde{f}^{\mu,\lambda,\kappa}(t)\|_{L_{x,v}^{2}}^2\Big)  \nonumber\\
&+\int_0^t \big(  \bar\lambda\| \nabla\{\mathbf{I}-\mathbf{P}\}\widetilde{f}^{\mu,\lambda,\kappa}(\tau)\|_{\nu}^2 +\|\nabla(\widetilde{b}^{\mu,\lambda,\kappa}-\widetilde{u}^{\mu,\lambda,\kappa})(\tau)\|_{L^2}^2  \big){\rm d}\tau\nonumber\\
&+\int_0^t \|\nabla(\sqrt{2}\widetilde{\omega}^{\mu,\lambda,\kappa}-\sqrt{3}\widetilde{\theta}^{\mu,\lambda,\kappa})(\tau)\|_{L^2}^2  {\rm d}\tau\nonumber\\
\leq\,&\frac{1}{2}\int_0^t  \int_{\mathbb{R}^3 }\langle \nabla( u\cdot v\widetilde{f}^{\mu,\lambda,\kappa}),\nabla\widetilde{f}^{\mu,\lambda,\kappa}\rangle {\rm d}x{\rm d}\tau-\int_0^t  \int_{\mathbb{R}^3 }\langle \nabla( u\cdot \nabla_v\widetilde{f}^{\mu,\lambda,\kappa}),\nabla\widetilde{f}^{\mu,\lambda,\kappa}\rangle {\rm d}x{\rm d}\tau\nonumber\\
&-\int_0^t \nabla(\widetilde{b}^{\mu,\lambda,\kappa}\cdot {u} )\cdot\nabla\widetilde{\theta}^{\mu,\lambda,\kappa}{\rm d}\tau       -\int_0^t \nabla(\widetilde{a}^{\mu,\lambda,\kappa} u)\cdot\nabla\widetilde{u}^{\mu,\lambda,\kappa}{\rm d}{\tau}  \nonumber\\
&+\frac{1}{2} \int_0^t  \int_{\mathbb{R}^3 }\langle\nabla(\widetilde{u}^{\mu,\lambda,\kappa}\cdot v{f}^{\mu,\lambda,\kappa}),\nabla\widetilde{f}^{\mu,\lambda,\kappa}\rangle {\rm d}x{\rm d}\tau -\int_0^t  \int_{\mathbb{R}^3 }\langle\nabla(\widetilde{u}^{\mu,\lambda,\kappa}\cdot \nabla_v{f}^{\mu,\lambda,\kappa}),\nabla\widetilde{f}^{\mu,\lambda,\kappa}\rangle {\rm d}x{\rm d}\tau    \nonumber\\
&-\int_0^t \nabla({b}^{\mu,\lambda,\kappa}\cdot  \widetilde{u}^{\mu,\lambda,\kappa} )\cdot\nabla\widetilde{\theta}^{\mu,\lambda,\kappa}{\rm d}\tau-\int_0^t\nabla({a}^{\mu,\lambda,\kappa}  \widetilde{u}^{\mu,\lambda,\kappa})\cdot\nabla\widetilde{u}^{\mu,\lambda,\kappa} {\rm d}{\tau}            \nonumber\\
&+\int_0^t  \int_{\mathbb{R}^3}\Big\langle\nabla\Big(\frac{\widetilde\theta^{\mu,\lambda,\kappa}}{\sqrt{M}}\Delta_v(\sqrt{M}f)\Big),\nabla\widetilde{f}^{\mu,\lambda,\kappa}\Big\rangle{\rm d}x-3\int_0^t\!\int_{\mathbb{R}^3}\nabla(a^{\mu,\lambda,\kappa}\widetilde\theta^{\mu,\lambda,\kappa})\cdot \nabla\widetilde\theta^{\mu,\lambda,\kappa}{\rm d}x{\rm d}
\tau\nonumber\\
&+\int_0^t \int_{\mathbb{R}^3}\Big\langle\nabla\Big(\frac{\theta}{\sqrt{M}}\Delta_v(\sqrt{M}\widetilde{f}^{\mu,\lambda,\kappa})\Big),\nabla\widetilde{f}^{\mu,\lambda,\kappa}\Big\rangle{\rm d}x-3\int_0^t\!\int_{\mathbb{R}^3}\nabla(\widetilde{a}^{\mu,\lambda,\kappa} \theta)\cdot\nabla\widetilde{\theta}^{\mu,\lambda,\kappa} {\rm d}x{\rm d}
\tau\nonumber\\
&+\int_0^t\!\int_{\mathbb{R}^3} \nabla\Big( \frac{1}{1+\rho^{\mu,\lambda,\kappa}}\Big)\widetilde{\rho}^{{\mu,\lambda,\kappa}}\cdot \nabla \widetilde{u}^{\mu,\lambda,\kappa}{\rm d}x{\rm d}
\tau +\int_0^t\!\int_{\mathbb{R}^3} \nabla\Big( \frac{1}{1+\theta^{\mu,\lambda,\kappa}}\Big)\widetilde{\theta}^{{\mu,\lambda,\kappa}}\cdot \nabla \widetilde{u}^{\mu,\lambda,\kappa}{\rm d}x{\rm d}
\tau\nonumber\\
&+ \int_0^t\!\int_{\mathbb{R}^3}   \frac{\theta^{\mu,\lambda,\kappa}}{1+\theta^{\mu,\lambda,\kappa}} \nabla(\widetilde{b}^{{\mu,\lambda,\kappa}}-\widetilde{u}^{\mu,\lambda,\kappa}-a^{\mu,\lambda,\kappa}\widetilde{u}^{\mu,\lambda,\kappa}+\widetilde{a}^{\mu,\lambda,\kappa}u)\cdot \nabla \widetilde{u}^{\mu,\lambda,\kappa}{\rm d}x{\rm d}
\tau \nonumber\\
&+ \int_0^t\!\int_{\mathbb{R}^3}   \frac{{(\theta^{\mu,\lambda,\kappa})}^2+2\theta^{\mu,\lambda,\kappa}}{(1+\theta^{\mu,\lambda,\kappa})^2} \nabla(\sqrt{2}\widetilde{\omega}^{{\mu,\lambda,\kappa}}-\sqrt{3}\widetilde{\theta}^{\mu,\lambda,\kappa})\cdot \nabla \widetilde{\theta}^{\mu,\lambda,\kappa}{\rm d}x{\rm d}
\tau \nonumber\\
&+ \int_0^t\!\int_{\mathbb{R}^3}   \frac{{(\theta^{\mu,\lambda,\kappa})}^2+2\theta^{\mu,\lambda,\kappa}}{(1+\theta^{\mu,\lambda,\kappa})^2}\nabla(b^{\mu,\lambda,\kappa}\cdot\widetilde{u}^{\mu,\lambda,\kappa}+\widetilde{b}^{\mu,\lambda,\kappa}\cdot u+3\widetilde{a}^{\mu,\lambda,\kappa}\theta+3a^{\mu,\lambda,\kappa}\widetilde{\theta}^{\mu,\lambda,\kappa})\cdot\nabla\theta^{\mu,\lambda,\kappa}  {\rm d}x{\rm d}
\tau \nonumber\\
&+\int_0^t\!\int_{\mathbb{R}^3}\frac{1}{(1+\rho^{\mu,\lambda,\kappa})^2} \nabla\widetilde{\rho}^{{\mu,\lambda,\kappa}}\cdot\nabla\widetilde{G}_1 {\rm d}x{\rm d}\tau+\int_0^t\!\int_{\mathbb{R}^3}\frac{1}{(1+\theta^{\mu,\lambda,\kappa}) }  \nabla\widetilde{u}^{{\mu,\lambda,\kappa}}\cdot\nabla\widetilde{G}_2 {\rm d}x{\rm d}\tau\nonumber\\
&+\int_0^t\!\int_{\mathbb{R}^3}\frac{1}{(1+\theta^{\mu,\lambda,\kappa}) } \nabla\widetilde{u}^{{\mu,\lambda,\kappa}}\cdot\nabla\widetilde{G}_3 {\rm d}x{\rm d}\tau+\int_0^t\!\int_{\mathbb{R}^3} \frac{1}{(1+\theta^{\mu,\lambda,\kappa}) }\nabla\widetilde{u}^{{\mu,\lambda,\kappa}}\cdot\nabla\widetilde{G}_4 {\rm d}x{\rm d}\tau\nonumber\\
&+\mu\int_0^t\!\int_{\mathbb{R}^3}\frac{1}{(1+\theta^{\mu,\lambda,\kappa}) } \nabla\Big(\frac{\Delta u^{\mu,\lambda,\kappa}  }{1+\rho^{\mu,\lambda,\kappa}} \Big)\cdot\nabla\widetilde{u}^{\mu,\lambda,\kappa}{\rm d}x{\rm d}\tau\nonumber\\
&+(\mu+\lambda)\int_0^t\!\int_{\mathbb{R}^3} \frac{1}{(1+\theta^{\mu,\lambda,\kappa}) }\nabla\Big(\frac{\nabla {\rm div} u^{\mu,\lambda,\kappa}  }{1+\rho^{\mu,\lambda,\kappa}} \Big)\cdot\nabla\widetilde{u}^{\mu,\lambda,\kappa}{\rm d}x{\rm d}\tau\nonumber\\
&+2\mu\int_0^t\!\int_{\mathbb{R}^3} \frac{1}{(1+\theta^{\mu,\lambda,\kappa}) }\nabla\Big(\frac{  
|D( u^{\mu,\lambda,\kappa} )|^2 }{1+\rho^{\mu,\lambda,\kappa}} \Big)\cdot\nabla\widetilde{u}^{\mu,\lambda,\kappa}{\rm d}x{\rm d}\tau\nonumber\\
&+\lambda\int_0^t\!\int_{\mathbb{R}^3} \frac{1}{(1+\theta^{\mu,\lambda,\kappa}) }\nabla\Big(\frac{  
|{\rm div} u^{\mu,\lambda,\kappa}  |^2 }{1+\rho^{\mu,\lambda,\kappa}} \Big)\cdot\nabla\widetilde{u}^{\mu,\lambda,\kappa}{\rm d}x{\rm d}\tau \nonumber\\
&+\kappa\int_0^t\!\int_{\mathbb{R}^3}\frac{1}{(1+\theta^{\mu,\lambda,\kappa})^2 } \nabla\Big(\frac{\Delta \theta^{\mu,\lambda,\kappa}  }{1+\rho^{\mu,\lambda,\kappa}} \Big)\cdot\nabla\widetilde{\theta}^{\mu,\lambda,\kappa}{\rm d}x{\rm d}\tau\nonumber\\
&+ \int_0^t\!\int_{\mathbb{R}^3} \frac{1}{(1+\theta^{\mu,\lambda,\kappa})^2 } \nabla\widetilde{\theta}^{{\mu,\lambda,\kappa}} \cdot\nabla\widetilde{G}_5 {\rm d}x{\rm d}\tau+ \int_0^t\!\int_{\mathbb{R}^3} \frac{1}{(1+\theta^{\mu,\lambda,\kappa})^2 } \nabla\widetilde{\theta}^{{\mu,\lambda,\kappa}} \cdot\nabla\widetilde{G}_6 {\rm d}x{\rm d}\tau\nonumber\\
&+ \int_0^t\!\int_{\mathbb{R}^3} \frac{1}{(1+\theta^{\mu,\lambda,\kappa})^2 } \nabla\widetilde{\theta}^{{\mu,\lambda,\kappa}} \cdot\nabla\widetilde{G}_7 {\rm d}x{\rm d}\tau  +\int_0^t\!\int_{\mathbb{R}^3} \frac{1}{2}  \partial_t \Big(\frac{1}{(1+\rho^{\mu,\lambda,\kappa})^2} \Big) |\nabla\widetilde\rho^{\mu,\lambda,\kappa}|^2 {\rm d}x{\rm d}\tau\nonumber\\
&+ \int_0^t\!\int_{\mathbb{R}^3} \frac{1}{2}  \partial_t \Big(\frac{1}{(1+\theta^{\mu,\lambda,\kappa}) } \Big) |\nabla\widetilde u^{\mu,\lambda,\kappa}|^2 {\rm d}x{\rm d}\tau+ \int_0^t\!\int_{\mathbb{R}^3} \frac{1}{2}  \partial_t \Big(\frac{1}{(1+\theta^{\mu,\lambda,\kappa})^2 } \Big) |\nabla\widetilde \theta^{\mu,\lambda,\kappa}|^2 {\rm d}x{\rm d}\tau\nonumber\\ 
&+\frac{1}{2}   \bigg(\Big\|\frac{ \nabla\widetilde\rho^{\mu,\lambda,\kappa}}{1+\rho^{\mu,\lambda,\kappa}}(0)\Big\|_{L^2}^2+\Big\|\frac{\nabla \widetilde u^{\mu,\lambda,\kappa}}{\sqrt{1+\theta^{\mu,\lambda,\kappa}}}(0)\Big\|_{L^2}^{2}+\Big\|\frac{\nabla\widetilde\theta^{\mu,\lambda,\kappa}}{1+\theta^{\mu,\lambda,\kappa}}(0)\Big\|_{L^2}^2 +\|\nabla \widetilde f^{\mu,\lambda,\kappa}(0)\|_{L_{x,v}^{2}}^2\bigg)\nonumber\\
\equiv:&\sum_{i=1}^{32}\widetilde{J}_i+\frac{1}{2}   \Big(\Big\|\frac{ \nabla\widetilde\rho^{\mu,\lambda,\kappa}}{1+\rho^{\mu,\lambda,\kappa}}(0)\Big\|_{L^2}^2+\Big\|\frac{\nabla \widetilde u^{\mu,\lambda,\kappa}}{\sqrt{1+\theta^{\mu,\lambda,\kappa}}}(0)\Big\|_{L^2}^{2}+\Big\|\frac{\nabla\widetilde\theta^{\mu,\lambda,\kappa}}{1+\theta^{\mu,\lambda,\kappa}}(0)\Big\|_{L^2}^2 +\|\nabla \widetilde f^{\mu,\lambda,\kappa}(0)\|_{L_{x,v}^{2}}^2\Big) .
\end{align} 

Now we deal with  the terms $\widetilde{J}_i (i=1,\dots, 32)$ in turn. 
First, for the terms $\widetilde{J}_1, \widetilde{J}_2,\widetilde{J}_3,\widetilde{J}_4,\widetilde{J}_9$, 
and $\widetilde{J}_{10}$, 
thanks to Lemmas \ref{LA.1} and \ref{LA.2} and the macro-micro decomposition  \eqref{G2.4}, we have
\begin{align}
|\widetilde{J}_1|\lesssim&\,\int_0^t (\|\widetilde{f}^{\mu,\lambda,\kappa}\|_{L_v^2(L^3)}\|\nabla u\|_{L^6}+\|u\|_{L^\infty}\|\nabla\widetilde{f}^{\mu,\lambda,\kappa}\|_{L_{x,v}^2}) \|\nabla(\widetilde{a}^{\mu,\lambda,\kappa},\widetilde{b}^{\mu,\lambda,\kappa},\widetilde{\omega}^{\mu,\lambda,\kappa})\|_{L^2}  {\rm d}\tau  \nonumber\\
&+\int_0^t (\|\widetilde{f}^{\mu,\lambda,\kappa}\|_{L_v^2(L^3)}\|\nabla u\|_{L^6}+\|u\|_{L^\infty}\|\nabla\widetilde{f}^{\mu,\lambda,\kappa}\|_{L_{x,v}^2})  \|\nabla\{\mathbf{I}-\mathbf{P}\}\widetilde{f}^{\mu,\lambda,\kappa}\|_{\nu}  {\rm d}\tau  \nonumber\\
\lesssim&\,\|\widetilde{f}^{\mu,\lambda,\kappa}\|_{L_t^\infty(L_v^2(H^1))}\|\nabla u\|_{L_t^2(H^1)}\big(\|\nabla(\widetilde{a}^{\mu,\lambda,\kappa},\widetilde{b}^{\mu,\lambda,\kappa},\widetilde{\omega}^{\mu,\lambda,\kappa})\|_{L_t^2(L^2)}+\|\nabla\{\mathbf{I}-\mathbf{P}\}\widetilde{f}^{\mu,\lambda,\kappa}\|_{L_t^2(\nu)}\big)\nonumber\\
\lesssim&\, \big(\eps_0^\frac{1}{2}+\eps_1^\frac{1}{2}\big){\Xi} ^{\mu,\lambda,\kappa}(t),\label{G4.18}\\
|\widetilde{J}_2| 
\lesssim&\,\|\widetilde{f}^{\mu,\lambda,\kappa}\|_{L_t^\infty(L_v^2(H^2))}\|\nabla u\|_{L_t^2(H^1)}\big(\|\nabla(\widetilde{a}^{\mu,\lambda,\kappa},\widetilde{b}^{\mu,\lambda,\kappa},\widetilde{\omega}^{\mu,\lambda,\kappa})\|_{L_t^2(L^2)}+\|\nabla\{\mathbf{I}-\mathbf{P}\}\widetilde{f}^{\mu,\lambda,\kappa}\|_{L_t^2(\nu)}\big)\nonumber\\
\lesssim&\, \big(\eps_0^\frac{1}{2}+\eps_1^\frac{1}{2}\big){\Xi} ^{\mu,\lambda,\kappa}(t),\label{G4.18-1}\\
|\widetilde{J}_3|\lesssim&\,\int_0^t \|\nabla\widetilde{\theta}^{\mu,\lambda,\kappa}\|_{L^2}\big(\|\nabla \widetilde{b}^{\mu,\lambda,\kappa}\|_{L^2}\|\nabla u\|_{L^\infty}+\|\widetilde{b}^{\mu,\lambda,\kappa}\|_{L^6}\|\nabla^2 u\|_{L^3}\big){\rm d}\tau\nonumber\\
\lesssim&\, \|\widetilde{\theta}^{\mu,\lambda,\kappa}\|_{L_t^\infty(H^1)}\|\nabla\widetilde{b}^{\mu,\lambda,\kappa}\|_{L_t^2(L^2)}\|\nabla^2 u\|_{L_t^2(H^1)}\nonumber\\
\lesssim&\, \big(\eps_0^\frac{1}{2}+\eps_1^\frac{1}{2}\big){\Xi} ^{\mu,\lambda,\kappa}(t),\label{G4.18-2}\\
|\widetilde{J}_4|\lesssim&\,\int_0^t \|\nabla\widetilde{u}^{\mu,\lambda,\kappa}\|_{L^2}\big(\|\nabla \widetilde{a}^{\mu,\lambda,\kappa}\|_{L^2}\|\nabla u\|_{L^\infty}+\|\widetilde{a}^{\mu,\lambda,\kappa}\|_{L^6}\|\nabla^2 u\|_{L^3}\big){\rm d}\tau\nonumber\\
\lesssim&\, \|\widetilde{u}^{\mu,\lambda,\kappa}\|_{L_t^\infty(H^1)}\|\nabla\widetilde{a}^{\mu,\lambda,\kappa}\|_{L_t^2(L^2)}\|\nabla^2 u\|_{L_t^2(H^1)}\nonumber\\
\lesssim&\, \big(\eps_0^\frac{1}{2}+\eps_1^\frac{1}{2}\big){\Xi} ^{\mu,\lambda,\kappa}(t),\label{G4.18-3}\\
|\widetilde{J}_9|\lesssim&\,\int_0^t \big\|\nabla\big[\widetilde{\theta}^{\mu,\lambda,\kappa}(2\nabla_v f-vf) \big]\big\|_{L_{x,v}^2} \|\nabla  (2\nabla_v \widetilde{f}^{\mu,\lambda,\kappa}+v\widetilde{f}^{\mu,\lambda,\kappa})   \|_{L_{x,v}^2} {\rm d}\tau\nonumber\\
\lesssim&\, \int_0^t \|\nabla\widetilde\theta^{\mu,\lambda,\kappa}\|_{L^2}\|(2\nabla_v-v)  f\|_{L_v^2(L^\infty)} \|(2\nabla_v+v)\nabla \widetilde{f}^{\mu,\lambda,\kappa}\|_{L_{x,v}^2 }{\rm d}\tau \nonumber\\
&+\int_0^t  \| \widetilde\theta^{\mu,\lambda,\kappa}\|_{L^6}\|(2\nabla_v-v)  \nabla f\|_{L_v^2(L^3)} \|(2\nabla_v+v)\nabla \widetilde{f}^{\mu,\lambda,\kappa}\|_{L_{x,v}^2 }{\rm d}\tau \nonumber\\
\lesssim&\, \|\widetilde{\theta}^{\mu,\lambda,\kappa}\|_{L_t^\infty(H^1)}\big(\|\{\mathbf{I}-\mathbf{P}\}\nabla\widetilde{f}^{\mu,\lambda,\kappa}\|_{L_t^2(\nu)} +\|\nabla(\widetilde{a}^{\mu,\lambda,\kappa},\widetilde{b}^{\mu,\lambda,\kappa},\widetilde{\omega}^{\mu,\lambda,\kappa})\|_{L_t^2(L^2)} \big)\nonumber\\
&\times \Big(\sum_{1\leq |\alpha|\leq 2}\|\{\mathbf{I}-\mathbf{P}\}\partial^\alpha {f} \|_{L_t^2(\nu)}+\|\nabla(a,b,\omega)\|_{L_t^2(H^1)}\Big)\nonumber\\
\lesssim&\, \big(\eps_0^\frac{1}{2}+\eps_1^\frac{1}{2}\big){\Xi} ^{\mu,\lambda,\kappa}(t),\label{G4.18-4}\\
|\widetilde{J}_{10}|\lesssim&\,\int_0^t \|\nabla\widetilde{\theta}^{\mu,\lambda,\kappa}\|_{L^2}\big(\| {a}^{\mu,\lambda,\kappa}\|_{L^\infty}\|\nabla\widetilde{u}^{\mu,\lambda,\kappa}\|_{L^2}+\|\nabla a^{\mu,\lambda,\kappa}\|_{L^3}\|\widetilde{u}^{\mu,\lambda,\kappa}\|_{L^6}\big){\rm d}\tau\nonumber\\
\lesssim&\, \|\widetilde{\theta}^{\mu,\lambda,\kappa}\|_{L_t^\infty(H^1)}\|\nabla a^{\mu,\lambda,\kappa}\|_{L_t^2(H^1)}\|\nabla\widetilde{u}^{\mu,\lambda,\kappa}\|_{L_t^2(L^2)}\nonumber\\
\lesssim&\, \big(\eps_0^\frac{1}{2}+\eps_1^\frac{1}{2}\big){\Xi} ^{\mu,\lambda,\kappa}(t).\label{G4.18-5}
\end{align}

For the terms $\widetilde{J}_5,  \widetilde{J}_6, \widetilde{J}_7,  \widetilde{J}_8, \widetilde{J}_{11}$,
and $  \widetilde{J}_{12}$,  taking similar arguments  to the above, we   infer that
\begin{align}
|\widetilde{J}_5 |+|\widetilde{J}_6|
\lesssim&\,\|\widetilde{u}^{\mu,\lambda,\kappa}\|_{L_t^\infty( H^1)}\|\nabla f^{\mu,\lambda,\kappa}\|_{L_t^2(L_v^2(H^1))} \|\nabla(\widetilde{a}^{\mu,\lambda,\kappa},\widetilde{b}^{\mu,\lambda,\kappa},\widetilde{\omega}^{\mu,\lambda,\kappa})\|_{L_t^2(L^2)} \nonumber\\
&+\|\widetilde{u}^{\mu,\lambda,\kappa}\|_{L_t^\infty( H^1)}\|\nabla f^{\mu,\lambda,\kappa}\|_{L_t^2(L_v^2(H^1))} \|\nabla\{\mathbf{I}-\mathbf{P}\}\widetilde{f}^{\mu,\lambda,\kappa}\|_{L_t^2(\nu)} \nonumber\\
\lesssim&\, \big(\eps_0^\frac{1}{2}+\eps_1^\frac{1}{2}\big){\Xi} ^{\mu,\lambda,\kappa}(t),\label{G4.19-1}\\   
|\widetilde{J}_7|\lesssim&\,  \|\widetilde{\theta}^{\mu,\lambda,\kappa}\|_{L_t^\infty(H^1)}\|\nabla\widetilde{u}^{\mu,\lambda,\kappa}\|_{L_t^2(L^2)}\|\nabla^2 b^{\mu,\lambda,\kappa}\|_{L_t^2(H^1)} 
\lesssim  \big(\eps_0^\frac{1}{2}+\eps_1^\frac{1}{2}\big){\Xi} ^{\mu,\lambda,\kappa}(t),\label{G4.19-2}\\
|\widetilde{J}_8|\lesssim&\, \|\widetilde{u}^{\mu,\lambda,\kappa}\|_{L_t^\infty(H^1)}\|\nabla\widetilde{u}^{\mu,\lambda,\kappa}\|_{L_t^2(L^2)}\|\nabla^2 a^{\mu,\lambda,\kappa}\|_{L_t^2(H^1)} 
\lesssim  \big(\eps_0^\frac{1}{2}+\eps_1^\frac{1}{2}\big){\Xi} ^{\mu,\lambda,\kappa}(t),\label{G4.19-3}\\
|\widetilde{J}_{11}|\lesssim&\,\ \|\theta\|_{L_t^\infty(H^3 )}\big(\|\{\mathbf{I}-\mathbf{P}\}\nabla\widetilde{f}^{\mu,\lambda,\kappa}\|_{L_t^2(\nu)}^2 +\|\nabla(\widetilde{a}^{\mu,\lambda,\kappa},\widetilde{b}^{\mu,\lambda,\kappa},\widetilde{\omega}^{\mu,\lambda,\kappa})\|_{L_t^2(L^2)}^2 \big) \nonumber\\ 
\lesssim &\, \big(\eps_0^\frac{1}{2}+\eps_1^\frac{1}{2}\big){\Xi} ^{\mu,\lambda,\kappa}(t),\label{G4.19-4}\\
|\widetilde{J}_{12}| 
\lesssim&\, \|\widetilde{a}^{\mu,\lambda,\kappa}\|_{L_t^\infty(H^1)}\|\nabla \theta \|_{L_t^2(H^1)}\|\nabla\widetilde{u}^{\mu,\lambda,\kappa}\|_{L_t^2(L^2)} 
\lesssim  \big(\eps_0^\frac{1}{2}+\eps_1^\frac{1}{2}\big){\Xi} ^{\mu,\lambda,\kappa}(t).\label{G4.19}
\end{align} 

For the terms $\widetilde{J}_{13},\dots\widetilde{J}_{17}$,
it follows from Lemmas \ref{LA.1} and \ref{LNXL3.2} that
\begin{align}\label{LNXEG4.38}
&|\widetilde{J}_{13}+\widetilde{J}_{14}+\widetilde{J}_{15}+\widetilde{J}_{16}+\widetilde{J}_{17}|   \nonumber\\
\lesssim&\,\int_0^t \|\nabla\widetilde{u}^{\mu,\lambda,\kappa}\|_{L^2}\|(\widetilde{\rho}^{\mu,\lambda,\kappa},\widetilde{\theta}^{\mu,\lambda,\kappa})\|_{L^6} \|\nabla (\rho^{\mu,\lambda,\kappa},\theta^{\mu,\lambda,\kappa})\|_{L^3} {\rm d}\tau \nonumber\\
&+\int_0^t\|\nabla(\widetilde{u}^{\mu,\lambda,\kappa},\widetilde{\theta}^{\mu,\lambda,\kappa})\|_{L^2}\|\theta^{\mu,\lambda,\kappa}\|_{L^\infty}\|(\widetilde{a}^{\mu,\lambda,\kappa},\widetilde{b}^{\mu,\lambda,\kappa},\widetilde u^{\mu,\lambda,\kappa},\widetilde\theta^{\mu,\lambda,\kappa})\|_{L^6} \|\nabla( a^{\mu,\lambda,\kappa},b^{\mu,\lambda,\kappa},  u,\theta)\|_{L^3}                    {\rm d}\tau\nonumber\\
&+\int_0^t\|\nabla(\widetilde{u}^{\mu,\lambda,\kappa},\widetilde{\theta}^{\mu,\lambda,\kappa})\|_{L^2}\|\theta^{\mu,\lambda,\kappa}\|_{L^\infty}\|\nabla(\widetilde{a}^{\mu,\lambda,\kappa},\widetilde{b}^{\mu,\lambda,\kappa},\widetilde u^{\mu,\lambda,\kappa},\widetilde{\theta}^{\mu,\lambda,\kappa})\|_{L^2}\| ( a^{\mu,\lambda,\kappa},b^{\mu,\lambda,\kappa},u,\theta)\|_{L^\infty}                  {\rm d}\tau\nonumber\\
&+\int_0^t\|\nabla(\widetilde{u}^{\mu,\lambda,\kappa},\widetilde{\theta}^{\mu,\lambda,\kappa})\|_{L^2} \|\theta^{\mu,\lambda,\kappa}\|_{L^\infty}\|\nabla\big((\widetilde{b}^{\mu,\lambda,\kappa}-\widetilde u^{\mu,\lambda,\kappa}),(\sqrt{2}\widetilde{\omega}^{\mu,\lambda,\kappa}-\sqrt{3}\widetilde \theta^{\mu,\lambda,\kappa})\big)\|_{L^2}{\rm d}\tau \nonumber\\
\lesssim&\, \big(1+\eps_0^\frac{1}{2}\big)\| (\widetilde{\rho}^{\mu,\lambda,\kappa},\widetilde{u}^{\mu,\lambda,\kappa},\widetilde{\theta}^{\mu,\lambda,\kappa})\|_{L_t^\infty(H^1)}\|\nabla(\widetilde a^{\mu,\lambda,\kappa},\widetilde b^{\mu,\lambda,\kappa},\widetilde u^{\mu,\lambda,\kappa},\widetilde \theta^{\mu,\lambda,\kappa})\|_{L_t^2(L^2)} \nonumber\\
& \times  \|\nabla(  a^{\mu,\lambda,\kappa},  b^{\mu,\lambda,\kappa}, \rho^{\mu,\lambda,\kappa},  \theta^{\mu,\lambda,\kappa},  u ,  \theta )\|_{L_t^2(H^2)}\nonumber\\
&+\| ( \widetilde{u}^{\mu,\lambda,\kappa},\widetilde{\theta}^{\mu,\lambda,\kappa})\|_{L_t^\infty(H^1)}\|\nabla\theta^{\mu,\lambda,\kappa}\|_{L_t^2(H^2)}\|\nabla\big((\widetilde{b}^{\mu,\lambda,\kappa}-\widetilde u^{\mu,\lambda,\kappa}),(\sqrt{2}\widetilde{\omega}^{\mu,\lambda,\kappa}-\sqrt{3}\widetilde \theta^{\mu,\lambda,\kappa})\big)\|_{L_t^2(L^2)}\nonumber\\
\lesssim&\,  \big(\eps_0^\frac{1}{2}+\eps_1^\frac{1}{2}\big){\Xi} ^{\mu,\lambda,\kappa}(t).
\end{align}  

For the term $\widetilde{J}_{18}$, by applying integration by parts and Lemmas \ref{LA.1} and \ref{LNXL3.2}, we have  
\begin{align}\label{G4.20}
|\widetilde{J}_{18}|\lesssim&\,     \int_0^t\|\nabla\widetilde{\rho}^{\mu,\lambda,\kappa}\|_{L^2}\big(\|\nabla(\widetilde{\rho}^{\mu,\lambda,\kappa},\widetilde{u}^{\mu,\lambda,\kappa})\|_{L^2}\|\nabla (\rho,u)\|_{L^\infty}+\|(\widetilde{\rho}^{\mu,\lambda,\kappa},\widetilde{u}^{\mu,\lambda,\kappa})\|_{L^3}\|\nabla^2 (\rho,u)\|_{L^6}\big){\rm d}\tau\nonumber\\
&+\int_0^t\|\nabla\widetilde{\rho}^{\mu,\lambda,\kappa}\|_{L^2}^2  \big(\|\nabla u^{\mu,\lambda,\kappa}\|_{L^{\infty}}+\nabla \rho^{\mu,\lambda,\kappa}\|_{L^{\infty}}\|  u^{\mu,\lambda,\kappa}\|_{L^{\infty}}\big) {\rm d}\tau \nonumber\\
\lesssim&\, \|(\widetilde{\rho}^{\mu,\lambda,\kappa},\widetilde{u}^{\mu,\lambda,\kappa})\|_{L_t^\infty(H^1)} \|\nabla (\widetilde{\rho}^{\mu,\lambda,\kappa},\widetilde{u}^{\mu,\lambda,\kappa} ) \|_{L_t^2(L^2)}  \|\nabla ({\rho}^{\mu,\lambda,\kappa},{u}^{\mu,\lambda,\kappa},\rho,u)\|_{L_t^2(H^2)} \nonumber\\
\lesssim&\,  \big(\eps_0^\frac{1}{2}+\eps_1^\frac{1}{2}\big){\Xi} ^{\mu,\lambda,\kappa}(t).
\end{align}  
 For the terms $\widetilde{J}_{19}$, $\widetilde{J}_{20}$, and $\widetilde{J}_{21}$, similar to \eqref{G4.20}, we compute that
\begin{align}\label{G4.21}
 |\widetilde{J}_{19}+\widetilde{J}_{20}+J_{21}|\lesssim \big(\eps_0^\frac{1}{2}+\eps_1^\frac{1}{2}\big){\Xi} ^{\mu,\lambda,\kappa}(t).
\end{align}

For the term $\widetilde{J}_{22}$, analogous to \eqref{LNXG4.18}, we derive that 
\begin{align}\label{G4.22}
|\widetilde{J}_{22}|\lesssim&\, \mu \int_0^t\|\nabla^3u^{\mu,\lambda,\kappa}\|_{L^2} \|\nabla\widetilde{u}^{\mu,\lambda,\kappa}\|_{L^2}{\rm d}\tau+\mu\int_0^t \Big\|\nabla\Big( \frac{1}{1+\rho^{\mu,\lambda,\kappa}} \Big)\Big\|_{L^\infty}\|\nabla^2 u^{\mu,\lambda,\kappa}\|_{L^2}\|\nabla\widetilde{u}^{\mu,\lambda,\kappa}\|_{L^2}{\rm d}\tau\nonumber\\
\lesssim&\, \mu\|\nabla\widetilde{u}^{\mu,\lambda,\kappa}\|_{L_t^2(L^2)}\|\nabla^3 u^{\mu,\lambda,\kappa}\|_{L_t^2(L^2)}+\mu\|\rho^{\mu,\lambda,\kappa}\|_{L_t^\infty(H^3)}\|\nabla^2 u^{\mu,\lambda,\kappa}\|_{L_t^2(L^2)} \|\nabla\widetilde{u}^{\mu,\lambda,\kappa}\|_{L_t^2(L^2)}\nonumber\\
\lesssim&\, \big(\eps_0^\frac{1}{2}+\eps_1^\frac{1}{2}\big){\Xi} ^{\mu,\lambda,\kappa}(t)+\mu^2.
\end{align} 

Similarly, we have
\begin{align}\label{LNXL4.42}
|\widetilde{J}_{23}+\widetilde{J}_{24}+\widetilde{J}_{25}+\widetilde{J}_{26}| \lesssim&\,     \big(\eps_0^\frac{1}{2}+\eps_1^\frac{1}{2}\big){\Xi} ^{\mu,\lambda,\kappa}(t)+\mu^2+\lambda^2+\kappa^2.
\end{align} 

For the terms $\widetilde{J}_{27}, \widetilde{J}_{28}$ and $\widetilde{J}_{29}$, it follows from Lemma \ref{LA.1} and   integration by parts that  
\begin{align}
|\widetilde{J}_{27}|\lesssim&\,\int_0^t \|\nabla\widetilde\theta^{\mu,\lambda,\kappa}\|_{L^2}\big(\|\nabla \widetilde{\theta}^{\mu,\lambda,\kappa}\|_{L^2}\|\nabla u^{\mu,\lambda,\kappa}\|_{L^2}+\|\widetilde{\theta}^{\mu,\lambda,\kappa}\|_{L^3}\|\nabla^2 u^{\mu,\lambda,\kappa}\|_{L^6}\big){\rm d}\tau\nonumber\\
&+\int_0^t \|\nabla\widetilde\theta^{\mu,\lambda,\kappa}\|_{L^2}\big( \|\nabla  \widetilde{u}^{\mu,\lambda,\kappa}\|_{L^6}\|\nabla\theta\|_{L^3}+\|\widetilde{u}^{\mu,\lambda,\kappa}\|_{L^6}\|\nabla^2\theta\|_{L^3}\big){\rm d}\tau\nonumber\\
&+\int_0^t \|\nabla\widetilde\theta^{\mu,\lambda,\kappa}\|_{L^2}^2 \big(\|\nabla u^{\mu,\lambda,\kappa}\|_{L^\infty}+\|  u^{\mu,\lambda,\kappa}\|_{L^\infty}\|  \nabla\theta^{\mu,\lambda,\kappa}\|_{L^\infty}\big) {\rm d}\tau\nonumber\\
\lesssim&\, \| \widetilde{\theta}^{\mu,\lambda,\kappa}\|_{L_t^\infty(H^1)} \|\nabla(\widetilde{u}^{\mu,\lambda,\kappa},\widetilde{\theta}^{\mu,\lambda,\kappa})\|_{L_t^2(L^2)} \big( \|\nabla (u^{\mu,\lambda,\kappa},\theta^{\mu,\lambda,\kappa})\|_{L_t^2(H^2)}+\|\nabla\theta\|_{L_t^2(H^2)} \big)\nonumber\\
\lesssim&\,\big(\eps_0^\frac{1}{2}+\eps_1^\frac{1}{2}\big){\Xi} ^{\mu,\lambda,\kappa}(t),\label{G4.23-1}\\
|\widetilde{J}_{28}|\lesssim&\, \int_0^t  \|\nabla\widetilde{\theta}^{\mu,\lambda,\kappa}\|_{L^2}\big( \| b^{\mu,\lambda,\kappa}-u^{\mu,\lambda,\kappa}\|_{L^3}\|\widetilde{u}^{\mu,\lambda,\kappa}\|_{L^6}+\| \nabla(b^{\mu,\lambda,\kappa}-u^{\mu,\lambda,\kappa})\|_{L^3}\|\widetilde{u}^{\mu,\lambda,\kappa}\|_{L^6}\big){\rm d}\tau\nonumber\\
&+\int_0^t\|\nabla\widetilde{\theta}^{\mu,\lambda,\kappa}\|_{L^2}\big(\| b^{\mu,\lambda,\kappa}-u^{\mu,\lambda,\kappa}\|_{L^\infty}\|\nabla\widetilde{u}^{\mu,\lambda,\kappa}\|_{L^2}+\|\nabla u\|_{L^3}\| \widetilde{b}^{\mu,\lambda,\kappa}-\widetilde{u}^{\mu,\lambda,\kappa}\|_{L^6} \big){\rm d}\tau\nonumber\\
&+\int_0^t\|\nabla\widetilde{\theta}^{\mu,\lambda,\kappa}\|_{L^2}\big\{ \|u\|_{L^\infty}\|\widetilde{u}^{\mu,\lambda,\kappa}-\widetilde{b}^{\mu,\lambda,\kappa}\|_{L^2}\nonumber\\
 &\qquad \ \
+\|\nabla{a}^{\mu,\lambda,\kappa}\|_{L^6} \|\widetilde{\theta}^{\mu,\lambda,\kappa}\|_{L^6}(\|\theta\|_{L^6}+\|\theta^{\mu,\lambda,\kappa}
\|_{L^6})\big\}{\rm d}\tau\nonumber\\
&+\int_0^t  \|\nabla\widetilde{\theta}^{\mu,\lambda,\kappa}\|_{L^2} \|\nabla\rho\|_{L^6}\big(\|u\|_{L^6}\|\widetilde{b}^{\mu,\lambda,\kappa}-\widetilde{u}^{\mu,\lambda,\kappa}\|_{L^6}+\|\widetilde{a}^{\mu,\lambda,\kappa}\|_{L^6}(\|\theta\|_{L^2}+\|\theta\|_{L^6})\|\theta\|_{L^\infty}  \big){\rm d}\tau\nonumber\\
&+\int_0^t  \|\nabla\widetilde{\theta}^{\mu,\lambda,\kappa}\|_{L^2}\big\{\|\nabla \widetilde{a}^{\mu,\lambda,\kappa}\|_{L^2}\|\theta\|_{L^\infty}^2\nonumber\\
&\qquad \ \ 
+\|\nabla\rho\|_{L^\infty} \|a^{\mu,\lambda,\kappa}\|_{L^6}\|\widetilde{\theta}^{\mu,\lambda,\kappa}\|_{L^6}(\|\theta\|_{L^6}+\|\theta^{\mu,\lambda,\kappa}\|_{L^6})\big\}{\rm d}\tau\nonumber\\
&+\int_0^t \|\nabla\widetilde{\theta}^{\mu,\lambda,\kappa}\|_{L^2} \|a^{\mu,\lambda,\kappa}
\|_{L^6}\|\widetilde{\theta}^{\mu,\lambda,\kappa}\|_{L^6}(\|\theta\|_{L^6}
+\|\theta^{\mu,\lambda,\kappa}\|_{L^6}){\rm d}\tau\nonumber\\   
  &+\int_0^t\|\nabla\widetilde{\theta}^{\mu,\lambda,\kappa}\|_{L^2}^2
  \big(\|a^{\mu,\lambda,\kappa}\|_{L^\infty}\|(\theta,
  \theta^{\mu,\lambda,\kappa})\|_{L^\infty}+\|\nabla\rho\|_{L^3}\|\sqrt{2}
  \widetilde{\omega}^{\mu,\lambda,\kappa}
-\sqrt{3}\widetilde{\theta}^{\mu,\lambda,\kappa}\|_{L^6}\big) {\rm d}\tau\nonumber\\
&+\int_0^t\|\nabla\widetilde{\theta}^{\mu,\lambda,\kappa}\|_{L^2} \|\rho\|_{L^\infty}\|\nabla(\sqrt{2}\widetilde{\omega}^{\mu,\lambda,\kappa}-\sqrt{3}\widetilde{\theta}^{\mu,\lambda,\kappa})\|_{L^2}   {\rm d}\tau\nonumber\\
&+\int_0^t \|\nabla\widetilde{\theta}^{\mu,\lambda,\kappa}\|_{L^2}\|\nabla\rho\|_{L^6}\|(\widetilde{a}^{\mu,\lambda,\kappa},\widetilde{b}^{\mu,\lambda,\kappa},\widetilde{u}^{\mu,\lambda,\kappa},\widetilde{\theta}^{\mu,\lambda,\kappa})\|_{L^6} \|(u,\theta,a^{\mu,\lambda,\kappa},b^{\mu,\lambda,\kappa})\|_{L^6} {\rm d}\tau\nonumber\\
&+\int_0^t \|\nabla\widetilde{\theta}^{\mu,\lambda,\kappa}\|_{L^2}\|\rho\|_{L^\infty}\|\nabla(\widetilde{a}^{\mu,\lambda,\kappa},\widetilde{b}^{\mu,\lambda,\kappa},\widetilde{u}^{\mu,\lambda,\kappa},\widetilde{\theta}^{\mu,\lambda,\kappa})\|_{L^2}(\|(u,\theta,a^{\mu,\lambda,\kappa},b^{\mu,\lambda,\kappa})\|_{L^\infty}{\rm d}\tau\nonumber\\
&+\int_0^t \|\nabla\widetilde{\theta}^{\mu,\lambda,\kappa}\|_{L^2}\|\rho\|_{L^6}\| (\widetilde{a}^{\mu,\lambda,\kappa},\widetilde{b}^{\mu,\lambda,\kappa},\widetilde{u}^{\mu,\lambda,\kappa},\widetilde{\theta}^{\mu,\lambda,\kappa})\|_{L^6} \|\nabla(u,\theta, a^{\mu,\lambda,\kappa},b^{\mu,\lambda,\kappa})\|_{L^6}   {\rm d}\tau\nonumber\\
\lesssim&\,  \|\widetilde{\theta}^{\mu,\lambda,\kappa}\|_{L_t^\infty(H^1)}\big\{\|\nabla(\widetilde{a}^{\mu,\lambda,\kappa},\widetilde{b}^{\mu,\lambda,\kappa},\widetilde{u}^{\mu,\lambda,\kappa},\widetilde{\theta}^{\mu,\lambda,\kappa})\|_{L_t^2(L^2)}\nonumber\\
& +\|(\widetilde{b}^{\mu,\lambda,\kappa}-\widetilde{u}^{\mu,\lambda,\kappa},
\sqrt{2}\widetilde{\omega}^\kappa-\sqrt{3}\widetilde{\theta}^\kappa)\|_{L_t^2(H^1)}\big\}\nonumber\\
&  \times \big\{\|\nabla( {a} , {b} , {u} , {\theta} )\|_{L_t^2(H^2)}+\| {b}^{\mu,\lambda,\kappa} - {u}^{\mu,\lambda,\kappa}\|_{L_t^2(H^1)}+\|\sqrt{2} {\omega}^{\mu,\lambda,\kappa} -\sqrt{3} {\theta}^{\mu,\lambda,\kappa}\|_{L_t^2(H^1)}\nonumber\\
&  +\|\nabla(a^{\mu,\lambda,\kappa},b^{\mu,\lambda,\kappa},u^{\mu,\lambda,\kappa})\|_{L_t^2(H^2)}\big\}\nonumber\\
\lesssim&\, \big(\eps_0^\frac{1}{2}+\eps_1^\frac{1}{2}\big){\Xi} ^{\mu,\lambda,\kappa}(t),\label{G4.23-2}\\
|\widetilde{J}_{29}|\lesssim&\,\int_0^t \|\nabla\widetilde{\theta}^{\mu,\lambda,\kappa}\|_{L^2} (\|\widetilde{\rho}^{\mu,\lambda,\kappa}\|_{L^6}+\|\nabla\widetilde{\rho}^{\mu,\lambda,\kappa}\|_{L^2})\|\sqrt{2} {\omega}^{\mu,\lambda,\kappa} -\sqrt{3} {\theta}^{\mu,\lambda,\kappa} \|_{H^2} {\rm d}\tau\nonumber\\
&+\int_0^t \|\nabla\widetilde{\theta}^{\mu,\lambda,\kappa}\|_{L^2}(\|\widetilde{\rho}^{\mu,\lambda,\kappa}\|_{L^6}+\|\nabla\widetilde{\rho}^{\mu,\lambda,\kappa}\|_{L^2}) \|\nabla(a^{\mu,\lambda,\kappa},b^{\mu,\lambda,\kappa},u^{\mu,\lambda,\kappa})\|_{H^1} {\rm d}\tau\nonumber\\
&+\int_0^t \|\nabla\widetilde{\theta}^{\mu,\lambda,\kappa}\|_{L^2}\big(\|\nabla\widetilde{\rho}^{\mu,\lambda,\kappa}\|_{L^2}\|\nabla u^{\mu,\lambda,\kappa}\|_{L^\infty}^2+\|\widetilde{\rho}^{\mu,\lambda,\kappa}\|_{L^6}\|\nabla u^{\mu,\lambda,\kappa}\|_{L^6}\|\nabla^2 u^{\mu,\lambda,\kappa}\|_{L^6}\big){\rm d}\tau\nonumber\\
\lesssim&\,\big(1+\eps_0^\frac{1}{2}+\eps_1^\frac{1}{2}\big)\|\widetilde{\theta}^{\mu,\lambda,\kappa}\|_{L_t^\infty(H^1)} \|\nabla\widetilde{\rho}^{\mu,\lambda,\kappa}\|_{L_t^\infty(H^1)}  \|\nabla(a^{\mu,\lambda,\kappa},b^{\mu,\lambda,\kappa},u^{\mu,\lambda,\kappa})\|_{L_t^2(H^2)} ,\nonumber\\
&+\big(1+\eps_0^\frac{1}{2}+\eps_1^\frac{1}{2}\big)\|\widetilde{\theta}^{\mu,\lambda,\kappa}\|_{L_t^\infty(H^1)} \|\nabla\widetilde{\rho}^{\mu,\lambda,\kappa}\|_{L_t^\infty(H^1)}  \|\sqrt{2} {\omega}^{\mu,\lambda,\kappa} -\sqrt{3} {\theta}^{\mu,\lambda,\kappa} \|_{L_t^2(H^2)} ,\nonumber\\
\lesssim&\, \big(\eps_0^\frac{1}{2}+\eps_1^\frac{1}{2}\big){\Xi} ^{\mu,\lambda,\kappa}(t). \label{G4.23}
\end{align} 

Finally, for the remaining terms $\widetilde{J}_{30}$, $\widetilde{J}_{31}$ and $\widetilde{J}_{32}$, using Lemma \ref{LNXL3.2} and H\"{o}lder's inequality yields
\begin{align}\label{LNXG4.46}
|\widetilde{J}_{30}|+|\widetilde{J}_{31}|+|\widetilde{J}_{32}|\lesssim&\, \|\partial_t (\rho^{\mu,\lambda,\kappa},u^{\mu,\lambda,\kappa},\theta^{\mu,\lambda,\kappa})\|_{L_t^\infty(L^\infty)}  \|\nabla(\widetilde\rho^{\mu,\lambda,\kappa},\widetilde u^{\mu,\lambda,\kappa},\widetilde\theta^{\mu,\lambda,\kappa})\|_{L_t^2(L^2)}^2  \nonumber\\
\lesssim&\,\big(\eps_0^\frac{1}{2}+\eps_1^\frac{1}{2}\big){\Xi} ^{\mu,\lambda,\kappa}(t). 
\end{align} 

By incorporating the estimates  \eqref{G4.18}--\eqref{LNXG4.46} into \eqref{G4.17}, we ultimately derive the desired estimate \eqref{G4.16}.
\end{proof}

Next, we provide the estimate of $ \|\nabla(\widetilde{a}^{\mu,\lambda,\kappa},\widetilde{b}^{\mu,\lambda,\kappa},\widetilde{\omega}^{\mu,\lambda,\kappa}) \|_{L^2_t(L^2)}^2 $.

\begin{lem}\label{L4.3}
It holds that
\begin{align}\label{G4.24}
\int_0^t \|\nabla(\widetilde{a}^{\mu,\lambda,\kappa},\widetilde{b}^{\mu,\lambda,\kappa},\widetilde{\omega}^{\mu,\lambda,\kappa}
)\|_{L^2}^2{\rm d}\tau\leq  C {\Xi} ^{\mu,\lambda,\kappa}(t)+{\Xi} ^{\mu,\lambda,\kappa}(0),
\end{align}
where $C>0$ is a constant independent of   $\mu$, $\lambda$, $\kappa$ and   $t$.
\end{lem}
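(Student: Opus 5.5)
The plan is to repeat, at the level of the error variables and only at first order, the macroscopic dissipation argument behind Lemma \ref{L3.3}.

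\textbf{Step 1: the error macroscopic system.} Subtract the macroscopic system \eqref{G3.21} written for $f^{\mu,\lambda,\kappa}$ from the same system written for $f$. This gives a system for $(\widetilde{a}^{\mu,\lambda,\kappa},\widetilde{b}^{\mu,\lambda,\kappa},\widetilde{\omega}^{\mu,\lambda,\kappa})$:
\begin{align*}
&\partial_t\widetilde{a}+{\rm div}\,\widetilde{b}=0,\\
&\partial_t\widetilde{b}_i+\partial_i\widetilde{a}+\tfrac{2}{\sqrt6}\partial_i\widetilde{\omega}+\textstyle\sum_j\partial_j\Gamma_{ij}(\{\mathbf{I}-\mathbf{P}\}\widetilde f)=-(\widetilde{b}_i-\widetilde{u}_i)+(u^{\mu,\lambda,\kappa}_ia^{\mu,\lambda,\kappa}-u_ia),
\end{align*}
together with the corresponding equation for $\widetilde\omega$ and the two higher-moment identities. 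In the last two identities, $\partial_j\widetilde b_i+\partial_i\widetilde b_j$ and $\partial_i\widetilde\omega$ are expressed through $-\partial_t\Gamma_{ij}(\{\mathbf{I}-\mathbf{P}\}\widetilde f)$, $-\partial_t\Upsilon_i(\{\mathbf{I}-\mathbf{P}\}\widetilde f)$ and the differences of $\mathfrak l,\mathfrak r,\mathfrak s$. Each nonlinear difference is bilinear. I will split it as (one full solution) $\times$ (one error variable), for example $u^{\mu,\lambda,\kappa}\widetilde a+\widetilde u\,a$ and $\theta^{\mu,\lambda,\kappa}\widetilde b+\widetilde\theta\, b$.

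\textbf{Step 2: interactive functional at order zero.} Define the analogue of $\mathfrak{E}_0$ with only $|\alpha|=0$:
\begin{align*}
\widetilde{\mathfrak E}(t):=\frac{2}{21}\int\widetilde a\Big(\frac{\sqrt6}{5}\sum_i\partial_i\Upsilon_i(\{\mathbf{I}-\mathbf{P}\}\widetilde f)-{\rm div}\,\widetilde b\Big){\rm d}x+\sum_i\int\partial_i\widetilde\omega\,\Upsilon_i(\{\mathbf{I}-\mathbf{P}\}\widetilde f){\rm d}x+\sum_{i,j}\int(\partial_j\widetilde b_i+\partial_i\widetilde b_j)\Gamma_{ij}(\{\mathbf{I}-\mathbf{P}\}\widetilde f){\rm d}x .
\end{align*}
Testing the moment identities as in \cite{Mu-Wang-20,LNW-arXiv-2025} produces
\begin{align*}
\frac{\rm d}{{\rm d}t}\widetilde{\mathfrak E}+\lambda\|\nabla(\widetilde a,\widetilde b,\widetilde\omega)\|_{L^2}^2\lesssim \sum_{|\alpha|\le1}\|\{\mathbf{I}-\mathbf{P}\}\partial^\alpha\widetilde f\|_\nu^2+\|\widetilde b-\widetilde u\|_{L^2}^2+\|\sqrt2\widetilde\omega-\sqrt3\widetilde\theta\|_{L^2}^2+\mathcal R.
\end{align*}
The time derivatives $\partial_t\Gamma,\partial_t\Upsilon$ are moved onto $\widetilde{\mathfrak E}$, and the resulting $\partial_t\widetilde b,\partial_t\widetilde\omega$ terms are substituted from the error equations. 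The linear microscopic contributions of $\mathfrak l$ are bounded by $\|\{\mathbf{I}-\mathbf{P}\}\widetilde f\|_\nu+\|\nabla\{\mathbf{I}-\mathbf{P}\}\widetilde f\|_\nu$. The relaxation terms $\widetilde b-\widetilde u$ and $\sqrt2\widetilde\omega-\sqrt3\widetilde\theta$ appear only linearly and are absorbed via Young. $\mathcal R$ collects the nonlinear differences.

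\textbf{Step 3: integrate in time and close.} Integrate over $[0,t]$. The endpoint terms satisfy $|\widetilde{\mathfrak E}(\tau)|\lesssim\|\widetilde f(\tau)\|_{H^1_{x,v}}^2$, so the term at time $t$ is bounded by $\Xi^{\mu,\lambda,\kappa}(t)$ and the term at time $0$ by $\Xi^{\mu,\lambda,\kappa}(0)$. The linear right-hand terms are directly part of $\Xi^{\mu,\lambda,\kappa}(t)$. For $\int_0^t\mathcal R$ I follow the pattern of \eqref{G4.9}--\eqref{G4.13}. Put the error factor in $L^\infty_t(H^1)\subset L^\infty_t(L^3\cap L^6)$ or in $L^2_t$ dissipation, and put the solution factor in $L^2_t$ dissipation from \eqref{a-2} and Theorem \ref{T1.2}. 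Every such term is then bounded by $(\eps_0^{1/2}+\eps_1^{1/2})\Xi^{\mu,\lambda,\kappa}(t)\le C\,\Xi^{\mu,\lambda,\kappa}(t)$. No viscous terms enter the kinetic equation, so no $\max\{\mu,\lambda,\kappa\}^2$ remainder arises. The $\lesssim$ with a large constant $C$ in the statement allows the crude bound here.

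\textbf{Main obstacle.} The hardest part will be the $\mathfrak s$-type difference, $\theta^{\mu,\lambda,\kappa}\Delta_v(\sqrt M f^{\mu,\lambda,\kappa})-\theta\Delta_v(\sqrt M f)$. It carries two $v$-derivatives and, in the difference, the factor $\widetilde\theta\,(2\nabla_v-v)f$. My first attempt is to exploit the fact that $\Gamma_{ij},\Upsilon_i$ test against smooth, rapidly decaying weights. Integrating by parts in $v$ moves both $v$-derivatives onto $v_iv_j\sqrt M$ or $v_i(|v|^2-3)\sqrt M$. This reduces the term to $\|\widetilde\theta\|_{L^6}\|f\|_{L^2_v(L^3)}$ plus $\|\theta\|_{L^\infty}\|\{\mathbf I-\mathbf P\}\widetilde f\|_{\nu}$, both of which are controllable in $\Xi^{\mu,\lambda,\kappa}(t)$.
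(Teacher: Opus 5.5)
Your proposal is correct and follows essentially the same route as the paper: subtract the macroscopic systems to obtain \eqref{G4.25}, build the same interactive functional $\widetilde{\mathfrak{E}}^{\mu,\lambda,\kappa}_0$ (including the $\frac{2}{21}$-weighted $\widetilde a^{\mu,\lambda,\kappa}$ term) to reach \eqref{G4.34}, and bound the endpoint terms by $\|\widetilde f^{\mu,\lambda,\kappa}\|_{L^2_v(H^1)}^2$ as in \eqref{G4.27}. The nonlinear differences are then absorbed as $(\eps_0+\eps_1)$-small multiples of $\Xi^{\mu,\lambda,\kappa}(t)$; the paper does this pointwise in time, whereas you use a time-integrated H\"older form, and this difference is immaterial. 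As a side remark, \eqref{G4.24} as literally stated already follows from the definition \eqref{G4.4}, since $\Xi^{\mu,\lambda,\kappa}(t)$ contains $\int_0^t\|\nabla(\widetilde a^{\mu,\lambda,\kappa},\widetilde b^{\mu,\lambda,\kappa},\widetilde\omega^{\mu,\lambda,\kappa})\|_{L^2}^2{\rm d}\tau$. What is actually needed to close Theorem \ref{T1.3} is the sharper bound that you (and the paper) derive: the left side controlled by the microscopic and relaxation dissipation plus a small multiple of $\Xi^{\mu,\lambda,\kappa}(t)$.
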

\begin{proof}
Similarly to \eqref{G3.21}, by combining the equations \eqref{G4.1}$_4$ and \eqref{G4.2}$_4$, we can derive the equations for $\widetilde{a}^{\mu,\lambda,\kappa}$, $\widetilde{b}^{\mu,\lambda,\kappa}$ and $\widetilde{\omega}^{\mu,\lambda,\kappa}$:

\begin{equation}\label{G4.25}
\left\{\begin{aligned}
&\partial_t \widetilde{a}^{\mu,\lambda,\kappa}+{\rm div} \widetilde{b}^{\mu,\lambda,\kappa}=0,\\
& \partial_t \widetilde{b}^{\mu,\lambda,\kappa}_i+\partial_i \widetilde{a}^{\mu,\lambda,\kappa}
+\frac{2}{\sqrt{6}} \partial_i \widetilde\omega^{\mu,\lambda,\kappa}+\sum_{j=1}^3 \partial_{j} 
\Gamma_{i, j}(\{\mathbf{I}-\mathbf{P}\} \widetilde{f}^{\mu,\lambda,\kappa}) \\
&\quad=-(\widetilde{b}^{\mu,\lambda,\kappa}_i-\widetilde{u}^{\mu,\lambda,\kappa}_i)
+\widetilde{u}^{\mu,\lambda,\kappa}_i a+u_i^{\mu,\lambda,\kappa}\widetilde{a}^{\mu,\lambda,\kappa}, \\
& \partial_t \widetilde\omega^{\mu,\lambda,\kappa}+\sqrt{2}(\sqrt{2} 
\widetilde\omega^{\mu,\lambda,\kappa}-\sqrt{3} \widetilde\theta^{\mu,
\lambda,\kappa})-\sqrt{6} (\widetilde{a}^\kappa \theta+{a}^{\mu,\lambda,
\kappa}\widetilde{\theta}^{\mu,\lambda,\kappa})+\frac{2}{\sqrt{6}} {\rm div} 
 \widetilde{b}^{\mu,\lambda,\kappa}     \\
&\quad=-\sum_{i=1}^3 \partial_{i} \Upsilon_i(\{\mathbf{I}-\mathbf{P}\} 
\widetilde{f}^{\mu,\lambda,\kappa})+\frac{2}{\sqrt{6}}(\widetilde{u}^{\mu,
\lambda,\kappa}\cdot b+u^{\mu,\lambda,\kappa}\cdot\widetilde{b}^{\mu,\lambda,\kappa}),\\
& \partial_j \widetilde{b}^{\mu,\lambda,\kappa}_i+\partial_i \widetilde{b}^{\mu,\lambda,\kappa}_j
- (\widetilde{u}^{\mu,\lambda,\kappa}_i b_j+u^{\mu,\lambda,\kappa}_i \widetilde{b}^{\mu,\lambda,\kappa}_j
+\widetilde{u}^{\mu,\lambda,\kappa}_j b_i+u^{\mu,\lambda,\kappa}_j\widetilde{b}^{\mu,\lambda,\kappa}_i )   \\
&\quad =\frac{2}{\sqrt{6}} \delta_{i j}\Big(\frac{2}{\sqrt{6}} {\rm div} 
  \widetilde{b}^{\mu,\lambda,\kappa}-\frac{2}{\sqrt{6}} (\widetilde{u}^{\mu,\lambda,\kappa} \cdot b
  +u^{\mu,\lambda,\kappa}\cdot\widetilde{b}^{\mu,\lambda,\kappa})+\sum_{i=1}^3 \partial_{ i} 
  \Upsilon_i(\{\mathbf{I}-\mathbf{P}\} \widetilde{f}^{\mu,\lambda,\kappa})\Big), \\
&\quad\quad -  \partial_t \Gamma_{i, j}(\{\mathbf{I}-\mathbf{P}\} f) 
 -\Gamma_{i, j}\big(\,\widetilde{\mathfrak{l}}^{\mu,\lambda,\kappa}
 +\widetilde{\mathfrak{r}}^{\mu,\lambda,\kappa}+\widetilde{\mathfrak{s}}^{\mu,\lambda,\kappa}\big)\\
& \frac{5}{3} (\partial_i \widetilde\omega^{\mu,\lambda,\kappa}
-\widetilde\omega^{\mu,\lambda,\kappa} u_i-\omega^{\mu,\lambda,\kappa} 
\widetilde{u}^{\mu,\lambda,\kappa}_i-\sqrt{6} \widetilde\theta^{\mu,
\lambda,\kappa} b_i-\sqrt{6} \theta^{\mu,\lambda,\kappa} \widetilde{b}^{\mu,\lambda,\kappa}_i )   \\
&\quad=\frac{2}{\sqrt{6}} \sum_{j=1}^3 \partial_{j} \Gamma_{i, j}
(\{\mathbf{I}-\mathbf{P}\} \widetilde{f}^{\mu,\lambda,\kappa})-\partial_t 
\Upsilon_i(\{\mathbf{I}-\mathbf{P}\} 
\widetilde{f}^{\mu,\lambda,\kappa})
+\Upsilon_i\big(\,\widetilde{\mathfrak{l}}^{\mu,\lambda,\kappa}
+\widetilde{\mathfrak{r}}^{\mu,\lambda,\kappa}+\widetilde{\mathfrak{s}}^{\mu,\lambda,\kappa}\big),
\end{aligned}\right.
\end{equation}
for $1\leq i,j\leq 3$, where $\widetilde{\mathfrak{l}}^{\mu,\lambda,\kappa}$, 
$\widetilde{\mathfrak{r}}^{\mu,\lambda,\kappa}$ and $\widetilde{\mathfrak{s}}^{\mu,\lambda,\kappa}$ are given by
\begin{align*} 
 \widetilde{\mathfrak{l}}^{\mu,\lambda,\kappa}:=&\,-v \cdot \nabla\{\mathbf{I}-\mathbf{P}\} 
 \widetilde{f}^{\mu,\lambda,\kappa}+\mathcal{L}\{\mathbf{I}-\mathbf{P}\} \widetilde{f}^{\mu,\lambda,\kappa},  \\
 \widetilde{\mathfrak{r}}^{\mu,\lambda,\kappa}:=&\,-(\widetilde{u}^{\mu,\lambda,\kappa} 
 \cdot \nabla_v\{\mathbf{I}-\mathbf{P}\} f^{\mu,\lambda,\kappa}
 +u \cdot \nabla_v\{\mathbf{I}-\mathbf{P}\} \widetilde{f}^{\mu,\lambda,\kappa})\\
&+\frac{1}{2} (\widetilde{u}^{\mu,\lambda,\kappa}\cdot v \{\mathbf{I}-\mathbf{P}\}f^{\mu,\lambda,
\kappa}+u \cdot v\{\mathbf{I}-\mathbf{P}\} \widetilde{f}^{\mu,\lambda,\kappa}),  \\
  \widetilde{\mathfrak{s}}^{\mu,\lambda,\kappa}:=&\,\frac{1}{2}\frac{\theta}{\sqrt{M}} 
  {\rm div}_v(\sqrt{M} (2\nabla_v-v)\{\mathbf{I}-\mathbf{P}\} \widetilde{f}^{\mu,\lambda,\kappa}),\\
&+\frac{1}{2}\frac{\widetilde\theta^{\mu,\lambda,\kappa}}{\sqrt{M}} {\rm div}_v(\sqrt{M} 
(2\nabla_v-v)\{\mathbf{I}-\mathbf{P}\} f^{\mu,\lambda,\kappa}).
\end{align*}

Define the temporal functional
\begin{align}\nonumber% \label{G4.26}
\widetilde{\mathfrak{E}}^{\mu,\lambda,\kappa}_0(t):= &   \sum_{i, j=1}^3 \int_{\mathbb{R}^3} 
 (\partial_j \widetilde{b}^{\mu,\lambda,\kappa}_i+\partial_i \widetilde{b}^{\mu,\lambda,\kappa}_j)  
 \Gamma_{i, j}(\{\mathbf{I}-\mathbf{P}\} \widetilde{f}^{\mu,\lambda,\kappa}) \mathrm{d} x \nonumber\\
&+  \sum_{i=1}^3 \int_{\mathbb{R}^3}   \partial_i \widetilde\omega^{\mu,\lambda,\kappa}  
 \Upsilon_i(\{\mathbf{I}-\mathbf{P}\} \widetilde{f}^{\mu,\lambda,\kappa}) \mathrm{d} x \nonumber\\
& +\frac{2}{21}  \int_{\mathbb{R}^3}   \widetilde{a}^{\mu,\lambda,\kappa}   
\Big(\frac{\sqrt{6}}{5} \sum_{i=1}^3 \partial_i \Upsilon_i(\{\mathbf{I}-\mathbf{P}\} 
\widetilde{f}^{\mu,\lambda,\kappa})-{\rm div}   \widetilde{b}^{\mu,\lambda,\kappa}\Big) \mathrm{d} x.
\end{align} 
By Young's inequality, we have
\begin{align}\label{G4.27}
|\widetilde{\mathfrak{E}}^{\mu,\lambda,\kappa}_0(t)|\lesssim  
\|\widetilde{f}^{\mu,\lambda,\kappa}\|_{L_v^2(H^1)}^2.    
\end{align}

Let us first estimate $\|\nabla \widetilde{b}^{\mu,\lambda,\kappa}\|_{L^2}$. 
By employing the energy method and using \eqref{G4.25}$_4$, we have
\begin{align}\label{G4.28}
& \sum_{i, j=1}^3  \|  \partial_j \widetilde{b}^{\mu,\lambda,\kappa}_i+\partial_i
 \widetilde{b}^{\mu,\lambda,\kappa}_j  \|_{L^2}^2-\sum_{i, j=1}^3 \int_{\mathbb{R}^3} \frac{2}{3} \delta_{i j}   ({\rm div} 
 \widetilde{b}^{\mu,\lambda,\kappa})\cdot  
  (\partial_j \widetilde{b}^{\mu,\lambda,\kappa}_i
  +\partial_i \widetilde{b}^{\mu,\lambda,\kappa}_j ) \mathrm{d} x \nonumber\\
  =\,& -\frac{\mathrm{d}}{\mathrm{d} t} 
  \sum_{i, j=1}^3 \int_{\mathbb{R}^3}  
  \big(\partial_j \widetilde{b}^{\mu,\lambda,\kappa}_i
  +\partial_i \widetilde{b}^{\mu,\lambda,\kappa}_j \big)  
   \Gamma_{i, j}(\{\mathbf{I}-\mathbf{P}\} \widetilde{f}^{\mu,\lambda,\kappa}) \mathrm{d} x \nonumber\\
&  +\sum_{i, j=1}^3 \int_{\mathbb{R}^3}  
\big(\partial_i \partial_t \widetilde{b}^{\mu,\lambda,\kappa}_j
+\partial_j \partial_t \widetilde{b}^{\mu,\lambda,\kappa}_i\big)  
 \Gamma_{i, j}(\{\mathbf{I}-\mathbf{P}\} \widetilde{f}^{\mu,\lambda,\kappa}) \mathrm{d} x \nonumber\\
&   +\sum_{i, j=1}^3 \int_{\mathbb{R}^3}  
 \big(\partial_i \widetilde{b}^{\mu,\lambda,\kappa}_j+\partial_j 
 \widetilde{b}^{\mu,\lambda,\kappa}_i \big)   \big(\widetilde{u}^{\mu,\lambda,\kappa}_i 
 b_j+u^{\mu,\lambda,\kappa}_i \widetilde{b}^{\mu,\lambda,\kappa}_j
 +\widetilde{u}^{\mu,\lambda,\kappa}_j b_i+u^{\mu,\lambda,\kappa}_j\widetilde{b}^{\mu,\lambda,
 \kappa}_i \big) {\rm d}x\nonumber \\
&   -\sum_{i, j=1}^3 \int_{\mathbb{R}^3}  
 \big(\partial_i \widetilde{b}^{\mu,\lambda,\kappa}_j+\partial_j 
 \widetilde{b}^{\mu,\lambda,\kappa}_i \big)   \delta_{i j}\sum_{i=1}^3 \partial_i 
 \Upsilon_i(\{\mathbf{I}-\mathbf{P}\} \widetilde{f}^{\mu,\lambda,\kappa}){\rm d}x\nonumber \\
&   +\sum_{i, j=1}^3 \int_{\mathbb{R}^3}  
 \big(\partial_i \widetilde{b}^{\mu,\lambda,\kappa}_j+\partial_j 
 \widetilde{b}^{\mu,\lambda,\kappa}_i \big)  \Big[\Gamma_{i, j}
 \big(\,\widetilde{\mathfrak{l}}^{\mu,\lambda,\kappa}+\widetilde{\mathfrak{r}}^{\mu,\lambda,
 \kappa}+\widetilde{\mathfrak{s}}^{\mu,\lambda,\kappa}\big)-\frac{2}{3}\delta_{i j} 
 \big(\widetilde{u}^{\mu,\lambda,\kappa} 
\cdot b+u^{\mu,\lambda,\kappa}\cdot
\widetilde{b}^{\mu,\lambda,\kappa}\big)\Big]{\rm d}x\nonumber\\
 =:\,&-\frac{\mathrm{d}}{\mathrm{d} t} \sum_{i, j=1}^3 
 \int_{\mathbb{R}^3}  \big(\partial_j \widetilde{b}^{\mu,\lambda,\kappa}_i
 +\partial_i \widetilde{b}^{\mu,\lambda,\kappa}_j \big)  
  \Gamma_{i, j}(\{\mathbf{I}-\mathbf{P}\} \widetilde{f}^{\mu,\lambda,\kappa})
   \mathrm{d} x+\widetilde{K}_1+\widetilde{K}_2+\widetilde{K}_3+\widetilde{K}_4,
\end{align}
where $\widetilde{K}_1$, $\widetilde{K}_2$, $\widetilde{K}_3$ and $\widetilde{K}_4$ denote the third, fourth, fifth and sixth lines of 
\eqref{G4.28}, respectively. And $\delta_{ij}$ is the Kronecker delta function.

According to \eqref{G4.25}$_2$ and Young’s inequality, the term $\widetilde{K}_1$  can be derived as follows:  
\begin{align*}
\widetilde{K}_1=&\, 2\sum_{i, j=1}^3 \int_{\mathbb{R}^3}   \partial_j \partial_t \widetilde{b}^{\mu,\lambda,\kappa}_i   \Gamma_{i, j}(\{\mathbf{I}-\mathbf{P}\} \widetilde{f}^{\mu,\lambda,\kappa}) \mathrm{d} x\nonumber\\
=&\,  -2\sum_{i, j=1}^3 \int_{\mathbb{R}^3}    \partial_t \widetilde{b}^{\mu,\lambda,\kappa}_i   \partial_j\Gamma_{i, j}(\{\mathbf{I}-\mathbf{P}\} \widetilde{f}^{\mu,\lambda,\kappa}) \mathrm{d} x\nonumber\\
=&\,2\sum_{i, j=1}^3 \int_{\mathbb{R}^3} \Big(\partial_i \widetilde{a}^{\mu,\lambda,\kappa}+\frac{2}{\sqrt{6}} \partial_i \widetilde\omega^{\mu,\lambda,\kappa}+\sum_{j=1}^3 \partial_{j} \Gamma_{i, j}(\{\mathbf{I}-\mathbf{P}\} \widetilde{f}^{\mu,\lambda,\kappa})\Big)\partial_j\Gamma_{i, j}(\{\mathbf{I}-\mathbf{P}\} \widetilde{f}^{\mu,\lambda,\kappa}) \mathrm{d} x\nonumber\\
&+2\sum_{i, j=1}^3 \int_{\mathbb{R}^3} \big((\widetilde{b}^{\mu,\lambda,\kappa}_i-\widetilde{u}^{\mu,\lambda,\kappa}_i)-\widetilde{u}^{\mu,\lambda,\kappa}_i a-u_i^{\mu,\lambda,\kappa}\widetilde{a}^{\mu,\lambda,\kappa}\big)\partial_j\Gamma_{i, j}(\{\mathbf{I}-\mathbf{P}\} \widetilde{f}^{\mu,\lambda,\kappa}) \mathrm{d} x\nonumber\\
\lesssim&\,\sigma\|\nabla(\widetilde{a}^{\mu,\lambda,\kappa},\widetilde\omega^{\mu,\lambda,\kappa})\|_{L^2}^2+ \|\nabla \{\mathbf{I}-\mathbf{P}\} \widetilde{f}^{\mu,\lambda,\kappa}\|_{L_{x,v}^2}^2+ \|\widetilde{b}^{\mu,\lambda,\kappa}-\widetilde{u}^{\mu,\lambda,\kappa}\|_{L^2}^2\nonumber\\
&+\|(\widetilde{a}^{\mu,\lambda,\kappa},\widetilde{u}^{\mu,\lambda,\kappa})\|_{L^6}^2(\|a\|_{L^3}^2+\|u^{\mu,\lambda,\kappa}\|_{L^3}^2).
\end{align*}
Here and below, $\sigma>0$ is a sufficiently small constant.
For the terms $\widetilde{K}_2$, $\widetilde{K}_2$, and $\widetilde{K}_2$, from Young's inequality and Lemma \ref{LA.1}, it follows that  
\begin{align*}
|\widetilde{K}_2+\widetilde{K}_3+\widetilde{K}_4|\leq&\,  \frac{1}{4}\sum_{i, j=1}^3  \|  \partial_j \widetilde{b}^{\mu,\lambda,\kappa}_i+\partial_i \widetilde{b}^{\mu,\lambda,\kappa}_j  \|_{L^2}^2 +C \|\widetilde{u}^{\mu,\lambda,\kappa} \cdot b+u^{\mu,\lambda,\kappa}\cdot\widetilde{b}^\kappa\|_{L^2}^2\nonumber\\
&+C\|\nabla \Upsilon_i(\{\mathbf{I}-\mathbf{P}\} \widetilde{f}^{\mu,\lambda,\kappa})\|_{L^2}^2\nonumber\\
&+C\|\widetilde{u}^{\mu,\lambda,\kappa}_i b_j+u^{\mu,\lambda,\kappa}_i \widetilde{b}^{\mu,\lambda,\kappa}_j+\widetilde{u}^{\mu,\lambda,\kappa}_j b_i+u^{\mu,\lambda,\kappa}_j\widetilde{b}^{\mu,\lambda,\kappa}_i \|_{L^2}^2\nonumber\\
&+C\big\|\Gamma_{i, j}\big(\,\widetilde{\mathfrak{l}}^{\mu,\lambda,\kappa}+\widetilde{\mathfrak{r}}^{\mu,\lambda,\kappa}+\widetilde{\mathfrak{s}}^{\mu,\lambda,\kappa}\big)\big\|_{L^2}^2\nonumber\\
\leq\,&\frac{1}{4}\sum_{i, j=1}^3  \|  \partial_j \widetilde{b}^{\mu,\lambda,\kappa}_i+\partial_i \widetilde{b}^{\mu,\lambda,\kappa}_j  \|_{L^2}^2 +C\|\nabla \{\mathbf{I}-\mathbf{P}\} \widetilde{f}^{\mu,\lambda,\kappa}\|_{L_{x,v}^2}^2\nonumber\\
&+C\|\{\mathbf{I}-\mathbf{P}\} \widetilde{f}^{\mu,\lambda,\kappa}\|_{L_v^2(H^1)}^2+C\|\nabla(\widetilde{u}^{\mu,\lambda,\kappa},\widetilde{\theta}^{\mu,\lambda,\kappa})\|_{L^2}^2   \|\{\mathbf{I}-\mathbf{P}\}   {f}^{\mu,\lambda,\kappa} \|_{L_v^2(H^1)} ^2\nonumber\\
&+C\|\nabla(u,\theta)\|_{L^2}^2 \|  \{\mathbf{I}-\mathbf{P}\}  \widetilde{f}^{\mu,\lambda,\kappa} \|_{L_v^2(H^1)}^2\nonumber\\
&+C\|(\widetilde{b}^{\mu,\lambda,\kappa},\widetilde{u}^{\mu,\lambda,\kappa})\|_{L^6}^2\big(\|b\|_{L^3}^2+\|u^{\mu,\lambda,\kappa}\|_{L^3}^2\big).
\end{align*}
Moreover, we  observe that   
\begin{gather*}
  \sum_{i, j=1}^3 \|\ \partial_i \widetilde{b}^{\mu,\lambda,\kappa}_j+\partial_j \widetilde{b}^{\mu,\lambda,\kappa}_i \|_{L^2}^2 =2 \|\nabla   \widetilde{b}^{\mu,\lambda,\kappa} \|_{L^2}^2+2 \| {\rm div} \widetilde{b}^{\mu,\lambda,\kappa}\|_{L^2}^2, \\
 -\sum_{i, j=1}^3 \frac{2}{3} \delta_{i j} \int_{\mathbb{R}^3}   {\rm div}   \widetilde{b}^{\mu,\lambda,\kappa}  (\partial_i \widetilde{b}^{\mu,\lambda,\kappa}_j+\partial_j \widetilde{b}^{\mu,\lambda,\kappa}_i ) \mathrm{d} x 
=-\frac{4}{3} \|  {\rm div}   \widetilde{b}^{\mu,\lambda,\kappa} \|_{L^2}^2.
\end{gather*}
Consequently, by substituting the aforementioned estimates into \eqref{G4.28},  we obtain 
\begin{align}\label{G4.29}
& \frac{\mathrm{d}}{\mathrm{d} t}   \sum_{i, j=1}^3 \int_{\mathbb{R}^3} \ \big(\partial_i \widetilde{b}^{\mu,\lambda,\kappa}_j+\partial_j \widetilde{b}^{\mu,\lambda,\kappa}_i \big)   \Gamma_{i, j}(\{\mathbf{I}-\mathbf{P}\} \widetilde{f}^{\mu,\lambda,\kappa}) \mathrm{d} x  +   \frac{3}{2} \|\nabla   \widetilde{b}^{\mu,\lambda,\kappa}\|_{L^2}^2+\frac{1}{6} \|  \nabla  \cdot \widetilde{b}^{\mu,\lambda,\kappa}\|_{L^2}^2  \nonumber\\
 &\quad \leq  C\sigma \|\nabla (\widetilde{a}^{\mu,\lambda,\kappa}, \widetilde\omega^{\mu,\lambda,\kappa})\|_{L^2}^2+C(\eps_0+\eps_1)\|\nabla(\widetilde{a}^{\mu,\lambda,\kappa},\widetilde{b}^{\mu,\lambda,\kappa},\widetilde{u}^{\mu,\lambda,\kappa},\widetilde{\theta}^{\mu,\lambda,\kappa})\|_{L^2}^2\nonumber\\
&\qquad\, +C \|\{\mathbf{I}-\mathbf{P}\} \widetilde{f}^{\mu,\lambda,\kappa}\|_{L_v^2 (H^1 )}^2+C\|\widetilde{b}^{\mu,\lambda,\kappa}-\widetilde{u}^{\mu,\lambda,\kappa}\|_{H^1}^2.  
\end{align}

Next, we proceed to estimate $\|\nabla\widetilde{\omega}^{\mu,\lambda,\kappa}\|_{L^2}$.
From \eqref{G4.25}$_5$, it follows that  
\begin{align}\label{G4.30}
 \|  \partial_i \widetilde\omega^{\mu,\lambda,\kappa} \|_{L^2}^2= &\, -\frac{3}{5} \frac{\mathrm{d}}{\mathrm{d} t} \int_{\mathbb{R}^3}  \partial_i \widetilde\omega^{\mu,\lambda,\kappa}   \Upsilon_i(\{\mathbf{I}-\mathbf{P}\} \widetilde{f}^{\mu,\lambda,\kappa}) \mathrm{d} x+\frac{3}{5} \int_{\mathbb{R}^3}   \partial_i \partial_t \widetilde\omega^{\mu,\lambda,\kappa}   \Upsilon_i(\{\mathbf{I}-\mathbf{P}\} \widetilde{f}^{\mu,\lambda,\kappa}) \mathrm{d} x \nonumber\\
& +\int_{\mathbb{R}^3}  \partial_i \widetilde\omega^{\mu,\lambda,\kappa}   \big(\widetilde\omega^{\mu,\lambda,\kappa} u_i+\omega^{\mu,\lambda,\kappa}\widetilde{u}^{\mu,\lambda,\kappa}_i+\sqrt{6} \widetilde{\theta}^{\mu,\lambda,\kappa} b_i+\sqrt{6}\theta^{\mu,\lambda,\kappa}\widetilde{b}^{\mu,\lambda,\kappa}_i \big) \mathrm{d} x\nonumber\\
& +\frac{\sqrt{6}}{5} \int_{\mathbb{R}^3}   \partial_i \widetilde\omega^{\mu,\lambda,\kappa}   \sum_{j=1}^3 \partial_j \Gamma_{i, j}(\{\mathbf{I}-\mathbf{P}\} \widetilde{f}^{\mu,\lambda,\kappa})  \mathrm{d} x\nonumber\\
&+ \frac{3}{5}\int_{\mathbb{R}^3}   \partial_i \widetilde\omega \Upsilon_i^{\mu,\lambda,\kappa}\big(\widetilde{\mathfrak{l}}^{\mu,\lambda,\kappa}+\widetilde{\mathfrak{r}}^{\mu,\lambda,\kappa}+\widetilde{\mathfrak{s}}^{\mu,\lambda,\kappa}\big) \mathrm{d} x \nonumber\\
\equiv: & -\frac{3}{5} \frac{\mathrm{d}}{\mathrm{d} t} \int_{\mathbb{R}^3}  \partial_i \widetilde\omega^{\mu,\lambda,\kappa}   \Upsilon_i(\{\mathbf{I}-\mathbf{P}\} \widetilde{f}^{\mu,\lambda,\kappa}) \mathrm{d} x+ \widetilde{K}_5+\widetilde{K}_6+\widetilde{K}_7+\widetilde{K}_8.
\end{align}
By leveraging \eqref{G4.25}$_3$, Young's inequality and Lemma \ref{LA.1}, we have
\begin{align*}
| \widetilde{K}_5|= &\,\Big| \int_{\mathbb{R}^3} \partial_i  \Big(\sqrt{6}\big( \widetilde{a}^{\mu,\lambda,\kappa} \theta+a^{\mu,\lambda,\kappa}\widetilde{\theta}^{\mu,\lambda,\kappa}\big)-\sqrt{2}\big(\sqrt{2} \widetilde\omega^{\mu,\lambda,\kappa}-\sqrt{3} \widetilde\theta^{\mu,\lambda,\kappa}\big) \Big)\Upsilon_i(\{\mathbf{I}-\mathbf{P}\} \widetilde{f}^{\mu,\lambda,\kappa}) \mathrm{d} x\Big| \\
&   +\Big|\int_{\mathbb{R}^3}\partial_i  \Big(\frac{2}{\sqrt{6}}\big( \widetilde{u}^{\mu,\lambda,\kappa} \cdot b+u^{\mu,\lambda,\kappa}\cdot\widetilde{b}^{\mu,\lambda,\kappa}\big)-\sum_i \partial_i \Upsilon_i\{\mathbf{I}-\mathbf{P}\} \widetilde{f}^{\mu,\lambda,\kappa}\Big)    \Upsilon_i(\{\mathbf{I}-\mathbf{P}\} \widetilde{f}^{\mu,\lambda,\kappa}) \mathrm{d} x\Big| \\
&+\Big| \int_{\mathbb{R}^3} \partial_i  \Big( \frac{2}{\sqrt{6}} {\rm div} \widetilde{b}^{\mu,\lambda,\kappa}\Big)\Upsilon_i(\{\mathbf{I}-\mathbf{P}\} \widetilde{f}^{\mu,\lambda,\kappa}) \mathrm{d} x\Big| \\
\lesssim \,& \sigma\|\nabla  \widetilde{b}^{\mu,\lambda,\kappa}\|_{L^2}^2+ \|\{\mathbf{I}-\mathbf{P}\} \widetilde{f}^{\mu,\lambda,\kappa}\|_{L_v^2 (H^1)}^2+ \|\sqrt{2} \widetilde\omega^{\mu,\lambda,\kappa}-\sqrt{3} \widetilde\theta^{\mu,\lambda,\kappa}\|_{H^1}^2\nonumber\\
&+(\eps_0+\eps_1)\|\nabla(\widetilde{b}^{\mu,\lambda,\kappa},\widetilde{u}^{\mu,\lambda,\kappa})\|_{L^2}^2.
\end{align*}
Similarly, for the terms $\widetilde{K}_6$, $\widetilde{K}_6$, and $\widetilde{K}_6$, through a direct calculation, we arrive at  
\begin{align*}
|\widetilde{K}_6+\widetilde{K}_7+\widetilde{K}_8|\leq&\,
\frac{2}{5} \| \partial_i \widetilde\omega^{\mu,\lambda,\kappa} \|_{L^2}^2+C\|\{\mathbf{I}-\mathbf{P}\} \widetilde{f}^{\mu,\lambda,\kappa}\|_{L_v^2 (H^1)}^2 \nonumber\\ &+C(\eps_0+\eps_1)\|\nabla(\widetilde{b}^{\mu,\lambda,\kappa},\widetilde{\omega}^{\mu,\lambda,\kappa},\widetilde{u}^{\mu,\lambda,\kappa},\widetilde\theta^{\mu,\lambda,\kappa})\|_{L^2}^2.   
\end{align*}
Inserting the above estimates on $\widetilde{K}_5$ and $\widetilde{K}_6+\widetilde{K}_7+\widetilde{K}_8$ into
\eqref{G4.30} gives rise to
\begin{align}\label{G4.31}
& \frac{\mathrm{d}}{\mathrm{d} t}   \sum_{ i=1}^3 \int_{\mathbb{R}^3}   \partial_i \widetilde\omega^{\mu,\lambda,\kappa}   \Upsilon_i(\{\mathbf{I}-\mathbf{P}\} \widetilde{f}^{\mu,\lambda,\kappa}) \mathrm{d} x+ \| \nabla  \widetilde\omega^{\mu,\lambda,\kappa} \|_{L^2}^2 \nonumber\\
&\quad \leq C\sigma\|\nabla b\|_{H^1}^2+C \|\{\mathbf{I}-\mathbf{P}\} \widetilde{f}^{\mu,\lambda,\kappa}\|_{L_v^2 (H^1 )}^2\nonumber\\
& \quad\quad  +C(\eps_0+\eps_1)\|\nabla(\widetilde{a}^{\mu,\lambda,\kappa},\widetilde{b}^{\mu,\lambda,\kappa},\widetilde\omega^{\mu,\lambda,\kappa},\widetilde{u}^{\mu,\lambda,\kappa},\widetilde\theta^{\mu,\lambda,\kappa})\|_{L^2}^2\nonumber\\
& \quad\quad   +C\|\sqrt{2} \widetilde\omega^{\mu,\lambda,\kappa}-\sqrt{3} \widetilde\theta^{\mu,\lambda,\kappa}\|_{H^1}^2. 
\end{align}

Finally, we   estimate $\|\nabla\widetilde{a}^{\mu,\lambda,\kappa}\|_{L^2}$.
From \eqref{G4.25}$_2$ and \eqref{G4.25}$_5$, one has
\begin{align}\label{G4.32}
\|  \partial_i \widetilde{a}^{\mu,\lambda,\kappa} \|_{L^2}^2
=\, & \frac{\mathrm{d}}{\mathrm{d} t} \int_{\mathbb{R}^3}   \partial_i \widetilde{a}^{\mu,\lambda,\kappa}  \Big(\frac{\sqrt{6}}{5} \Upsilon_i(\{\mathbf{I}-\mathbf{P}\} \widetilde{f}^{\mu,\lambda,\kappa})-\widetilde{b}^{\mu,\lambda,\kappa}_i\Big) \mathrm{d} x \nonumber\\
& -\int_{\mathbb{R}^3}   \partial_i \partial_t\widetilde{a}^{\mu,\lambda,\kappa}   \Big(\frac{\sqrt{6}}{5} \Upsilon_i(\{\mathbf{I}-\mathbf{P}\} \widetilde{f}^{\mu,\lambda,\kappa})-\widetilde{b}^{\mu,\lambda,\kappa}_i\Big) \mathrm{d} x \nonumber\\
& +\int_{\mathbb{R}^3}  \partial_i \widetilde{a}^{\mu,\lambda,\kappa}  \big( (\widetilde{u}^{\mu,\lambda,\kappa}_i-\widetilde{b}^{\mu,\lambda,\kappa}_i)+\widetilde{u}^{\mu,\lambda,\kappa}_i a+ {u}_i^{\mu,\lambda,\kappa}\widetilde{a}^{\mu,\lambda,\kappa} \big){\rm d}x\nonumber \\
& -\int_{\mathbb{R}^3}  \partial_i \widetilde{a}^{\mu,\lambda,\kappa}  \Big(  \frac{2}{\sqrt{6}} (\widetilde\omega^{\mu,\lambda,\kappa} u_i+\omega^{\mu,\lambda,\kappa} \widetilde{u}^{\mu,\lambda,\kappa}_i)+2 (\widetilde\theta^{\mu,\lambda,\kappa} b_i+\theta^{\mu,\lambda,\kappa}\widetilde{b}^{\mu,\lambda,\kappa}_i)\Big){\rm d}x\nonumber \\
& - \int_{\mathbb{R}^3}  \partial_i \widetilde{a}^{\mu,\lambda,\kappa}\Big(\frac{7}{5} \sum_{j=1}^3 \partial_j \Gamma_{i j}(\{\mathbf{I}-\mathbf{P}\} \widetilde{f}^{\mu,\lambda,\kappa})+\frac{\sqrt{6}}{5} \Upsilon_i\big(\,\widetilde{\mathfrak{l}}^{\mu,\lambda,\kappa}+\widetilde{\mathfrak{r}}^{\mu,\lambda,\kappa}
+\widetilde{\mathfrak{s}}^{\mu,\lambda,\kappa}\big)\Big) \mathrm{d}x\nonumber\\
\equiv:\,\,&\frac{\mathrm{d}}{\mathrm{d} t} \int_{\mathbb{R}^3}   \partial_i \widetilde{a}^{\mu,\lambda,\kappa}  \Big(\frac{\sqrt{6}}{5} \Upsilon_i(\{\mathbf{I}-\mathbf{P}\} \widetilde{f}^{\mu,\lambda,\kappa})-\widetilde{b}^{\mu,\lambda,\kappa}_i\Big) \mathrm{d}x+\widetilde{K}_9+\widetilde{K}_{10}+\widetilde{K}_{11}+\widetilde{K}_{12}. 
\end{align}
By employing \eqref{G4.25}$_1$ and applying Young's inequality, we obtain  
\begin{align*}
|\widetilde{K}_9 | =&\,\Big|\int_{\mathbb{R}^3}  \partial_i ({\rm div}   \widetilde{b}^{\mu,\lambda,\kappa})  \Big(\widetilde{b}^{\mu,\lambda,\kappa}_i-\frac{\sqrt{6}}{5} \Upsilon_i(\{\mathbf{I}-\mathbf{P}\} \widetilde{f}^{\mu,\lambda,\kappa})\Big) \mathrm{d} x\Big|\\
\leq&\, \int_{\mathbb{R}^3}   {\rm div}   \widetilde{b}^{\mu,\lambda,\kappa}   \partial_i \widetilde{b}^{\mu,\lambda,\kappa}_i \mathrm{d} x+\frac{1}{4} \|  {\rm div}   \widetilde{b}^{\mu,\lambda,\kappa} \|_{L^2}^2+C\|\{\mathbf{I}-\mathbf{P}\} \widetilde{f}^{\mu,\lambda,\kappa}\|_{L_v^2 (H^1 )}^2. 
\end{align*}
By a direct computation, we also have
\begin{align*}
|\widetilde{K}_{10}+\widetilde{K}_{11}+\widetilde{K}_{12}|
\leq \,&\frac{1}{4}\| \partial_i \widetilde{a}^{\mu,\lambda,\kappa}\|_{L^2}^2+C\big(\| \widetilde{u}^{\mu,\lambda,\kappa}_i a+u_i^{\mu,\lambda,\kappa}\widetilde{a}^{\mu,\lambda,\kappa}-\frac{2}{\sqrt{6}} (\widetilde
\omega^{\mu,\lambda,\kappa} u_i+\omega^{\mu,\lambda,\kappa}\widetilde{u}_i^{\mu,\lambda,\kappa})\|_{L^2}^2\big) \\
&+ C\big( \|  \widetilde{u}^{\mu,\lambda,\kappa}_i-\widetilde{b}^{\mu,\lambda,\kappa}_i\|_{L^2}^2+\sum_{j=1}^3 \|  \partial_j \Gamma_{i j}(\{\mathbf{I}-\mathbf{P}\} \widetilde{f}^{\mu,\lambda,\kappa})\|_{L_{x,v}^2}^2\big) \\
&+C\big(\|  \widetilde\theta^{\mu,\lambda,\kappa} b_i+\theta^{\mu,\lambda,\kappa}\widetilde{b}^{\mu,\lambda,\kappa}_i\|_{L^2}^2+\big\|  \Upsilon_i\big(\widetilde{\mathfrak{l}}^{\mu,\lambda,\kappa}+\widetilde{\mathfrak{r}}^{\mu,\lambda,\kappa}+\widetilde{\mathfrak{s}}^{\mu,\lambda,\kappa}\big) \big\|_{L ^2}^2 \big)\\
\leq\, &\frac{1}{4}\|  \partial_i \widetilde{a}^{\mu,\lambda,\kappa} \|_{L^2}^2+C\|\{\mathbf{I}-\mathbf{P}\} \widetilde{f}^{\mu,\lambda,\kappa}\|_{L_v^2 (H^1 )}^2+C\|\widetilde{b}^{\mu,\lambda,\kappa}-\widetilde{u}^{\mu,\lambda,\kappa}\|_{L^2}^2\nonumber\\
& + C(\eps_0+\eps_1)\|\nabla(\widetilde{a}^{\mu,\lambda,\kappa},\widetilde{b}^{\mu,\lambda,\kappa},\widetilde{\omega}^{\mu,\lambda,\kappa},\widetilde{u}^{\mu,\lambda,\kappa},\widetilde{\theta}^{\mu,\lambda,\kappa})\|_{L^2}^2.    
\end{align*}
Plugging the above estimates on $\widetilde{K}_9$ and $\widetilde{K}_{10} + \widetilde{K}_{11}+ \widetilde{K}_{12}$ into \eqref{G4.32} yields
\begin{align}\label{G4.33}
&\frac{\mathrm{d}}{\mathrm{d} t}   \int_{\mathbb{R}^3} \frac{2}{21}   \widetilde{a}^{\mu,\lambda,\kappa}  \Big(\frac{\sqrt{6}}{5}\sum_{i=1}^3 \Upsilon_i(\{\mathbf{I}-\mathbf{P}\} \widetilde{f}^{\mu,\lambda,\kappa})-{\rm div}   \widetilde{b}^{\mu,\lambda,\kappa}\Big) \mathrm{d} x+\frac{1}{14}   \|  \nabla  \widetilde{a}^{\mu,\lambda,\kappa}\|_{L^2}^2\nonumber \\
&\quad \leq \frac{5}{42} \| {\rm div}   \widetilde{b}^{\mu,\lambda,\kappa}\|_{L^2}^2+C\|\{\mathbf{I}-\mathbf{P}\} \widetilde{f}^{\mu,\lambda,\kappa}\|_{L_v^2 (H^1 )}^2+C\|\widetilde{b}^{\mu,\lambda,\kappa}-\widetilde{u}^{\mu,\lambda,\kappa}\|_{H^1}^2\nonumber\\
&\quad\quad\, +C(\eps_0+\eps_1)\|\nabla(\widetilde{a}^{\mu,\lambda,\kappa},\widetilde{b}^{\mu,\lambda,\kappa},\widetilde{\omega}^{\mu,\lambda,\kappa},\widetilde{u}^{\mu,\lambda,\kappa},\widetilde{\theta}^{\mu,\lambda,\kappa})\|_{L^2}^2.
\end{align}

Adding   \eqref{G4.29}, \eqref{G4.31}, and \eqref{G4.33}, we consequently get
\begin{align}\label{G4.34}
&\frac{{\rm d}}{{\rm d} t}\widetilde{\mathfrak{E}}^{\mu,\lambda,\kappa}_0(t)+\lambda_8 \|\nabla(\widetilde{a}^{\mu,\lambda,\kappa},\widetilde{b}^{\mu,\lambda,\kappa},\widetilde{\omega}^{\mu,\lambda,\kappa})\|_{L^2}^2\nonumber\\
&\quad\lesssim  \|\{\mathbf{I}-\mathbf{P}\} \widetilde{f}^{\mu,\lambda,\kappa}\|_{L_v^2 (H^1 )}^2+ \|\widetilde{b}^{\mu,\lambda,\kappa}-\widetilde{u}^{\mu,\lambda,\kappa}\|_{H^1}^2+\|\sqrt{2}\widetilde\omega^{\mu,\lambda,\kappa}-\sqrt{3}\widetilde{\theta}^{\mu,\lambda,\kappa}\|_{H^1}^2\nonumber\\
& \quad\quad+(\eps_0+\eps_1)\|\nabla( \widetilde{u}^{\mu,\lambda,\kappa},\widetilde{\theta}^{\mu,\lambda,\kappa})\|_{L^2}^2,   
\end{align}
for some $\lambda_8>0$.

Integrating \eqref{G4.34} over $[0,t]$ yields
\begin{align*}
&\widetilde{\mathfrak{E}}_0^{\mu,\lambda,\kappa}(t)+\|\nabla(\widetilde{a}^{\mu,\lambda,\kappa},\widetilde{b}^{\mu,\lambda,\kappa},\widetilde{\omega}^{\mu,\lambda,\kappa})\|_{L_t^2(L^2)}^2\nonumber\\
&\quad \lesssim\widetilde{\mathfrak{E}}_0^{\mu,\lambda,\kappa}(0)+\|\widetilde{b}^{\mu,\lambda,\kappa}-\widetilde{u}^{\mu,\lambda,\kappa}\|_{L_t^2(H^1)}^2
+\|\sqrt{2}\widetilde\omega^{\mu,\lambda,\kappa}-\sqrt{3}\widetilde{\theta}^{\mu,\lambda,\kappa}\|_{L_t^2(H^1)}^2\nonumber\\
&\qquad \, +\|\{\mathbf{I}-\mathbf{P}\} \widetilde{f}^{\mu,\lambda,\kappa}\|_{L_t^2(L_v^2 (H^1 ))}^2+(\eps_0+\eps_1)\|\nabla(\widetilde{u}^{\mu,\lambda,\kappa},\widetilde{\theta}^{\mu,\lambda,\kappa})\|_{L_t^2(L^2)}^2,
\end{align*}
which, together with \eqref{G4.7}, \eqref{G4.16}  and  \eqref{G4.27}, leads to  the desired estimate \eqref{G4.24}.
\end{proof}

Finally, we give the estimate of $\int^t_0 \| \nabla\widetilde\rho^{\mu,\lambda,\kappa}(\tau)\|_{L^2} {\rm d}\tau$.
\begin{lem}\label{L4.4}
It holds that
\begin{align}\label{G4.35}
\int_0^t \|\nabla \widetilde{\rho}^{\mu,\lambda,\kappa}
\|_{L^2}^2{\rm d}\tau\leq C\max\{\mu,\lambda,\kappa\}^2+C {\Xi} ^{\mu,\lambda,\kappa}(t)+{\Xi} ^{\mu,\lambda,\kappa}(0),
\end{align}
where $C>0$ is a constant independent of   $\mu$, $\lambda$, $\kappa$ and   $t$.
\end{lem}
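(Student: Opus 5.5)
The estimate mirrors Lemma \ref{L3.4} at the level of the error system. We test the velocity equation of \eqref{G4.3}$_2$ against $\nabla\widetilde{\rho}^{\mu,\lambda,\kappa}$ and integrate over $[0,t]\times\mathbb{R}^3$. The term $\nabla\widetilde{\rho}^{\mu,\lambda,\kappa}\cdot\nabla\widetilde{\rho}^{\mu,\lambda,\kappa}$ produces the left-hand side $\int_0^t\|\nabla\widetilde{\rho}^{\mu,\lambda,\kappa}\|_{L^2}^2{\rm d}\tau$.

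The time derivative term is handled as in \eqref{G3.27}. We write
\[
\int \partial_t\widetilde{u}^{\mu,\lambda,\kappa}\cdot\nabla\widetilde{\rho}^{\mu,\lambda,\kappa}
=\frac{\rm d}{{\rm d}t}\int\widetilde{u}^{\mu,\lambda,\kappa}\cdot\nabla\widetilde{\rho}^{\mu,\lambda,\kappa}
+\int{\rm div}\,\widetilde{u}^{\mu,\lambda,\kappa}\,\partial_t\widetilde{\rho}^{\mu,\lambda,\kappa},
\]
and then substitute $\partial_t\widetilde{\rho}^{\mu,\lambda,\kappa}=-{\rm div}\,\widetilde{u}^{\mu,\lambda,\kappa}+\widetilde F_1$ from \eqref{G4.3}$_1$.
\begin{itemize}
\item The boundary terms in time are bounded by $\sup_\tau\|(\widetilde{\rho}^{\mu,\lambda,\kappa},\widetilde{u}^{\mu,\lambda,\kappa})\|_{H^1}^2\lesssim \Xi^{\mu,\lambda,\kappa}(t)+\Xi^{\mu,\lambda,\kappa}(0)$.
\item The contribution $\|{\rm div}\,\widetilde{u}^{\mu,\lambda,\kappa}\|_{L^2_t(L^2)}^2$ is controlled by $\Xi^{\mu,\lambda,\kappa}(t)$ via \eqref{LNXG4.5}.
\item The $\widetilde F_1$ part is bounded by $(\eps_0^{1/2}+\eps_1^{1/2})\Xi^{\mu,\lambda,\kappa}(t)$, using the same H\"older/Sobolev splittings as for $\widetilde I_{13}$ in \eqref{G4.13-1}.
\end{itemize}

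The linear terms are absorbed with Young's inequality, taking a small fraction (say $\frac18$) of $\|\nabla\widetilde{\rho}^{\mu,\lambda,\kappa}\|_{L^2}^2$ each time.
\begin{itemize}
\item $\nabla\widetilde{\theta}^{\mu,\lambda,\kappa}\cdot\nabla\widetilde{\rho}^{\mu,\lambda,\kappa}$ costs $\int_0^t\|\nabla\widetilde{\theta}^{\mu,\lambda,\kappa}\|_{L^2}^2$, which is $\lesssim\Xi^{\mu,\lambda,\kappa}(t)$ by \eqref{G4.5}.
\item $(\widetilde{b}^{\mu,\lambda,\kappa}-\widetilde{u}^{\mu,\lambda,\kappa})\cdot\nabla\widetilde{\rho}^{\mu,\lambda,\kappa}$ costs $\|\widetilde{b}^{\mu,\lambda,\kappa}-\widetilde{u}^{\mu,\lambda,\kappa}\|_{L^2_t(L^2)}^2\le\Xi^{\mu,\lambda,\kappa}(t)$.
\item The terms $a^{\mu,\lambda,\kappa}\widetilde{u}^{\mu,\lambda,\kappa}$, $\widetilde{a}^{\mu,\lambda,\kappa}u$ and $\widetilde F_2,\widetilde F_3,\widetilde F_4$ are quadratic, with one factor being an error quantity and the other a solution quantity. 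Their $L^2_t$ dissipation is bounded by Theorems \ref{T1.1}--\ref{T1.2}. Estimating them as in \eqref{G4.13-2}--\eqref{G4.13} gives $(\eps_0^{1/2}+\eps_1^{1/2})\Xi^{\mu,\lambda,\kappa}(t)$, plus an absorbable piece $\frac18\|\nabla\widetilde{\rho}^{\mu,\lambda,\kappa}\|^2_{L^2_t(L^2)}$.
\item $\widetilde F_3$ contains $\frac{\rho}{1+\rho}\nabla\widetilde{\rho}^{\mu,\lambda,\kappa}$ and $\frac{\theta^{\mu,\lambda,\kappa}}{1+\rho}\nabla\widetilde{\rho}^{\mu,\lambda,\kappa}$. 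These are bounded directly by $\|(\rho,\theta^{\mu,\lambda,\kappa})\|_{L^\infty}\|\nabla\widetilde{\rho}^{\mu,\lambda,\kappa}\|_{L^2}^2\lesssim(\eps_0^{1/2}+\eps_1^{1/2})\|\nabla\widetilde{\rho}^{\mu,\lambda,\kappa}\|_{L^2}^2$, which is absorbable.
\end{itemize}

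The dissipative source terms fix the rate. For $\frac{\mu\Delta u^{\mu,\lambda,\kappa}}{1+\rho^{\mu,\lambda,\kappa}}$ and $\frac{(\mu+\lambda)\nabla{\rm div}\,u^{\mu,\lambda,\kappa}}{1+\rho^{\mu,\lambda,\kappa}}$ we do not integrate by parts. We use Young's inequality directly:
\[
\mu\Big|\int_0^t\!\!\int\frac{\Delta u^{\mu,\lambda,\kappa}\cdot\nabla\widetilde{\rho}^{\mu,\lambda,\kappa}}{1+\rho^{\mu,\lambda,\kappa}}\Big|
\le\frac18\|\nabla\widetilde{\rho}^{\mu,\lambda,\kappa}\|_{L^2_t(L^2)}^2+C\mu^2\|\nabla^2u^{\mu,\lambda,\kappa}\|_{L^2_t(L^2)}^2 .
\]
This requires the bound $\|\nabla^2u^{\mu,\lambda,\kappa}\|_{L^2_t(L^2)}^2\lesssim\eps_0$ uniformly in $\mu,\lambda,\kappa$. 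That bound follows from \eqref{a-2} and the uniform dissipation of $\nabla u^{\mu,\lambda,\kappa}$ in $H^3$ through $\mathcal{D}$, i.e. $\|\nabla u\|_{H^3}\lesssim\|b-u\|_{H^4}+\|\nabla b\|_{H^3}$. The quadratic viscous heating terms $\mu|D(u^{\mu,\lambda,\kappa})|^2$ and $\lambda|{\rm div}\,u^{\mu,\lambda,\kappa}|^2$ are handled the same way, using $\|\nabla u^{\mu,\lambda,\kappa}\|_{L^\infty}\lesssim\eps_0^{1/2}$. Together these give the $C\max\{\mu,\lambda,\kappa\}^2$ contribution.

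The main obstacle is precisely this uniform $L^2_t$ control of the second derivatives of $u^{\mu,\lambda,\kappa}$, which cannot come from the viscosity itself and has to come from the effective modes. Once it is in place, collecting all terms and absorbing the $\frac18$-pieces yields \eqref{G4.35}.
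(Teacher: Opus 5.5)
Your proposal is correct and follows essentially the same route as the paper's proof. You test the velocity error equation against $\nabla\widetilde{\rho}^{\mu,\lambda,\kappa}$, convert the time-derivative term via the continuity equation into $\frac{\rm d}{{\rm d}t}\int_{\mathbb{R}^3}\nabla\widetilde{\rho}^{\mu,\lambda,\kappa}\cdot\widetilde{u}^{\mu,\lambda,\kappa}\,{\rm d}x$ plus $\|{\rm div}\,\widetilde{u}^{\mu,\lambda,\kappa}\|_{L^2}^2$ and small remainders, and apply Young's inequality directly to the viscous terms, so that the uniform bound $\int_0^t\|\nabla^2u^{\mu,\lambda,\kappa}\|_{L^2}^2{\rm d}\tau\lesssim\eps_0$ coming from $\mathcal{D}$ yields the $\max\{\mu,\lambda,\kappa\}^2$ rate. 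The viscous-heating terms you treat are a typo carried over from \eqref{G4.3}$_2$ (they belong to the temperature equation and do not appear in the paper's identity \eqref{G4.37}), so handling them is harmless but unnecessary.
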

\begin{proof}
First,   by applying Young's inequality, we have 
\begin{align}\label{G4.36}
\Big|\int_{\mathbb{R}^3} \widetilde{u}^{\mu,\lambda,\kappa} \cdot \nabla \widetilde{\rho}^{\mu,\lambda,\kappa} \, \mathrm{d}x\Big|  \lesssim \|\widetilde{u}^{\mu,\lambda,\kappa}\|_{L^2}^2+\|\nabla\widetilde{\rho}^{\mu,\lambda,\kappa}\|_{L^2}^2.    
\end{align}

It follows from \eqref{G4.1}$_2$ and \eqref{G4.2}$_2$ that
\begin{align}\label{G4.37}
 \|\nabla   \widetilde{\rho}^{\mu,\lambda,\kappa} \|_{L^2}^2
 =\, & -\int_{\mathbb{R}^3} \nabla    \widetilde{\rho}^{\mu,\lambda,\kappa}  \cdot \partial_t \widetilde{u}^{\mu,\lambda,\kappa} \mathrm{d} x\nonumber \\
 & -\int_{\mathbb{R}^3} \nabla    \widetilde{\rho}^{\mu,\lambda,\kappa}\cdot\Big(\nabla\widetilde{\theta}^{\mu,\lambda,\kappa}-\frac{\mu\Delta {u}^{\mu,\lambda,\kappa}}{1+\rho^{\mu,\lambda,\kappa}}-\frac{(\mu+\lambda)\nabla{\rm div} {u}^{\mu,\lambda,\kappa}}{1+\rho^{\mu,\lambda,\kappa}}-(\widetilde{b}^{\mu,\lambda,\kappa}-\widetilde{u}^{\mu,\lambda,\kappa})\Big)   \mathrm{d} x\nonumber \\
& -\int_{\mathbb{R}^3} \nabla   \widetilde{\rho}^{\mu,\lambda,\kappa}\cdot   ( \widetilde{a}^{\mu,\lambda,\kappa} u+a^{\mu,\lambda,\kappa}\widetilde{u}^{\mu,\lambda,\kappa}+\widetilde{u}^{\mu,\lambda,\kappa}\cdot\nabla u+u^{\mu,\lambda,\kappa}\cdot\nabla\widetilde{u}^{\mu,\lambda,\kappa} )\ \mathrm{d} x\nonumber\\
&+ \int_{\mathbb{R}^3}  \nabla   \widetilde{\rho}^{\mu,\lambda,\kappa}\cdot \widetilde{G}_8 \mathrm{d} x\nonumber\\  
\equiv:\,& \widetilde{R}_1 +\widetilde{R}_2+\widetilde{R}_3+\widetilde{R}_4,
\end{align}
where  
\begin{align*}
\widetilde{G}_8:=&\,\Big(\frac{1}{1+ \rho^{\mu,\lambda,\kappa}}-\frac{1}{1+\rho}    \Big)(\theta^{\mu,\lambda,\kappa}\nabla\rho^{\mu,\lambda,\kappa}+a^{\mu,\lambda,\kappa} u^{\mu,\lambda,\kappa})\nonumber\\
& -\frac{\rho}{1+\rho}\big((\widetilde{b}^{\mu,\lambda,\kappa}-\widetilde{u}^{\mu,\lambda,\kappa})-\nabla\widetilde{\rho}^{\mu,\lambda,\kappa}\big) \nonumber\\
&+\Big(\frac{\rho^{\mu,\lambda,\kappa}}{1+ \rho^{\mu,\lambda,\kappa}}-\frac{\rho}{1+\rho}    \Big)\big(\nabla\rho^{\mu,\lambda,\kappa}-(b^{\mu,\lambda,\kappa}-u^{\mu,\lambda,\kappa})\big)   \nonumber\\ 
&+\frac{1}{1+\rho}(\widetilde{a}^{\mu,\lambda,\kappa} u^{\mu,\lambda,\kappa}+a^{\mu,\lambda,\kappa}\widetilde{u}^{\mu,\lambda,\kappa}-\widetilde\theta^{\mu,\lambda,\kappa}\nabla\rho-\theta^{\mu,\lambda,\kappa}\nabla\widetilde{\rho}^{\mu,\lambda,\kappa}).
\end{align*} 
For the term $\widetilde{R}_1$, applying integration by parts and utilizing \eqref{G4.1}$_1$ and \eqref{G4.1}$_2$, we get  
\begin{align}\label{G4.38}
\widetilde{R}_1 =&\,-\frac{\mathrm{d}}{\mathrm{d} t} \int_{\mathbb{R}^3} \nabla    \widetilde{\rho}^{\mu,\lambda,\kappa} \cdot \widetilde{u}^{\mu,\lambda,\kappa} \mathrm{d} x+\int_{\mathbb{R}^3}  {\rm div}   \widetilde{u}^{\mu,\lambda,\kappa} \big ( (1+ {\rho}^{\mu,\lambda,\kappa}) {\rm div} \widetilde{u}^{\mu,\lambda,\kappa}+\widetilde{\rho}^{\mu,\lambda,\kappa}{\rm div}u \big) \mathrm{d} x \nonumber\\   
&+\int_{\mathbb{R}^3}  {\rm div}   \widetilde{u}^{\mu,\lambda,\kappa} \big (  \widetilde{u}^{\mu,\lambda,\kappa} \cdot \nabla \rho+u^{\mu,\lambda,\kappa}\cdot\nabla\widetilde{\rho}^{\mu,\lambda,\kappa}\big) \mathrm{d} x \nonumber\\  
\leq &\,-\frac{\mathrm{d}}{\mathrm{d} t} \int_{\mathbb{R}^3} \nabla    \widetilde{\rho}^{\mu,\lambda,\kappa} \cdot \widetilde{u}^{\mu,\lambda,\kappa} \mathrm{d} x+C\|\nabla\widetilde{u}^{\mu,\lambda,\kappa}\|_{L^2}^2+C\big(\eps_0^\frac{1}{2}+\eps_1^\frac{1}{2}\big)
\|\nabla(\widetilde{\rho}^{\mu,\lambda,\kappa},\widetilde{u}^{\mu,\lambda,\kappa})\|_{L^2}^2.
\end{align}
For the terms $\widetilde{R}_2, \widetilde{R}_3$, and $\widetilde{R}_4$, using Young's inequality, we perform a direct calculation to deduce  
\begin{align}
|\widetilde{R}_2 |  \leq&\, \frac{1}{4}\|\nabla\widetilde{\rho}^{\mu,\lambda,\kappa}\|_{L^2}^2+C\big(\|\nabla\widetilde{\theta}^{\mu,\lambda,\kappa}\|_{L^2}^2+ \mu^2\|\nabla^2{u}^{\mu,\lambda,\kappa}\|_{L^2}^2+(\mu+\lambda)^2\|\nabla{\rm div}u^{\mu,\lambda,\kappa}\|_{L^2}^2)\nonumber\\
&+C\|\widetilde{b}^{\mu,\lambda,\kappa}-\widetilde{u}^{\mu,\lambda,\kappa}\|_{L^2}^2,\label{G4.39-1}\\
|\widetilde{R}_3 |  \leq&\, \frac{1}{4}\|\nabla\widetilde{\rho}^{\mu,\lambda,\kappa}\|_{L^2}^2+C(\eps_0+\eps_1)\|\nabla (\widetilde{a}^{\mu,\lambda,\kappa},\widetilde{\rho}^{\mu,\lambda,\kappa},\widetilde{u}^{\mu,\lambda,\kappa})\|_{L^2}^2,\label{G4.39-2}\\
|\widetilde{R}_4 |\leq&\,  \frac{1}{4}\|\nabla\widetilde{\rho}^{\mu,\lambda,\kappa}\|_{L^2}^2+C(\eps_0+\eps_1)\big(\|\nabla (\widetilde{a}^{\mu,\lambda,\kappa},\widetilde{\rho}^{\mu,\lambda,\kappa}, \widetilde{u}^{\mu,\lambda,\kappa},\widetilde\theta^{\mu,\lambda,\kappa})\|_{L^2}^2+\|\widetilde{b}^{\mu,\lambda,\kappa}-\widetilde{u}^{\mu,\lambda,\kappa}\|_{L^2}^2 \big).\label{G4.39}
\end{align} 
Putting \eqref{G4.38}--\eqref{G4.39} into \eqref{G4.37} gives
\begin{align}\label{G4.40}
&\frac{\mathrm{d}}{\mathrm{d} t} \int_{\mathbb{R}^3} \nabla    \widetilde{\rho}^{\mu,\lambda,\kappa} \cdot \widetilde{u}^{\mu,\lambda,\kappa} \mathrm{d} x+\|\nabla\widetilde{\rho}^{\mu,\lambda,\kappa}\|_{L^2}\nonumber\\
&\quad\lesssim \|\nabla\widetilde{u}^{\mu,\lambda,\kappa}\|_{H^1}^2+\|\nabla\widetilde{\theta}^{\mu,\lambda,\kappa}\|_{L^2}^2+\|\widetilde{b}^{\mu,\lambda,\kappa}-\widetilde{u}^{\mu,\lambda,\kappa}\|_{L^2}^2 +(\eps_0+\eps_1)\|\nabla(\widetilde{a}^{\mu,\lambda,\kappa},\widetilde{u}^{\mu,\lambda,\kappa})\|_{L^2}^2.   
\end{align}
By integrating \eqref{G4.40} over $[0,t]$, and subsequently combining it with \eqref{G4.7}, \eqref{G4.16}, \eqref{G4.24}, and \eqref{G4.36}, we consequently derive the estimate \eqref{G4.35}.
\end{proof}

Applying similar arguments to those in the proof in 
Lemma \ref{L3.5}, taking $\nabla_v$ to \eqref{G4.3}$_4$ 
and subsequently employing the energy method, we   obtain  
\begin{align}\label{G4.41}&\sup_{\tau\in[0,t]}
\|\nabla_v\{\mathbf{I}-\mathbf{P}\}\widetilde{f}^{\mu,\lambda,\kappa}(\tau)\|_{L_{x,v}^2}^2
+ \int_0^t\|\nabla_v \{\mathbf{I}-\mathbf{P}\}\widetilde{f}^{\mu,\lambda,\kappa}(\tau)\|_{\nu}^2{\rm d}
\tau \nonumber\\
&\quad \leq C\sum_{|\alpha|\leq 1}\|\partial^\alpha\{\mathbf{I}-\mathbf{P}\}
\widetilde f^{\mu,\lambda,\kappa} \|_{\nu}^2
+C\|\nabla(\widetilde{a}^{\mu,\lambda,\kappa},\widetilde{b}^{\mu,\lambda,\kappa},
\widetilde{\omega}^{\mu,\lambda,\kappa})\|_{L^2}^2+{\Xi} ^{\mu,\lambda,\kappa}(0).\end{align}  
For the sake of brevity, the detailed derivation is omitted here. 
Subsequently, by combining \eqref{G4.7}, \eqref{G4.16}, \eqref{G4.24}, \eqref{G4.35},
 and \eqref{G4.41} up, we arrive at the following lemma:
 
\begin{lem}\label{L4.5}
It holds that
\begin{align}\nonumber% \label{G4.42}
&\sup_{\tau\in[0,t]}\|\nabla_v\{\mathbf{I}-\mathbf{P}\}
\widetilde{f}^{\mu,\lambda,\kappa}(\tau)\|_{L_{x,v}^2}^2 
+ \int_0^t\|\nabla_v \{\mathbf{I}-\mathbf{P}\}
\widetilde{f}^{\mu,\lambda,\kappa}(\tau)\|_{\nu}^2{\rm d}\tau\nonumber\\
&\quad\leq {\Xi} ^{\mu,\lambda,\kappa}(0)
+C\max\{\mu,\lambda,\kappa\}^2+C \big(\eps_0^\frac{1}{2} 
 +\eps_1^\frac{1}{2}\big){\Xi} ^{\mu,\lambda,\kappa}(t),
\end{align}
where $C>0$ is a constant independent of  $\mu$, $\lambda$,  $\kappa$ and    $t$.
\end{lem}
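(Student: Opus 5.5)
The plan is to derive Lemma \ref{L4.5} by first establishing the intermediate bound \eqref{G4.41}, and then closing the right-hand side of \eqref{G4.41} with the already proven Lemmas \ref{L4.1}--\ref{L4.4}.

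To obtain \eqref{G4.41}, I would apply $\{\mathbf{I}-\mathbf{P}\}$ to the error equation \eqref{G4.3}$_4$, imitating the derivation of \eqref{G3.31}. The result is a transport--Fokker--Planck equation for $\{\mathbf{I}-\mathbf{P}\}\widetilde{f}^{\mu,\lambda,\kappa}$. Its source terms are of two kinds.
\begin{itemize}
\item Linear terms: $\mathbf{P}(v\cdot\nabla\{\mathbf{I}-\mathbf{P}\}\widetilde f^{\mu,\lambda,\kappa})$ and $\{\mathbf{I}-\mathbf{P}\}(v\cdot\nabla\mathbf{P}\widetilde f^{\mu,\lambda,\kappa})$.
\item Nonlinear terms built from $u$, $\widetilde{u}^{\mu,\lambda,\kappa}$, $\theta$, $\widetilde\theta^{\mu,\lambda,\kappa}$ and $f$, $\widetilde{f}^{\mu,\lambda,\kappa}$.
\end{itemize}
The terms $\widetilde u^{\mu,\lambda,\kappa}\cdot v\sqrt M$ and $(|v|^2-3)\sqrt M\widetilde\theta^{\mu,\lambda,\kappa}$ are annihilated by $\{\mathbf{I}-\mathbf{P}\}$.

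Next I would apply $\nabla_v$, take the $L^2_{x,v}$ inner product with $\nabla_v\{\mathbf{I}-\mathbf{P}\}\widetilde f^{\mu,\lambda,\kappa}$, and integrate over $[0,t]$.
\begin{itemize}
\item The operator $\mathcal{L}$ commutes with $\nabla_v$ up to lower order terms. Using \eqref{G2.6b}, it gives the dissipation $\int_0^t\|\nabla_v\{\mathbf{I}-\mathbf{P}\}\widetilde f^{\mu,\lambda,\kappa}\|_\nu^2$, modulo $C\|\{\mathbf{I}-\mathbf{P}\}\widetilde f^{\mu,\lambda,\kappa}\|_\nu^2$.
\item The commutator $[\nabla_v,v\cdot\nabla]=\nabla_x$ produces $\|\nabla\{\mathbf{I}-\mathbf{P}\}\widetilde f^{\mu,\lambda,\kappa}\|_\nu$. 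This is absorbed by Young's inequality into the first sum on the right of \eqref{G4.41}.
\item The linear macroscopic sources are bounded by $\|\nabla(\widetilde a^{\mu,\lambda,\kappa},\widetilde b^{\mu,\lambda,\kappa},\widetilde\omega^{\mu,\lambda,\kappa})\|_{L^2}$ plus the microscopic $\nu$-norms.
\item The nonlinear terms are handled exactly as $\widetilde I_1$--$\widetilde I_{12}$ in Lemma \ref{L4.1}, using Lemma \ref{LA.1} and the uniform bounds of Theorems \ref{T1.1}--\ref{T1.2}. Each is bounded by $C(\eps_0^{1/2}+\eps_1^{1/2})\Xi^{\mu,\lambda,\kappa}(t)$.
\end{itemize}

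Then I would sum up. Lemmas \ref{L4.1} and \ref{L4.2} bound $\int_0^t\sum_{|\alpha|\le1}\|\partial^\alpha\{\mathbf{I}-\mathbf{P}\}\widetilde f^{\mu,\lambda,\kappa}\|_\nu^2$. Lemma \ref{L4.3} bounds $\int_0^t\|\nabla(\widetilde a^{\mu,\lambda,\kappa},\widetilde b^{\mu,\lambda,\kappa},\widetilde\omega^{\mu,\lambda,\kappa})\|_{L^2}^2$, up to $C\Xi^{\mu,\lambda,\kappa}(t)$. Here one must be careful that Lemma \ref{L4.3}'s constant $C$ in front of $\Xi$ is not small. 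I would instead use the integrated form displayed just before its conclusion, whose right-hand side consists of quantities already controlled by \eqref{G4.7} and \eqref{G4.16}. These are $\Xi^{\mu,\lambda,\kappa}(0)+C\max\{\mu,\lambda,\kappa\}^2+C(\eps_0^{1/2}+\eps_1^{1/2})\Xi^{\mu,\lambda,\kappa}(t)$. The remaining term $(\eps_0+\eps_1)\|\nabla(\widetilde u^{\mu,\lambda,\kappa},\widetilde\theta^{\mu,\lambda,\kappa})\|^2_{L^2_t(L^2)}$ is bounded by $(\eps_0+\eps_1)\Xi^{\mu,\lambda,\kappa}(t)$ through \eqref{LNXG4.5}--\eqref{G4.5}. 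Substituting all of this into \eqref{G4.41} gives the claimed estimate.

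The main obstacle is the term $\frac{\theta}{\sqrt M}\Delta_v(\sqrt M\widetilde f^{\mu,\lambda,\kappa})$ after applying $\nabla_v$, since it carries one extra $v$-derivative. I would integrate by parts in $v$, as in \eqref{G3.7}. This yields $\|\theta\|_{L^\infty}\|\nabla_v\{\mathbf{I}-\mathbf{P}\}\widetilde f^{\mu,\lambda,\kappa}\|_\nu^2$, which is absorbed by smallness. What is left are the lower-order pieces, which are bounded by the macroscopic and microscopic norms above. The companion term $\frac{\widetilde\theta^{\mu,\lambda,\kappa}}{\sqrt M}\Delta_v(\sqrt Mf)$ is estimated by placing $\widetilde\theta^{\mu,\lambda,\kappa}$ in $L^6$ and the velocity derivatives of $f$ in $L_v^2(L^3)$, using the $H^4_{x,v}$ bounds on $f$.
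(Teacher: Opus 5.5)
Your proposal is correct and follows the paper's route. You derive \eqref{G4.41} by applying $\{\mathbf{I}-\mathbf{P}\}$ and then $\nabla_v$ to the error equation \eqref{G4.3}$_4$ and running the energy estimate of Lemma \ref{L3.5}, then close its right-hand side with \eqref{G4.7}, \eqref{G4.16} and the bound on $\nabla(\widetilde a^{\mu,\lambda,\kappa},\widetilde b^{\mu,\lambda,\kappa},\widetilde\omega^{\mu,\lambda,\kappa})$. You also choose the integrated inequality that precedes \eqref{G4.24}, with $\widetilde{\mathfrak{E}}_0^{\mu,\lambda,\kappa}$ at times $0$ and $t$ controlled through \eqref{G4.27}, rather than \eqref{G4.24} itself, whose constant in front of $\Xi^{\mu,\lambda,\kappa}(t)$ is not small. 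That choice is what makes the combination actually yield the small factor $C(\eps_0^{1/2}+\eps_1^{1/2})$ claimed in the lemma.
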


With Lemmas \ref{L4.1}--\ref{L4.5} in hand, we now proceed to prove Theorem \ref{T1.3}.  
\begin{proof}[Proof of Theorem \ref{T1.3}]
We deduce from Lemmas \ref{L4.1}--\ref{L4.5} that  
\begin{align*}  
{\Xi} ^{\mu,\lambda,\kappa}(t) \leq C \big(\eps_0^\frac{1}{2} 
+ \eps_1^\frac{1}{2}\big){\Xi} ^{\mu,\lambda,\kappa}(t) + C\max\{\mu,\lambda,\kappa\}^2,  
\end{align*}  
and by virtue of the smallness assumption on $\eps_0^\frac{1}{2}$ and $\eps_1^\frac{1}{2}$, we conclude that  
\begin{align*}  
{\Xi} ^{\mu,\lambda,\kappa}(t) \leq C\max\{\mu,\lambda,\kappa\}^2.  
\end{align*}  
Thus, the estimate \eqref{c2} is rigorously established. This completes the proof of Theorem \ref{T1.3}.    
\end{proof}

%\section{Time decay rates of solutions and their spatial derivatives for 
%\texorpdfstring{\(\mu=\lambda=\kappa=0\)}{kappa=0}}
%\texorpdfstring{\(\kappa>0\)}{\kappa>0}}
\section{Time decay rates of solutions and their spatial derivatives for 
 {\(\mu=\lambda=\kappa=0\)} }
%\texorpdfstring{\(\kappa>0\)}{\kappa>0}}

In this section, we aim to derive the time-decay rates for $(\rho, u, \theta, f)$ to the non-isentropic 
compressible Euler-VFP system \eqref{Q2}. To achieve this goal, we divide the total arguments into three parts:   analysis of 
the linearized system, higher-order Lyapunov-type inequality, analysis of the nonlinear system. 

% and nonlinear analysis thereby obtaining the time-decay rates of the decay estimates for $(\rho, u, \theta, f)$.
\subsection{Time decay estimates of  solutions to the linearized  problem}%  of \eqref{Q2}--\eqref{Q2-2}}
Below we investigate the time decay rates of the global classical solution 
to the linearized problem of \eqref{Q2}--\eqref{Q2-2} in $\mathbb{R}^{3}$ (We shall still denote the solution by 
$(\rho, u, \theta, f)$ for notation simplicity). To   this end, we first analyze the following linear system:  
\begin{equation}\label{G5.1}\left\{\begin{aligned}
&\partial_t \rho + {\rm div}u = 0, \\
&\partial_t u  + \nabla \theta + \nabla \rho - (b - u) = 0, \\
&\partial_t \theta + {\rm div}u - \sqrt{3}(\sqrt{2} \omega - \sqrt{3} \theta) = 0, \\
&\partial_t f + v \cdot \nabla f - u \cdot v \sqrt{M} - \theta (|v|^2 - 3) \sqrt{M} - \mathcal{L} f =  {S}_f,
\end{aligned}\right.
\end{equation}  
with the initial data  
\begin{equation}\label{G5.2}
(\rho, u, \theta, f)|_{t=0} = \big( \rho_0(x), u_0(x), \theta_0(x), f_0(x,v) \big), \quad x \in \mathbb{R}^3, \  v \in \mathbb{R}^3.
\end{equation}
In \eqref{G5.1}, the source term  ${S}_f$ takes the form  
\begin{equation*}
{S}_f = {\rm div}_v G - \frac{1}{2} v \cdot G + h,
\end{equation*}  
with  
\begin{equation*}
G = \big(G_1(t,x,v), G_2(t,x,v), G_3(t,x,v)\big) \in \mathbb{R}^3, \quad h = h(t,x,v) \in \mathbb{R},
\end{equation*}  
satisfying  
\begin{equation*}
\mathbf{P}_{0}G = 0, \quad \mathbf{P}_{1}G = 0, \quad \mathbf{P}h = 0.
\end{equation*}   

Now, we define $U=(\rho,u,\theta,f)$ and $U_0=(\rho_0,u_0,\theta_0,f_0)$ to simplify the presentation of the problem \eqref{G5.1}--\eqref{G5.2}. By employing Duhamel's principle, we get  
\begin{align}\label{G5.3}  
U(t)=\mathbb{A}(t)U_{0}+\int_{0}^{t}\mathbb{A}(t-s)\big( 0,0,0,{S}_f(s)\big){\rm d}s,  
\end{align}  
for all $t\geq 0$, where $\mathbb{A}(t)$ denotes the solution operator corresponding to 
the problem \eqref{G5.1}--\eqref{G5.2} with $ {S}_f=0$. 

Applying the operator $\phi_0(D_x)$ to \eqref{G5.3}, we derive  that
\begin{align}\label{G5.4}  
U^L(t) = \mathbb{A}(t)U^L_{0} + \int_{0}^{t}\mathbb{A}(t-s)\big(0,0,0,{S}_f^L(s)\big){\rm d}s,  
\end{align}  
where we have denoted $U^L(t):= \big(\rho^L(t), u^L(t), \theta^L(t), f^L(\theta)\big)$. We recall 
that $\phi_0(\cdot)$ is defined in \eqref{G2.2} and the low-frequency part of a function is given 
in \eqref{G2.1}.

Subsequently, we shall establish the following time-decay properties of $U(t)$ and $U^L(t)$.

\begin{thm}\label{T5.1}
Let $1\leq q\leq2$. For any $\alpha$, $\alpha^{\prime}$ with 
$\alpha^{\prime}\leq\alpha$ and $m=|\alpha-\alpha^{\prime}|$, the following estimates hold for 
  all $t\geq 0$:
\begin{align}\label{G5.5}
& \|\partial^{\alpha}\mathbb{A}(t)U_{0}\|_{\mathcal{Z}_{2}}
\lesssim (1+t )^{-\frac{3}{2}(\frac{1}{q}-\frac{1}{2})-\frac{m}{2}}
\big(\|\partial^{\alpha^{\prime}}U_{0}\|_{\mathcal{Z}_{q}}+\|\partial^{\alpha}U_{0}\|_{\mathcal{Z}_{2}}\big),\\ \label{G5.6}
&\Big\|\partial^{\alpha}\int_{0}^{t}\mathbb{A}(t-s)
\big( 0,0,0,S_f(s)\big){\rm d}s \Big\|_{\mathcal{Z}_{2}}^{2}
  \lesssim\int_0^t(1+t-s)^{-3(\frac{1}{q}-\frac{1}{2})-m}\nonumber\\
&\qquad \quad   \times \big(\big\|\partial^{\alpha^{\prime}}\big(G(s),\nu^{-1/2}h(s)\big)\big\|_{\mathcal{Z}_{q}}^{2}
+\big\|\partial^{\alpha}\big(G(s),\nu^{-1/2}h(s)\big)\big\|_{\mathcal{Z}_{2}}^{2}\big){\rm d}s, 
\end{align}
and
\begin{align}\label{G5.7}
 &\|\partial^{\alpha}\mathbb{A}(t)U_{0}^L\|_{\mathcal{Z}_{2}}
\lesssim  (1+t )^{-\frac{3}{2}(\frac{1}{q}-\frac{1}{2})-\frac{m}{2}}
 \|\partial^{\alpha^{\prime}}U_{0}\|_{\mathcal{Z}_{q}},  \qquad  \\ \label{G5.8}
&\Big\|\partial^{\alpha}\int_{0}^{t}\mathbb{A}(t-s)
\big( 0,0,0,S^L_f(s)\big){\rm d}s\Big\|_{\mathcal{Z}_{2}}^{2}\nonumber\\
&\qquad\quad   \lesssim\int_0^t(1+t-s)^{-3(\frac{1}{q}-\frac{1}{2})-m}   \big\|\partial^{\alpha^{\prime}}\big(G(s),\nu^{-1/2}h(s)\big) \big\|_{\mathcal{Z}_{q}}^{2}
 {\rm d}s. 
\end{align}
\end{thm}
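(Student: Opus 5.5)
The plan is to work in Fourier variables and derive a pointwise-in-$\xi$ Lyapunov inequality for the linear system \eqref{G5.1}. Taking the Fourier transform in $x$ gives, for $\hat U=(\hat\rho,\hat u,\hat\theta,\hat f)(t,\xi,v)$, a system in which $\partial_t\hat f+iv\cdot\xi\hat f-\hat u\cdot v\sqrt M-\hat\theta(|v|^2-3)\sqrt M-\mathcal L\hat f=\hat S_f$. The steps, in order, are as follows.

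\textbf{Step 1 (basic energy identity).} Taking the real part of the $L^2_v$/$\mathbb C$ inner product of each equation with the conjugate unknown, exactly as in Lemma \ref{L3.1} but now linear and pointwise in $\xi$, gives
\[
\tfrac12\partial_t\big(|\hat\rho|^2+|\hat u|^2+|\hat\theta|^2+|\hat f|_{L^2_v}^2\big)+|\hat b-\hat u|^2+|\sqrt2\hat\omega-\sqrt3\hat\theta|^2+\bar\lambda|\{\mathbf I-\mathbf P\}\hat f|_\nu^2=\mathrm{Re}\langle\hat S_f,\hat f\rangle .
\]
This uses \eqref{G2.6} together with the cancellation of the cross terms $\hat u\cdot\hat b$ and $\hat\theta\hat\omega$.

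\textbf{Step 2 (macroscopic dissipation via a corrector).} Mimicking Lemmas \ref{L3.3}--\ref{L3.4} on the Fourier side, I will use the fluid-type system \eqref{G3.21} (linear part) for $(\hat a,\hat b,\hat\omega)$ and the $u$-equation tested against $i\xi\hat\rho$. The interactive functional is
\[
\mathcal E_{\rm int}(t,\xi)=\mathrm{Re}\sum_{ij}\frac{(i\xi_j\hat b_i+i\xi_i\hat b_j\,|\,\Gamma_{ij}(\{\mathbf I-\mathbf P\}\hat f))}{1+|\xi|^2}+\cdots+\frac{\mathrm{Re}(\hat u\,|\,i\xi\hat\rho)}{1+|\xi|^2},
\]
built from the terms of $\mathfrak E_0$ and the $\rho$ corrector. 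Differentiating it should give
\[
\partial_t\mathcal E_{\rm int}+\lambda\frac{|\xi|^2}{1+|\xi|^2}\big(|\hat a|^2+|\hat b|^2+|\hat\omega|^2+|\hat\rho|^2\big)\lesssim |\{\mathbf I-\mathbf P\}\hat f|_\nu^2+|\hat b-\hat u|^2+|\sqrt2\hat\omega-\sqrt3\hat\theta|^2+|\hat S\text{-terms}|^2 .
\]
Since $|\hat u|\le|\hat b-\hat u|+|\hat b|$ and $|\hat\theta|\lesssim|\sqrt2\hat\omega-\sqrt3\hat\theta|+|\hat\omega|$, combining Step 1 with a small multiple of $\mathcal E_{\rm int}$ yields
\[
\partial_t\mathcal E(t,\xi)+\lambda\frac{|\xi|^2}{1+|\xi|^2}\mathcal E(t,\xi)\lesssim |\hat G|^2_{L^2_v}+|\nu^{-1/2}\hat h|^2_{L^2_v},
\]
with $\mathcal E\sim|\hat U|^2$. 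Here the source term is handled by writing $\langle \hat S_f,\hat f\rangle=-\langle \hat G,(\nabla_v+\tfrac v2)\{\mathbf I-\mathbf P\}\hat f\rangle+\langle\hat h,\{\mathbf I-\mathbf P\}\hat f\rangle$, which is valid because $\mathbf P_0G=\mathbf P_1G=0$ and $\mathbf Ph=0$; this expression is then absorbed into $\nu$-dissipation by Cauchy--Schwarz. One point to check is that $\mathbf P_2$ of $({\rm div}_v-\tfrac v2\cdot)G$ contributes to $\omega$-equations; I expect it pairs with $\{\mathbf I-\mathbf P\}$ after adjointness since $(\nabla_v+\frac v2)(|v|^2-3)\sqrt M\propto v\sqrt M$, and $\mathbf P_1G=0$ kills it.

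\textbf{Step 3 (Gronwall and frequency splitting).} Gronwall gives
\[
|\hat U(t,\xi)|^2\lesssim e^{-\lambda\frac{|\xi|^2}{1+|\xi|^2}t}|\hat U_0|^2+\int_0^t e^{-\lambda\frac{|\xi|^2}{1+|\xi|^2}(t-s)}\big(|\hat G|^2+|\nu^{-1/2}\hat h|^2\big)(s)\,ds .
\]
Multiplying by $|\xi|^{2|\alpha|}$ and integrating, I split into $|\xi|\le1$ and $|\xi|\ge1$.
\begin{itemize}
\item On low frequencies, I use $|\xi|^{2|\alpha|}=|\xi|^{2m}|\xi|^{2|\alpha'|}$, the bound $\sup_\xi|\widehat{\partial^{\alpha'}g}|\le\|\partial^{\alpha'}g\|_{L^1}$, and Hausdorff--Young together with Hölder for general $q$ (with $\|\cdot\|_{L^{q'}_\xi}$). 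This gives the factor $(1+t)^{-3(\frac1q-\frac12)-m}$, using $\int_{|\xi|\le1}|\xi|^{2m}e^{-c|\xi|^2t}|\xi|^{\dots}$-type estimates after taking the $L^2_v$ norm inside; the Minkowski order $L^2_v(L^q_x)$ is fine because $q\le2$.
\item On high frequencies, the factor $e^{-ct}$ times $\|\partial^\alpha U_0\|_{\mathcal Z_2}$ appears, which yields \eqref{G5.5}--\eqref{G5.6}.
\end{itemize}
For \eqref{G5.7}--\eqref{G5.8}, the cutoff $\phi_0$ means only $|\xi|\le r_0$ contributes, so the $\mathcal Z_2$ terms disappear.

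The main obstacle will be Step 2: arranging the corrector so that every cross term is absorbed uniformly in $\xi$ while still obtaining full dissipation of $\hat\rho$. In particular, the $\rho$ coupling through $\hat u$ has no direct damping, and I must verify that the pressure-type terms $i\xi(\hat\rho+\hat\theta)$ do not destroy the sign for small $\tau$.
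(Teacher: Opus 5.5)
Your proposal is correct and is essentially the paper's proof. The paper also uses the pointwise-in-$\xi$ energy estimate \eqref{G5.11} and the $(1+|\xi|^2)^{-1}$-weighted correctors $\mathfrak{E}_1(\hat f)$ and $\mathrm{Re}\,(\hat u\,|\,i\xi\hat\rho)$ to reach the Lyapunov inequality \eqref{G5.18}, then Gronwall and a low/high-frequency split with Hausdorff--Young and H\"older. The obstacle you flag is handled as you anticipate: the $\rho$-corrector gets a smaller coefficient than the $(a,b,\omega)$-correctors, so the term $\frac{|\xi|^2}{1+|\xi|^2}(|\hat u|^2+|\hat\theta|^2)$ it produces is absorbed via $|\hat u|\le|\hat b-\hat u|+|\hat b|$ and $|\hat\theta|\lesssim|\sqrt2\hat\omega-\sqrt3\hat\theta|+|\hat\omega|$. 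One small correction: your Step~1 should read ``$\le$'' rather than ``$=$'', since \eqref{G2.6} is an inequality.
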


\begin{proof}
Inspired by the hypercoercivity argument in \cite{SK-HMJ-1985}, we prove Theorem \ref{T5.1} by using Fourier analysis techniques.
Applying the Fourier transform with respect to $x$ to the system \eqref{G5.1}, we get
\begin{equation}\label{G5.9}\left\{
\begin{aligned}
&\partial_t \hat{\rho}+i \xi \cdot \hat{u}=0, \\
&\partial_t \hat{u} +i \xi \hat{\theta}+i \xi \hat{\rho}-(\hat{b}-\hat{u})=0, \\
&\partial_t \hat{\theta} +i \xi \hat{u}-\sqrt{3}(\sqrt{2} \hat{\omega}-\sqrt{3} \hat{\theta})=0 ,\\
&\partial_t \hat{f}+i v \cdot \xi \hat{f}-\hat{u} \cdot v \sqrt{M}-\hat{\theta} (|v|^2-3) \sqrt{M}
=\mathcal{L} \hat{f}+{\rm div}_v  \hat{G}-\frac{1}{2} v \cdot \hat{G}+\hat{h}.
\end{aligned}  \right.
\end{equation}
By taking the inner product of \eqref{G5.9}$_4$ with $\Bar{\hat{f}}$ and integrating the result over $\mathbb{R}^3$ in $v$, we arrive at  
\begin{align*}  
& \frac{1}{2} \partial_t\|\hat{f}\|_{L_v^2}^2 + \mathbf{Re} \int_{\mathbb{R}^3} \langle -\mathcal{L}\{\mathbf{I}-\mathbf{P}\} \hat{f}, \{\mathbf{I}-\mathbf{P}\} \hat{f} \rangle \mathrm{d} v + |\hat{b}|^2 + 2|\hat{\omega}|^2 - \mathbf{Re}(\hat{u}|\hat{b}) - \mathbf{Re}(\hat{\theta}|\sqrt{6} \hat{\omega}) \nonumber\\  
& \quad = \mathbf{Re} \int_{\mathbb{R}^3} \Big( {\rm div}_v \hat{G} - \frac{1}{2} v \cdot \hat{G} \Big) \{\mathbf{I}-\mathbf{P}\} \hat{f} \, \mathrm{d} v + \mathbf{Re} \int_{\mathbb{R}^3} (\hat{h}| \{\mathbf{I}-\mathbf{P}\} \hat{f}) \mathrm{d} v,  
\end{align*}  
where the derivation relies on the following key properties:  
\begin{gather*}  
{\rm div}_v G - \frac{1}{2} v \cdot G \perp {\rm Range}(\mathbf{P})  \ \ \text{and} \ \ \int_{\mathbb{R}^3} (\mathbf{P} \hat{f}|\hat{h}) \mathrm{d} v = 0,  
\end{gather*}  
owing to the facts that  
\begin{align*}  
\mathbf{P}_{0}G = 0, \quad \mathbf{P}_{1}G = 0, \quad \mathbf{P}h=0.  
\end{align*}
Then, by exploiting the coercivity of $\mathcal{L}$ in \eqref{G2.6}, applying integration by parts in $v$, and invoking   Cauchy-Schwarz's inequality, it further follows that  
\begin{align}\label{G5.10}
& \frac{1}{2} \partial_t \|\hat{f}\|_{L_v^2}^2 +\frac{1}{2}\bar\lambda|\{\mathbf{I}-\mathbf{P}\} \hat{f}|_\nu^2+|\hat{b}|^2\nonumber\\
 &\quad  +2|\hat{\omega}|^2- \mathbf{Re}(\hat{u}|\hat{b}) - \mathbf{Re}(\hat{\theta}|\sqrt{6} \hat{\omega}) \leq  C\big(\|\hat{G}\|_{L_v^2}^2+\|\nu^{-\frac{1}{2}} \hat{h}\|_{L_v^2}^2\big) .    
\end{align}

Multiplying \eqref{G5.1}$_1$--\eqref{G5.1}$_3$ by $\Bar{\hat{\rho}}$, $\Bar{\hat{u}}$, and $\Bar{\hat{\theta}}$, respectively, and taking the real part of them yield   
\begin{equation*} \left\{
\begin{aligned}
&\frac{1}{2} \partial_t|\hat{\rho}|^2+\mathbf{Re}( i \xi \hat{u}|\hat{\rho})=0, \\
 &\frac{1}{2} \partial_t|\hat{u}|^2+\mathbf{Re}( i \xi \hat{\theta}|\hat{u})+\mathbf{Re}( i \xi \hat{\rho}| \hat{u})+|\hat{u}|^2 -\mathbf{Re}(\hat{b}| \hat{u})=0,\\
& \frac{1}{2} \partial_t|\hat{\theta}|^2+\mathbf{Re}( i \xi \hat{u}| \hat{\theta})+3|\hat{\theta}|^2-\mathbf{Re}(\sqrt{6} \hat{\omega}|\hat{\theta})=0,
 \end{aligned}\right.
\end{equation*}
which, together with \eqref{G5.10} and   Cauchy-Schwarz's inequality, leads to
\begin{align}\label{G5.11}
& \frac{1}{2} \partial_t\big(|\hat{\rho}|^2
+|\hat{u}|^2+|\hat{\theta}|^2+\|\hat{f}\|_{L_v^2}^2\big)+\bar\lambda|\{\mathbf{I}-\mathbf{P}\} \hat{f}|_\nu^2
  \nonumber\\
&\quad+|\hat{b}-\hat{u}|^2+|\sqrt{2} \hat{\omega}-\sqrt{3} \hat{\theta}|^2\leq  C\big(\|\hat{G}\|_{L_v^2}^2+\|\nu^{-\frac{1}{2}} \hat{h}\|_{L_v^2}^2\big).
\end{align}

Next, we handle the estimates of $a$, $b$, and $\omega$. Similar to the derivation of \eqref{G3.21}, we conclude that
\begin{equation*} \left\{
\begin{aligned}
& \partial_t a+{\rm div}   b=0, \\
& \partial_t b_i+\partial_i a+\frac{2}{\sqrt{6}} 
\partial_i \omega+\sum_{j=1}^3 \partial_{j} \Gamma_{i, j}(\{\mathbf{I}-\mathbf{P}\} f)+(b_i-u_i)=0, \\
& \partial_t \omega+\sqrt{2}(\sqrt{2} \omega-\sqrt{3} \theta)
+\frac{2}{\sqrt{6}} {\rm div}  b+\sum_{i=1}^3 \partial_{i} \Upsilon_i(\{\mathbf{I}-\mathbf{P}\} f)=0, \\
& \partial_j b_i+\partial_i b_j-\frac{2}{\sqrt{6}} \delta_{i j}
\Big(\frac{2}{\sqrt{6}} {\rm div}   b+\sum_{i=1}^3 \partial_{ i} 
\Upsilon_i(\{\mathbf{I}-\mathbf{P}\} f)\Big)\\
&\quad =-\partial_t \Gamma_{i, j}(\{\mathbf{I}-\mathbf{P}\} f)
+\Gamma_{i, j} (\mathfrak{l}+{S}_f ), \\
& \frac{5}{3} \partial_i \omega-\frac{2}{\sqrt{6}} \sum_{j=1}^3 \partial_{j} 
\Gamma_{i, j}(\{\mathbf{I}-\mathbf{P}\} f)
=-\partial_t \Upsilon_i(\{\mathbf{I}-\mathbf{P}\} f)+\Upsilon_i(\mathfrak{l}+ {S}_f ),
\end{aligned}\right.
\end{equation*}
where $\mathfrak{l}$ is defined in the same way as   in \eqref{G3.22}.

Then,  by applying the  Fourier transform with respect to $x$ to the above system, one has
\begin{equation*} \left\{
\begin{aligned}
& \partial_t \hat{a}+i \xi \cdot \hat{b}=0, \\
& \partial_t \hat{b}_i+i \xi_i \hat{a}+\frac{2}{\sqrt{6}} i \xi_i \hat{\omega}+\sum_{j=1}^3 i \xi_j \Gamma_{i, j}(\{\mathbf{I}-\mathbf{P}\} \hat{f})+(\hat{b}_i-\hat{u}_i)=0, \\
& \partial_t \hat{\omega}+\sqrt{2}(\sqrt{2} \hat{\omega}-\sqrt{3} \hat{\theta})+\frac{2}{\sqrt{6}} i \xi \cdot \hat{b}+\sum_{i=1}^3 i \xi_i \Upsilon_i(\{\mathbf{I}-\mathbf{P}\} \hat{f})=0, \\
& i \xi_j \hat{b}_i+i \xi_i \hat{b}_j-\frac{2}{\sqrt{6}} \delta_{i j}\Big(\frac{2}{\sqrt{6}} i \xi \cdot \hat{b}+\sum_{i=1}^3 i \xi_i \Upsilon_i(\{\mathbf{I}-\mathbf{P}\} \hat{f})\Big)\\
&\quad =-\partial_t \Gamma_{i, j}(\{\mathbf{I}-\mathbf{P}\} \hat{f})+\Gamma_{i, j}\big(\hat{\mathfrak{l}}+\hat{ {S}_f}\big), \\
& \frac{5}{3} i \xi_i \hat{\omega}-\frac{2}{\sqrt{6}} \sum_{j=1}^3 i \xi_j 
\Gamma_{i, j}(\{\mathbf{I}-\mathbf{P}\} \hat{f})=-\partial_t \Upsilon_i(\{\mathbf{I}-\mathbf{P}\} \hat{f})
+\Upsilon_i\big(\hat{\mathfrak{l}}+\hat{{S}_f}\big) .
\end{aligned}\right.
\end{equation*}
By applying the energy method and performing a direct calculation, we obtain that there exists a positive constant $\lambda_{9} > 0$ such that 
\begin{align}\label{G5.12}
& \partial_t \mathbf{Re} \sum_{i, j=1}^3\big( i \xi_i \hat{b}_j+i \xi_j \hat{b_i} 
| \Gamma_{i, j}(\{\mathbf{I}-\mathbf{P}\} \hat{f})\big)
+\lambda_{9} (|\xi|^2|\hat{b}|^2+|\xi \cdot \hat{b}|^2 ) \nonumber\\
&\quad\leq \sigma|\xi|^2 (|\hat{a}|^2+|\hat{\omega}|^2 )+C (1+|\xi|^2 )|\{\mathbf{I}-\mathbf{P}\} \hat{f}|_{L_v^2}^2\nonumber\\
&\qquad+C\big(|\hat{b}-\hat{u}|^2+\|\hat{G}\|_{L_v^2}^2+\|\nu^{-\frac{1}{2}} \hat{h}\|_{L_v^2}^2\big),
\end{align}
and
\begin{align}\label{G5.13}
& \partial_t \mathbf{Re} \sum_{i=1}^3\big( i \xi_i \hat{\omega} 
| \Upsilon_i(\{\mathbf{I}-\mathbf{P}\} \hat{f})\big)+|\xi|^2|\hat{\omega}|^2  \nonumber\\
&\quad\leq \sigma|\xi|^2|\hat{b}|^2+C (1+|\xi|^2)|\{\mathbf{I}-\mathbf{P}\} \hat{f}|_{L_v^2}^2\nonumber\\
&\qquad  +C\big(|\sqrt{2} \hat{\omega}-\sqrt{3} \hat{\theta}|^2+\|\hat{G}\|_{L_v^2}^2+\|\nu^{-\frac{1}{2}} \hat{h}\|_{L_v^2}^2\big), \\ \label{G5.14}
& \partial_t \mathbf{Re}\Big(\hat{a} \Big\lvert\, i 
\frac{\sqrt{6}}{5} \sum_{j=1}^3 \xi_j \Upsilon_j(\{\mathbf{I}-\mathbf{P}\} 
\hat{f})-i \xi \cdot \hat{b}\Big)+\frac{3}{4}|\xi|^2|\hat{a}|^2  \nonumber\\
&\quad\leq \frac{5}{4}|\xi \cdot \hat{b}|^2 +C (1+|\xi|^2 )|\{\mathbf{I}-\mathbf{P}\} \hat{f}|_{L_v^2}^2\nonumber\\
& \qquad+C\big(|\hat{u}-\hat{b}|^2+\|\hat{G}\|_{L_v^2}^2+\|\nu^{-\frac{1}{2}} \hat{h}\|_{L_v^2}^2\big),  \\\label{G5.15}
& \partial_t \mathbf{Re}(\hat{u}|i \xi \hat{\rho})
+\frac{3}{4}|\xi|^2|\hat{\rho}|^2 \leq C|\xi|^2 (|\hat{\theta}|^2
+|\hat{u}|^2 )+C|\hat{b}-\hat{u}|^2+|\xi\cdot\hat{u}|^2 . 
\end{align}

Now, we define 
\begin{align}\label{G5.16}
 {\mathfrak{E}}_1(\hat{f}):= & \frac{1}{1+|\xi|^2}\Big(\sum_{i, j=1}^3
\big(i \xi_i \hat{b}_j+i \xi_j \hat{b}_i|\Gamma_{i, j}(\{\mathbf{I}-\mathbf{P}\} \hat{f})\big)+\sum_{i=1}^3\big( i \xi_i \hat{\omega}|\Upsilon_i(\{\mathbf{I}-\mathbf{P}\} \hat{f})\big)\Big)
\nonumber\\
& 
+\frac{\iota_1}{1+|\xi|^2}\Big(\hat{a} \Big\lvert\, i \frac{\sqrt{6}}{5} \sum_{j=1}^3 \xi_j \Upsilon_j(\{\mathbf{I}-\mathbf{P}\} 
\hat{f})-i \xi \cdot \hat{b} \Big) ,
\end{align}
where $0<\iota_1\ll 1$ is a constant.

From   \eqref{G5.12}--\eqref{G5.14}, it can be deduced that there exists a positive constant $\lambda_{10} > 0$ such that 
\begin{align}\label{G5.17}
& \partial_t \mathbf{Re} \mathfrak{E}_1(\hat{f})+\frac{\lambda_{10}|\xi|^2}{1+|\xi|^2} (|\hat{a}|^2+|\hat{b}|^2+|\hat{\omega}|^2 )\nonumber \\
&\quad\leq C\big(|\{\mathbf{I}-\mathbf{P}\} \hat{f}|_{L_v^2}^2+|\hat{b}-\hat{u}|^2+|\sqrt{2} \hat{\omega}-\sqrt{3} \hat{\theta}|^2\big)+C\big(\|\hat{G}\|_{L_v^2}^2+\|\nu^{-\frac{1}{2}} \hat{h}\|_{L_v^2}^2\big).
\end{align}
Let us define the time-frequency Lyapunov functional ${\mathcal E}_{M}\big(\hat{U}(t,\xi)\big)$ as  
\begin{align*}  
{\mathcal E}_{M}\big(\hat{U}(t,\xi)\big) := |\hat{\rho}|^2 + |\hat{u}|^2 + |\hat{\theta}|^2 + \|\hat{f}\|_{L_v^2}^2 + \iota_2 \mathbf{Re}\, \mathfrak{E}_1(\hat{f}) + \iota_3 \mathbf{Re} \frac{1}{1+|\xi|^2} (\hat{u}|i \xi \hat{\rho}),  
\end{align*}  
where $\iota_2$ and $\iota_3$ are positive constants satisfying $0 < \iota_2, \iota_3 \ll 1$. By Young's inequality, it yields  
\begin{align*}
{\mathcal E}_{M}\big(\hat{U}(t,\xi)\big)\backsim    |\hat{\rho}|^2 + |\hat{u}|^2 + |\hat{\theta}|^2 + \|\hat{f}\|_{L_v^2}^2. 
\end{align*}
Combining \eqref{G5.11}, \eqref{G5.15} and \eqref{G5.17} up, we infer that there exists a positive constant $\lambda_{11} > 0$ such that 
\begin{align}\label{G5.18}
\partial_{t}\mathcal{E}_{M}\big(\hat{U}(t,\xi)\big)+\frac{\lambda_{11}|\xi|^{2}}{1+|\xi|^{2}}
\mathcal{E}_{M}(\hat{U}(t,\xi))\leq C\big(\|\hat{G}\|_{L_{v}^{2}}^{2}
+\|\nu^{-\frac{1}{2}}\hat{h}\|_{L_{v}^{2}}^{2}\big).
\end{align}
Here, we have utilized the following inequalities: 
\begin{align*}  
|\xi|^2|\hat{b}|^2 + |\hat{b} - \hat{u}|^2 &\geq |\xi|^2|\hat{b}|^2 + |\xi|^2|\hat{b} - \hat{u}|^2 \geq \frac{1}{4}|\xi|^2|\hat{u}|^2, \qquad\qquad\quad\quad\, |\xi| \leq \frac{r_0}{2}, \\  
|\xi|^2|\hat{b}|^2 + |\hat{b} - \hat{u}|^2 &\geq \Big(\frac{r_0}{2}\Big)^2|\hat{b}|^2 + |\hat{b} - \hat{u}|^2 \geq \min\Big\{1, \Big(\frac{r_0}{2}\Big)^2\Big\}\frac{1}{4}|\hat{u}|^2, \quad |\xi| \geq \frac{r_0}{2},  
\end{align*}  
and
\begin{align*}  
2|\xi|^2|\hat{\omega}|^2 + |\sqrt{2}\hat{\omega} - \sqrt{3}\hat{\theta}|^2 \geq&\, 2|\xi|^2|\hat{\omega}|^2 + |\xi|^2|\sqrt{2}\hat{\omega} - \sqrt{3}\hat{\theta}|^2  \geq  \frac{1}{9}|\xi|^2|\hat{\theta}|^2, \quad\qquad\qquad\,\,\,  |\xi| \leq \frac{r_0}{2}, \\  
|\xi|^2|\hat{\omega}|^2 +  |\sqrt{2}\hat{\omega} - \sqrt{3}\hat{\theta}|^2   \geq&\, \Big(\frac{r_0}{2}\Big)^2|\hat{\omega}|^2 +  |\sqrt{2}\hat{\omega} - \sqrt{3}\hat{\theta}|^2  \geq \min\Big\{1,\Big(\frac{r_0}{2}\Big)^2\Big\}\frac{1}{9}|\hat{\theta}|^2,   \quad |\xi| \geq \frac{r_0}{2}, 
\end{align*} 
where $r_0$ is defined in \eqref{G2.2}.
It follows from \eqref{G5.18} and Gronwall's inequality that
\begin{align}\label{G5.19}
\mathcal{E}_{M}\big(\hat{U}(t,\xi)\big)\leq Ce^{\frac{-\lambda_{11}|\xi|^{2}t}{1+|\xi|^{2}}}
\mathcal{E}_{M}\big(\hat{U}(0,\xi)\big)+C\int_{0}^{t}e^{\frac{-\lambda_{11}|\xi|^{2}(t-s)}{1+|\xi|^{2}}}
\big(\|\hat{G}\|_{L_{v}^{2}}^{2}+\|\nu^{-\frac{1}{2}}\hat{h}\|_{L_{v}^{2}}^{2}\big){\rm d}s.
\end{align}

As pointed out  in \cite{SK-HMJ-1985} and \cite[Theorem 3.1]{DFT-2010-CMP}, the anticipated time-decay properties given by \eqref{G5.5}--\eqref{G5.8} can be directly derived from the aforementioned estimate. 
%
%For the sake of conciseness, we  only provide the proof of \eqref{G5.5} and
%\eqref{G5.7} below.
In fact, from \eqref{G5.19}, by setting $ {S}_f(s) = 0 $, we obtain   
\begin{align}\label{G5.20}
\|\partial^{\alpha}\mathbb{A}(t)U_{0}\|_{\mathcal{Z}_{2}}^2\lesssim & \, \int_{|\xi|\leq \frac{r_0}{2}} |\xi|^{2\alpha}e^{\frac{-\lambda_{11}|\xi|^{2}t}{1+|\xi|^{2}}}
\mathcal{E}_{M}\big(\hat{U}(0,\xi)\big)\mathrm{d}\xi\nonumber\\
& +\int_{|\xi|\geq \frac{r_0}{2}} |\xi|^{2\alpha}e^{\frac{-\lambda_{11}|\xi|^{2}t}{1+|\xi|^{2}}}
\mathcal{E}_{M}\big(\hat{U}(0,\xi)\big)\mathrm{d}\xi\nonumber\\
\lesssim&\,   (1+t )^{-3 (\frac{1}{q}-\frac{1}{2} )-m}
\|\partial^{\alpha^{\prime}}U_{0}\|_{\mathcal{Z}_{q}}^2+ e^{-ct}\|\partial^{\alpha}U_{0}\|_{\mathcal{Z}_{2}}^2,
\end{align}
for some constant $c>0$.

To achieve  this,  on the one hand, for the low-frequency part of our solution $\mathbb{A}(t)U_0$, we derive  
\begin{align}  \label{G5.21}
\int_{|\xi|\leq \frac{r_0}{2}} |\xi|^{2\alpha}e^{\frac{-\lambda_{11}|\xi|^{2}t}{1+|\xi|^{2}}}\mathcal{E}_{M}\big(\hat{U}(0,\xi)\big)\mathrm{d}\xi \lesssim&\, \int_{|\xi|\leq \frac{r_0}{2}} |\xi|^{2|\alpha-\alpha^{\prime}|}e^{\frac{-\lambda_{11}|\xi|^{2}t}{1+|\xi|^{2}}}|\xi|^{2\alpha^{\prime}}\mathcal{E}_{M}\big(\hat{U}(0,\xi)\big)\mathrm{d}\xi \nonumber \\  
\lesssim&\, \Big\||\xi|^{m} e^{\frac{-\lambda_{11}|\xi|^{2}t}{2+2|\xi|^{2}}}\Big\|_{L^{\frac{2q}{2-q}}}^2 \|\partial^{\alpha}U_{0}\|_{\mathcal{Z}_{q}}^2.  
\end{align}  
Here, we have applied   Hausdorff-Young's and Hölder's inequalities. By utilizing  a change of variables and the properties of   Gamma function, we obtain  
\begin{align}  \label{G5.22}
\Big\||\xi|^{m} e^{\frac{-\lambda_{11}|\xi|^{2}t}{2+2|\xi|^{2}}}\Big\|_{L^{\frac{2q}{2-q}}}^{\frac{2q}{2-q}} \lesssim&\, \int_{0}^{\infty} |x|^{\frac{m (\frac{2q}{2-q} )+1}{2}}e^{-\lambda_{11}tx (\frac{2q}{2-q} )}\mathrm{d}x \nonumber \\  
\lesssim&\, (1+t)^{\frac{-m (\frac{2q}{2-q} )-3}{2}} \int_{0}^{\infty} x^{\frac{m (\frac{2q}{2-q} )+3}{2}-1}e^{-x}\mathrm{d}x \nonumber \\  
\lesssim&\, (1+t)^{\frac{-m (\frac{2q}{2-q} )-3}{2}}.  
\end{align}  
Substituting \eqref{G5.22} into \eqref{G5.21}, we arrive at  
\begin{align}  \label{G5.23}
\int_{|\xi|\leq \frac{r_0}{2}} |\xi|^{2\alpha}e^{\frac{-\lambda_{11}|\xi|^{2}t}{1+|\xi|^{2}}}\mathcal{E}_{M}\big(\hat{U}(0,\xi)\big)\mathrm{d}\xi \lesssim&\,  (1+t )^{-m-3 (\frac{1}{q}-\frac{1}{2} )} \|\partial^{\alpha}U_{0}\|_{\mathcal{Z}_{q}}^2.  
\end{align}  
This indicates that  \eqref{G5.7} is valid.

On the other hand, for the high-frequency component of our solution $\mathbb{A}(t)U_0$, we observe that  
\begin{align}\label{G5.24}
\int_{|\xi|\geq \frac{r_0}{2}} |\xi|^{2\alpha}e^{\frac{-\lambda_{11}|\xi|^{2}t}{1+|\xi|^{2}}}\mathcal{E}_{M}\big(\hat{U}(0,\xi)\big)\mathrm{d}\xi \lesssim&\,   e^{-ct}\|\partial^{\alpha}U_{0}\|_{\mathcal{Z}_{2}}^2.  
\end{align}
By combining \eqref{G5.23} and \eqref{G5.24}, we conclude  that \eqref{G5.20} holds, thereby implying that \eqref{G5.5} also holds. 

Similarly, by analyzing the low- and high-frequency components of the integral $ \int_{0}^{t}\mathbb{A}(t-s)\big(0,0,0,\linebreak {S}_f^L(s)\big){\rm d}s$, we   further obtain \eqref{G5.6} and \eqref{G5.8}. Thus, we complete the proof of Theorem \ref{T5.1}.
\end{proof}

\subsection{Higher-order Lyapunov-type inequality }
We shall establish a higher order   Lyapunov-type inequality for the low-frequency decomposition of $(\rho, u, \theta, f)$ to the nonlinear problem \eqref{Q2}--\eqref{Q2-2} stated as follows.

% Now, we   state the following higher-order Lyapunov inequality:
\begin{prop}\label{P5.2}
For the classical solution $(\rho, u, \theta, f)$ to the problem  \eqref{Q2}--\eqref{Q2-2}, it holds that  
\begin{align}\label{G5.25}  
&\frac{\rm d}{{\rm d}t}\big( \|\nabla^k(\rho,u,\theta)\|_{L^2}^2+\|\nabla^k f\|_{L_{x,v}^2}^2\big) \nonumber\\  
&\quad+ \lambda_{12}\big(  \|\nabla^k(\rho,u,\theta)\|_{L^2}^2+\|\nabla^k f\|_{L_{x,v}^2}^2\big) \lesssim \|\nabla^k(a^L,b^L,\omega^L,\rho^L,\theta^L)\|_{L^2}^2,  
\end{align}  
for any $k=2,3,4$ and $0 \leq t < T$, where $\lambda_{12}>0$ is independent of $T$.
\end{prop}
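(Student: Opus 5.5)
The plan is to redo the higher-order energy estimates of Section 3, now with $\mu=\lambda=\kappa=0$, restricted to a fixed level $|\alpha|=k$ with $k\in\{2,3,4\}$. The goal is an inequality of the form
\[
\frac{\rm d}{{\rm d}t}\mathcal{E}_k(t)+\lambda\,\mathcal{D}_k(t)\le 0,
\qquad
\mathcal{E}_k\sim\|\nabla^k(\rho,u,\theta)\|_{L^2}^2+\|\nabla^k f\|_{L^2_{x,v}}^2 .
\]
Here $\mathcal{D}_k$ contains $\|\nabla^k(b-u)\|_{L^2}^2$, $\|\nabla^k(\sqrt2\omega-\sqrt3\theta)\|_{L^2}^2$, $\|\{\mathbf{I}-\mathbf{P}\}\nabla^k f\|_\nu^2$ and $\|\nabla^{k+1}(a,b,\omega,\rho)\|_{L^2}^2$. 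Afterwards I will split every macroscopic quantity into low and high frequencies. For the high-frequency parts, the missing zeroth-order piece of dissipation comes from \eqref{G2.3}: $\|\nabla^k g^H\|_{L^2}\lesssim r_0^{-1}\|\nabla^{k+1}g\|_{L^2}$. The low-frequency parts are moved to the right-hand side, which produces \eqref{G5.25}.

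\textbf{Step 1 (weighted energy at level $k$).} I apply $\partial^\alpha$ with $|\alpha|=k$ to \eqref{Q2}. I multiply the $\rho$, $u$, $\theta$ equations by $\partial^\alpha\rho/(1+\rho)^2$, $\partial^\alpha u/(1+\theta)$, $\partial^\alpha\theta/(1+\theta)^2$ respectively, exactly as in the proof of Lemma \ref{L3.2}, and pair the $f$ equation with $\partial^\alpha f$. The symmetric structure cancels the top-order terms. The remaining terms are bounded as in \eqref{G3.11}--\eqref{G3.19} with all viscous terms dropped. This leaves $\delta\|\nabla^k(a,b,\omega,\rho,u,\theta)\|_{H^{4-k}}^2$-type errors. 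These must be absorbed, so I will write them as $\delta$ times the dissipation at levels $\ge k$, using the Sobolev product estimates of Lemma \ref{LA.2} with the low-order factors in $L^\infty$/$L^3$ only.

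\textbf{Step 2 (macroscopic dissipation).} I add $\tau_1$ times the $a,b,\omega$ interaction functional of Lemma \ref{L3.3}, taken at order $|\alpha|=k-1$ (only the top level). I also add $\tau_2\int\nabla^{k-1}u\cdot\nabla^{k}\rho$ as in Lemma \ref{L3.4}. The dissipation identities $\|\nabla^k u\|\lesssim\|\nabla^k(b-u)\|+\|\nabla^k b\|$ and its $\theta$-analogue then give $\|\nabla^{k}(u,\theta)\|^2$. To close, I still need control of $\|\nabla^k(a,b,\omega,\rho)\|^2$ by the dissipation. For the high-frequency part, \eqref{G2.3} gives $\|\nabla^k g^H\|\lesssim\|\nabla^{k+1}g\|$. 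For the low-frequency part, I add $C\|\nabla^k(a^L,b^L,\omega^L,\rho^L,\theta^L)\|^2$ to both sides. Since $\|\nabla^{k}u\|\le\|\nabla^k(b-u)\|+\|\nabla^k b^L\|+\|\nabla^k b^H\|$ (similarly for $\theta$), the term $\mathcal{D}_k$ plus this low-frequency error dominates $\mathcal{E}_k$. This yields \eqref{G5.25}, after choosing $\tau_1,\tau_2$ small and using $\mathcal{E}_k\sim$ the unweighted norm via \eqref{LNXG3.2}.

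\textbf{Main obstacle.} The hard part is the nonlinear term $\frac{\theta}{\sqrt M}\Delta_v(\sqrt M f)$ together with the zero-order terms $au$, $b\cdot u$, $a\theta$, $\rho(\sqrt2\omega-\sqrt3\theta)$. At level $k$ these must be bounded without invoking velocity-derivative energies of order $>k$, since $\mathcal{E}_k$ contains none. The plan for the velocity term is to integrate by parts in $v$, as in \eqref{G3.7}. When all $k$ derivatives fall on $f$, this gives $\theta|\nabla_v\{\mathbf{I}-\mathbf{P}\}\partial^\alpha f|^2$-type terms, bounded by $\|\theta\|_{L^\infty}\|\{\mathbf{I}-\mathbf{P}\}\partial^\alpha f\|_\nu^2$. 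When derivatives split between $\theta$ and $f$, the result is $\|\nabla\theta\|_{H^3}$ times mixed norms, which are small by the global $H^4$ bound of Theorem \ref{T1.2}. The zero-order products are handled via the macro-micro cancellations of \eqref{G3.6}. Whatever residual $\delta\|\nabla^k(\cdot)\|^2$ terms remain are absorbed using the same frequency splitting as in Step 2.
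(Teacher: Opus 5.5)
Step~1 is in line with Lemma~\ref{L5.3} (with one caveat at the end). Step~2, however, has a genuine gap, and it is exactly where the paper's argument differs from yours. The interaction functionals must be built on the high-frequency parts; you build them on the full unknowns and apply the frequency splitting only afterwards, to the dissipation.

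Take your functionals literally: at order $k-1$ and full frequency, i.e. $\int\nabla^{k-1}\partial_i\omega\,\nabla^{k-1}\Upsilon_i(\{\mathbf{I}-\mathbf{P}\}f)\,{\rm d}x$, the corresponding $a$- and $b$-terms, and $\int\nabla^{k-1}u\cdot\nabla^{k}\rho\,{\rm d}x$. Two things go wrong.

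\emph{Equivalence fails.} These functionals are only bounded by products such as $\|\nabla^k\omega\|_{L^2}\|\nabla^{k-1}\{\mathbf{I}-\mathbf{P}\}f\|_{L^2_{x,v}}$ or $\|\nabla^{k-1}u\|_{L^2}\|\nabla^k\rho\|_{L^2}$. Hence $\mathcal{E}_k\sim\|\nabla^k(\rho,u,\theta)\|_{L^2}^2+\|\nabla^kf\|_{L^2_{x,v}}^2$ fails; \eqref{LNXG3.2} only handles the weights, not these cross terms.

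\emph{Level-$(k-1)$ remainders appear.} The time derivatives of the functionals produce:
$\int\nabla^{k}a\cdot\nabla^{k-1}(u-b)\,{\rm d}x$;
$\int\nabla^k\rho\cdot\nabla^{k-1}[h(\rho)(b-u)]\,{\rm d}x$;
$\int\nabla^{k-1}(b-u)\cdot\nabla^{k-1}\partial_j\Gamma_{ij}(\{\mathbf{I}-\mathbf{P}\}f)\,{\rm d}x$;
and, from $\mathcal{L}\{\mathbf{I}-\mathbf{P}\}f$ inside $\Gamma_{ij}(\mathfrak{l})$ and $\Upsilon_i(\mathfrak{l})$, terms of size $\|\nabla^{k-1}\{\mathbf{I}-\mathbf{P}\}f\|$.
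None of these is controlled by level-$k$ dissipation. Integrating by parts only moves the missing derivative onto $\nabla^{k-1}a$, $\nabla^{k-1}\rho$ or $\nabla^{k-1}b$.

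This cannot be repaired by sharper estimates. Your functionals yield the full-frequency dissipation $\|\nabla^k(a,b,\omega,\rho)\|_{L^2}^2$. If both points above were fine, you would get $\frac{\rm d}{{\rm d}t}\mathcal{E}_k+\lambda\mathcal{E}_k\le 0$ with no low-frequency term at all. That means exponential decay of $\|\nabla^kU\|$, which is false already for the linearized problem, since it is heat-like at low frequencies (cf. Theorem~\ref{T5.1}).

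Your bookkeeping of levels is also inconsistent. Order-$(k-1)$ functionals give $\|\nabla^k(a,b,\omega,\rho)\|^2$, not the $\|\nabla^{k+1}(a,b,\omega,\rho)\|^2$ you put in $\mathcal{D}_k$. Obtaining the latter would need functionals at order $k$, whose remainders contain $\|\nabla^{k+1}\{\mathbf{I}-\mathbf{P}\}f\|$. That is five $x$-derivatives when $k=4$, and no level-$k$ energy controls it.

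The paper's remedy works as follows.
\begin{enumerate}
\item Apply $\phi_1(D_x)$ to \eqref{G3.21}, giving \eqref{G5.27}.
\item Use $\mathfrak{E}_0^H$ from \eqref{G5.28}, built on $(a^H,b^H,\omega^H,\{\mathbf{I}-\mathbf{P}\}f^H)$ at order $k-1$, together with $\int\nabla^{k-1}\rho^H\cdot\nabla^{k-1}{\rm div}\,u\,{\rm d}x$.
\item Use $\|\nabla^{k-1}g^H\|_{L^2}\lesssim r_0^{-1}\|\nabla^k g\|_{L^2}$ from \eqref{G2.3}, one level below where you use it. This makes the functionals equivalent to the level-$k$ energy, as in \eqref{G5.51}, and turns every level-$(k-1)$ remainder into a level-$k$ one (see \eqref{G5.31} and \eqref{G5.47-1}).
\item The dissipation $\|\nabla^k(a^H,b^H,\omega^H,\rho^H)\|_{L^2}^2$ then comes out directly (Lemmas~\ref{L5.4}--\ref{L5.5}), and one closes by adding $\|\nabla^k(a^L,b^L,\omega^L,\rho^L)\|_{L^2}^2$ to both sides.
\end{enumerate}

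The caveat about Step~1: errors of size $\delta\|\nabla^k(\cdot)\|_{H^{4-k}}^2$, or ``$\|\nabla\theta\|_{H^3}$ times mixed norms'', are small but not absorbable by level-$k$ dissipation. You need them in the form $\eps_1^{1/2}\|\nabla^k(a,b,\rho,\omega)\|_{L^2}^2$, as in Lemma~\ref{L5.3}. Lemma~\ref{LA.2} provides this if the lower-order factor always goes into $L^\infty$ or $L^3$ and you use $\|\nabla^j g\|_{L^6}\lesssim\|\nabla^{j+1}g\|_{L^2}$.
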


The proof of Proposition \ref{P5.2}   relies on Lemmas \ref{L5.3}--\ref{L5.5} below. %, which will be presented  below.

\begin{lem}\label{L5.3}
For the  classical solution $(\rho, u,\theta,f)$ to the Cauchy problem \eqref{Q2}--\eqref{Q2-2}, it holds that 
\begin{align}\label{G5.26}
&\frac{\rm d}{{\rm d}t}\big(\|\nabla^k(\rho,u,\theta)\|_{L^2}^{2} 
+\|\nabla^k f\|_{L_{x,v}^2}^2\big)+\lambda_{13} \big(\|\nabla^k(b-u)\|_{L^2}^{2}
\nonumber\\
& \quad+\|\nabla^k(\sqrt{2}\omega-\sqrt{3}\theta)\|_{L^2}^2
+\|\{\mathbf{I}-\mathbf{P}\}\nabla^k f\|_{\nu}^{2}\big)
\leq C\eps_1^\frac{1}{2} \|\nabla^k(a,b,\rho,\omega)\|_{L^2}^2,
\end{align}
for any $k=2,3,4$ and $0 \leq t < T$, where $\lambda_{13}>0$ is independent of $T$. 
\end{lem}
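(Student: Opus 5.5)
We follow the scheme of Lemma \ref{L3.2}, now with $\mu=\lambda=\kappa=0$ and for a single order $k\in\{2,3,4\}$, so that only the \emph{homogeneous} dissipation $\|\nabla^k(\cdot)\|$ appears on the right-hand side. We write \eqref{Q2} in the perturbed form \eqref{G3.2} with $\mu=\lambda=\kappa=0$. For $|\alpha|=k$ we apply $\partial^\alpha$ and use the symmetrized form \eqref{LNXG3.13}--\eqref{LNXG3.16}. We multiply these by $\frac{\partial^\alpha\rho}{(1+\rho)^2}$, $\frac{\partial^\alpha u}{1+\theta}$, $\frac{\partial^\alpha\theta}{(1+\theta)^2}$ and $\partial^\alpha f$, integrate, and sum over $|\alpha|=k$.

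The linear terms give the same structure as \eqref{G5.11} in physical space. The couplings $-\partial^\alpha u\cdot v\sqrt M$ and $-(|v|^2-3)\sqrt M\partial^\alpha\theta$ in the $f$-equation combine with $-\partial^\alpha(b-u)$ and $-\sqrt3\partial^\alpha(\sqrt2\omega-\sqrt3\theta)$ in the fluid equations. Using \eqref{G2.5}--\eqref{G2.6}, this produces the dissipation
\[
\|\nabla^k(b-u)\|_{L^2}^2+\|\nabla^k(\sqrt2\omega-\sqrt3\theta)\|_{L^2}^2+\bar\lambda\|\{\mathbf I-\mathbf P\}\nabla^k f\|_\nu^2 .
\]
The weights $(1+\rho)^{-2}$, $(1+\theta)^{-1}$ and $(1+\theta)^{-2}$ generate the cross terms $J_4$--$J_7$. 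These are bounded by $\|\nabla(\rho,\theta)\|_{L^\infty}$ times quadratic quantities in $\nabla^k$, hence by $\varepsilon_1^{1/2}$ times dissipation. The energy with weights is then equivalent to $\|\nabla^k(\rho,u,\theta)\|^2$ up to factors $1+O(\varepsilon_1^{1/2})$, and we state the result for the unweighted norm. We would either keep the weights, as in Lemma \ref{L3.2}, or absorb the $\partial_t$ of the weights via Lemma \ref{LNXL3.2}, which controls $\|\partial_t(\rho,u,\theta)\|_{L^\infty}$.

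The nonlinear terms are handled exactly as in \eqref{G3.6}--\eqref{G3.19}, now using Theorem \ref{T1.2} to get the uniform bound $\|(\rho,u,\theta)\|_{H^4}+\|f\|_{H^4_{x,v}}\lesssim\varepsilon_1^{1/2}$. The zero-order terms $au$, $b\cdot u$, $a\theta$ and $\frac12 u\cdot vf$ are treated by macro-micro decomposition as in \eqref{G3.6}--\eqref{G3.7}. The most dangerous piece is $\partial^\alpha(\frac12u\cdot vf)$ paired with $\partial^\alpha f$. When all derivatives fall on $f$, it becomes $u\cdot\langle v\partial^\alpha f,\partial^\alpha f\rangle$. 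Splitting $\partial^\alpha f=\mathbf P\partial^\alpha f+\{\mathbf I-\mathbf P\}\partial^\alpha f$, this is bounded by $\|u\|_{L^\infty}(\|\nabla^k(a,b,\omega)\|^2+\|\{\mathbf I-\mathbf P\}\nabla^kf\|_\nu^2)$.

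The term $\frac{\theta}{\sqrt M}\Delta_v(\sqrt M\partial^\alpha f)$ is integrated by parts in $v$, as in $I_4$. Combined with $-3a\theta$, it yields $\|\theta\|_{L^\infty}$ times dissipation plus the good combination $\sqrt2\omega-\sqrt3\theta$. Commutator terms in which derivatives fall on the coefficients are estimated with Lemma \ref{LA.2}. Using Lemma \ref{LA.1} and interpolation, the lower-order factor is placed in $L^\infty$ or $L^3$ and the top-order factor is kept in $\nabla^k$.

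Finally, every $\|\nabla^k u\|$ or $\|\nabla^k\theta\|$ that appears on the right is converted into admissible quantities via
\[
\|\nabla^ku\|\le\|\nabla^k(b-u)\|+\|\nabla^kb\|,\qquad \|\nabla^k\theta\|\lesssim\|\nabla^k(\sqrt2\omega-\sqrt3\theta)\|+\|\nabla^k\omega\|.
\]
The resulting dissipation pieces are absorbed by smallness of $\varepsilon_1$. What remains is bounded by $C\varepsilon_1^{1/2}\|\nabla^k(a,b,\rho,\omega)\|_{L^2}^2$, which is \eqref{G5.26}.

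We expect the main obstacle to be the case $k=4$. There, commutator terms such as $\partial^\alpha(\theta\,{\rm div}u)\partial^\alpha\theta$ and $[\partial^\alpha,\frac{1+\theta}{1+\rho}\nabla]\rho$ involve $\nabla^4$ of coefficients times $\nabla$ of the solution. These must be bounded without using $\nabla^5$ and without lower-order norms $\|\nabla^{j}\cdot\|$ with $j<k$ appearing on the right-hand side. We would first try to bound them by $\|\nabla(\rho,u,\theta)\|_{L^\infty}\|\nabla^k(\cdot)\|^2$, which is small times top-order. For the few terms where this fails, we would use Gagliardo--Nirenberg interpolation between $\|\nabla(\cdot)\|_{L^2}$ and $\|\nabla^k(\cdot)\|_{L^2}$, together with the $H^4$ smallness, to reduce to $\varepsilon_1^{1/2}\|\nabla^k\|^2$. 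The $v$-derivative term $\frac{\theta}{\sqrt M}\Delta_v$ at top order also needs care. We plan to control it by $\|\theta\|_{L^\infty}\|\{\mathbf I-\mathbf P\}\nabla^kf\|_\nu^2$ through the symmetric integration by parts of \eqref{G3.7}, so that no $\nabla_v$-energy is lost.
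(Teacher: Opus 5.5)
Your proposal follows the paper's argument: the paper proves Lemma \ref{L5.3} by taking \eqref{G3.10} with $|\alpha|=k$ and $\mu=\lambda=\kappa=0$, estimating the terms as in Lemma \ref{L3.2} so that only order-$k$ norms survive, and converting $\|\nabla^k u\|_{L^2}$, $\|\nabla^k\theta\|_{L^2}$ through the dissipative combinations $b-u$ and $\sqrt{2}\omega-\sqrt{3}\theta$, exactly as you outline. One caveat: this computation controls the time derivative of the weighted energy $\sum_{|\alpha|=k}\big(\big\|\frac{\partial^\alpha\rho}{1+\rho}\big\|_{L^2}^2+\big\|\frac{\partial^\alpha u}{\sqrt{1+\theta}}\big\|_{L^2}^2+\big\|\frac{\partial^\alpha\theta}{1+\theta}\big\|_{L^2}^2+\|\partial^\alpha f\|_{L^2_{x,v}}^2\big)$, not of the unweighted one. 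The weighted multipliers are precisely what cancel the $\nabla^{k+1}u$ terms, so your alternative of ``absorbing $\partial_t$ of the weights'' cannot yield the unweighted derivative; this is harmless, since Proposition \ref{P5.2} only needs this energy to be equivalent to $\|\nabla^k(\rho,u,\theta)\|_{L^2}^2+\|\nabla^k f\|_{L^2_{x,v}}^2$.
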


\begin{rem}
To prove Lemma \ref{L5.3}, we only need to consider the cases where $|\alpha|=k$ 
\emph{(}$k=1,2,3,4$, respectively\emph{)} and $\mu=\lambda=\kappa=0$ in the equality
 \eqref{G3.10}. By performing calculations analogous to those in the 
 proof of Lemma \ref{L3.2}, we can derive \eqref{G5.26}. For the sake of brevity, 
 the detailed steps are omitted here. Nevertheless, it is important to emphasize 
 that we have utilized the dissipative structures of $b-u$ and $\sqrt{2}\omega-\sqrt{3}\theta$, i.e.,
\begin{align*}  
\|\nabla^k u\|_{L^2}^2 &\leq 2\big(\|\nabla^k(b-u)\|_{L^2}^2+\|\nabla^k b\|_{L^2}^2\big), \\  
\|\nabla^k \theta\|_{L^2}^2 &\leq \frac{2}{3}\big(\|\nabla^k(\sqrt{2}\omega-\sqrt{3}\theta)\|_{L^2}^2 + 2\|\nabla^k \omega\|_{L^2}^2\big).  
\end{align*}  
\end{rem}

Similar to \cite{Ww-CMS-2024,LNW-arXiv}, by employing the low-high frequency decomposition method, we aim to derive more refined energy estimates for the classical solutions to \eqref{Q2}--\eqref{Q2-2}. Specifically, we first focus on estimating the dissipation terms $(a^H, b^H, \omega^H)$. By applying the operator $\phi_1(D_x)$ (as defined in \eqref{G2.1}) to  \eqref{G3.21}, we obtain that  
\begin{equation}\label{G5.27}
\left\{\begin{aligned}
&\partial_t a^H+{\rm div} b^H=0,\\
& \partial_t b^H_i+\partial_i a^H+\frac{2}{\sqrt{6}} \partial_i \omega^H+\sum_{j=1}^3 \partial_{j} \Gamma_{i, j}(\{\mathbf{I}-\mathbf{P}\} f^H)+(b_i^H-u_i^H)= (u_i a)^H, \\
& \partial_t \omega^H+\sqrt{2}(\sqrt{2} \omega^H-\sqrt{3} \theta^H)+\frac{2}{\sqrt{6}} {\rm div}  b^H    +\sum_{i=1}^3 \partial_{i} \Upsilon_i(\{\mathbf{I}-\mathbf{P}\} f^H) \\
&\quad=\frac{2}{\sqrt{6}} (u \cdot b)^H+\sqrt{6} (a \theta)^H,     \\
& \partial_j b^H_i+\partial_i b^H_j -\frac{2}{\sqrt{6}} \delta_{i j}\Big(\frac{2}{\sqrt{6}} {\rm div}   b^H-\frac{2}{\sqrt{6}} (u \cdot b)^H+\sum_{i=1}^3 \partial_{ i} \Upsilon_i(\{\mathbf{I}-\mathbf{P}\} f^H)\Big) \\
&\quad = (u_i b_j)^H+(u_j b_i )^H  -\partial_t \Gamma_{i, j}(\{\mathbf{I}-\mathbf{P}\} f^H)+\Gamma_{i, j}\big(\mathfrak{l}^H+\mathfrak{r}^H+\mathfrak{s}^H\big), \\
& \frac{5}{3} \big(\partial_i \omega^H-(\omega u_i)^H-\sqrt{6} (\theta b_i)^H \big)-\frac{2}{\sqrt{6}} \sum_{j=1}^3 \partial_{j} \Gamma_{i, j}(\{\mathbf{I}-\mathbf{P}\} f^H)   \\
&\quad=-\partial_t \Upsilon_i(\{\mathbf{I}-\mathbf{P}\} f^H)+\Upsilon_i\big(\mathfrak{l}^H+\mathfrak{r}^H+\mathfrak{s}^H\big),
\end{aligned}\right.
\end{equation}
for $1\leq i,j\leq 3$, where the terms $\mathfrak{l}$, $\mathfrak{r}$ and $\mathfrak{s}$ are  defined
in the same way  as 
  in \eqref{G3.22}.
We then define the temporal functional $\mathfrak{E}_0^H(t)$ as  % follows:
\begin{align}\label{G5.28}
\mathfrak{E}_0^H(t) :=\, &  \sum_{i, j=1}^3 \int_{\mathbb{R}^3} \nabla^{k-1} (\partial_j b_i^H+\partial_i b_j^H ) \nabla^{k-1} \Gamma_{i, j}(\{\mathbf{I}-\mathbf{P}\} f^H) \mathrm{d} x \nonumber\\
& + \sum_{i=1}^3 \int_{\mathbb{R}^3}  \nabla^{k-1} \partial_i \omega^H  \nabla^{k-1} \Upsilon_i(\{\mathbf{I}-\mathbf{P}\} f^H) \mathrm{d} x \nonumber\\
& +\frac{2}{21} \int_{\mathbb{R}^3} \nabla^{k-1} a^H \nabla^{k-1}\Big(\frac{\sqrt{6}}{5} \sum_{i=1}^3 \partial_{i} \Upsilon_i(\{\mathbf{I}-\mathbf{P}\} f^H)-{\rm div}  b^H\Big) \mathrm{d} x .
\end{align} 

\begin{lem}\label{L5.4}
For  the classical solution $(\rho, u,\theta,f)$ to the Cauchy problem \eqref{Q2}--\eqref{Q2-2}, it holds that 
\begin{align}\label{G5.29}
& \frac{\mathrm{d}}{\mathrm{d} t} \mathfrak{E}_0^H(t)+\lambda_{14}\big(\|\nabla^k a^H\|_{L^2}^2+\|\nabla^k b^H\|_{L^2}^2+\|\nabla^k \omega^H\|_{L^2}^2 \big)\nonumber\\
&\quad \leq  C \big(\|\{\mathbf{I}-\mathbf{P}\}\nabla^k f\|_{L_{x,v}^2}^2+\|\nabla^k(b-u)\|_{L^2}^2+\|\nabla^k(\sqrt{2} \omega-\sqrt{3} \theta)\|_{L^2}^2 \big) \nonumber\\
& \qquad +C\eps_1\|\nabla^k(a,b,\omega)\|_{L^2}^2,
\end{align}
for any $k=2,3,4$ and $0 \leq t <T$, where $\lambda_{14}>0$ is independent of  $T$. 
\end{lem}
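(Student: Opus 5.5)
We follow the scheme of Lemma \ref{L3.3} and of Lemma \ref{L4.3}, but at the level $\nabla^{k-1}$ and restricted to high frequencies. For each of the three pieces of $\mathfrak{E}_0^H(t)$ in \eqref{G5.28} we apply $\nabla^{k-1}$ to the relevant equation of \eqref{G5.27}, test it against the matching high-frequency quantity, and integrate by parts in $x$.

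\emph{Step 1: the $b^H$ piece.} Apply $\nabla^{k-1}$ to \eqref{G5.27}$_4$ and multiply by $\nabla^{k-1}(\partial_j b^H_i+\partial_i b^H_j)$. The left side gives $2\|\nabla^k b^H\|_{L^2}^2+\frac{2}{3}\|\nabla^{k-1}{\rm div}\, b^H\|_{L^2}^2$, using the identities displayed before \eqref{G4.29}. We move the time derivative $\partial_t\Gamma_{i,j}$ onto $b^H$. Replacing $\partial_t b^H$ by \eqref{G5.27}$_2$ produces $\sigma\|\nabla^k(a^H,\omega^H)\|_{L^2}^2$, plus $\|\nabla^k\{\mathbf{I}-\mathbf{P}\}f\|^2$ and $\|\nabla^{k-1}(b-u)^H\|^2$. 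We treat the last term as follows: at high frequency, \eqref{G2.3} gives $\|g^H\|_{L^2}\lesssim\|\nabla g\|_{L^2}$, so $\|\nabla^{k-1}(b-u)^H\|\lesssim\|\nabla^k(b-u)\|$. The nonlinear term $(u_ia)^H$ is also handled after moving $\nabla$ onto it. The linear terms $\Gamma_{i,j}(\mathfrak{l}^H)$ are bounded by $\|\nabla^k\{\mathbf{I}-\mathbf{P}\}f\|_{L^2_{x,v}}$. Here $v\cdot\nabla$ costs one derivative, which is exactly why we test at level $k-1$; $\mathcal{L}$ costs nothing, since $\Gamma_{i,j}$ integrates against Gaussian moments and we can integrate by parts in $v$.

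\emph{Step 2: the $\omega^H$ and $a^H$ pieces.} For $\omega^H$ we use \eqref{G5.27}$_5$ against $\nabla^{k-1}\partial_i\omega^H$, with \eqref{G5.27}$_3$ to remove $\partial_t\omega^H$. This creates $\sigma\|\nabla^k b^H\|^2$ and $\|\nabla^k(\sqrt2\omega-\sqrt3\theta)\|^2$, again via \eqref{G2.3}. For $a^H$ we combine \eqref{G5.27}$_2$ and \eqref{G5.27}$_5$ as in \eqref{G4.32}, giving $\frac{1}{14}\|\nabla^k a^H\|^2\le\frac{5}{42}\|\nabla^{k-1}{\rm div}\, b^H\|^2+\dots$. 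Adding the three pieces with weight $\frac{2}{21}$ on the $a$ part, exactly as in \eqref{G4.34}, absorbs the ${\rm div}\, b^H$ and $\sigma$ terms and yields the coercive term $\lambda_{14}\|\nabla^k(a^H,b^H,\omega^H)\|^2$.

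\emph{Step 3: the nonlinear terms.} These are $(u a)^H$, $(u\cdot b)^H$, $(a\theta)^H$, $(u_ib_j)^H$, $(\omega u_i)^H$, $(\theta b_i)^H$ and $\Gamma_{i,j}/\Upsilon_i(\mathfrak{r}^H+\mathfrak{s}^H)$. Since $\phi_1(D_x)$ is $L^2$-bounded, we drop the $H$ and use Lemma \ref{LA.2}: $\|\nabla^{k}(gh)\|\lesssim\|g\|_{L^\infty}\|\nabla^k h\|+\|h\|_{L^\infty}\|\nabla^k g\|$. With the smallness $\|(\rho,u,\theta)\|_{H^4}+\|f\|_{H^4_{x,v}}\lesssim\eps_1^{1/2}$ from Theorem \ref{T1.2}, this bounds everything by $C\eps_1\|\nabla^k(a,b,\omega,u,\theta)\|^2$ plus $\|\nabla^k\{\mathbf{I}-\mathbf{P}\}f\|^2$. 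We then express $\nabla^k u$ through $\nabla^k(b-u)$ and $\nabla^k b$, and $\nabla^k\theta$ through $\nabla^k(\sqrt2\omega-\sqrt3\theta)$ and $\nabla^k\omega$.

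\emph{Main obstacle.} The delicate term is $\mathfrak{s}^H$, which contains $\theta\,\Delta_v\{\mathbf{I}-\mathbf{P}\}f$. At $k=4$ a naive bound would require $\nabla^4$ of $\theta\cdot\{\mathbf{I}-\mathbf{P}\}f$, with the velocity derivatives costing extra. We plan to integrate by parts in $v$ inside $\Gamma_{i,j}$ and $\Upsilon_i$, moving all $v$-derivatives onto the Gaussian weights. Then only $\|\nabla^k(\theta\{\mathbf{I}-\mathbf{P}\}f)\|_{L^2_{x,v}}$ remains, which Lemma \ref{LA.2} controls by $\eps_1^{1/2}(\|\nabla^k\{\mathbf{I}-\mathbf{P}\}f\|+\|\nabla^k\theta\|)$.
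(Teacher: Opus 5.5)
Your proposal is correct and follows essentially the same route as the paper. Both arguments use the three $\nabla^{k-1}$-level interactive functionals of \eqref{G5.28}, eliminate $\partial_t b^H$ and $\partial_t\omega^H$ via \eqref{G5.27}$_2$ and \eqref{G5.27}$_3$ after integrating by parts in $x$, lift $\nabla^{k-1}(\cdot)^H$ to $\nabla^k$ through \eqref{G2.3}, and take the $\frac{2}{21}$-weighted combination to absorb $\nabla^{k-1}{\rm div}\, b^H$ and the $\sigma$-terms. Your explicit conversion of the $\eps_1\|\nabla^k(u,\theta)\|_{L^2}^2$ errors into $\|\nabla^k(b-u)\|_{L^2}^2$, $\|\nabla^k(\sqrt{2}\omega-\sqrt{3}\theta)\|_{L^2}^2$ and $\eps_1\|\nabla^k(b,\omega)\|_{L^2}^2$ is a step the paper leaves implicit, and it is exactly what is needed to reach the stated right-hand side.
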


\begin{proof}
From \eqref{G5.27}, by  performing energy method, we arrive at 
\begin{align}\label{G5.30}
& \sum_{i, j=1}^3  \|\nabla^{k-1} (\partial_i b^H_j+\partial_j b^H_i ) \|_{L^2}^2-\sum_{i, j=1}^3 \int_{\mathbb{R}^3} \frac{2}{3} \delta_{i j} \nabla^{k-1} {\rm div}  b^H\cdot \nabla^{k-1} (\partial_i b^H_j+\partial_j b^H_i ) \mathrm{d} x \nonumber\\
=&\, -\frac{\mathrm{d}}{\mathrm{d} t} \sum_{i, j=1}^3 \int_{\mathbb{R}^3}\nabla^{k-1} (\partial_i b^H_j+\partial_j b^H_i) \nabla^{k-1} \Gamma_{i, j}(\{\mathbf{I}-\mathbf{P}\} f^H) \mathrm{d} x \nonumber\\
&   +\sum_{i, j=1}^3 \int_{\mathbb{R}^3} \nabla^{k-1} (\partial_i \partial_t b^H_j+\partial_j \partial_t b^H_i ) \nabla^{k-1} \Gamma_{i, j}(\{\mathbf{I}-\mathbf{P}\} f^H) \mathrm{d} x \nonumber\\
&   +\sum_{i, j=1}^3 \int_{\mathbb{R}^3} \nabla^{k-1} (\partial_i b^H_j+\partial_j b^H_i ) \nabla^{k-1}\Big[ (u_i b_j)^H+(u_j b_i )^H-\delta_{i j}\sum_i \partial_i \Upsilon_i(\{\mathbf{I}-\mathbf{P}\} f^H)\Big]\mathrm{d}x\nonumber \\
&  +\int_{\mathbb{R}^3}\nabla^{k-1} (\partial_i b^H_j+\partial_j b^H_i )\Big[-\frac{2}{3}\delta_{i j} (u \cdot b)^H+\Gamma_{i, j}\big(\mathfrak{l}^H+\mathfrak{r}^H+\mathfrak{s}^H\big)\Big] \mathrm{d} x \nonumber\\
\equiv:&\,-\frac{\mathrm{d}}{\mathrm{d} t} \sum_{i, j=1}^3 \int_{\mathbb{R}^3}\nabla^{k-1} (\partial_i b^H_j+\partial_j b^H_i) \nabla^{k-1} \Gamma_{i, j}(\{\mathbf{I}-\mathbf{P}\} f^H) \mathrm{d} x
+\sum_{i=1}^3I_i^H.
\end{align}
By virtue of \eqref{G5.27}$_2$, \eqref{G2.3}, Lemmas \ref{LA.1}--\ref{LA.2} 
and Young's inequality, the term ${I}^H_1$ is estimated as % follows:
\begin{align}\label{G5.31}
{I}^H_1=&\, 2\sum_{i, j=1}^3 \int_{\mathbb{R}^3}   \nabla^{k-1}\partial_j \partial_t {b}_i^H   \nabla^{k-1}\Gamma_{i, j}(\{\mathbf{I}-\mathbf{P}\} {f}^H) \mathrm{d} x\nonumber\\
=&\,  -2\sum_{i, j=1}^3 \int_{\mathbb{R}^3}   \nabla^{k-1} \partial_t {b}^H_i   \nabla^{k-1}\partial_j\Gamma_{i, j}(\{\mathbf{I}-\mathbf{P}\} {f}^H) \mathrm{d} x\nonumber\\
=&\,2\sum_{i, j=1}^3 \int_{\mathbb{R}^3} \Big( \frac{2}{\sqrt{6}} \nabla^{k-1}\partial_i \omega^H+\sum_{j=1}^3 \nabla^{k-1}\partial_{j} \Gamma_{i, j}(\{\mathbf{I}-\mathbf{P}\} {f}^H)\Big)\nabla^{k-1}\partial_j\Gamma_{i, j}(\{\mathbf{I}-\mathbf{P}\} {f}^H) \mathrm{d} x\nonumber\\
&+2\sum_{i, j=1}^3 \int_{\mathbb{R}^3} \nabla^{k-1}\big( \partial_i  {a}^H+({b}^H_i-{u}^H_i)-({u}_i a)^H\big)\nabla^{k-1}\partial_j\Gamma_{i, j}(\{\mathbf{I}-\mathbf{P}\} {f}^H) \mathrm{d} x\nonumber\\
\lesssim&\, \sigma\|\nabla^k({a}^H,\omega^H )\|_{L^2}^2+ \| \{\mathbf{I}-\mathbf{P}\} \nabla^k{f}\|_{L_{x,v}^2}^2+\|\nabla^k(b-u)\|_{L^2}^2+\|\nabla^{k-1} (a,u)\|_{L^6}^2 \|(a,u)\|_{L^3}^2\nonumber\\
\lesssim&\,\sigma \|\nabla^k({a}^H ,\omega^H )\|_{L^2}^2+ \| \{\mathbf{I}-\mathbf{P}\} \nabla^k{f}\|_{L_{x,v}^2}^2+\|\nabla^k(b-u)\|_{L^2}^2+\eps_1\|\nabla^{k} (a,u)\|_{L^2}^2. 
\end{align}
Similarly, for the terms $I_2^H$ and $I_3^H$,  one has % by   direct computations, one has
\begin{align}\label{G5.32}
&|I_2^H+I_3^H|\nonumber\\  
\leq \,& \frac{1}{4} \sum_{i, j=1}^3 \|\nabla^{k-1} (\partial_i b_j^H+\partial_j b_i^H ) \|_{L^2}^2+C     \|\nabla^{k-1} \big((u_i b_j)^H+(u_j b_i)^H \big) \|_{L_{x,v}^2}^2 \nonumber\\
&  +C \|\nabla^{k-1}(u \cdot b)^H \|_{L^2}^2+ \big\|\nabla^{k-1} \Gamma_{i, j}\big(\mathfrak{l}^H+\mathfrak{r}^H+\mathfrak{s}^H\big) \big\|_{L ^2}^2+C\|\nabla^{k-1} \partial_i \Upsilon_i(\{\mathbf{I}-\mathbf{P}\} f^H) \|_{L ^2}^2  \nonumber\\
\leq\,& \frac{1}{4} \sum_{i, j=1}^3\|\nabla^{k-1} (\partial_i b_j^H+\partial_j b_i^H ) \|_{L^2}^2+C\|\{\mathbf{I}-\mathbf{P}\} \nabla^k f\|_{L_{x,v}^2  }^2+\eps_1\|\nabla^k (b,u,\theta)\|_{L^2}^2. 
\end{align}
Besides, it is easy to observe that
\begin{gather} 
 \sum_{i, j=1}^3 \|\nabla^{k-1}(\partial_i  {b}^H _j+\partial_j  {b}^H _i) \|_{L^2}^2 =2 \|\nabla^{k}   {b}^H \|_{L^2}^2+2 \| \nabla^{k-1}{\rm div}  {b}^H \|_{L^2}^2,   \quad \label{G5.33-1}\\
-\sum_{i, j=1}^3 \frac{2}{3} \delta_{i j} \int_{\mathbb{R}^3}   \nabla^{k-1}{\rm div}    {b}^H   \nabla^{k-1} (\partial_i  {b}^H _j+\partial_j  {b}^H _i ) \mathrm{d} x 
=-\frac{4}{3} \|  \nabla^{k-1}{\rm div}    {b}^H  \|_{L^2}^2.\label{G5.33}
\end{gather}   
Inserting \eqref{G5.31}--\eqref{G5.33} into \eqref{G5.30}, we end up with
\begin{align}\label{G5.34}
& \frac{\mathrm{d}}{\mathrm{d} t}  \sum_{i, j=1}^3 \int_{\mathbb{R}^3}  \nabla^{k-1} (\partial_i b^H_j+\partial_j b^H_i )  \nabla^{k-1} \Gamma_{i, j}(\{\mathbf{I}-\mathbf{P}\} f^H) \mathrm{d} x  +  \frac{3}{2} \|\nabla^k b^H\|_{L^2}^2+\frac{1}{6} \|   \nabla^{k-1}{\rm div}b^H \|_{L^2}^2  \nonumber\\
&\quad \lesssim \sigma \|\nabla^k(a^H, \omega^H)\|_{L^2}^2+\| \{\mathbf{I}-\mathbf{P}\} \nabla^k{f}\|_{L_{x,v}^2}^2+\|\nabla^k(b-u)\|_{L^2}^2+\eps_1\|\nabla^{k} (a,b,\theta)\|_{L^2}^2. 
\end{align}

It follows from \eqref{G5.27}$_5$ that
\begin{align}\label{G5.35}
 &\|\nabla^{k-1} \partial_i \omega^H\|_{L^2}^2\nonumber\\
=&\,  -\frac{3}{5} \frac{\mathrm{d}}{\mathrm{d} t} \int_{\mathbb{R}^3} \nabla^{k-1} \partial_i \omega^H \nabla^{k-1} \Upsilon_i(\{\mathbf{I}-\mathbf{P}\} f^H) \mathrm{d} x+\frac{3}{5} \int_{\mathbb{R}^3} \nabla^{k-1} \partial_i \partial_t \omega^H \nabla^{k-1} \Upsilon_i(\{\mathbf{I}-\mathbf{P}\} f^H) \mathrm{d} x \nonumber\\
& +\int_{\mathbb{R}^3} \nabla^{k-1} \partial_i \omega^H \nabla^{k-1} \big((\omega u_i)^H+\sqrt{6} (\theta b_i)^H \big) \mathrm{d} x+\frac{3}{5}\int_{\mathbb{R}^3} \nabla^{k-1} \partial_i \omega \nabla^{k-1}
 \Upsilon_i\big(\mathfrak{l}^H+\mathfrak{r}^H+\mathfrak{s}^H\big) \mathrm{d} x \nonumber\\
& +\frac{\sqrt{6}}{5} \int_{\mathbb{R}^3} \nabla^{k-1} \partial_i \omega^H    \sum_{j=1}^3 \nabla^{k-1}\partial_j \Gamma_{i, j}(\{\mathbf{I}-\mathbf{P}\} f^H)  \mathrm{d} x\nonumber\\
\equiv:& -\frac{3}{5} \frac{\mathrm{d}}{\mathrm{d} t} \int_{\mathbb{R}^3} \nabla^{k-1} \partial_i \omega^H \nabla^{k-1} \Upsilon_i(\{\mathbf{I}-\mathbf{P}\} f^H) \mathrm{d}x+\sum_{i=1}^4J_i^H.
\end{align}
By using \eqref{G5.27}$_3$, \eqref{G2.3}, and Lemmas \ref{LA.1} and \ref{LA.2}, one derives  that 
\begin{align}\label{G5.36}
J_1^H
= &\, \int_{\mathbb{R}^3} \partial_i \nabla^{k-1}\Big(\sqrt{6} (a \theta)^H-\sqrt{2}(\sqrt{2} \omega^H-\sqrt{3} \theta^H)-\frac{2}{\sqrt{6}} {\rm div}  b^H\Big)\nabla^{k-1} \Upsilon_i(\{\mathbf{I}-\mathbf{P}\} f^H) \mathrm{d} x \nonumber\\
&  +\int_{\mathbb{R}^3}\partial_i \nabla^{k-1}\Big(\frac{2}{\sqrt{6}} (u \cdot b)^H-\sum_i \partial_i \Upsilon_i(\{\mathbf{I}-\mathbf{P}\} f^H)\Big)  \nabla^{k-1} \Upsilon_i(\{\mathbf{I}-\mathbf{P}\} f^H) \mathrm{d} x \nonumber\\
\lesssim&\,  \sigma\|\nabla^k b^H\|_{L^2}^2+\|\nabla^k (a,b,u,\theta)\|_{L^2}^2\|(a,b,u,\theta)\|_{L^\infty}^2+\|\nabla^k(\sqrt{2}\omega^H-\sqrt{3}\theta^H)\|_{L^2}^2\nonumber\\
&+ \|\{\mathbf{I}-\mathbf{P}\} \nabla^k f\|_{L_{x,v}^2  }^2\nonumber\\
\lesssim&\,\sigma\|\nabla^k b^H\|_{L^2}^2+\eps_1\|\nabla^k (a,b,u,\theta)\|_{L^2}^2+\|\nabla^k(\sqrt{2}\omega^H-\sqrt{3}\theta^H)\|_{L^2}^2+ \|\{\mathbf{I}-\mathbf{P}\} \nabla^k f\|_{L_{x,v}^2  }^2.
\end{align}
Similarly, for the terms $J_2^H$, $J_3^H$, and $J_4^H$, we have   %by a direct calculation, we have
\begin{align}\label{G5.37}
 |J_2^H+J_3^H+J_4^H| \leq&\, \frac{2}{5}\|\nabla^k \omega^H\|_{L^2}^2+C\|\nabla^k ( b,\omega,u,\theta)\|_{L^2}^2\|( b,\omega,u,\theta)\|_{L^\infty}^2+C \|\{\mathbf{I}-\mathbf{P}\} \nabla^k f\|_{L_{x,v}^2  }^2\nonumber\\
\leq&\,\frac{2}{5}\|\nabla^k \omega^H\|_{L^2}^2+C\eps_1\|\nabla^k ( b,\omega,u,\theta)\|_{L^2}^2+C\|\{\mathbf{I}-\mathbf{P}\} \nabla^k f\|_{L_{x,v}^2  }^2.
\end{align}
Inserting the estimates \eqref{G5.36} and \eqref{G5.37} into \eqref{G5.35} results in  
\begin{align}\label{G5.38}
& \frac{\mathrm{d}}{\mathrm{d} t}  \sum_{ i=1}^3 \int_{\mathbb{R}^3} \nabla^{k-1} \partial_i \omega^H \nabla^{k-1} \Upsilon_i(\{\mathbf{I}-\mathbf{P}\} f ^H)\mathrm{d} x+  \|\nabla^{k}  \omega^H \|_{L^2}^2 \nonumber\\
&\quad \lesssim  \sigma\|\nabla^k b^H\|_{L^2}^2+ \|\{\mathbf{I}-\mathbf{P}\} \nabla^k f\|_{L_{x,v}^2}^2+ \|\nabla^k(\sqrt{2} \omega-\sqrt{3} \theta)\|_{L^2}^2\nonumber\\
& \qquad +\eps_1\|\nabla^k(a,b,\omega,u,\theta)\|_{L^2}^2. 
\end{align}
From \eqref{G5.27}$_2$ and \eqref{G5.27}$_5$, one has
\begin{align}\label{G5.39}
\|\nabla^{k-1} \partial_i a^H \|_{L^2}^2=&\, \frac{\mathrm{d}}{\mathrm{d} t} \int_{\mathbb{R}^3} \nabla^{k-1} \partial_i a^H \nabla^{k-1}\Big(\frac{\sqrt{6}}{5} \Upsilon_i(\{\mathbf{I}-\mathbf{P}\} f^H)-b^H_i\Big) \mathrm{d} x \nonumber\\
& -\int_{\mathbb{R}^3} \nabla^{k-1} \partial_i\partial_t a^H \nabla^{k-1}\Big(\frac{\sqrt{6}}{5} \Upsilon_i(\{\mathbf{I}-\mathbf{P}\} f^H)-b^H_i\Big) \mathrm{d} x \nonumber\\
& +\int_{\mathbb{R}^3} \nabla^{k-1} \partial_i a \nabla^{k-1}\Big( (u^H_i-b^H_i)+(u_i a)^H-\frac{2}{\sqrt{6}} (\omega u_i)^H-2 (\theta b_i)^H\Big){\rm d}x\nonumber \\
& -\int_0^t \nabla^{k-1} \partial_i a \nabla^{k-1}\Big( \frac{7}{5} \sum_{j=1}^3 \partial_j \Gamma_{i j}(\{\mathbf{I}-\mathbf{P}\} f^H)+\frac{\sqrt{6}}{5} \Upsilon_i\big(\mathfrak{l}^H+\mathfrak{r}^H+\mathfrak{s}^H\big)\Big) \mathrm{d}x \nonumber\\
\equiv:\,&\frac{\mathrm{d}}{\mathrm{d} t} \int_{\mathbb{R}^3} \nabla^{k-1} \partial_i a^H \nabla^{k-1}\Big(\frac{\sqrt{6}}{5} \Upsilon_i(\{\mathbf{I}-\mathbf{P}\} f^H)-b^H_i\Big) \mathrm{d} x+\sum_{j=1}^3 K_j^H.
\end{align}
It follows from \eqref{G5.27}$_1$ and Young's inequality that
\begin{align}\label{G5.40}
K_1^H
=&\,\int_{\mathbb{R}^3} \nabla^{k-1} \partial_i ({\rm div} b^H) \nabla^{k-1} \Big(b^H_i-\frac{\sqrt{6}}{5} \Upsilon_i(\{\mathbf{I}-\mathbf{P}\} f^H)\Big) \mathrm{d} x\nonumber\\
\leq&\, \int_{\mathbb{R}^3} \nabla^{k-1} ({\rm div}   b^H) \nabla^{k-1} \partial_i b^H_i \mathrm{d} x+\frac{1}{4} \|\nabla^{k-1} {\rm div}   b^H \|_{L^2}^2+C\|\{\mathbf{I}-\mathbf{P}\} \nabla^k f\|_{L_{x,v}^2}^2.
\end{align}
Then, through  direct calculations, we get  the estimates of $K_2^H$ and $K_3^H$ as follows: 
\begin{align}\label{G5.41}
 |K_2^H+K_3^H |
\leq \,&\frac{1}{4}\|\nabla^{k-1} \partial_i a^H\|^2+C  \|\nabla^{k-1} (b^H_i-u^H_i ) \|_{L^2}^2+C \|\nabla^{k-1}  (\theta b_i)^H \|_{L^2}^2 \nonumber\\
& +C\|\nabla^{k-1} (\omega u_i)^H\|_{L^2}^2+C\big\|\nabla^{k-1} \Upsilon_i\big(\mathfrak{l}^H+\mathfrak{r}^H+\mathfrak{s}^H\big) \big\|_{L ^2}^2 \nonumber\\
&+C\sum_{j=1}^3 \|\nabla^{k-1} \partial_j \Gamma_{i j}(\{\mathbf{I}-\mathbf{P}\} f^H) \|_{L_{x,v}^2}^2+C \|\nabla^{k-1} (u_i a)^H\|_{L^2}^2\nonumber\\
\leq\, &\frac{1}{4} \|\nabla^k a^H \|_{L^2}^2+C\|\nabla^k(b-u)\|_{L^2}^2+C\|\{\mathbf{I}-\mathbf{P}\} \nabla^k f\|_{L_{x,v}^2}^2\nonumber\\
&+C\|\nabla^k (a, b,\omega,u,\theta)\|_{L^2}^2\|(a, b,\omega,u,\theta)\|_{L^\infty}^2\nonumber\\
\leq\, &\frac{1}{4} \|\nabla^k a^H \|_{L^2}^2+C\|\nabla^k(b-u)\|_{L^2}^2+C\|\{\mathbf{I}-\mathbf{P}\} \nabla^k f\|_{L_{x,v}^2}^2\nonumber\\
&+C\eps_1\|\nabla^k (a, b,\omega,u,\theta)\|_{L^2}^2.
\end{align}
By incorporating the estimates \eqref{G5.40} and \eqref{G5.41} into \eqref{G5.39}, we consequently obtain   
\begin{align*}
& \frac{\mathrm{d}}{\mathrm{d} t}   \int_{\mathbb{R}^3} \frac{2}{21} \nabla^{k-1} a^H \nabla^{k-1} \Big(\frac{\sqrt{6}}{5}\sum_{i=1}^3 \Upsilon_i(\{\mathbf{I}-\mathbf{P}\} f^H )-{\rm div}   b^H  \Big) \mathrm{d} x+\frac{1}{14}   \|\nabla^{k} a^H  \|_{L^2}^2\nonumber \\
&\quad \leq \frac{5}{42}   \|\nabla^{k-1} {\rm div}   b^H \|_{L^2}^2+C\|\nabla^{k}(b-u)\|_{L^2}^2\nonumber\\
& \qquad +C\|\{\mathbf{I}-\mathbf{P}\}\nabla^k f\|_{L_{x,v}^2}^2 +C\eps_1\|\nabla^k (a, b,\omega,u,\theta)\|_{L^2}^2.     
\end{align*}
This together with \eqref{G5.34} and \eqref{G5.38} leads to \eqref{G5.29}.
\end{proof}

Finally, we proceed to estimate $\|\nabla^k \rho^H\|_{L^2}$ for $k=2,3,4$.

\begin{lem}\label{L5.5}
For  the classical solution $(\rho, u,\theta,f)$ to the Cauchy problem \eqref{Q2}--\eqref{Q2-2}, it holds that 
\begin{align}\label{G5.42}
& -\frac{\mathrm{d}}{\mathrm{d} t}   \int_{\mathbb{R}^3} \nabla^{k-1} \rho^H \cdot \nabla^{k-1} {\rm div}u \mathrm{d} x+\lambda_{15}\|\nabla^{k}  \rho^H\|_{L^2}^2\nonumber \\
&\quad  \lesssim  \eps_1\|\nabla^{k} (a,\rho)\|_{L^2}^2+\|\nabla^{k}(b-u)\|_{L^2}^2+\|\nabla  ^{k}u\|_{L^2}^2+\|\nabla^{k} \rho^L\|_{L^2}^2+\|\nabla^k \theta\|_{L^2}^2 ,
\end{align}
for any $k=2,3,4$ and $0 \leq t < T$, where $\lambda_{15}>0$ is independent of  $T$. 
\end{lem}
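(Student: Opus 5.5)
Following Lemma \ref{L3.4}, but restricted to high frequencies, I would apply $\phi_1(D_x)$ to the velocity equation in \eqref{Q2} (in the form \eqref{G3.2} with $\mu=\lambda=\kappa=0$):
\begin{align*}
\partial_t u^H+\nabla\rho^H+\nabla\theta^H-(b-u)^H=(\mathcal{N}_2-au)^H .
\end{align*}
Next I apply $\nabla^{k-1}$, take the inner product with $\nabla^{k-1}\nabla\rho^H$, and integrate over $\mathbb{R}^3$. The left-hand side then contains $\|\nabla^k\rho^H\|_{L^2}^2$. The time-derivative term is handled by moving $\partial_t$ onto $\rho^H$:
\begin{align*}
\int_{\mathbb{R}^3}\nabla^{k-1}\partial_t u^H\cdot\nabla^{k-1}\nabla\rho^H{\rm d}x
=-\frac{{\rm d}}{{\rm d}t}\int_{\mathbb{R}^3}\nabla^{k-1}{\rm div}u^H\,\nabla^{k-1}\rho^H{\rm d}x-\int_{\mathbb{R}^3}\nabla^{k-1}{\rm div}u^H\,\nabla^{k-1}\partial_t\rho^H{\rm d}x .
\end{align*}
Since $\phi_1(D_x)$ is self-adjoint and idempotent up to the cutoff overlap, I can replace $u^H$ by $u$ in the pairing against $\rho^H$, at the cost of a $\rho^L$ correction. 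More precisely, I write $\nabla^{k-1}{\rm div}u^H\cdot\nabla^{k-1}\rho^H$ and keep the functional exactly as stated, estimating the difference $\int\nabla^{k-1}{\rm div}u^L\nabla^{k-1}\rho^H$. This difference is bounded by $\|\nabla^k u\|_{L^2}\|\nabla^{k-1}\rho\|$ restricted to the annulus, where $\|\nabla^{k-1}\rho^H\|\lesssim\|\nabla^k\rho^H\|$ by \eqref{G2.3}. So it is absorbable after integrating in time. Alternatively, I simply use the functional with ${\rm div}u$ and accept the extra term from $\partial_t$ of the low part.

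In the second term I substitute $\partial_t\rho^H=-{\rm div}u^H+(\mathcal{N}_1)^H$. This gives $\|\nabla^k u\|_{L^2}^2$ plus $\|\nabla^{k}u\|_{L^2}\|\nabla^{k-1}\mathcal{N}_1\|_{L^2}$. By Lemma \ref{LA.2} and the smallness $\|(\rho,u)\|_{H^4}\lesssim\eps_1^{1/2}$ (Theorem \ref{T1.2}), the latter is $\lesssim\|\nabla^k u\|^2+\eps_1\|\nabla^k\rho\|^2$.

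The linear terms $\nabla^k\theta^H$ and $\nabla^{k-1}(b-u)^H$ are handled by Cauchy--Schwarz and Young with $\frac14\|\nabla^k\rho^H\|^2$. For $\|\nabla^{k-1}(b-u)^H\|$ I use $\|g^H\|\lesssim\|\nabla g\|$ from \eqref{G2.3} to upgrade it to $\|\nabla^k(b-u)\|_{L^2}$. The nonlinear terms $\nabla^{k-1}(\mathcal{N}_2-au)^H$ come next: $u\cdot\nabla u$, $h(\rho)\theta\nabla\rho$, $g(\rho)(-\nabla\rho+(b-u)-au)$ and $au$. By the commutator and product estimates of Lemma \ref{LA.2} and the $L^\infty$ smallness, these are bounded by $\eps_1^{1/2}\|\nabla^k(a,\rho,u,\theta,b-u)\|_{L^2}$. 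For the zero-order products $au$ and $g(\rho)(b-u)$, the $H$ projection together with \eqref{G2.3} allows taking $k-1$ derivatives to $k$ derivatives. Finally, writing $\|\nabla^k\rho\|^2\le2\|\nabla^k\rho^H\|^2+2\|\nabla^k\rho^L\|^2$, the $\eps_1\|\nabla^k\rho\|^2$ contributions are absorbed into $\lambda_{15}\|\nabla^k\rho^H\|^2$, leaving the $\|\nabla^k\rho^L\|^2$ term on the right.

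The main obstacle is the top-order case $k=4$ in the nonlinear term $h(\rho)\theta\nabla\rho$ and in $\mathcal{N}_1$. There $\nabla^{3}(\theta\nabla\rho)$ contains $\theta\nabla^4\rho$, which only needs $H^4$ regularity of $\rho$, so no derivative is lost because the pairing is with $\nabla^4\rho^H$ at the same level. Similarly, $\nabla^{3}(u\cdot\nabla\rho)$ is paired with $\nabla^4 u$. I would check carefully that Lemma \ref{LA.2} places the top derivative on $\rho$ with an $L^\infty$ coefficient of size $\eps_1^{1/2}$, so that it produces exactly $\eps_1\|\nabla^4\rho\|^2$ and does not require $\nabla^5$.
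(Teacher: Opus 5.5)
Your individual estimates are sound: the linear terms, the $\mathcal{N}_1$ term, and the top-order product bounds at $k=4$. The gap is in matching your functional to the one in \eqref{G5.42}.

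\textbf{The gap.} Applying $\phi_1(D_x)$ to the momentum equation and pairing with $\nabla^k\rho^H$ gives $-\frac{\rm d}{{\rm d}t}\int_{\mathbb{R}^3}\nabla^{k-1}\rho^H\,\nabla^{k-1}{\rm div}u^H{\rm d}x$. Since \eqref{G5.42} is a pointwise-in-time differential inequality, you must control $\frac{\rm d}{{\rm d}t}\int_{\mathbb{R}^3}\nabla^{k-1}\rho^H\,\nabla^{k-1}{\rm div}u^L{\rm d}x$, not the size of that integral. Bounding it by $\|\nabla^k u\|_{L^2}\|\nabla^k\rho^H\|_{L^2}$ says nothing about its derivative. ``Absorbing after integrating in time'' only produces boundary terms at $0$ and $t$, which are not of the form on the right of \eqref{G5.42}. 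Also, $\phi_1$ is not idempotent, and the correction involves $u^L$, not $\rho^L$.

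\textbf{The fix.} Your ``alternatively'' is the right repair. Differentiate that integral, and use the projected continuity equation for $\partial_t\rho^H$ and the low-frequency momentum equation for $\partial_t u^L$. The $\nabla\rho^L$ term in the latter produces $-\int\nabla^k\rho^H\cdot\nabla^k\rho^L{\rm d}x$. That cross term is where $\|\nabla^k\rho^L\|_{L^2}^2$ really comes from. It does not come from absorbing $\eps_1\|\nabla^k\rho\|_{L^2}^2$, which the right-hand side already allows.

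Combined with your computation, this is exactly the paper's proof, which skips the detour. It pairs the unprojected $\nabla^{k-1}$\eqref{G5.43} with $\nabla^k\rho^H$. The $\partial_t u$ term then gives the stated functional directly, after one integration by parts in $x$. Splitting $\nabla^k\rho=\nabla^k\rho^L+\nabla^k\rho^H$ on the left leaves the cross term $R_2^H\le\frac16\|\nabla^k\rho^H\|_{L^2}^2+C\|\nabla^k\rho^L\|_{L^2}^2$.

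\textbf{What your version buys.} With the functional $\int\nabla^{k-1}\rho^H\,\nabla^{k-1}{\rm div}u^H{\rm d}x$ you get the inequality with no $\|\nabla^k\rho^L\|_{L^2}^2$ term at all. That functional is also bounded by $C\|\nabla^k(\rho,u)\|_{L^2}^2$ via \eqref{G2.3}, so it would work equally well in $\mathcal{E}_2(t)$ for Proposition \ref{P5.2}. It just is not literally the statement.

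\textbf{A minor slip.} The product rule gives $-\int\nabla^{k-1}\partial_t{\rm div}u^H\,\nabla^{k-1}\rho^H{\rm d}x=-\frac{\rm d}{{\rm d}t}\int\nabla^{k-1}{\rm div}u^H\,\nabla^{k-1}\rho^H{\rm d}x+\int\nabla^{k-1}{\rm div}u^H\,\nabla^{k-1}\partial_t\rho^H{\rm d}x$, with a plus sign on the last term. This is harmless, since you estimate that term in absolute value.
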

\begin{proof}
By \eqref{Q2}$_2$, we have  
\begin{align}  \label{G5.43}
 \nabla \rho  =&\, -\partial_t u-\nabla\theta+h(\rho)(b - u)+h(\rho)\big(\rho\nabla\rho-u\cdot\nabla u-\theta\cdot\nabla\rho- au\big),  
\end{align}  
where  $h(\rho)$ is defined as in \eqref{define:g,h}, i.e., 
\begin{gather*}  
h(\rho) = \frac{1}{1+\rho}.
\end{gather*}  
Multiplying $\nabla^{k-1}$\eqref{G5.43} by $\nabla^k \rho^H$, utilizing the decomposition $\rho= \rho^L  + \rho^H $, and subsequently integrating   over $\mathbb{R}^3_x$, one obtains
\begin{align}\label{G5.44}
 \|\nabla^k\rho^H\|_{L^2}^2 
=\,&-\int_{\mathbb{R}^3} \nabla^k \rho^H\cdot \nabla^{k-1}\partial_t u{\rm d}x-\int_{\mathbb{R}^3} \nabla^k \rho^H\cdot \nabla^k \rho^L{\rm d}x\nonumber\\
&-\int_{\mathbb{R}^3}\nabla^k\rho^H\cdot\nabla^k \theta{\rm d}x+\int_{\mathbb{R}^3} \nabla^k \rho^H\cdot \nabla^{k-1}[h(\rho)(b-u)]{\rm d}x\nonumber\\
&+\int_{\mathbb{R}^3} \nabla^k \rho^H\cdot\nabla^{k-1}\big[h(\rho)(\rho\nabla\rho-u\cdot\nabla u-au-\theta\nabla\rho)\big]{\rm d}x\nonumber\\
\equiv:\,& \sum_{i=1}^5 R_i^H.
\end{align} 
For the first term $R_1^H$, by using \eqref{Q2}$_1$, integration by parts, Young's inequality, and Lemmas \ref{LA.1} and \ref{LA.2}, we have
\begin{align}\label{G5.45}
R_1^H=&\, -\frac{{\rm d}}{{\rm d}t}\int_{\mathbb{R}^3} \nabla^k \rho^H\cdot\nabla^{k-1} u{\rm d}x +\int_{\mathbb{R}^3} \nabla^k
\partial_t \rho^H\cdot\nabla^{k-1}u {\rm d}x\nonumber\\
=&\,-\frac{{\rm d}}{{\rm d}t}\int_{\mathbb{R}^3} \nabla^k \rho^H\cdot\nabla^{k-1} u{\rm d}x+\int_{\mathbb{R}^3}\nabla^{k-1}\big[(\rho+1){\rm div}u+u\cdot\nabla\rho      \big]^H\cdot\nabla^k u{\rm d}x\nonumber\\
\leq&\,-\frac{{\rm d}}{{\rm d}t}\int_{\mathbb{R}^3} \nabla^k \rho^H\cdot\nabla^{k-1} u{\rm d}x+C\|\nabla^{k-1} {\rm div}u\|_{L^2}^2+C\eps_1\|\nabla^k (\rho, u)\|_{L^2}^2.
\end{align}
Continuing from \eqref{G2.3}, by applying   Young's inequality and Lemmas \ref{LA.1} and \ref{LA.2}, we arrive at  
\begin{align}
|R_2^H|\leq&\, \frac{1}{6} \|\nabla^k \rho^H\|_{L^2}^2+C \|\nabla^k \rho^L\|_{L^2}^2, \label{G5.46-1} \\
|R^H_3|\leq &\,\frac{1}{6} \|\nabla^k \rho^H\|_{L^2}^2+C\|\nabla^k \theta\|_{L^2}^2.\label{G5.46}
\end{align}
In a similar manner to the above, we have
\begin{align}
|R^H_4| =&\, -\int_{\mathbb{R}^3} \nabla^{k-1}\rho^H \cdot\nabla^{k} [h(\rho)(b-u)]{\rm d}x\nonumber\\
\leq&\, C\|\nabla^{k-1}\rho^H\|_{L^2}\big( \|\nabla^k(b-u)\|_{L^2}\|h(\rho)\|_{L^\infty}+\|b-u\|_{L^\infty}\|\nabla^k h(\rho)\|_{L^2}    \big)\nonumber\\
\leq&\, \frac{1}{6}\|\nabla^k \rho^H\|_{L^2}^2+C\|\nabla^k(b-u)\|_{L^2}^2+C\eps_1 \|\nabla^k \rho\|_{L^2}^2,\label{G5.47-1}\\
|R^H_5|\leq&\,  \frac{1}{6}\|\nabla^{k} \rho^H\|_{L^2}^2+C\|\nabla^{k-1}\rho\|_{L^6}^2\big(\|(\rho,u,\theta)\|_{L^6}^2\|\nabla(\rho,u,\theta)\|_{L^6}^2+\|a\|_{L^6}^2\|u\|_{L^6}^2 
 \big)\nonumber\\
&+C\|h(\rho)\|_{L^\infty}^2\big(\|\nabla^{k-1}(\rho,u,\theta)\|_{L^6}^2\|\nabla(\rho,u,\theta)\|_{L^3}^2+\|\nabla^k(\rho,u,\theta)\|_{L^2}^2\|(\rho,u,\theta)\|_{L^\infty}^2  \big) \nonumber\\
&+C\|h(\rho)\|_{L^\infty}^2\|\nabla^{k-1}(a,u)\|_{L^6}^2\|(a,u)\|_{L^3}^2\nonumber\\
\leq&\, \frac{1}{6}\|\nabla^{k} \rho^H\|_{L^2}^2+C\eps_1 \|\nabla^k(a,\rho,u,\theta)\|_{L^2}^2.\label{G5.48}
\end{align}
Putting the estimates \eqref{G5.45}--\eqref{G5.48} into \eqref{G5.44}, we ultimately obtain the desired estimate \eqref{G5.42}.
\end{proof}

Now, we present the  proof of Proposition \ref{P5.2} as follows.  
\begin{proof}[Proof of Proposition \ref{P5.2}]
To begin with, we define the following energy and dissipation functionals:   
\begin{align}\label{G5.49}
\mathcal{E}_2(t):=\,&  \|\nabla^k(\rho,u,\theta)\|_{L^2}^{2} 
+\|\nabla^k f\|_{L_{x,v}^2}^2  +\tau_4\mathfrak{E}_0^H(t)
+\tau_5\int_{\mathbb{R}^3}\nabla^{k-1}\rho^H\cdot\nabla^{k-1}{\rm div}u{\rm d}x,\\ \label{G5.50}
\mathcal{D}_{1}(t):=\,&\|\nabla^k(b-u)\|_{L^2}^{2}
+\|\nabla^k(\sqrt{2}\omega-\sqrt{3}\theta)\|_{L^2}^2\nonumber\\
&+\|\{\mathbf{I}-\mathbf{P}\}\nabla^k f\|_{\nu}^{2}+\|\nabla^k(a^H,b^H,\omega^H,\rho^H)\|_{L^2}^2,
\end{align}
where $0<\tau_4$, $\tau_5<1$ are small constants.
From the definitions of $\mathfrak{E}_0^H(t)$ in \eqref{G5.28} and $\mathcal{E}_2(t)$ in \eqref{G5.49}, and by applying Young's inequality together with \eqref{G2.3}, we conclude that    
\begin{align}\label{G5.51}
 \mathcal{E}_2(t)\backsim   \|\nabla^k(\rho,u,\theta)\|_{L^2}^{2} +\|\nabla^k f\|_{L_{x,v}^2}^2,
\end{align}
for $k=2,3,4$.

Adding \eqref{G5.26} to $\tau_4\times$ \eqref{G5.29} and $\tau_5\times$ \eqref{G5.42}, we deduce that
\begin{align}\label{G5.52}
\frac{{\rm d}}{{\rm d}t}\mathcal{E}_2(t)+\lambda_{16} \mathcal{D}_{1}(t)\lesssim \|\nabla^k(a^L,b^L,\omega^L,\rho^L)\|_{L^2}^2,   
\end{align}
where $\lambda_{16} > 0$ satisfies $\lambda_{16} < \min\{\lambda_{13}, \tau_4\lambda_{14}, \tau_5\lambda_{15}\}$.
Here, we have used the following properties of the dissipative structures of $b-u$ and $\sqrt{2}\omega-\sqrt{3}\theta$:
\begin{align*}
\|\nabla^k u\|_{L^2}^2 \leq&\, 2\big(\|\nabla^k(b-u)\|_{L^2}^2+\|\nabla^k b\|_{L^2}^2\big)\\
\|\nabla^k \theta\|_{L^2}^2 \leq&\, \frac{2}{3}\big(\|\nabla^k(\sqrt{2}\omega-\sqrt{3}\theta)\|_{L^2}^2 + 2\|\nabla^k \omega\|_{L^2}^2\big), 
\end{align*}
and the properties of \eqref{G2.1} and \eqref{G2.3}:
\begin{align*}
\|\nabla^k (a,b,\omega,\rho)\|_{L^2}^2\leq&\, C\|\nabla^k (a^H,b^H,\omega^H,\rho^H)\|_{L^2}^2+C\|\nabla^k (a^L,b^L,\omega^L,\rho^L)\|_{L^2}^2.    
\end{align*}

Adding $\lambda_{16}  \|\nabla^k (a^L,b^L,\omega^L,\rho^L)\|_{L^2}^2$ to both  sides
of \eqref{G5.52}, we find that 
\begin{align}\label{G5.53}
 \frac{{\rm d}}{{\rm d}t}\mathcal{E}_2(t)+\lambda_{17} \mathcal{E}_2(t)\lesssim \|\nabla^k(a^L,b^L,\omega^L,\rho^L)\|_{L^2}^2,   
\end{align}
for some $\lambda_{17}>0$. 

Combining \eqref{G5.51} and \eqref{G5.53}, we consequently obtain \eqref{G5.25}. Therefore, the proof of Proposition \ref{P5.2} is completed.
\end{proof}

\subsection{Time   decay estimates of  solutions to the nonlinear problem}
Now, we focus on deriving the time-decay rates of $(\rho, u, \theta, f)$ to the nonlinear problem  \eqref{Q2}--\eqref{Q2-2} under the assumption stated in Theorem \ref{T1.4}.   By utilizing Theorem \ref{T5.1},
and the higher-order Lyapunov-type inequality in Proposition \ref{P5.2},
we deduce the optimal time-decay rates of classical solutions and their
gradients to the nonlinear problem \eqref{Q2}--\eqref{Q2-2}.

In what follows, we aim to establish the optimal time-decay estimates of $\|\nabla^k(\rho,u,\theta)\|_{L^2}^{2} +\|\nabla^k f\|_{L_{x,v}^2}^2$ with $k=0, 1 $. To achieve this, we rewrite the solution $U(t)$ to the problem  \eqref{Q2}--\eqref{Q2-2} as   
\begin{align}\label{G5.54}  
U(t)=\mathbb{A}(t)U_{0}+\int_{0}^{t}\mathbb{A}(t-s)\big( S_\rho(s),S_u(s), S_\theta(s), S_f(s)\big) {\rm d}s,
\end{align}  
where the  source term $S_f$ takes the form  
\begin{align*}  
S_f := &\,  -u\cdot\nabla_vf+\frac{1}{2}u\cdot vf + (|v|^2-3)\sqrt{M}\theta+ \frac{\theta}{\sqrt{M}}\Delta_v(\sqrt{M}f)\nonumber\\
=\,&-u\cdot\nabla_v\{\mathbf{I}-\mathbf{P}_0-\mathbf{P}_1\}f+\frac{1}{2} u\cdot v\{\mathbf{I}-\mathbf{P}_0-\mathbf{P}_1\}f- u\cdot\nabla_v  \mathbf{P}_0 f\nonumber\\
&-u\cdot\nabla_v \mathbf{P}_1 f+ \frac{1}{2}u\cdot v \mathbf{P}_0 f+\frac{1}{2}u\cdot v \mathbf{P}_1 f+ (|v|^2-3)\sqrt{M}\theta\nonumber\\ &+\mathbf{P}_2\Big(\frac{\theta}{\sqrt{M}}\Delta_v(\sqrt{M}f)\Big)
+\{\mathbf{I}-\mathbf{P}_2\}\Big(\frac{\theta}{\sqrt{M}}\Delta_v(\sqrt{M}f)\Big)\nonumber\\
=:&\, {\rm div}_v G-\frac{1}{2}v\cdot G+h+au\cdot v\sqrt{M}-u\cdot b\sqrt{M}+a\theta(|v|^2-3)\sqrt{M}+u\cdot v(b\cdot v\sqrt{M}),
\end{align*}
with  
\begin{align*}  
G = -u \{\mathbf{I} - \mathbf{P}_{\mathbf{0}} - \mathbf{P}_{\mathbf{1}}\} f, \quad h = \frac{1}{2} \theta \{\mathbf{I} - \mathbf{P}_2\} \Big[\frac{1}{\sqrt{M}} \nabla_v \cdot \big(\sqrt{M} (2 \nabla_v f - v f)\big)\Big].  
\end{align*}  
And the other source terms $S_\rho$, $S_u$, and $S_\theta$ are given by 
\begin{align*}  
S_\rho :=&\, -\rho {\rm div} u - u \cdot \nabla \rho, \\  
S_u :=&\, -u \cdot \nabla u + \frac{1}{1+\rho} \big(\rho \nabla \rho - \theta \nabla \rho + \rho (b-u)\big) - \frac{a u}{1+\rho}, \\  
S_\theta :=&\, -u \cdot \nabla \theta - \theta {\rm div} u - \frac{\sqrt{3}}{1+\rho} \big(\rho (\sqrt{2} \omega - \sqrt{3} \theta)\big) - \frac{1}{1+\rho} \big(u \cdot (b-u)\big)- \frac{u \cdot b}{1+\rho} \\  
&\quad  - \frac{3a \theta}{1+\rho}+ \frac{a |u|^2 }{1+\rho}  .  
\end{align*} 
 
Thus, by applying $\nabla^l\phi_0(D_x)$ to \eqref{G5.54}, 
we decompose $\nabla^l U^L(t)$ into the following form:
\begin{align}\label{G5.55}  
 {\nabla^l} U^L(t) 
=&\,\mathbb{A}(t){\nabla^l}U^L_0+\int_0^t\mathbb{A}(t-s)\Big(0,0,0,{\nabla^l}
\Big({\rm div}_v G-\frac{1}{2}v\cdot G\Big)^L\Big)\mathrm{d}s\nonumber\\
&+\int_0^t\mathbb{A}(t-s) \big( 0,0,0,{\nabla^l}
\big(u\cdot av\sqrt{M}-u\cdot b\sqrt{M}\nonumber\\
& \ \ \ \ \ \ +u\cdot v(b\cdot v\sqrt{M})
+a\theta(|v|^2-3)\sqrt{M} \big)^L\big)\mathrm{d}s \nonumber\\
&+\int_0^t\mathbb{A}(t-s)(0,0,0,{\nabla^l}h^L)\mathrm{d}s
+\int_0^t\mathbb{A}(t-s)({\nabla^l}S^L_\rho(s),0,0,0)\mathrm{d}s\nonumber\\
&+\int_0^t\mathbb{A}(t-s)(0,{\nabla^
l}S^L_u(s),0,0)\mathrm{d}s+\int_0^t\mathbb{A}(t-s)(0,0,{\nabla^l}S^L_\theta(s),0)\mathrm{d}s\nonumber\\
\equiv:&\,\sum_{j=1}^7L_j(t). 
\end{align}  

Now, we handle with the terms $ L_i(t) (i=1,\dots, 7)$ in turn. 
\emph{Without the assumption that the quantity $\|(\rho_0, u_0, \theta_0, f_0)\|_{\mathcal{Z}_1}$  
  is small}, below we first establish slower decay rates for $(\rho, u, \theta, f)$ 
  and their third-order spatial derivatives in $\mathcal{Z}_2$-norm,
   by solely leveraging the boundedness of the $\mathcal{Z}_1$-norm of $(\rho, u, \theta, f)$.
We define
\begin{align}\label{G5.56}
\mathcal{E}_{\infty}(t):=\sup_{0\leq s\leq t}\,&\bigg\{\sum_{l=0}^1(1+s)^{\frac{3}{4}+l}\big(\|\nabla^l(\rho,u,\theta)\|_{L^2}^2 +\|\nabla^l f\|_{L_{x,v}^2}^2\big)\nonumber\\
&+\sum_{l=2}^4(1+s)^{\frac{7}{4}}\big(\|\nabla^l(\rho,u,\theta)\|_{L^2}^2+\|\nabla^l f\|_{L_{x,v}^2}^2 \big)   \bigg\}.
\end{align}
For any $l=0,1$, by invoking Theorem \ref{T5.1}, setting $q=\frac{4}{3}$ in \eqref{G5.7}, and applying the interpolation inequality
and Lemma \ref{LA.3}, we obtain   
\begin{align}\label{G5.57}
\|L_1(t)\|_{\mathcal{Z}_2}\lesssim (1+t)^{-\frac{3}{8}-\frac{l}{2}}\|U_0\|_{\mathcal{Z}_{\frac{4}{3}}}\lesssim (1+t)^{-\frac{3}{8}-\frac{l}{2}}\|U_0\|_{\mathcal{Z}_{1}}^\frac{1}{2}\|U_0\|_{\mathcal{Z}_{2}}^\frac{1}{2}.    
\end{align}
Regarding the term $L_2(t)$, for any $l=0,1$, by applying $\mathbf{P}_0G=0$, $\mathbf{P}_1 G=0$ and substituting $q=1$, $m=l$ as well as $q=\frac{3}{2}$, $m=0$ into \eqref{G5.8} respectively, we
compute
\begin{align}\label{G5.58}
\| L_2(t)\|_{\mathcal{Z}_2}^2\lesssim\,& \int_0^{\frac{t}{2}}(1+t-s)^{-\frac{3}{2}-l}
\| u\{\mathbf{I}-\mathbf{P}_0-\mathbf{P}_1\}f\|_{\mathcal{Z}_1}^2\mathrm{d}s\nonumber\\
&+ \int_{\frac{t}{2}}^t(1+t-s)^{-\frac{1}{2}}
\|\nabla^l ( u\{\mathbf{I}-\mathbf{P}_0-\mathbf{P}_1\}f)\|_{\mathcal{Z}_{\frac{3}{2}}}^2\mathrm{d}s\nonumber\\
\lesssim &\, \mathcal{E}_{ \infty}^2(t)\int_0^{\frac{t}{2}}(1+t-s)^{-\frac{3}{2}-l}
(1+s)^{-\frac{3}{2}}\mathrm{d}s +\mathcal{E}_{ \infty}^2(t)\int_{\frac{t}{2}}^t(1+t-s)^{-\frac{1}{2}}(1+s)^{-\frac{5}{2}-l}\mathrm{d}s\nonumber\\
\lesssim &\,(1+t)^{-\frac{3}{2}-l}\mathcal{E}_{\infty}^2(t).
\end{align}
Similarly, taking $q=1$, $m=l$ and $q=\frac{3}{2}$, $m=0$ in \eqref{G5.7} in Theorem \ref{T5.1} respectively, and  using Lemmas \ref{LA.1}--\ref{LA.3} along with the inequality $v^k\sqrt{M}\lesssim 1$ for any integer $k\geq 0$,  we derive that 
\begin{align}\label{G5.59}
&\| L_3(t)\|_{\mathcal{Z}_2}\nonumber\\
\lesssim&\, \int_0^{\frac{t}{2}}(1+t-s)^{-\frac{3}{4}-\frac{l}{2}}\|u\cdot av\sqrt{M}-u\cdot b\sqrt{M}+u\cdot v(b\cdot v\sqrt{M})+a\theta(|v|^2-3)\sqrt{M}\|_{\mathcal{Z}_1}\mathrm{d}s\nonumber\\
&+ \int_{\frac{t}{2}}^{t}(1+t-s)^{-\frac{1}{4}}\big\|\nabla^l\big(u\cdot av\sqrt{M}-u\cdot b\sqrt{M}+u\cdot v(b\cdot v\sqrt{M})+a\theta(|v|^2-3)\sqrt{M}\big)\big\|_{\mathcal{Z}_{\frac{3}{2}}}\mathrm{d}s\nonumber\\
\lesssim&\, \int_0^{\frac{t}{2}}(1+t-s)^{-\frac{3}{4}-\frac{l}{2}}  \|(a,b,u,\theta)\|_{L^2}^2       
  \mathrm{d}s +\int_{\frac{t}{2}}^t(1+t-s)^{-\frac{1}{4}} \|(a,b,u,\theta)\|_{L^\infty}  \|\nabla^l (a,b,u,\theta)\|_{L^2}\mathrm{d}s\nonumber\\
\lesssim& \,\mathcal{E}_{ \infty}(t)\int_0^{\frac{t}{2}}(1+t-s)^{-\frac{3}{4}-\frac{l}{2}}(1+s)^{-\frac{3}{4}}
  \mathrm{d}s + \mathcal{E}_{ \infty}(t)\int_{\frac{t}{2}}^{t}(1+t-s)^{-\frac{1}{4}}(1+s)^{-\frac{5}{4}-\frac{l}{2}}
  \mathrm{d}s\nonumber\\
\lesssim&\, (1+t)^{-\frac{1}{2}-\frac{l}{2}}\mathcal{E}_{ \infty}(t), 
\end{align}
for $l=0,1$.

For the term $ L_4(t) $, it follows from \eqref{G5.8} in Theorem \ref{T5.1} and the condition $\mathbf{P}h=0$ that  
\begin{align}\label{G5.60}
\| L_4(t)\|_{\mathcal{Z}_2}^2   \lesssim&\,  \int_0^{\frac{t}{2}}(1+t-s)^{-\frac{3}{2}-l}
\Big\|\nu^{-\frac{1}{2}}  \theta \{\mathbf{I} - \mathbf{P}_2\} \Big[\frac{1}{\sqrt{M}} \nabla_v \cdot \big(\sqrt{M} (2 \nabla_v f - v f)\big)\Big]\Big\|_{\mathcal{Z}_1}^2\mathrm{d}s\nonumber\\
&+  \int_{\frac{t}{2}}^{t}(1+t-s)^{-\frac{1}{2}-l}
\Big\|\nu^{-\frac{1}{2}}  \theta \{\mathbf{I} - \mathbf{P}_2\} \Big[\frac{1}{\sqrt{M}} \nabla_v \cdot \big(\sqrt{M} (2 \nabla_v f - v f)\big)\Big]\Big\|_{\mathcal{Z}_{\frac{3}{2}}}^2\mathrm{d}s\nonumber\\
\lesssim&\,  (1+t)^{-\frac{3}{2}-l}\mathcal{E}_{\infty}(t)\int_0^{\frac{t}{2}}
\mathcal{D}(\rho,u,\theta,f)(s)\mathrm{d}s + (1+t)^{-\frac{5}{4}-l}\mathcal{E}_{\infty}(t)\int_{\frac{t}{2}}^t\mathcal{D}(\rho,u,\theta,f)(s)\mathrm{d}s\nonumber\\
\lesssim\,& \mathcal{E}_0(1+t)^{-\frac{5}{4}-l}\mathcal{E}_{\infty}(t),
\end{align}
for $l=0,1$.

For the remaining terms $L_5(t)$, $L_6(t)$, and $L_7(t)$, taking $q=1$, $m=0$ and $q=\frac{3}{2}$, $m=0$ in  \eqref{G5.7} in Theorem \ref{T5.1}, respectively, we get
\begin{align}\label{G5.61}
&\|L_5(t)\|_{\mathcal{Z}_2}+\|L_6(t)\|_{\mathcal{Z}_2}+\|L_7(t)\|_{\mathcal{Z}_2} \nonumber\\
\lesssim &\,
\int_0^{\frac{t}{2}}(1+t-s)^{-\frac{3}{4}-\frac{l}{2
}}\big\|\big(S_\rho (s),S_u(s),S_\theta(s)\big)\big\|_{\mathcal{Z}_1}\mathrm{d}s\nonumber\\
&+\int_{\frac{t}{2}}^{t}(1+t-s)^{-\frac{1}{4}-\frac{l}{2}}\big\|\big(S_\rho (s),S_u(s),S_\theta(s)\big)\big\|_{\mathcal{Z}_{\frac{3}{2}}}\mathrm{d}s\nonumber\\
\lesssim\,&\int_0^{\frac{t}{2}}(1+t-s)^{-\frac{3}{4}-\frac{l}{2}}\big(\|(a,b,\omega,\rho,u,\theta)\|_{H^1}\|\nabla (a,b,\omega,\rho,u,\theta)\|_{H^1}+\|(a,b,u,\theta)\|_{L^2}^2 \big)\mathrm{d}s\nonumber\\
&+\int_{\frac{t}{2}}^t(1+t-s)^{-\frac{1}{4}-\frac{l}{2}} \|(a,b,\omega,\rho,u,\theta)\|_{L^6}\|\nabla(a,b,\omega,\rho,u,\theta)\|_{H^1} \mathrm{d}s\nonumber\\
&+\int_{\frac{t}{2}}^t(1+t-s)^{-\frac{1}{4}-\frac{l}{2}} \|(a,b,u,\theta)\|_{L^6}\|(a,b,u,\theta)\|_{L^3} \mathrm{d}s\nonumber\\
\lesssim&\,\mathcal{E}_{ \infty}(t)\int_0^{\frac{t}{2}}(1+t-s)^{-\frac{3}{4}-\frac{l}{2}}(1+s)^{-\frac{3}{4}}\mathrm{d}s +\mathcal{E}_{ \infty}(t)\int_{\frac{t}{2}}^t(1+t-s)^{-\frac{1}{4}-\frac{l}{2}}(1+s)^{-\frac{5}{4}}\mathrm{d}s\nonumber\\
\lesssim&\, (1+t)^{-\frac{1}{2}-\frac{l}{2}}\mathcal{E}_{ \infty}(t),
\end{align}
for $l=0,1$. 

By plugging the estimates  \eqref{G5.57}--\eqref{G5.61} into \eqref{G5.55}, 
we   therefore infer that  
\begin{align}\label{G5.62}
\|\nabla^l U^L(t)\|_{\mathcal{Z}_2}\lesssim (1+t)^{-\frac{3}{8}-\frac{l}{2}}
\big(   \|U_0\|_{\mathcal{Z}_1}^{\frac{1}{2}}\|U_0\|_{\mathcal{Z}_2}^{\frac{1}{2}}
+\mathcal{E}^{\frac{1}{2}}_0\mathcal{E}^{\frac{1}{2}}_{ \infty}(t)+\mathcal{E}_{ \infty}(t)\big),  
\end{align}
for $l=0,1$.

From \eqref{G5.25} in Proposition \ref{P5.2} and Gronwall's inequality, one gets
\begin{align}\label{G5.63}
\|\nabla^k(\rho,u,\theta)(t)\|_{L^2}^2+\|\nabla^k f(t)\|_{L_{x,v}^2}^2
\lesssim\, & e^{-\lambda_{12}t}\big( \|\nabla^k(\rho,u,\theta)(0)\|_{L^2}^2
+\|\nabla^k f(0)\|_{L_{x,v}^2}^2\big) \nonumber\\
& +\int_{0}^te^{-\lambda_{12}(t-s)} \|\nabla^k U^L(s)\|_{\mathcal{Z}_2}^2\mathrm{d}s,  
\end{align}
for $k=2,3,4$.
Then, the third inequality in \eqref{G2.3} implies that for any $k=2,3,4$,  
\begin{align}  \label{G5.64}
\|\nabla^k U^L(t)\|_{\mathcal{Z}_2}^2 \lesssim \|\nabla^l U^L(t)\|_{\mathcal{Z}_2}^2,  
\end{align}  
for any $l=0,1$.  

Putting   \eqref{G5.64} into \eqref{G5.63} gives rise to
\begin{align}\label{G5.65}
 \|\nabla^k(\rho,u,\theta)(t)\|_{L^2}^2+\|\nabla^k f(t)\|_{L_{x,v}^2}^2
\lesssim&\, e^{-\lambda_{12}t}\big( \|\nabla^k(\rho,u,\theta)(0)\|_{L^2}^2
+\|\nabla^k f(0)\|_{L_{x,v}^2}^2\big) \nonumber\\
& +\int_{0}^te^{-\lambda_{12}(t-s)} \|\nabla^l U^L(s)\|_{\mathcal{Z}_2}^2\mathrm{d}s,  
\end{align}
for any $k=2,3,4$ and $l=0,1$.

By combining \eqref{G5.62} with \eqref{G5.65}, we further obtain  
\begin{align}\label{G5.66}
\|\nabla^k U\|_{\mathcal{Z}_2}^2\lesssim (1+t)^{-\frac{7}{4}}\big(\|U_0\|_{\mathcal{H}^4}^2+\|U_0\|_{\mathcal{Z}_1}\|U_0\|_{\mathcal{Z}_2}
+\mathcal{E}_0\mathcal{E}_{ \infty}(t)+\mathcal{E}_{ \infty}^2(t)    \big),
\end{align}
for any $k=2,3,4$.

On the other hand, for any $l=0,1$, from \eqref{G2.3} and \eqref{G5.66}, it holds that
\begin{align*}
\|\nabla^l U(t)\|_{\mathcal{Z}_2}^2\lesssim &\,\|\nabla^l U^L(t)\|_{\mathcal{Z}_2}^2+\|\nabla^l U^H(t)\|_{\mathcal{Z}_2}^2\nonumber\\
\lesssim\,& \|\nabla^l U^L(t)\|_{\mathcal{Z}_2}^2+\|\nabla^k U(t)\|_{\mathcal{Z}_2}^2\nonumber\\
\lesssim\,&(1+t)^{-\frac{3}{4}-l}\big(\|U_0\|_{\mathcal{H}^4}^2
+\|U_0\|_{\mathcal{Z}_1}\|U_0\|_{\mathcal{Z}_2}+\mathcal{E}_0\mathcal{E}_{\infty}(t)
+\mathcal{E}_{\infty}^2(t)    \big),
\end{align*}
which together with \eqref{G5.66} and the definition of
$\mathcal{E}_{\infty}(t)$ in \eqref{G5.56} leads to
\begin{align*}
\mathcal{E}_{\infty}(t)\lesssim  \|U_0\|_{\mathcal{H}^4}^2+\|U_0\|_{\mathcal{Z}_1}\|U_0\|_{\mathcal{H}^4}+\mathcal{E}_0\mathcal{E}_{ \infty}(t)
+\mathcal{E}_{ \infty}^2(t).   
\end{align*}

Thanks to the smallness of $\|U_0\|_{\mathcal{H}^3}$ and $\mathcal{E}_0$, from Lemma \ref{LA.4}, we arrive at 
\begin{align*}
\mathcal{E}_{\infty}(t)\lesssim  \|U_0\|_{\mathcal{H}^4}^2+\|U_0\|_{\mathcal{Z}_1}\|U_0\|_{\mathcal{H}^4},
\end{align*}
which implies that 
\begin{align}\label{decay-a}
\|f\|_{L_{x,v}^2}+\|(\rho,u,\theta)\|_{L^2} \lesssim&\, (1+t)^{-\frac{3}{8}}\big(\|U_0\|_{\mathcal{H}^4}+\|U_0\|_{\mathcal{Z}_1}^{\frac{1}{2}}\|U_0\|_{\mathcal{H}^4}^{\frac{1}{2}}   \big),  \\\label{decay-b}
\|\nabla ( \rho,u,f)\|_{\mathcal{H}^3}\lesssim&\,(1+t)^{-\frac{7}{8}}\big(\|U_0\|_{\mathcal{H}^4}+\|U_0\|_{\mathcal{Z}_1}^{\frac{1}{2}}\|U_0\|_{\mathcal{H}^4}^{\frac{1}{2}}   \big).
\end{align}

\subsection{Optimal time-decay rates of  solutions and their gradients }
Note that the decay rates obtained in \eqref{decay-a} and \eqref{decay-b} are 
slower compared to the optimal time-decay rates of classical solutions for the nonlinear problem \eqref{Q2}--\eqref{Q2-2}. To achieve the improved results stated in Theorem \ref{T1.4}, we present more precise energy estimates for $(\rho, u, \theta, f)$. Subsequently, we define  
\begin{align}\label{G5.69}  
\mathcal{E}_{1,\infty}(t) := \sup_{0 \leq s \leq t} & \bigg\{ \sum_{l=0}^1 (1+s)^{\frac{3}{2}+l} \big( \| \nabla^l f \|_{L_{x,v}^2}^2 + \| \nabla^l (\rho, u,\theta) \|_{L^2}^2 \big) \nonumber \\  
& + \sum_{l=2}^4(1+s)^{\frac{5}{2}} \big( \| \nabla^l f \|_{L_{x,v}^2}^2 + \| \nabla^l (\rho, u,\theta) \|_{L^2}^2 \big) \bigg\}.  
\end{align}

We aim to provide a more precise energy estimate for the nonlinear terms associated with $ L_1(t),\dots,L_7(t) $ in \eqref{G5.55}.

Taking $q=1,m=l$ in \eqref{G5.7}, we get
\begin{align}\label{G5.70}
\| L_1(t)\|_{\mathcal{Z}_2}\lesssim (1+t)^{-\frac{3}{4}-\frac{l}{2}}
\|U_0\|_{\mathcal{Z}_{1}},
\end{align}
for $l=0,1$. Similar to \eqref{G5.58}, we have
\begin{align}\label{G5.71}
\| L_2(t)\|_{\mathcal{Z}_2}^2\lesssim\,& \int_0^{\frac{t}{2}}(1+t-s)^{-\frac{3}{2}-l}
\| u\{\mathbf{I}-\mathbf{P}_0-\mathbf{P}_1\}f\|_{\mathcal{Z}_1}^2\mathrm{d}s\nonumber\\
&+ \int_{\frac{t}{2}}^t(1+t-s)^{-\frac{1}{2}}
\|\nabla^l ( u\{\mathbf{I}-\mathbf{P}_0-\mathbf{P}_1\}f)\|_{\mathcal{Z}_{\frac{3}{2}}}^2\mathrm{d}s\nonumber\\
\lesssim &\,(1+t)^{-\frac{3}{2}-l}\big(\|U_0\|_{\mathcal{H}^4}^2+\|U_0\|_{\mathcal{Z}_1}\|U_0\|_{\mathcal{H}^4}   \big)\mathcal{E}_{1,\infty}(t),    
\end{align}
for $l=0,1$.
Following a similar approach to \eqref{G5.59}, we derive that 
\begin{align}\label{G5.72}
\| L_3(t)\|_{\mathcal{Z}_2}
\lesssim&\, \int_0^{\frac{t}{2}}(1+t-s)^{-\frac{3}{4}-\frac{l}{2}}\|(a,b,u,\theta)\|_{L^2}^2 
  \mathrm{d}s\nonumber\\
&+ \int_{\frac{t}{2}}^t(1+t-s)^{-\frac{1}{4}}\|\nabla^l(a,b,u,\theta)\|_{L^2}\|\nabla(a,b,u,\theta)\|_{L^2}\mathrm{d}s\nonumber\\
\lesssim&\, \big(\|U_0\|_{\mathcal{H}^4}+\|U_0\|_{\mathcal{Z}_1}^{\frac{1}{2}}\|U_0\|_{\mathcal{H}^4}^{\frac{1}{2}}   \big)\mathcal{E}^{\frac{1}{2}}_{\infty}(t)\int_0^{\frac{t}{2}}(1+t-s)^{-\frac{3}{4}-\frac{l}{2}}(1+s)^{-\frac{9}{8}}
  \mathrm{d}s\nonumber\\
&+ \big(\|U_0\|_{\mathcal{H}^4}+\|U_0\|_{\mathcal{Z}_1}^{\frac{1}{2}}\|U_0\|_{\mathcal{H}^4}^{\frac{1}{2}}   \big)\mathcal{E}^{\frac{1}{2}}_{\infty}(t)\int_{\frac{t}{2}}^{t}(1+t-s)^{-\frac{1}{4}}(1+s)^{-\frac{13}{8}-\frac{l}{2}}
  \mathrm{d}s\nonumber\\
\lesssim&\, (1+t)^{-\frac{3}{4}-\frac{l}{2}}\big(\|U_0\|_{\mathcal{H}^4}+\|U_0\|_{\mathcal{Z}_1}^{\frac{1}{2}}\|U_0\|_{\mathcal{H}^4}^{\frac{1}{2}}   \big)\mathcal{E}^{\frac{1}{2}}_{1,\infty}(t),
\end{align}
for $l=0,1$.
Similar to \eqref{G5.60}, one easily observes that   
\begin{align}\label{G5.73}
\| L_4(t)\|_{\mathcal{Z}_2}^2   \lesssim&\,  \int_0^{\frac{t}{2}}(1+t-s)^{-\frac{3}{2}-l}
\Big\|\nu^{-\frac{1}{2}}  \theta \{\mathbf{I} - \mathbf{P}_2\} \Big[\frac{1}{\sqrt{M}} \nabla_v \cdot \big(\sqrt{M} (2 \nabla_v f - v f)\big)\Big]\Big\|_{\mathcal{Z}_1}^2\mathrm{d}s\nonumber\\
&+  \int_{\frac{t}{2}}^{t}(1+t-s)^{-\frac{1}{2}-l}
\Big\|\nu^{-\frac{1}{2}}  \theta \{\mathbf{I} - \mathbf{P}_2\} \Big[\frac{1}{\sqrt{M}} \nabla_v \cdot \big(\sqrt{M} (2 \nabla_v f - v f)\big)\Big]\Big\|_{\mathcal{Z}_{\frac{3}{2}}}^2\mathrm{d}s\nonumber\\
\lesssim&\, (1+t)^{-\frac{3}{2}-l}\mathcal{E} _{1,\infty}(t)\int_0^{\frac{t}{2}}
\mathcal{D}(\rho,u,\theta,f)(s)\mathrm{d}s \nonumber\\ &+(1+t)^{-\frac{5}{2}}\mathcal{E}_{1,\infty}(t)\int_{\frac{t}{2}}^t\mathcal{D}(\rho,u,\theta,f)(s)\mathrm{d}s\nonumber\\
\lesssim&\, \mathcal{E}(0)(1+t)^{-\frac{3}{2}-l}\mathcal{E}_{1,\infty}(t),
\end{align}
for $l=0,1$.

For the remaining terms $L_5(t)$, $L_6(t)$, and $L_7(t)$ with $l=0,1$, similar to \eqref{G5.61}, it follows from \eqref{G5.7} in Theorem \ref{T5.1}  that  
\begin{align}\label{G5.74}
&\|L_5(t)\|_{\mathcal{Z}_2}+\|L_6(t)\|_{\mathcal{Z}_2}+\|L_7(t)\|_{\mathcal{Z}_2} \nonumber\\
\lesssim &\,
   \int_0^{\frac{t}{2}}(1+t-s)^{-\frac{3}{4}-\frac{l}{2}}\big\|\big(S_\rho(s),S_u(s), S_{\theta}(s)\big)\big\|_{\mathcal{Z}_1}\mathrm{d} s \nonumber\\
&+  \int_{\frac{t}{2}}^{t}(1+t-s)^{-\frac{1}{4}-\frac{l}{2}}\Big(\| S_\rho(s)\|_{\mathcal{Z}_{\frac{3}{2}}}+\Big\|S_u(s)+\frac{\rho}{1+\rho}(b-u)+\frac{a u}{1+\rho} \Big\|_{\mathcal{Z}_{\frac{3}{2}}}\Big)\mathrm{d} s\nonumber\\
&+  \int_{\frac{t}{2}}^{t}(1+t-s)^{-\frac{1}{4}-\frac{l}{2}}\Big\|S_{\theta}(s)+\frac{\sqrt{3} \rho}{1+\rho}(\sqrt{2} \omega-\sqrt{3} \theta)-\frac{(1+a)|u|^2-3 a \theta-2 u \cdot b}{1+\rho} \Big\|_{\mathcal{Z}_{\frac{3}{2}}}\mathrm{d} s\nonumber\\
&+  \int_{\frac{t}{2}}^{t}(1+t-s)^{-\frac{1}{4}}\Big\|\nabla^l\Big(\frac{\rho}{1+\rho}(b-u)+\frac{a u}{1+\rho} \Big)\Big\|_{\mathcal{Z}_{\frac{3}{2}}}\mathrm{d} s\nonumber\\
&+  \int_{\frac{t}{2}}^{t}(1+t-s)^{-\frac{1}{4}}\Big\|\nabla^l\Big(\frac{\sqrt{3} \rho}{1+\rho}(\sqrt{3} \theta-\sqrt{2} \omega)\Big)\Big\|_{\mathcal{Z}_{\frac{3}{2}}}\mathrm{d} s\nonumber\\
&+  \int_{\frac{t}{2}}^{t}(1+t-s)^{-\frac{1}{4}}\Big\|\nabla^l\Big(\frac{(1+a)|u|^2-3 a \theta-2 u \cdot b}{1+\rho} \Big)\Big\|_{\mathcal{Z}_{\frac{3}{2}}}\mathrm{d} s\nonumber\\ 
\lesssim&\, \big(\|U_0\|_{\mathcal{Z}_{1}}^\frac{1}{2}\|U_0\|_{\mathcal{H}^4}^\frac{1}{2}+\|U_0\|_{\mathcal{H}^4}\big)\mathcal{E}_{1,\infty}^{\frac{1}{2}} (t) \int_0^{\frac{t}{2}}(1+t-s)^{-\frac{3}{4}-\frac{l}{2}}(1+s)^{-\frac{9}{8}}\mathrm{d}s\nonumber\\
&+ \big(\|U_0\|_{\mathcal{Z}_{1}}^\frac{1}{2}\|U_0\|_{\mathcal{H}^4}^\frac{1}{2}+\|U_0\|_{\mathcal{H}^4}\big)\mathcal{E}_{1,\infty}^{\frac{1}{2}} (t)\int_{\frac{t}{2}}^{t}(1+t-s)^{-\frac{1}{4}-\frac{l}{2}}(1+s)^{-\frac{17}{8}}\mathrm{d}s\nonumber\\
&+ \big(\|U_0\|_{\mathcal{Z}_{1}}^\frac{1}{2}\|U_0\|_{\mathcal{H}^4}^\frac{1}{2}+\|U_0\|_{\mathcal{H}^4}\big)\mathcal{E}_{1,\infty}^{\frac{1}{2}} (t)\int_{\frac{t}{2}}^{t}(1+t-s)^{-\frac{1}{4}}(1+s)^{-\frac{13}{8}-\frac{l}{2}}\mathrm{d}s\nonumber\\
\lesssim& \,(1+t)^{-\frac{3}{4}-\frac{l}{2}}\big(\|U_0\|_{\mathcal{Z}_{1}}^{\frac{1}{2}}\|U_0\|_{\mathcal{H}^4}^\frac{1}{2}+\|U_0\|_{\mathcal{H}^4}\big)\mathcal{E}_{1,\infty}^{\frac{1}{2}} (t).
\end{align} 

Collecting all the above estimates \eqref{G5.70}--\eqref{G5.74} together,
 we consequently obtain  
\begin{align}\label{G5.75}
\|\nabla^l U^L(t)\|_{\mathcal{Z}_2}\lesssim (1+t)^{-\frac{3}{4}-\frac{l}{2}} 
\big(\|U_0\|_{\mathcal{H}^4}
+\|U_0\|_{\mathcal{Z}_1}^{\frac{1}{2}}\|U_0\|_{\mathcal{H}^4}^{\frac{1}{2}}+\mathcal{E}^{\frac{1}{2}}_0  
 \big)\mathcal{E}^{\frac{1}{2}}_{1,\infty}(t),   
\end{align}
for $l=0,1$.

Similar to \eqref{G5.66}, by using \eqref{G2.3}, we   infer that
\begin{align}\label{G5.76}
\|\nabla^k U(t)\|_{\mathcal{Z}_2}\lesssim (1+t)^{-\frac{5}{4}}
 \big(\|U_0\|_{\mathcal{H}^4}+\|U_0\|_{\mathcal{Z}_1}^{\frac{1}{2}}
 \|U_0\|_{\mathcal{H}^4}^{\frac{1}{2}}+\mathcal{E}^{\frac{1}{2}}_0  
  \big)\mathcal{E}^{\frac{1}{2}}_{1,\infty}(t),   
\end{align}
for any $k=2,3,4$.
Utilizing the inequalities  \eqref{G2.3} and the estimate \eqref{G5.76} leads to  
\begin{align}\label{G5.77}
\|\nabla^l U(t)\|_{\mathcal{Z}_2}^2\lesssim \,
& (\|\nabla^l U^L(t)\|_{\mathcal{Z}_2}^2+\|\nabla^l U^H(t)\|_{\mathcal{Z}_2}^2)\nonumber\\
\lesssim\,&  (\|\nabla^l U^L(t)\|_{\mathcal{Z}_2}^2+\|\nabla^k U(t)\|_{\mathcal{Z}_2}^2)\nonumber\\
\lesssim\,& (1+t)^{-\frac{3}{2}-l}\big(\big(\|U_0\|_{\mathcal{H}^4}^2+\|U_0\|_{\mathcal{H}^4}\|U_0\|_{\mathcal{Z}_1} 
+\mathcal{E}_0    \big)\mathcal{E}_{1,\infty}(t)+\|U_0\|_{\mathcal{H}^4\cap{\mathcal{Z}_1}}^2  \big),
\end{align}
for any $k=2,3,4$ and $l=0,1$. Then,
\eqref{G5.77} in combination with   the definition of $\mathcal{E}_{1,\infty}(t)$   in \eqref{G5.69} implies that  
\begin{align*}
\mathcal{E}_{1,\infty}(t)\lesssim 
 \big(\|U_0\|_{\mathcal{H}^4}^2+\|U_0\|_{\mathcal{Z}_1}\|U_0\|_{\mathcal{H}^4}+\mathcal{E}_0 
    \big)\mathcal{E}_{1,\infty}(t)+\|U_0\|_{\mathcal{H}^4\cap{\mathcal{Z}_1}}^2  .   
\end{align*}
Owing to the smallness of $ \|U_0\|_{\mathcal{H}^3} $ and $ \mathcal{E}_0 $, we derive  that
\begin{align*}  
\mathcal{E}_{1,\infty}(t) \lesssim \|U_0\|_{\mathcal{H}^4 \cap \mathcal{Z}_1}^2,  
\end{align*}  
which guarantees that the inequalities in  \eqref{d1} hold.

\subsection{End of the proof of  Theorem \ref{T1.4}}
\begin{proof}[Proof of Theorem \ref{T1.4} \emph{(}continued\emph{)}]
To end the proof of  Theorem \ref{T1.4},  we first need to investigate  
the  time-decay rates of classical solutions in $L^p$ space with  $p\in [2,+\infty]$. In fact, 
based on the decay rates obtained in   \eqref{d1}, we have 
\begin{align*}
\|(\rho,u,\theta)\|_{L^2}\lesssim \,& (1+t)^{-\frac{3}{4}}, \qquad
 \|(\rho,u,\theta)\|_{L^6}\lesssim   \|\nabla(\rho,u,\theta)\|_{L^2}\lesssim (1+t)^{-\frac{5}{4}},   \\
 \|f\|_{L_v^2(L^{2})}\lesssim \,&  (1+t)^{-\frac{3}{4}},\qquad\quad\, 
 \| f\|_{L_v^2(L^{6})}\lesssim    \|\nabla f\|_{L_v^2(L^{2})}\lesssim (1+t)^{-\frac{5}{4}}.      
\end{align*}

For $p\in [2,6]$, using the interpolation inequality, it holds that
\begin{align*}
 \|(\rho,u,\theta)\|_{L^p}\lesssim \,
 &  \|(\rho,u,\theta)\|_{L^2}^{\zeta}\|(\rho,u,\theta)\|_{L^6}^{{1-\zeta}}
 \lesssim  (1+t)^{-\frac{3}{2}(1-\frac{1}{p})} , \\
 \|f\|_{L_v^2(L^{p})}\lesssim \,&  
 \|f\|_{L_v^2(L^{2})}^{\zeta}\|f\|_{L_v^2(L^{6})}^{{1-\zeta}}\lesssim  (1+t)^{-\frac{3}{2}(1-\frac{1}{p})},
\end{align*}
where $\zeta= ({6-p})/ {2p} \in [0,1]$.

Using   Gagliardo-Nirenberg's inequality, we derive  that
\begin{align*} 
\|(\rho,u,\theta)\|_{L^{\infty}} \lesssim \,& \|\nabla(\rho,u,\theta)\|_{H^{1}} 
\lesssim (1+t)^{-\frac{5}{4}}, \\ 
\|f\|_{L_v^2(L^{\infty})} \lesssim \,& \|\nabla f\|_{L_v^2(H^{1})} 
\lesssim (1+t)^{-\frac{5}{4}}. 
\end{align*}  

For $p \in [6,\infty]$, by applying the interpolation inequality, it follows that  
\begin{align*} 
\|(\rho,u,\theta)\|_{L^p} &\lesssim \|(\rho,u,\theta)\|_{L^6}^{\zeta'} 
\|(\rho,u,\theta)\|_{L^{\infty}}^{1-\zeta'} \lesssim (1+t)^{-\frac{5}{4}}, \\ 
\|f\|_{L_v^2(L ^{p})} &\lesssim \|f\|_{L_v^2(L ^{6})}^{\zeta'} 
\|f\|_{L_v^2(L ^{\infty})}^{1-\zeta'} \lesssim (1+t)^{-\frac{5}{4}}, 
\end{align*}  
where $\zeta' = {6}/{p} \in [0,1]$. This guarantees that \eqref{d2} holds.

 \medskip 
 Finally, we prove the remaining inequality \eqref{NJKd3}. 
 It follows  from \eqref{Q2}$_2$, \eqref{Q2}$_3$ and 
\eqref{G3.21}$_1$ \linebreak  –\eqref{G3.21}$_3$ that
\begin{align}\label{NJKG3.158}
 \partial_t (b-u)+2(b-u) 
=\,&  -\nabla a-\frac{2}{\sqrt{6}}\nabla\omega-{\rm div}
\big\langle\{ v\otimes v-{\rm Id}\} \sqrt{M},\{\mathbf{I}-\mathbf{P}\}f               
    \big\rangle+2au\nonumber\\
&+\nabla \rho+\nabla\theta  +u\cdot\nabla u
+h(\rho)\theta\nabla\rho +g(\rho)  \big(  \nabla\rho +(b-u)+au\big)\nonumber\\
\equiv:\,& \mathcal{Q}_1, 
\end{align}
and
\begin{align}\label{NJKG3.159}
&\partial_t(\sqrt{2}\omega-\sqrt{3}\theta)+5(\sqrt{2}\omega-\sqrt{3}\theta)\nonumber\\ 
=\,& \sqrt{2}\Big(\sqrt{6}a\theta-\frac{2}{\sqrt{6}}{\rm div}b
+\frac{2}{\sqrt{6}}u\cdot b-{\rm div} \Big\langle \frac{v(|v|^2-3)}{\sqrt{6}}\sqrt{M}, 
\{\mathbf{I}-\mathbf{P}\}f   \Big\rangle   \Big)\nonumber\\
&+\sqrt{3}{\rm div}u+\sqrt{3}\theta{\rm div}u
+\sqrt{3} u\cdot\nabla\theta-\sqrt{3} u\cdot b-3\sqrt{3}a\theta\nonumber\\
&-\sqrt{3} h(\rho) \big( -u\cdot(b-u) +a|u|^2 
+\sqrt{3}(\sqrt{2}\omega-\sqrt{3}\theta)-b\cdot u-3a\theta      \big)  \nonumber\\
\equiv:\,& \mathcal{Q}_2.
\end{align}
The equations \eqref{NJKG3.158} and \eqref{NJKG3.159} can be represented as % follows:
\begin{align}\label{NJKG3.160}
b-u=&\, e^{-2t}(b_0-u_0)+\int_{0}^t e^{-2(t-\tau)} \mathcal{Q}_1{\rm d}\tau,    
\end{align}
and
\begin{align}\label{NJKG3.161}
\sqrt{2}\omega-\sqrt{3}\theta=&\, e^{-5t}(\sqrt{2}\omega_0-\sqrt{3}\theta_0)
+\int_{0}^t e^{-5(t-\tau)} \mathcal{Q}_2{\rm d}\tau.   
\end{align} 
Taking the $L^2$-norm of both sides of \eqref{NJKG3.160} and \eqref{NJKG3.161}, and applying 
the decay estimates in \eqref{d2}, we obtain   
\begin{align}\label{NJKG3.162}
\|(b-u)(t)\|_{L^2}\lesssim&\, e^{-2t} \|b_0-u_0\|_{L^2}+\int_0^t e^{-2(t-\tau)}
\big(\|\nabla(a,\rho,\theta,\omega)\|_{L^2}+\|\nabla f\|_{L_{x,v}^2}\big)  {\rm d}\tau\nonumber\\
&+\int_0^t e^{-2t}\big(\|au\|_{L^2}+\|u\cdot\nabla u\|_{L^2}
+\|\theta\nabla\rho\|_{L^2}+\|\rho\|_{L^\infty}\|b-u\|_{L^2}  \big){\rm d}\tau \nonumber\\
\lesssim&\, e^{-2t}+\int_0^t e^{-2(t-\tau)}(1+\tau)^{-\frac{5}{4}}{\rm d}\tau 
+\Big(\int_0^t e^{-4(t-\tau)}(1+\tau)^{-\frac{5}{2}}{\rm d}\tau\Big)^\frac{1}{2}\|b-u\|_{L_t^2(L^2)}\nonumber\\
\lesssim&\, (1+t)^{-\frac{5}{4}},
\end{align}
and
\begin{align}\label{NJKG3.163}
\|(\sqrt{2}\omega-\sqrt{3}\theta)(t)\|_{L^2}\lesssim&\, e^{-5t} \|\sqrt{2}\omega_0
-\sqrt{3}\theta_0\|_{L^2}+\int_0^t e^{-5(t-\tau)}\big(\|\nabla(u,b)\|_{L^2}
+\|\nabla f\|_{L_{x,v}^2}\big)  {\rm d}\tau\nonumber\\
&+\int_0^t e^{-5t}\big(\|b\cdot
 u\|_{L^2}+\|a\theta\|_{L^2}+\|\theta{\rm div} u\|_{L^2}+\|u\cdot\nabla\theta\|_{L^2} 
 \big){\rm d}\tau \nonumber\\
&+\int_0^t\big(  \|a\|_{L^6}\|u\|_{L^6}^2+\|u\cdot(b-u)\|_{L^2}  
+\|\rho\|_{L^\infty}\| \sqrt{2}\omega-\sqrt{3}\theta \|_{L^2} \big)
  {\rm d}\tau\nonumber\\
\lesssim&\, e^{-2t}+\int_0^t e^{-2(t-\tau)}(1+\tau)^{-\frac{5}{4}}{\rm d}\tau \nonumber\\
&+\Big(\int_0^t e^{-4(t-\tau)}(1+\tau)^{-\frac{5}{2}}{\rm d}\tau\Big)^\frac{1}{2}
\|\sqrt{2}\omega-\sqrt{3}\theta\|_{L_t^2(L^2)}\nonumber\\
\lesssim&\, (1+t)^{-\frac{5}{4}}.
\end{align}

We now proceed to derive the time-decay estimate  for the microscopic
 component $\{\mathbf{I}-\mathbf{P}\}f$ in \eqref{NJKd3}. To this end, we consider the equation 
 \eqref{Q2}$_4$, from which we obtain  
\begin{align}\label{NJKG3.164}
&\partial_t \{\mathbf{I}-\mathbf{P}\}f-\mathcal{L}{\{\mathbf{I}-\mathbf{P}\}}f
+v\cdot\nabla\{\mathbf{I}-\mathbf{P}\}f-\frac{1}{2}u\cdot v\mathbf{P}f+u\cdot\nabla_v\mathbf{P}f\nonumber\\
&\quad +\partial_t {\mathbf{P}f}+v\cdot\nabla {\mathbf{P}f}+(b-u)v\sqrt{M}
+\frac{1}{\sqrt{3}}(|v|^2-3)\sqrt{M}(\sqrt{2}\omega-\sqrt{3}\theta)\nonumber\\
&\qquad =  \frac{1}{2}u\cdot v\{\mathbf{I}-\mathbf{P}\}f
-u\cdot \nabla_v\{\mathbf{I}-\mathbf{P}\}f+\frac{\theta}{\sqrt{M}} \Delta_v(\sqrt{M}  \mathbf{P}f)\nonumber\\
& \qquad \ \ \, +\frac{\theta}{\sqrt{M}} \Delta_v(\sqrt{M} \{\mathbf{I}- \mathbf{P}\}f).
\end{align}
Taking the $L^2_{x,v}$ inner product of \eqref{NJKG3.164} and utilizing the
 property \eqref{G2.6} and Young's inequality, we  arrive at
\begin{align*} 
&\frac{1}{2}  \frac{\rm d}{{\rm d}t} \|\{\mathbf{I}-
\mathbf{P}\}f\|_{L_{x,v}^2}^2+\bar\lambda \|\{\mathbf{I}-\mathbf{P}\}f\|_{\nu}^2\nonumber\\
&\quad \lesssim \|\partial_t{\mathbf{P}f} \|_{L_{x,v}^2}^2
+\big(1+\|(u,\theta)\|_{H^2}^2\big)\|\nabla(a,b,\omega)\|_{L^2}^2+\|b-u\|_{L^2}^2
\nonumber\\
&\qquad+\|\sqrt{2}\omega-\sqrt{3}\theta\|_{L^2}^2 
+\eps_1\bar\lambda\|\{\mathbf{I}-\mathbf{P}\}f\|_{\nu}^2+\|u\|_{H^2}\|\{\mathbf{I}-\mathbf{P}\}f\|_{\nu}^2,
\end{align*}
which, together with the fact $ \|\{\mathbf{I}-\mathbf{P}\}f\|_{L_{x,v}^2} 
\lesssim\|\{\mathbf{I}-\mathbf{P}\}f\|_{\nu}^2$, leads to
\begin{align}\label{NJKG3.165}
  \|\{\mathbf{I}-\mathbf{P}\}f\|_{L_{x,v}^2}^2 
 \lesssim&\, \int_0^t e^{-2(t-\tau)}\big( \|\partial_t{\mathbf{P}f} 
 \|_{L_{x,v}^2}^2+\|\nabla(a,b,\omega)\|_{L^2}^2\nonumber\\
 & \quad + \|b-u\|_{L^2}^2
 +\|\sqrt{2}\omega-\sqrt{3}\theta\|_{L^2}^2  \big){\rm d}\tau %\nonumber\\
  +e^{-2\bar\lambda t}  \|\{\mathbf{I}-\mathbf{P}\}f_0\|_{L_{x,v}^2}^2\nonumber\\
 \lesssim&\, \int_0^t e^{-2(t-\tau)}(1+\tau)^{-\frac{5}{2}}{\rm d}\tau
 +e^{-2\bar\lambda t}+ \int_0^t e^{-2(t-\tau)}  \|\partial_t{\mathbf{P}f} \|_{L_{x,v}^2}^2{\rm d}\tau\nonumber\\
 \lesssim&\,(1+t)^{-\frac{5}{2}} + \int_0^t e^{-2(t-\tau)}  \|\partial_t{\mathbf{P}f} \|_{L_{x,v}^2}^2{\rm d}\tau.
\end{align}

Through a direct computation, it follows from \eqref{G3.21}$_1$–\eqref{G3.21}$_3$ that  
\begin{align}\label{NJKG3.166}
\|\partial_t{\mathbf{P}f} \|_{L_{x,v}^2}\lesssim&\, \|\partial_t a\|_{L^2}+ 
  \|\partial_t b\|_{L^2} +\|\partial_t \omega\|_{L^2}\nonumber\\
\lesssim\,&\|\nabla (a,b,\omega)\|_{L^2}+\|\nabla f\|_{L_{x,v}^2}+\|b-u\|_{L^2}\nonumber\\
& +\|\sqrt{2}\omega-\sqrt{3}\theta\|_{L^2}+\|au\|_{L^2}+\|u\cdot b\|_{L^2}\nonumber\\
\lesssim\,& (1+t)^{-\frac{5}{4}},
\end{align}

Substituting \eqref{NJKG3.166} into \eqref{NJKG3.165}, we obtain 
\begin{align}\label{NJKG3.167}
 \|\{\mathbf{I}-\mathbf{P}\}f\|_{L_{x,v}^2} \lesssim (1+t)^{-\frac{5}{4}}.   
\end{align}
By putting the estimates \eqref{NJKG3.162}, \eqref{NJKG3.163}, 
and \eqref{NJKG3.167} together, we obtain the desired  inequality \eqref{NJKd3}.
 Thus, the proof of Theorem \ref{T1.4} is completed.  
\end{proof}

\begin{rem}
The term $L_4(t)$ in \eqref{G5.55}, which involves $\Delta_v f$, 
introduces significant challenges in the aforementioned estimates.
 Specifically,  $L_4(t)$ leads to energy dissipation, thereby 
 hindering the derivation of optimal decay rates for the second-order 
 and third-order derivatives of $(\rho, u, \theta, f)$. This phenomenon 
is strikingly different from \cite{Ww-CMS-2024} where the  
compressible isentropic NS-VFP system \eqref{H2}  was studied.
\end{rem}

\section{Periodic domain case} 

In this section, we investigate the non-isentropic compressible 
 NS-VFP system \eqref{Q1}   in the periodic domain $\mathbb{T}^3$. 
We first state the global existence result to the   problem 
\eqref{Q1}--\eqref{Q1-1} in $\mathbb{T}^3$. 
\begin{thm}\label{T1.5}
% Let $\Omega = \mathbb{T}^3$.
Assume that the initial data $(\rho_0^{\mu,\lambda,\kappa},
u_0^{\mu,\lambda,\kappa},\theta_0^{\mu,\lambda,\kappa},f_0^{\mu,\lambda,\kappa})$
satisfies $F_0^{\mu,\lambda,\kappa}=M+\sqrt{M}f_0^{\mu,\lambda,\kappa}\geq 0$, 
$(\rho_0^{\mu,\lambda,\kappa},u_0^{\mu,\lambda,\kappa},\theta_0^{\mu,\lambda,\kappa})
\in H^4(\mathbb{T}^3)$ and $f_0^{\mu,\lambda,\kappa}\in H^4_{x,v}(\mathbb{T}^3\times\mathbb{R}^3)$.
Assume that there exists a  small constant $\eps_2>0$ independent of $\mu$, $\lambda$ and $\kappa$  such that
\begin{align*}
\|(\rho^{{\mu,\lambda,\kappa}}_0,u^{{\mu,\lambda,\kappa}}_0,
\theta^{{\mu,\lambda,\kappa}}_0)\|_{H^4(\mathbb{T}^3)}
+\|f^{\mu,\lambda,\kappa}_0\|_{H^4_{x,v}(\mathbb{T}^3\times\mathbb{R}^3)}\leq \eps_2,   
\end{align*}
 and
\begin{gather}
\int_{\mathbb{T}^{3}}a_{0}^{{\mu,\lambda,\kappa}}\mathrm{d}x=0,
\quad\int_{\mathbb{T}^3} 
 \rho_{0}^{{\mu,\lambda,\kappa}}\mathrm{d}x=0,
 \quad\int_{\mathbb{T}^3}\big(b_0^{\mu,\lambda,
 \kappa}+(1+\rho_0^{\mu,\lambda,\kappa})u_0^{\mu,\lambda,\kappa}\big){\rm d}x=0, \label{e1-1}\\
\int_{\mathbb{T}^{3}}\Big((1+\rho^{{\mu,\lambda,\kappa}}_{0})
\Big(\theta_0^{\mu,\lambda,\kappa}+\frac{1}{2}|u_0^{\mu,\lambda,\kappa}|^2\Big)
+\frac{\sqrt{6}}{2}\omega^{\mu,\lambda,\kappa}_0\Big)\mathrm{d}x=0,\label{e1-2}
\end{gather}
with
\begin{align*}
a_0^{\mu,\lambda,\kappa}=&\,\int_{\mathbb{T}^3}\sqrt{M}f_0^{\mu,\lambda,\kappa}(x,v){\rm d}v,\\
b_0^{\mu,\lambda,\kappa}=&\,\int_{\mathbb{T}^3}v\sqrt{M}f_0^{\mu,\lambda,\kappa}(x,v){\rm d}v,\\
\ \omega_0^{\mu,\lambda,\kappa}=
&\,\int_{\mathbb{T}^3}\frac{|v|^2-3}{\sqrt{6}}\sqrt{M}f_0^{\mu,\lambda,\kappa}(x,v){\rm d}v.
\end{align*}
Then, the  problem \eqref{Q1}--\eqref{Q1-1} in $\mathbb{T}^3$ admits a unique global
 classical solution $(\rho^{{\mu,\lambda,\kappa}}, u^{{\mu,\lambda,\kappa}},\theta^{{\mu,\lambda,\kappa}},
  \linebreak f^{{\mu,\lambda,\kappa}})$  satisfying 
   $\rho^{{\mu,\lambda,\kappa}}=1+\varrho^{\mu,\lambda,\kappa}>0$, 
   $F^{\mu,\lambda,\kappa}=M+\sqrt{M}f^{\mu,\lambda,\kappa}\geq 0$, and
\begin{align*}
&\sup_{t\geq 0}\big\{\|(\rho^{{\mu,\lambda,\kappa}},u^{{\mu,\lambda,\kappa}},
\theta^{{\mu,\lambda,\kappa}} )(t)\|_{H^{4}(\mathbb{T}^3) }^2
+\|f^{\mu,\lambda,\kappa}(t)\|_{H_{x,v}^4(\mathbb{T}^3) }^2\big\}\nonumber\\
& \quad +\int_{0}^{t} \mathcal{D}(\rho^{\mu,\lambda,\kappa},
u^{\mu,\lambda,\kappa},\theta^{\mu,\lambda,\kappa},f^{\mu,\lambda,\kappa})(\tau){\rm d}\tau
 +\mu\int_0^{t}\|\nabla u^{\mu,\lambda,\kappa} (\tau)\|_{H^4(\mathbb{T}^3)}^2{\rm d}\tau\nonumber\\
&\qquad
+(\mu+\lambda)\int_0^{t}\|{\rm div} u^{\mu,\lambda,\kappa} (\tau)\|_{H^4(\mathbb{T}^3)}^2{\rm d}\tau
 +\kappa\int_0^{t}\|\nabla \theta^{\mu,\lambda,\kappa} (\tau)\|_{H^4(\mathbb{T}^3)}^2{\rm d}\tau \nonumber\\
&\qquad\quad \leq C_4\big(\|(\rho^{{\mu,\lambda,\kappa}}_0,u^{{\mu,\lambda,\kappa}}_0,
\theta^{{\mu,\lambda,\kappa}}_0)\|_{H^4(\mathbb{T}^3)}^2
+\|f^{\mu,\lambda,\kappa}_0\|_{H^4_{x,v}(\mathbb{T}^3\times\mathbb{R}^3)}^2\big),
\end{align*}
and
\begin{align*}
& \|(\rho^{\mu,\lambda,\kappa},u^{\mu,\lambda,\kappa},\theta^{\mu,\lambda,\kappa})(t)\|_{H^4(\mathbb{T}^3) }
+\|f^{\mu,\lambda,\kappa}(t)\|_{H^4_{x,v}(\mathbb{T}^3\times\mathbb{R}^3) }\nonumber\\
& \quad \leq C_4e^{-\zeta_1 t}\big(\|(\rho^{{\mu,\lambda,\kappa}}_0,u^{{\mu,\lambda,\kappa}}_0,
\theta^{{\mu,\lambda,\kappa}}_0)\|_{H^4(\mathbb{T}^3)}
+\|f^{\mu,\lambda,\kappa}_0\|_{H^4_{x,v}(\mathbb{T}^3\times\mathbb{R}^3)}\big),
\end{align*}
for all $t\geq0$, where $\zeta_1,C_4 > 0$ are two positive constants 
 independent of $\mu$, $\lambda$, $\kappa$ and   $t$.
\end{thm}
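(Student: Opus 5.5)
The plan follows the whole-space argument of Section 3 verbatim. The only genuinely new ingredients are the conservation laws on $\mathbb{T}^3$ and the Poincar\'e inequality, which replace the missing zero-order dissipation of $(a,b,\omega,\rho)$.

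First, I would verify that the integral constraints \eqref{e1-1}--\eqref{e1-2} are propagated in time by the original system \eqref{LN-2}:
\begin{itemize}
\item the particle mass conservation $\frac{d}{dt}\int_{\mathbb{T}^3} a\,dx=0$, from \eqref{G3.21}$_1$;
\item the fluid mass conservation $\frac{d}{dt}\int_{\mathbb{T}^3}\rho\,dx=0$;
\item the total momentum conservation $\frac{d}{dt}\int_{\mathbb{T}^3}(b+(1+\rho)u)\,dx=0$, since the drag terms cancel and the viscous terms are divergences;
\item the total energy conservation. Here the fluid energy $\varrho(\vartheta+\frac12|u|^2)$ plus the particle kinetic energy $\frac12\int|v|^2F\,dv$ is conserved, and the latter equals a constant plus $\frac{\sqrt6}{2}\omega+\frac32 a$ in perturbed form. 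Together with $\int a=0$ this gives exactly \eqref{e1-2} for all $t\ge0$.
\end{itemize}

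From these I would deduce Poincar\'e-type controls. $\int\rho=\int a=0$ gives $\|(\rho,a)\|_{L^2}\lesssim\|\nabla(\rho,a)\|_{L^2}$. The momentum identity gives $|\int(b+u)\,dx|\lesssim\|\rho\|_{L^2}\|u\|_{L^2}$, so $\|b\|_{L^2}\lesssim \|b-u\|_{L^2}+\|\nabla b\|_{L^2}+\delta\|u\|_{L^2}$, since the mean of $2b$ is $\int(b-u)+\int(b+u)$. Similarly, the energy identity gives $|\int(\theta+\frac{\sqrt6}{2}\omega)|\lesssim \delta(\|\rho\|_{L^2}+\|(u,\theta)\|_{L^2})$. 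Since $\theta+\frac{\sqrt6}{2}\omega$ and $\sqrt2\omega-\sqrt3\theta$ are linearly independent combinations, this controls the means of $\omega$ and $\theta$, hence $\|\omega\|_{L^2}$ and $\|\theta\|_{L^2}$, by $\|\sqrt2\omega-\sqrt3\theta\|_{L^2}+\|\nabla(\omega,\theta)\|_{L^2}$ plus $\delta$-small terms. Consequently $\mathcal{D}$ dominates $\|(\rho,u,\theta)\|_{H^4}^2+\|f\|_{H^4_{x,v}}^2$. Indeed, $\{\mathbf{I}-\mathbf{P}\}f$ is controlled by the $\nu$-norms and $\mathbf{P}f$ by $(a,b,\omega)$, while the $u$ and $\theta$ zero-order parts come from the new modes $b-u$ and $\sqrt2\omega-\sqrt3\theta$.

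Next, I would redo Lemmas \ref{L3.1}--\ref{L3.5} on $\mathbb{T}^3$. The energy estimates are identical, since integration by parts has no boundary terms. The only place where the whole-space Sobolev inequalities were used is in bounds such as $\|g\|_{L^6}\lesssim\|\nabla g\|_{L^2}$, which fail for functions with nonzero mean. There I would replace them by $\|g\|_{L^6}\lesssim\|g\|_{H^1}$ and then absorb the zero-order part via the Poincar\'e bounds above, using the smallness $\delta$. This is the main obstacle: each nonlinear term in \eqref{G3.6}--\eqref{G3.19} must be rechecked to ensure that the resulting zero-order norms of $(a,b,\omega,u,\theta)$ are dominated by $\mathcal{D}$ and not merely by the energy. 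The momentum and energy identities are needed precisely for $b$, $u$, $\omega$ and $\theta$.

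With this in hand, the same combination \eqref{G3.33} gives $\frac{d}{dt}\mathcal{E}+\lambda_7\mathcal{D}\le 0$ uniformly in $\mu,\lambda,\kappa$. The continuity argument with Lemma \ref{L3.6}, whose local existence is unchanged on $\mathbb{T}^3$, gives global existence and the uniform bound. Since $\mathcal{E}\sim\|(\rho,u,\theta)\|_{H^4}^2+\|f\|_{H^4_{x,v}}^2\lesssim\mathcal{D}$, Gronwall yields $\mathcal{E}(t)\le e^{-2\zeta_1 t}\mathcal{E}(0)$, which is the exponential decay. Nonnegativity of $F$ follows from the maximum principle as before.
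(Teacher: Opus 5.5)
Your proposal is correct and follows essentially the same route as the paper. You propagate the conservation laws of \eqref{LN-2} to get \eqref{G6.1-1}--\eqref{G6.1-2} for all $t\geq 0$. You then use Poincar\'e's inequality together with the modes $b-u$ and $\sqrt{2}\omega-\sqrt{3}\theta$ to control the zero-order norms, so that $\mathcal{E}\lesssim\mathcal{D}$. Rerunning Lemmas \ref{L3.1}--\ref{L3.5} on $\mathbb{T}^3$ gives $\frac{d}{dt}\mathcal{E}+\lambda\mathcal{D}\le 0$ uniformly in $\mu,\lambda,\kappa$, and continuity plus Gronwall finish the argument. Your explicit treatment of replacing homogeneous Sobolev bounds by inhomogeneous ones and absorbing the zero-order parts is, if anything, more detailed than the paper, which simply asserts that the computations carry over.
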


% Next, we proceed to prove Theorem \ref{T1.5}.
\begin{proof}% [Proof of Theorem \ref{T1.5}] 
For notations simplification, we shall  omit  the superscripts 
$\lambda$, $\mu$ and $\kappa$ on the unknowns in  the problem \eqref{Q1}--\eqref{Q1-1}.  
 Through   direct computations, we  derive the following conservation quantities from the system \eqref{LN-2}:
\begin{equation*}
\left\{
\begin{aligned}
&\frac{\mathrm{d}}{\mathrm{d} t} \int_{\mathbb{T}^3} \varrho \mathrm{d} x=0,\\
& \frac{\mathrm{d}}{\mathrm{d} t}\Big(\int_{\mathbb{T}^3} \varrho u \mathrm{d} x
+\iint_{\mathbb{T}^3\times\mathbb{R}^3} v F \mathrm{d} x \mathrm{d} v\Big)=0, \\
& \frac{\mathrm{d}}{\mathrm{d} t}\Big(\int_{\mathbb{T}^3} \varrho\Big(C_\textsl{v} 
\vartheta+\frac{1}{2}|u|^2\Big)
 \mathrm{d} x
+\iint_{\mathbb{T}^3\times\mathbb{R}^3} \frac{|v|^2}{2} F \mathrm{d} x \mathrm{d} v \Big)=0,\\
& \frac{\mathrm{d}}{\mathrm{d}t}\iint_{\mathbb{T}^3\times\mathbb{R}^3}F\mathrm{d}x=0,
\end{aligned}\right.
\end{equation*}
where $\varrho=1+\rho$, $\vartheta =1+\theta$ and $F=M+\sqrt{M}f$. Hence, under the assumptions
 \eqref{e1-1} and \eqref{e1-2}, we derive that the following identities hold for any $t \geq 0$:  
\begin{gather}
 \int_{\mathbb{T}^3} a \mathrm{d} x=0, \quad
 \int_{\mathbb{T}^3} \rho \mathrm{d} x=0, \quad
\int_{\mathbb{T}^3}\big(b+(1+\rho) u\big) \mathrm{d} x=0,\,\,\label{G6.1-1}\\
 \int_{\mathbb{T}^3}(1+\rho)\Big(\theta+\frac{1}{2}|u|^2\Big)+\frac{\sqrt{6}}{2}
  \omega \mathrm{d} x=0.\label{G6.1-2}
\end{gather}

Since the proofs of local existence and uniqueness for the classical solution 
$(\rho, u, \theta, f)$ are analogous to those in the $\mathbb{R}^3$ case, we omit 
the details here for the sake of brevity. The next key step involves 
deriving the uniform a priori estimates of classical solutions with 
exponential decay. By combining   Poincar\'{e}'s inequality with \eqref{G6.1-1} and \eqref{G6.1-2},
 we deduce that 
\begin{align}
\|a\|_{L^2(\mathbb{T}^3)} \lesssim&\, \|\nabla a\|_{L^2(\mathbb{T}^3)},\quad
\|\rho\|_{L^2(\mathbb{T}^3)}\lesssim \|\nabla \rho\|_{L^2(\mathbb{T}^3)}, \label{G6.2}\\
\|u+b\|_{L^2(\mathbb{T}^3)}  \lesssim&\,\|b+u+\rho u\|_{L^2(\mathbb{T}^3)}
+\|\rho u\|_{L^2(\mathbb{T}^3)} \nonumber\\
\lesssim&\, \|\nabla(b+u+\rho u)\|_{L^2(\mathbb{T}^3)}
+\|u\|_{L^{\infty}(\mathbb{T}^3)}\|\rho\|_{L^2(\mathbb{T}^3)} \nonumber\\
\lesssim&\,  \|\nabla(b, u)\|_{L^2(\mathbb{T}^3)}
+ \|u\|_{H^2{(\mathbb{T}^3)}}\|\nabla \rho\|_{L^2(\mathbb{T}^3)}
+ \|\rho\|_{H^2{(\mathbb{T}^3)}}\|\nabla u\|_{L^2(\mathbb{T}^3)}\nonumber\\
\lesssim&\,  \|\nabla(b,\rho,u)\|_{L^2(\mathbb{T}^3)},\label{G6.4}\\
\Big\|\frac{\sqrt{6}} {2} \omega+\theta\Big\|_{L^2(\mathbb{T}^3)}  
\lesssim&\,\Big\|(1+\rho)\Big(\theta+\frac{1}{2}|u|^2\Big)
+\frac{\sqrt{6}}{2} \omega\Big\|_{L^2(\mathbb{T}^3)}
+\|\rho \theta\|_{L^2(\mathbb{T}^3)}+ \big\|\rho|u|^2 \big\|_{L^2(\mathbb{T}^3)}\nonumber \\
 \lesssim&\, \big (\|\theta\|_{H^2{(\mathbb{T}^3)}}
 +\|u\|_{H^2{(\mathbb{T}^3)}}^2 \big)\|\nabla \rho\|_{L^2(\mathbb{T}^3)}
 +\big(1+\|u\|_{H^2{(\mathbb{T}^3)}}\big)\|\nabla (\omega,\theta)\|_{L^2(\mathbb{T}^3)}\nonumber \\
&+ \|\rho\|_{H^2{(\mathbb{T}^3)}}\big(1+\|(\rho,u)\|_{H^2{(\mathbb{T}^3)}}\big)
\|\nabla (u,\theta)\|_{L^2(\mathbb{T}^3)} \nonumber\\
\lesssim&\,\|\nabla(\rho,u,\omega,\theta)\|_{L^2(\mathbb{T}^3)}, \label{G6.5}\\
\|\omega\|_{L^2(\mathbb{T}^3)}+\|\theta\|_{L^2(\mathbb{T}^3)} 
 \lesssim&\,   \Big\|\frac{\sqrt{6}}{2}   \omega
 +\theta\Big\|_{L^2(\mathbb{T}^3)}+\|\sqrt{2} \omega-\sqrt{3} \theta\|_{L^2(\mathbb{T}^3)}  \nonumber\\
\lesssim&\, \|\nabla(\rho,u,\omega,\theta)\|_{L^2(\mathbb{T}^3)}
+\|\sqrt{2} \omega-\sqrt{3} \theta\|_{L^2(\mathbb{T}^3)},\label{NJK3.1622}\\
\|u\|_{L^2(\mathbb{T}^3)}+\|b\|_{L^2(\mathbb{T}^3)}
\lesssim&\, \|b-u\|_{L^2(\mathbb{T}^3)}+\|u+b\|_{L^2(\mathbb{T}^3)} \nonumber\\
\lesssim&\,  \|b-u\|_{L^2(\mathbb{T}^3)}+\|\nabla(b,\rho,u)\|_{L^2(\mathbb{T}^3)}.\label{NJK3.163}
\end{align}

By combining \eqref{G6.2}--\eqref{NJK3.163} with Lemma \ref{LA.1}, 
we   obtain the control of $L^p$-norms of $(\rho,u,\theta,f)$ ($2 \leq p \leq \infty$):
\begin{align}\label{G6.6}
&\|(\rho,u,\theta)\|_{L^p(\mathbb{T}^3)}+\|f\|_{L_v^2(L^p(\mathbb{T}^3))} \nonumber\\
&\quad \lesssim   \|\nabla (a,b,\omega,\rho, u, \theta )\|_{H^1(\mathbb{T}^3)} 
+ \|b-u\|_{L^2(\mathbb{T}^3)}+ \|\sqrt{2} \omega-\sqrt{3} \theta\|_{L^2(\mathbb{T}^3)} .    
\end{align}

Thus, by carrying the computations similar to those in Lemmas \ref{L3.1}--\ref{L3.5}, we derive that
\begin{align}\label{G6.7}  
\frac{d}{dt}\mathcal{E}(t) + \lambda_{18}\mathcal{D}(\rho,u,\theta,f) \leq 0, \quad 0 < t < T,  
\end{align}  
for some $\lambda_{18} > 0$  (uniform in $\mu,\lambda,$ and $\kappa$).
 Here, $\mathcal{E}(t)$ is defined  as same as that in \eqref{G3.33}, 
 and $\mathcal{D}(\rho,u,\theta,f)$ is specified by \eqref{D0} 
 for the case   $  \mathbb{R}^3$. These definitions combining \eqref{G6.7}  
 establish the uniform a-priori estimates and substantiate 
 the global existence claim presented in Theorem \ref{T1.5}. 
 For the sake of conciseness, the details are omitted here.

Furthermore, we rigorously establish the coercivity of 
the dissipation and derive the exponential decay rate. 
Specifically, by leveraging the inequalities \eqref{G6.6} 
and \eqref{G6.7}, we deduce that $\mathcal{E}(t) 
\lesssim \mathcal{D}(\rho,u,\theta,f)(t)$, and there exists
 a positive constant $\lambda_{19} > 0$ such that  
\begin{align*}  
\frac{d}{dt}\mathcal{E}(t) + \lambda_{19}\mathcal{E}(t) \leq 0,  
\end{align*}  
for all $t \geq 0$. Applying Gronwall's inequality, we consequently 
obtain the desired exponential decay estimate of 
$\mathcal{E}(t) \backsim \|(\rho,u,\theta,f)\|_{H^4(\mathbb{T}^3)}^2 
+ \|f\|_{H_{x,v}^4(\mathbb{T}^3\times\mathbb{R}^3)}^2$.
\end{proof}

\smallskip
Next, we state  the global convergence and stability results to the  problem \eqref{Q1}--\eqref{Q1-1} 
in $\mathbb{T}^3$ as follows. 

\begin{thm}\label{T1.6}
%Let $\Omega=\mathbb{T}^3$. 
Assume that the initial data $(\rho_0 ,u_0 ,\theta_0,f_0)$
satisfies $F_0=M+\sqrt{M}f_0\geq 0$, $(\rho_0,u_0,\theta_0)\linebreak \in H^4(\mathbb{T}^3)$
and $f_0\in H_{x,v}^4(\mathbb{T}^3\times\mathbb{R}^3)$, and 
$\{(\rho_0^{\mu,\lambda,\kappa},u_0^{\mu,\lambda,\kappa},
\theta_0^{\mu,\lambda,\kappa},f_0^{\mu,\lambda,\kappa})\}_{0<\mu,\lambda,\kappa<1}$ is
a sequence such that $(\rho^{\mu,\lambda,\kappa}_0,u^{\mu,\lambda,
\kappa}_0,\theta^{\mu,\lambda,\kappa}_0)\rightarrow (\rho_0,u_0,
\theta_0)$ in $H^4(\mathbb{T}^3)$ and $f_0^\kappa\rightarrow f_0$ in
$H^4_{x,v}(\mathbb{T}^3\times\mathbb{R}^3)$, as $\mu\rightarrow0$, 
$\lambda\rightarrow0$ 
and $\kappa \rightarrow 0$. And there exists a constant $\eps_3>0$
independent of $\kappa$ such that if 
\begin{align*}
\|(\rho _0,u _0,\theta _0 )\|_{H^4(\mathbb{T}^3)} +\|f_0 \|_{H_{x,v}^4(\mathbb{T}^3\times\mathbb{R}^3)} \leq   \eps_3, 
\end{align*}
then for the global solution $(\rho^{\mu,\lambda,\kappa},u^{\mu,\lambda,\kappa},
\theta^{\mu,\lambda,\kappa},f^{\mu,\lambda,\kappa})$ of the   problem 
\eqref{Q1}--\eqref{Q1-1} in $\mathbb{T}^3 $ subject to the initial data 
 $(\rho_0^{\mu,\lambda,\kappa},u_0^{\mu,\lambda,\kappa},\theta_0^{\mu,\lambda,\kappa},f_0^{\mu,\lambda,\kappa})$ 
 obtained in Theorem \ref{T1.5}, as $\mu\rightarrow 0$, $\lambda\rightarrow0$ and $\kappa\rightarrow 0$, 
 there exists a limit $(\rho,u,\theta,f)$ such that, up to a subsequence, it holds 
\begin{equation*} 
\left\{
\begin{aligned}
(\rho^{\mu,\lambda,\kappa},u^{\mu,\lambda,\kappa},\theta^{\mu,\lambda,\kappa} )
 &\rightharpoonup (\rho,u,\theta )\quad  \,\, \text{weakly-$*$~ ~~in\ \ } 
 &&L^{\infty}(\mathbb{R}^{+};H^4(\mathbb{T}^3)),& \\
f^{\mu,\lambda,\kappa} &\rightharpoonup  f\quad  \,\, \quad\quad\,\,\,\, 
\text{weakly-$*$~ ~~in \   } &&L^{\infty}(\mathbb{R}^{+};H_{x,v}^4(\mathbb{T}^3\times \mathbb{R}^3)),& \\
(\rho^{\mu,\lambda,\kappa},u^{\mu,\lambda,\kappa},\theta^{\mu,\lambda,\kappa} ) 
&\rightarrow (\rho,u,\theta )\quad  \,\, \text{strong~ ~~   in \ \ } 
&&C_{\rm{loc}}(\mathbb{R}^{+};H^3(\mathbb{T}^3)).&
\end{aligned}
\right.
\end{equation*}
 The limit $(\rho,u,\theta,f)$ is the unique global solution to the   problem \eqref{Q2}--\eqref{Q2-2} 
  in $\mathbb{T}^3 $ associated with the initial data $(\rho_0,u_0,\theta_0,f_0)$. 
  Furthermore, $(\rho,u,\theta,f)$ satisfies $\varrho=1+\rho>0$,
 $F=M+\sqrt{M}f \geq 0$ and
 \begin{align*} 
&\sup_{t\geq 0}\big(\|(\rho,u,\theta)(t)\|_{H^{4}(\mathbb{T}^3)}^2
+\|f(t)\|_{H_{x,v}^4(\mathbb{T}^3\times\mathbb{R}^3)}^2)
\nonumber\\
&\quad  +\int_{0}^{t}\mathcal{D}(\rho,u,\theta,f)(\tau){\rm d}\tau
\leq C_5\big(\|(\rho_{0},u_{0},\theta_{0})\|_{H^{4}(\mathbb{T}^3)}^2
 +\|f_0\|_{H_{x,v}^4(\mathbb{T}^3\times\mathbb{R}^3)}^2\big),
\end{align*}
and
\begin{gather*}
\|(\rho,u,\theta)(t)\|_{H^4(\mathbb{T}^3) }+\|f(t)\|_{H^4_{x,v}(\mathbb{T}^3\times\mathbb{R}^3) }
\leq C_5e^{-\zeta_2 t}\big(\|(\rho_0,u_0,\theta_0)\|_{H^4(\mathbb{T}^3)} 
+\|f_0\|_{H^4_{x,v}(\mathbb{T}^3\times\mathbb{R}^3)} \big),
\end{gather*}
for all $t\geq 0$, where $\zeta_2,C_5> 0$ are two positive constants 
 independent of $\mu$, $\lambda$, $\kappa$ and $t$.
\end{thm}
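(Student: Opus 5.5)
The plan is to repeat, on $\mathbb{T}^3$, the argument of the proof of Theorem \ref{T1.2}, using as the uniform input the $\mu,\lambda,\kappa$-independent bounds of Theorem \ref{T1.5}. The exponential decay then comes from the dissipation inequality \eqref{G6.7} applied to the limit. First, choose $\eps_3\le \eps_2/2$. Since the approximate data converge in $H^4$, for $\mu,\lambda,\kappa$ small the data $(\rho_0^{\mu,\lambda,\kappa},\dots,f_0^{\mu,\lambda,\kappa})$ satisfy the smallness hypothesis of Theorem \ref{T1.5}. The conservation constraints \eqref{e1-1}--\eqref{e1-2} are also needed. I will assume they hold for the limit data (they are implicit in the statement, being needed for decay). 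For the approximate data I arrange them by a small correction: subtract means and adjust $b_0,\omega_0$ through a finite-dimensional perturbation of $f_0$ supported on $v\sqrt M,(|v|^2-3)\sqrt M$. This costs $o(1)$ in $H^4$ and preserves $F_0\ge0$ when the perturbation is small and multiplied by a cutoff. Theorem \ref{T1.5} then gives global solutions with the uniform energy--dissipation bound and uniform exponential decay, with $C_4,\zeta_1$ independent of $\mu,\lambda,\kappa$.

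Second, compactness. The uniform $L^\infty_tH^4$ bounds give weak-$*$ limits along a subsequence. Exactly as in the proof of Theorem \ref{T1.2}, the equations \eqref{Q1} yield uniform bounds on $\partial_t(\rho,u,\theta)$ in $L^\infty_t H^2(\mathbb{T}^3)$, using $\mu,\lambda,\kappa<1$ to control the viscous terms. Since $\mathbb{T}^3$ is compact, $H^4\hookrightarrow H^3$ is compact, so Aubin--Lions gives strong convergence in $C_{\rm loc}(\mathbb{R}^+;H^3(\mathbb{T}^3))$, with no localization in $x$. For $f$, I use $\partial_t f^{\mu,\lambda,\kappa}$ bounded in $L^\infty_t H^{2}_{x,v}$ together with weak convergence, which suffices to pass to the limit in the bilinear terms $u\cdot\nabla_v f$, $u\cdot vf$ and $\theta\Delta_v(\sqrt M f)/\sqrt M$ in the distributional sense. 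The vanishing terms $\mu\Delta u$, $\kappa\Delta\theta$, $\mu|D(u)|^2$ are $O(\max\{\mu,\lambda,\kappa\})$ in $L^\infty_tH^2$ and drop out. Thus $(\rho,u,\theta,f)$ solves \eqref{Q2}--\eqref{Q2-2}. Positivity $1+\rho>0$ follows from uniform $L^\infty$ smallness, and $F\ge0$ from weak limits of nonnegative functions.

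Third, the limit inherits the estimates. The energy bound and the exponential bound pass to the limit by weak-$*$ lower semicontinuity, pointwise in $t$ since the convergence is in $C_{\rm loc}$ weak topology. The dissipation integral passes by Fatou/lower semicontinuity of the norms in $\mathcal{D}$. This gives $C_5=C_4$ and $\zeta_2=\zeta_1$. Alternatively, I can run \eqref{G6.7} directly on the limit system (the $\mu=\lambda=\kappa=0$ case of Lemmas \ref{L3.1}--\ref{L3.5}) with \eqref{G6.6}. The constraints \eqref{G6.1-1}--\eqref{G6.1-2} persist for the limit, since they are linear or strongly continuous functionals.

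Uniqueness is the step I expect to be the main obstacle, since \eqref{Q2} is hyperbolic in $x$. I plan to take the difference of two solutions in $L^\infty_tH^4$ and perform the $H^1$ error estimate of Lemmas \ref{L4.1}--\ref{L4.2} with $\mu=\lambda=\kappa=0$. The symmetrizer weights $1/(1+\rho)^2$, $1/(1+\theta)$, $1/(1+\theta)^2$ avoid derivative loss, and the $\Delta_v$ term is absorbed by the dissipation of $\mathcal{L}$ (as in \eqref{G4.11}). A Gronwall argument on $[0,T]$ then shows that the difference vanishes. Because of uniqueness, the whole family converges, not just a subsequence.
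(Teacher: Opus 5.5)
Your proposal is correct and follows the paper's route. The paper omits this proof, saying only that it mirrors Theorem \ref{T1.2}: uniform bounds from Theorem \ref{T1.5}, weak-$*$ compactness, Aubin--Lions for $(\rho,u,\theta)$, and passage to the limit in \eqref{Q2}, which you fill in, together with lower semicontinuity for the energy and exponential bounds, a symmetrized $H^1$ difference estimate for uniqueness, and the observation that \eqref{e1-1}--\eqref{e1-2} must be in force.

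Two small corrections, neither affecting the argument. First, the available bounds do not control $\partial_t f^{\mu,\lambda,\kappa}$ in $L^\infty_t H^2_{x,v}$, because $v\cdot\nabla f$, $\mathcal{L}f$ and $\frac{\theta}{\sqrt{M}}\Delta_v(\sqrt{M}f)$ carry velocity weights; use $\langle v\rangle^{-2}\partial_t f$ instead, which is bounded there and suffices for time continuity and $f(0)=f_0$. Second, you need not modify the approximating data: invoking Theorem \ref{T1.5} already imposes \eqref{e1-1}--\eqref{e1-2} on them, these pass to the limit by continuity, and your sign-indefinite correction would not obviously preserve $F_0\ge 0$ anyway.
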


\begin{thm}\label{T1.7}
% Let $\Omega=\mathbb{T}^3$.
Let $(\rho^{\mu,\lambda,\kappa},u^{\mu,\lambda,\kappa},\theta^{\mu,\lambda,\kappa},
f^{\mu,\lambda,\kappa})$ and $(\rho,u,\theta,f)$ be  global solutions to the 
 problems of \eqref{Q1}--\eqref{Q1-1} and \eqref{Q2}--\eqref{Q2-2}  in $\mathbb{T}^3 $ 
 obtained in Theorem \ref{T1.5} and Theorem \ref{T1.6}  respectively, associated with the initial data 
  $(\rho_0^{\mu,\lambda,\kappa},u_0^{\mu,\lambda,\kappa},\theta_0^{\mu,\lambda,\kappa},
  f_0^{\mu,\lambda,\kappa})$ 
and $(\rho_0,u_0,\theta_0,f_0)$. Assume further that 
\begin{align*}
\|(\rho_0^{\mu,\lambda,\kappa}-\rho_0,u_0^{\mu,\lambda,\kappa}-u_0,
\theta_0^{\mu,\lambda,\kappa}-\theta_0)\|_{H^1(\mathbb{T}^3)}+\|f_0^{\mu,\lambda,\kappa}
-f_0\|_{H_{x,v}^1(\mathbb{T}^3\times\mathbb{R}^3)}\leq \max\{\mu,\lambda,\kappa\} .
\end{align*}
Then it holds for all $t\geq0$ that
\begin{align*} 
& \sup_{t\geq 0}\big\{\|(\rho^{\mu,\lambda,\kappa}-\rho,u^{\mu,\lambda,
\kappa}-u,\theta^{\mu,\lambda,\kappa}-\theta)(t)\|_{H^1(\mathbb{T}^3)}\nonumber\\
&\quad +\|(f^{\mu,\lambda,\kappa}
-f)(t)\|_{H_{x,v}^1(\mathbb{T}^3\times\mathbb{R}^3)}\big\}\leq C_6 \max\{\mu,\lambda,\kappa\},
\end{align*}
for some constant $C_6$ independent of $\mu$, $\lambda$, $\kappa$ and   $t$. 
\end{thm}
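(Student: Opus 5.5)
The plan is to repeat the error analysis of Section~4 (Lemmas \ref{L4.1}--\ref{L4.5}) on $\mathbb{T}^3$. Set $(\widetilde\rho,\widetilde u,\widetilde\theta,\widetilde f)=(\rho^{\mu,\lambda,\kappa}-\rho,\dots,f^{\mu,\lambda,\kappa}-f)$. These differences satisfy the error system \eqref{G4.3}, together with its symmetrized form \eqref{LNXG4.24}. The energy functional $\Xi^{\mu,\lambda,\kappa}(t)$ is defined exactly as in \eqref{G4.4}, with all norms taken over $\mathbb{T}^3$. The key inputs from Theorems \ref{T1.5} and \ref{T1.6} are the uniform $H^4$ bounds of size $\eps_2,\eps_3$ and the time-integrability of the dissipation $\mathcal{D}$ for both solutions. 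These are precisely the quantities the $\mathbb{R}^3$ proof used, with $\eps_0,\eps_1$ replaced by $\eps_2,\eps_3$.

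First I redo the zeroth-order and first-order energy estimates of Lemmas \ref{L4.1} and \ref{L4.2}. For the first-order estimate I use the weights $(1+\rho^{\mu,\lambda,\kappa})^{-2}$, $(1+\theta^{\mu,\lambda,\kappa})^{-1}$ and $(1+\theta^{\mu,\lambda,\kappa})^{-2}$, which let the symmetric structure absorb the top-order terms. On $\mathbb{T}^3$ the Sobolev embedding $\|g\|_{L^6}\lesssim\|\nabla g\|_{L^2}$ fails for functions with nonzero mean. So wherever Section~4 used it, I write $\|g\|_{L^6}\lesssim\|g\|_{H^1}$ instead and bound the extra $\|g\|_{L^2}$ term by a Poincar\'e inequality.

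For the background solutions this step is harmless. The bounds \eqref{G6.2}--\eqref{G6.6} show that $\|(a,b,\omega,\rho,u,\theta)\|_{L^p}$ is controlled by $\mathcal{D}^{1/2}$, so every factor that was $L^2_t$-integrable in Section~4 remains so.

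For the error quantities I need zero-mean information. Subtracting the conservation laws of the two systems gives $\int\widetilde a=\int\widetilde\rho=0$, together with the two nonlinear relations coming from the momentum and energy constraints \eqref{e1-1}--\eqref{e1-2}, now applied to differences. Handling these as in \eqref{G4.5}--\eqref{NJK3.163} gives
\begin{align*}
\|(\widetilde a,\widetilde b,\widetilde\omega,\widetilde\rho,\widetilde u,\widetilde\theta)\|_{L^2}\lesssim{}&\|\nabla(\widetilde a,\widetilde b,\widetilde\omega,\widetilde\rho,\widetilde u,\widetilde\theta)\|_{L^2}+\|\widetilde b-\widetilde u\|_{L^2}+\|\sqrt2\widetilde\omega-\sqrt3\widetilde\theta\|_{L^2}\\
&+(\eps_2+\eps_3)\,\|(\widetilde\rho,\widetilde u,\widetilde\theta,\widetilde a,\widetilde b,\widetilde\omega)\|_{H^1}.
\end{align*}
The last term is cubic-type: it is a product of a background quantity and an error quantity. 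It can therefore be absorbed once it is paired with a time-integrable background factor.

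Next come the macroscopic dissipation estimates for $\nabla(\widetilde a,\widetilde b,\widetilde\omega)$ (Lemma \ref{L4.3}) and for $\nabla\widetilde\rho$ (Lemma \ref{L4.4}), along with the $\nabla_v$-estimate \eqref{G4.41}. These carry over verbatim; their Young's-inequality arguments are purely local and are not affected by the torus. The viscous and heat-conducting source terms, such as $\mu\Delta u^{\mu,\lambda,\kappa}/(1+\rho^{\mu,\lambda,\kappa})$, are handled as in \eqref{LNXG4.18}--\eqref{LNXG4.20} and \eqref{G4.22}. After integration by parts they are bounded by $\max\{\mu,\lambda,\kappa\}\,\|\nabla u^{\mu,\lambda,\kappa}\|_{L^2_t H^2}\,\Xi^{1/2}$. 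Since $\int_0^\infty\mathcal{D}\,dt\lesssim\eps_2^2$, Young's inequality turns this into $\max\{\mu,\lambda,\kappa\}^2+\eta\,\Xi$. Summing all the pieces then yields
\[
\Xi(t)\le C\Xi(0)+C(\eps_2^{1/2}+\eps_3^{1/2})\,\Xi(t)+C\max\{\mu,\lambda,\kappa\}^2 .
\]
Using $\Xi(0)\le\max\{\mu,\lambda,\kappa\}^2$ and taking $\eps_2,\eps_3$ small closes the estimate, which is the claimed $O(\max\{\mu,\lambda,\kappa\})$ bound, uniform in $t$.

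I expect the main obstacle to be the time-uniformity of the low-order terms on $\mathbb{T}^3$. Quadratic terms such as $\int\widetilde a\,u\cdot\widetilde u$ now carry the full $L^2$ norms of $\widetilde a$ and $u$ rather than gradient norms. These must be controlled using $\|u\|_{L^2_tL^\infty}\lesssim(\int\mathcal{D})^{1/2}$, the exponential decay of the background solution, and the error Poincar\'e bounds above. The delicate part is the momentum and energy mean constraints for the differences. Because they are nonlinear, I would write, for example,
\[
\int(\widetilde b+\widetilde u)\,dx=-\int\big(\widetilde\rho\,u^{\mu,\lambda,\kappa}+\rho\,\widetilde u\big)\,dx ,
\]
and estimate the right-hand side by $\eps_2\|(\widetilde\rho,\widetilde u)\|_{L^2}$, so that it can be absorbed. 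If that absorption fails at some step, the fallback is to use the exponential decay in Theorems \ref{T1.5}--\ref{T1.6}, which gives every background coefficient an $L^1_t$ bound.
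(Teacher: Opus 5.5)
Your proposal is correct and takes the paper's route: the paper only asserts that Theorem~\ref{T1.7} mirrors Lemmas~\ref{L4.1}--\ref{L4.5}, and your outline fills in the torus-specific ingredient it leaves implicit, namely controlling the means of the error quantities through the conservation laws (or, as your fallback, using the uniform exponential decay from Theorems~\ref{T1.5}--\ref{T1.6} to make every background coefficient integrable in time). One small correction is that Lemmas~\ref{L4.3}--\ref{L4.4} do not carry over entirely verbatim, because terms such as $\|(\widetilde a^{\mu,\lambda,\kappa},\widetilde u^{\mu,\lambda,\kappa})\|_{L^6}^2\|(a,u^{\mu,\lambda,\kappa})\|_{L^3}^2$ were closed there using $\|g\|_{L^6}\lesssim\|\nabla g\|_{L^2}$ on error quantities, so you must also apply your error Poincar\'e bound (or the exponential-decay argument) at those points.
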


The proofs of Theorems \ref{T1.6} and \ref{T1.7} closely mirror to those of Theorems 
\ref{T1.2} and \ref{T1.3}. For the sake of brevity, we  opt  to omit their detailed proofs here.\hfill $ \square$

\bigskip 
{\bf Acknowledgements:}
Li and Ni are supported by NSFC (Grant No. 12331007).  And Li is also supported by
 the ``333 Project'' of Jiangsu Province.
Xin is  in part supported by Zheng Ge Ru Foundation, Hong Kong RGC Earmarked Research 
Grants CUHK-14301421, CUHK-14302819, and CUHK-14300819.

\vspace{2mm}

\textbf{Conflict of interest.} The authors do not have any possible conflicts of interest.

\vspace{2mm}

\textbf{Data availability statement.}
 Data sharing is not applicable to this article as no data sets were generated or analyzed 
 during the current study.

%\newpage 

\bibliographystyle{plain}

\end{document}